\documentclass[11pt]{article}
\usepackage{graphicx} % Required for inserting images
\usepackage{subfig} 
\usepackage{bbm}
\usepackage{bm}
\usepackage[margin=1in]{geometry}
\usepackage{color}
\usepackage{amsfonts}
\usepackage{algorithm}
\usepackage{algorithmic}
\usepackage{latexsym, amssymb, amsmath, amscd, amsthm, amsxtra}
\usepackage{mathtools}
\usepackage{enumerate}
\usepackage[all]{xy}
\usepackage{mathrsfs}
\usepackage{fancyhdr}
\usepackage{listings}
\usepackage{hyperref}
\usepackage{enumitem}
\usepackage{multirow}
\usepackage{tikz}

\def\P{\mathbb{P}}

\def\E{\mathbb{E}}

\def\11{\mathbbm{1}}

\def\ER{Erd\H{o}s-R\'enyi\ }

\def\Fop{\operatorname{F}}

\newcommand{\Pb}{\mathbb P}
\newcommand{\Qb}{\mathbb Q}

\newtheorem{thm}{Theorem}[section]

\newtheorem{proposition}[thm]{Proposition}

\newtheorem{lemma}[thm]{Lemma}

\newtheorem{remark}[thm]{Remark}
\newtheorem{DEF}[thm]{Definition}

\numberwithin{equation}{section}

\definecolor{myGreen}{RGB}{0,255,0}      % #00FF00
\definecolor{myPurple}{RGB}{160,32,240}  % #A020F0
\definecolor{myRed}{RGB}{255,0,0}        % #FF0000
\definecolor{myBlue}{RGB}{0,0,255}       % #0000FF

\makeatletter
\newenvironment{breakablealgorithm}
{% \begin{breakablealgorithm}
		\begin{center}
			\refstepcounter{algorithm}% New algorithm
			\hrule height.8pt depth0pt \kern2pt% \@fs@pre for \@fs@ruled
			\renewcommand{\caption}[2][\relax]{% Make a new \caption
				{\raggedright\textbf{\ALG@name~\thealgorithm} ##2\par}%
				\ifx\relax##1\relax % #1 is \relax
				\addcontentsline{loa}{algorithm}{\protect\numberline{\thealgorithm}##2}%
				\else % #1 is not \relax
				\addcontentsline{loa}{algorithm}{\protect\numberline{\thealgorithm}##1}%
				\fi
				\kern2pt\hrule\kern2pt
			}
		}{% \end{breakablealgorithm}
		\kern2pt\hrule\relax% \@fs@post for \@fs@ruled
	\end{center}
}
\makeatother

\hypersetup{colorlinks=true,linkcolor=blue}

\title{Fundamental Limits of Community Detection in Contextual Multi-Layer Stochastic Block Models}

\usepackage{authblk}
\author[1]{Shuyang Gong\thanks{Email: \textit{gongsyprob@gmail.com}.}}
\author[2]{Dong Huang\thanks{Email: \textit{hd23@mails.tsinghua.edu.cn}.}}
\author[1]{Zhangsong Li\thanks{Email: \textit{ramblerlzs@pku.edu.cn}.}}

\affil[1]{School of Mathematical Sciences, Peking University}
\affil[2]{Department of Statistics and Data Science, Tsinghua University}

\date{\today}
\begin{document}
\maketitle

\begin{abstract}
    We consider the problem of community detection from the joint observation of a high-dimensional covariate matrix and $L$ sparse networks, all encoding noisy, partial information about the latent community labels of $n$ subjects. In the asymptotic regime where the networks have constant average degree and the number of features $p$ grows proportionally with $n$, we derive a sharp threshold under which detecting and estimating the subject labels is possible. Our results extend the work of \cite{MN23} to the constant-degree regime with noisy measurements, and also resolve a conjecture in \cite{YLS24+} when the number of networks is a constant. 
    
    Our information-theoretic lower bound is obtained via a novel comparison inequality between Bernoulli and Gaussian moments, as well as a statistical variant of the ``recovery to chi-square divergence reduction'' argument inspired by \cite{DHSS25}. On the algorithmic side, we design efficient algorithms based on counting decorated cycles and decorated paths and prove that they achieve the sharp threshold for both detection and weak recovery. In particular, our results show that there is no statistical-computational gap in this setting.
\end{abstract}

\tableofcontents

\section{Introduction}{\label{sec:intro}}

The detection of latent communities is a fundamental problem in modern network analysis. This problem has attracted widespread attention in probability, statistics, computer science, statistical physics and social sciences over the last three decades, leading to a detailed understanding of the basic statistical thresholds for signal recovery, and the introduction of statistically optimal algorithms for community detection. The stochastic block model (SBM) \cite{HLL83} is a popular model for studying community detection. There has been a large literature on theoretical approaches, algorithmic, and application aspects of SBM. We refer the interested reader to \cite{Abbe18} for a survey of the recent progress in this research endeavor.

Ever-growing techniques for data acquisition have led us to a new paradigm where one could have multiple data sets as multiple sources of information about the community structure. For instance, for a set of $n$ people, one could potentially have several social networks observed on them (Facebook, LinkedIn, etc.) together with a large collection of socioeconomic (genomic, neuroimaging, etc.) covariates for each individual. This raises the challenge of how to best integrate information from these multiple sources to uncover the common underlying community structure.

When one observes multiple networks without any covariate, this scenario has been studied in the multilayer network literature \cite{PC20, LZZ24, YLS24+}. The arguably more general and interesting scenario is where one observes one or more networks together with a collection of covariates. In the pioneering work \cite{DMMS18}, the authors considered the case where the available data is an adjacency matrix of an SBM and a high-dimensional Gaussian covariate matrix, both containing the same balanced two block community structure. Under this stylized yet informative model, they rigorously established a sharp information-theoretic threshold for detecting the community structure (i.e., to uncover the community structure better than random guessing) when the feature dimension $p$ and the network size $n$ tend to infinity proportionally and the average degree of the network diverges with $n$. In addition, they proposed a heuristic algorithm which supports the information-theoretic threshold empirically. Subsequently, the sharp threshold was extended to the case where the average degree is bounded \cite{LS23}. More recently, \cite{MN23} derived phase transitions and information-theoretic limits for contextual multi-layer SBMs with a common label structure, again under the assumption of diverging average degree.

The present paper is motivated by two key limitations left open by \cite{MN23}: (1) The \emph{sparse} setting where the average degree of the networks remains constant; (2) The \emph{noisy measurement} setting where the community labels of the networks are noisy versions of the labels indicated by the covariates. To this end, we introduce a general framework for community detection in contextual multi-layer networks. Our main contributions are two-fold:
\begin{itemize}
    \item We establish sharp thresholds for phase transition between the regime where detecting the community structure is feasible and the regime where no procedure performs better than random guessing, when the number of networks is a constant.
    \item We propose an efficient algorithm based on subgraph counts for signal recovery under our general framework.
\end{itemize}
Consequently, our results generalize those of \cite{MN23} to the practically critical sparse and noisy setting. Furthermore, our results extend straightforwardly to the setting without contextual information, thereby resolve a conjecture in \cite{YLS24+} when the number of networks is constant.

\subsection{Contextual multi-layer SBMs and main results}{\label{subsec:main-results}}

A general formulation of the contextual multi-layer stochastic block model is as follows. Specifically, denoting by $\operatorname{U}_n$ the set of unordered pairs $(i,j)$ where $1\le i<j\le n$, we can define this model as follows. 

\begin{DEF}[Stochastic block model]{\label{def-SBM}}
    Given an integer $n\ge 1$ and two parameters $\lambda>0, \epsilon \in (0,1)$, we define a random graph $\bm G$ on $[n]=\{ 1,\ldots,n \}$ as follows. First, we select a labeling $\bm x \in \{ -1,+1 \}^{n}$ uniformly at random. For each distinct pair $(i,j) \in \operatorname{U}_n$, we independently add an edge $(i,j)$ with probability $\frac{(1+\epsilon)\lambda}{n}$ if $\bm x(i)=\bm x(j)$, and with probability $\frac{(1-\epsilon)\lambda}{n}$ if $\bm x(i)\neq \bm x(j)$. We denote $\mathcal{S}(n,\lambda,\epsilon)$ as the law of $\bm G$.
\end{DEF}

\begin{DEF}[Contextual multi-layer SBM]{\label{def-contxtual-multilayer-SBM}}
    Given integers $p,n,L \in \mathbb N$, parameters $\mu,\rho>0$ and two families of parameters $\{ \lambda_{\ell}: 1 \leq \ell \leq L \}, \{ \epsilon_\ell \in (0,1): 1 \leq \ell \leq L \}$, we generate a matrix $\bm Y\in \mathbb R^{n*p}$ and $L$ random graphs $\bm G_1,\ldots,\bm G_L$ as follows. First, sample a labeling $\bm x \in \{ -1,+1 \}^n$ uniformly at random. Next, sample i.i.d.\ random vectors $\bm z_1,\ldots,\bm z_L \in \{ -1,+1 \}^n$ such that 
    \begin{equation}{\label{eq-def-z-ell}}
        \Pb\big( \bm z_{\ell}(i)=1 \big) = 1-\Pb\big( \bm z_{\ell}(i)=-1 \big) = \frac{1+\rho}{2} \mbox{ independently for } 1 \leq \ell \leq L, 1 \leq i \leq n \,.
    \end{equation}
    Define (where $\odot$ denotes the Hadamard product)
    \begin{equation}{\label{eq-def-x-ell}}
        \bm x_{\ell} = \bm x \odot \bm z_{\ell} \mbox{ for } 1 \leq \ell \leq L, \mbox{ i.e., } \bm x_{\ell}(i) = \bm x(i) \bm z_{\ell}(i) \mbox{ for } 1 \leq i \leq n \,.
    \end{equation}
    Conditioned on $\bm x,\bm x_1,\ldots,\bm x_{L}$, our observations are independent:
    \begin{equation}{\label{eq-def-contextual-multilayer-SBM}}
        \bm Y = \sqrt{\frac{ \mu }{ n }} \bm x \bm u^{\top} + \bm Z, \quad \bm G_{\ell} \sim \mathcal S(n,\lambda_{\ell},\epsilon_{\ell}) \mbox{ with labeling } \bm x_{\ell}\,, 1 \leq \ell \leq L \,.
    \end{equation}
    Here $\bm u \in \mathcal N(0,\mathbb I_p)$ is a standard $p$-dimensional Gaussian vector, and $\bm Z \in \mathbb R^{n*p}$ has i.i.d.\ standard normal entries. Let $\mathcal S(n,p;\mu,\rho,\{ \lambda_{\ell} \},\{ \epsilon_{\ell} \})$ denote the law of $(\bm Y;\bm G_1,\ldots,\bm G_L)$. We focus on the regime where $\frac{p}{n}=\frac{1}{\gamma}$ for some constant $\gamma\in (0,\infty)$ and $L$ is fixed.
\end{DEF}
We denote by $\Pb=\Pb_{n,p,\mu,\rho,\{ \lambda_{\ell} \},\{ \epsilon_{\ell} \}}$ the law of $(\bm Y,\bm G_1,\ldots,\bm G_L)$ under Definition~\ref{def-contxtual-multilayer-SBM}. In addition, we denote by $\Qb=\Qb_{n,p,\{ \lambda_{\ell} \}}$ the law of $(\bm Y,\bm G_1,\ldots,\bm G_L)$ such that $\bm Y\in \mathbb R^{n*p}$ constitutes i.i.d.\ standard normal entries, and $\bm G_{\ell}$ are independent \ER graphs $\mathcal G(n,\frac{\lambda_\ell}{n})$ with average degree $\lambda_{\ell}$. One of the simplest statistical tasks is binary hypothesis testing. In our case this amounts to, given $(\bm Y,\bm G_1,\ldots,\bm G_L)$, determining whether it was sampled from $\Pb$ or $\Qb$. We refer to $\Pb$ as the planted distribution, and $\Qb$ as the null distribution.
\begin{DEF}{\label{def-strong-detection}}
    We say a test $\mathcal A=\mathcal A(\bm Y,\bm G_1,\ldots,\bm G_L) \in \{ 0,1 \}$ achieves \emph{strong detection} between $\Pb$ and $\Qb$, if
    \begin{align}{\label{eq-def-strong-detection}}
        \Pb\big( \mathcal A( \bm{Y} )=0 \big) + \Qb( \mathcal A(\bm Y)=1 ) \to 0 \mbox{ as } n \to \infty \,.
    \end{align}
\end{DEF}
It is well known that the minimal sum of the Type~I and Type~II errors for testing $\Pb$ versus $\Qb$ equals $1-\mathsf{TV}(\Pb,\Qb)$ (see, e.g., \cite[Theorem 13.1.1]{LR05}), where $\mathsf{TV}(\Pb,\Qb)=\tfrac12\int |d\Pb-d\Qb|$ is the total variation distance between $\Pb$ and $\Qb$. Consequently, the possibility and impossibility of strong detection correspond to $\mathsf{TV}(\Pb,\Qb)=1-o(1)$ and $\mathsf{TV}(\Pb,\Qb)=o(1)$, respectively. 
An additional goal is to recover the planted vector $\bm x$. Note that this model cannot distinguish $\bm x$ with $-\bm x$. Thus, our goal is to instead estimate the rank-one deformation $\bm x \bm x^{\top}$. For $\mathcal X\in\mathbb R^{n*n}$ and $\bm x\in\mathbb R^n$, define $\langle \mathcal X,\bm x\bm x^\top\rangle \;:=\; \mathsf{tr}\!(\mathcal X^\top \bm x\bm x^\top).$
\begin{DEF}{\label{def-weak-recovery}}
    We say an estimator $\mathcal X:=\mathcal X(\bm Y,\bm G_1,\ldots,\bm G_L) \in \mathbb R^{n*n}$ achieves \emph{weak recovery}, if 
    \begin{align}{\label{eq-def-weak-recovery}}
        \mathbb E_{\Pb}\Bigg[ \frac{ \langle \mathcal X, \bm x \bm x^{\top} \rangle }{ \| \mathcal X \|_{\Fop} \| \bm x \bm x^{\top} \|_{\Fop} } \Bigg] \geq c \mbox{ for some constant } c>0 \,.
    \end{align}
\end{DEF}
Define
\begin{equation}{\label{eq-def-F}}
    F(\mu,\rho,\gamma,\{ \lambda_{\ell} \},\{ \epsilon_{\ell} \})=\max\left\{ \frac{ \mu^2 }{ \gamma }, \max_{1 \leq \ell \leq L} \big\{ \epsilon_{\ell}^2 \lambda_{\ell} \big\}, \frac{ \mu^2 }{ \gamma } + \sum_{\ell=1}^{L} \frac{ \rho^4 \epsilon_{\ell}^2 \lambda_{\ell} }{ 1-(1-\rho^4) \epsilon_{\ell}^2 \lambda_{\ell} } \right\} \,.
\end{equation}
Our main result can be summarized as follows. 
\begin{thm}{\label{MAIN-THM-informal}}
    Suppose that $L=O(1)$ and $F(\mu,\rho,\gamma,\{ \lambda_{\ell} \},\{ \epsilon_{\ell} \})<1$. Then strong detection and weak recovery are information-theoretically impossible.

    On the contrary, suppose that $L=O(1)$ and $F(\mu,\rho,\gamma,\{ \lambda_{\ell} \},\{ \epsilon_{\ell} \})>1$. Then there exists a test $\mathcal A$ (respectively, an estimator $\mathcal X$) that achieves strong detection (respectively, weak recovery). In addition, the test $\mathcal A$ and the estimator $\mathcal X$ can be computed in polynomial time. As a result, there is no statistical–computational gap for either detection or recovery in contextual multi-layer SBMs.
\end{thm}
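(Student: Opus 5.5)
The argument splits into a lower bound (when $F<1$) and matching algorithms (when $F>1$). Both rest on the same second‑moment structure: the overlap between two independent replicas of the labels.

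\emph{Lower bound, detection.} I would work with the likelihood ratio $\frac{d\Pb}{d\Qb}$ and control it by a truncated second moment. A plain second moment is infeasible in the sparse regime because of dense small subgraphs, so I would first condition on a high‑probability good event under $\Pb$, for instance that every graph $\bm G_\ell$ has few short cycles and that no small subgraph is too dense.

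Take two replicas $(\bm x,\{\bm z_\ell\})$ and $(\bm x',\{\bm z'_\ell\})$. Integrating out $\bm u$ and $\bm Z$ gives the Gaussian factor
\[
\E\exp\big(\tfrac{\mu}{n}\langle \bm u,\bm u'\rangle\langle \bm x,\bm x'\rangle\big),
\]
whose small‑overlap behavior is governed by $\mu^2/\gamma$. Each graph contributes a factor $\prod_{i<j}(1+\frac{\epsilon_\ell^2\lambda_\ell}{n}\sigma_{ij}\sigma'_{ij}+\cdots)$ with $\sigma=\bm x_\ell\bm x_\ell^\top$.

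I would expand the graph factors over cycles, as in the standard sparse‑SBM small‑subgraph conditioning argument. The cycle statistics are asymptotically Poisson, and a cycle of length $k$ in layer $\ell$ weights the replica overlap through $\epsilon_\ell^{2k}\lambda_\ell^k$ times the product of $\bm x_\ell\bm x'_\ell$ around the cycle.

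The key computation is averaging over the $\bm z$'s. The term $\prod_i \bm z_\ell(i)\bm z'_\ell(i)$ over a vertex set gives $\rho^4$ per vertex when paired with a global overlap, and $1$ for purely local noise‑free pairings. Summing geometric series then produces exactly the term
\[
\frac{\rho^4\epsilon_\ell^2\lambda_\ell}{1-(1-\rho^4)\epsilon_\ell^2\lambda_\ell}.
\]
This expression is the effective signal‑to‑noise ratio felt by the common overlap $\langle \bm x,\bm x'\rangle/n$. The remaining $(1-\rho^4)$ part is a layer‑specific signal, which stays harmless as long as $\epsilon_\ell^2\lambda_\ell<1$.

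To combine the Gaussian and Bernoulli contributions, I would use a comparison step: bound the Bernoulli overlap moments by the corresponding Gaussian moments. That reduces the overlap to a Gaussian quadratic form, whose exponential moment is finite precisely when $\frac{\mu^2}{\gamma}+\sum_\ell(\cdots)<1$. This shows that $\E_\Qb[(d\Pb'/d\Qb)^2]$ is bounded. The cycle‑count conditioning then upgrades this to $\mathsf{TV}=o(1)$, after matching the Poisson cycle counts under $\Pb$ and $\Qb$ (they differ, so the argument actually gives contiguity, which together with the conditioning suffices).

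\emph{Lower bound, recovery.} I would use a recovery‑to‑$\chi^2$ reduction. If weak recovery were possible, then planting a small extra signal would make a modified planted model distinguishable from a null with bounded $\chi^2$ divergence, contradicting the second‑moment bound just proved. Concretely, I would compare $\Pb$ with $\Pb$ conditioned on a single pair $\bm x(i)\bm x(j)$, and show that the posterior correlation $\E[\bm x(i)\bm x(j)\mid\text{data}]$ is $o(1)$ on average. This step is the main obstacle. The truncation needed for sparsity interacts badly with conditioning on a pair, so I expect to reprove the second‑moment bound uniformly for a family of slightly tilted priors.

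\emph{Upper bound.} When $\max_\ell\epsilon_\ell^2\lambda_\ell>1$, I would apply known sparse‑SBM cycle‑count tests and non‑backtracking path estimators to that layer alone. When $\mu^2/\gamma>1$, I would use the BBP spectral threshold for $\bm Y\bm Y^\top$. In the combined regime, I would count \emph{decorated cycles}. These are cycles whose edges are drawn from the different $\bm G_\ell$ (centered by $\lambda_\ell/n$) or from $\bm Y\bm Y^\top$ entries (centered), with each step weighted by an appropriate coefficient. Layer changes are allowed only at vertices, and each change picks up a factor $\rho^2$ from $\E[\bm z_\ell(i)\bm z_{\ell'}(i)]$.

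Choosing the weights optimally amounts to a transfer‑matrix computation. Runs of consecutive steps inside layer $\ell$ contribute $\sum_k(\epsilon_\ell^2\lambda_\ell)^k$-type series, and the factor $(1-\rho^4)$ appears from the resummation. The signed statistic then has mean‑squared over variance growing like $F^k$ for length $k\asymp\log n/\log\log n$. A standard mean/variance calculation, controlling overlaps of pairs of decorated cycles, gives strong detection when $F>1$.

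For weak recovery I would use the analogous self‑avoiding decorated paths between $i$ and $j$ to build $\mathcal X_{ij}$, following the Hopkins–Steurer style path‑counting analysis. Both statistics can be approximated in polynomial time by color coding.
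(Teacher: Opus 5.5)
\textbf{Recovery lower bound: the genuine gap.} Your slogan is the right one: a slightly stronger planted model that is still below threshold has bounded $\chi^2$ against $\Qb$, yet would be distinguishable from $\Qb$ if weak recovery were possible. But the concrete plan you then commit to does not carry this out. That plan is to show $\E[\bm x(i)\bm x(j)\mid\text{data}]=o(1)$ by redoing the second moment uniformly over tilted priors. You leave it unresolved, and a bounded $\chi^2(\Pb\|\Qb)$ gives no direct control of posterior pair correlations under $\Pb$. What is missing is a way to turn an estimator into a test whose null variance can be computed. The paper gets this from the Gaussian channel:
\begin{itemize}
\item Since a larger $\mu$ only helps recovery and $F$ is continuous, assume $\mu=\Theta(1)$; this also covers the graph-only case. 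Pick $\kappa$ small enough that $F<1$ still holds at $\mu(1+\kappa^2)$.
\item Given data at $\mu(1+\kappa^2)$ and an independent Gaussian matrix $\bm W$, set $\bm Y_1=(\bm Y+\kappa\bm W)/\sqrt{1+\kappa^2}$ and $\bm Y_2=(\bm Y-\kappa^{-1}\bm W)/\sqrt{1+\kappa^{-2}}$. Then $(\bm Y_1,\bm G_1,\ldots,\bm G_L)$ is exactly the model at $\mu$. Also $\bm Y_2=\kappa\sqrt{\mu/n}\,\bm x\bm u^{\top}+\bm Z_2$, with $\bm Z_2$ independent of $(\bm Y_1,\bm G_1,\ldots,\bm G_L)$ under both hypotheses.
\item Let $\mathcal X=\mathcal X(\bm Y_1,\bm G_1,\ldots,\bm G_L)$ and test with $f=\mathbf 1\{\langle\mathcal X,\bm Y_2\bm Y_2^{\top}-p\mathbb I_n\rangle\ge c'n\|\mathcal X\|_{\Fop}\}$. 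Under $\Qb$ the conditional second moment of the inner product is at most $2p\|\mathcal X\|_{\Fop}^2$, so $\E_{\Qb}[f^2]=O(1/n)$. Under $\widetilde{\Pb}$ the term $\kappa^2\mu n^{-1}\|\bm u\|^2\langle\mathcal X,\bm x\bm x^{\top}\rangle$ dominates the cross and noise terms with constant probability, so $\E_{\widetilde{\Pb}}[f]=\Omega(1)$.
\item This contradicts $\E_{\widetilde{\Pb}}[f]\le\sqrt{\chi^2(\widetilde{\Pb}\|\Qb)\,\E_{\Qb}[f^2]}$ with $\chi^2=O(1)$ at the enlarged parameter.
\end{itemize}
No pair conditioning or tilted priors are needed; the detection bound is reused verbatim.

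\textbf{Detection lower bound and algorithms.} Your Bernoulli-to-Gaussian moment comparison is the paper's key detection step, but two adjustments are needed.
\begin{itemize}
\item The truncation that matters is on $\|\bm u\|^2$, not on the graphs. For Gaussian $\bm u,\bm u'$ the replica factor equals $(1-\mu^2s^2)^{-p/2}$ with $s=\langle\bm x,\bm x'\rangle/n$. This is infinite at $s=1$ once $\mu\ge1$, which is allowed when $\gamma>\mu^2$. In any case it is not of the form $\exp(c\,ns^2)$ that your comparison requires. Conditioning on $\|\bm u\|^2\in[(1-\iota)p,(1+\iota)p]$ gives $\exp\big(\frac{(1+\iota)^2\mu^2}{2\gamma}ns^2\big)$.
\item Conversely, for two balanced communities the untruncated graph factor is already $\exp\big(\frac{\epsilon_\ell^2\lambda_\ell}{2n}\langle\bm x_\ell,\bm x_\ell'\rangle^2\big)$ up to bounded factors, by the Mossel--Neeman--Sly computation. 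So the cycle expansion and small-subgraph conditioning are unnecessary: bounded $\chi^2$ alone rules out strong detection.
\end{itemize}
Your algorithmic plan matches the paper, with three corrections:
\begin{itemize}
\item The signal-to-noise ratio grows like $\sigma_+(\mathbf P)^{\aleph}$ for the transfer matrix $\mathbf P$, not like $F^k$; one shows $\sigma_+(\mathbf P)>1\iff F>1$.
\item A switch between $\bm Y$ and $\bm G_\ell$ contributes $\rho$, not $\rho^2$, to the mean.
\item For detection the length should be $\omega(1)$ but $o(\log n/\log\log n)$, so that the combinatorial factors in the variance bound stay $n^{o(1)}$.
\end{itemize}
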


\begin{remark}
    Note that in the definition of $F(\mu,\rho,\gamma,\{ \lambda_{\ell} \},\{ \epsilon_{\ell} \})$, the term $\frac{\mu^2}{\gamma}=1$ corresponds to the Ben Arous-Baik-P\'ech\'e (BBP) threshold for the spiked matrix $\bm Y$ \cite{BBP05}, and the term $\epsilon^2_\ell \lambda_\ell=1$ corresponds to the Kesten-Stigum (KS) threshold for the block model $\bm G_\ell$ \cite{KS66, DKMZ11, MNS15, MNS18, MSS25a}. In contrast, the last term $\frac{ \mu^2 }{ \gamma } + \sum_{\ell=1}^{L} \frac{ \rho^4 \epsilon_{\ell}^2 \lambda_{\ell} }{ 1-(1-\rho^4) \epsilon_{\ell}^2 \lambda_{\ell} }$ only emerges in the contextual multi-layer setting and reflects the interplay of low-rank matrix recovery and community detection. 
    %Also, it is straightforward to check that as long as the correlation $\rho\neq 0$ and $1-(1-\rho^4)\epsilon_\ell^2 \lambda_\ell>0$ for all $1\le \ell \le L$, we have 
    %\begin{align*}
    %    \max\left\{ \max_{1 \leq \ell \leq L} \big\{ \epsilon_{\ell}^2 \lambda_{\ell} \big\}, \frac{\mu^2}{\gamma} \right\} \geq 1 \implies \frac{ \mu^2 }{ \gamma } + \sum_{\ell=1}^{L} \frac{ \rho^4 \epsilon_{\ell}^2 \lambda_{\ell} }{ 1-(1-\rho^4) \epsilon_{\ell}^2 \lambda_{\ell} } \geq 1 \,.
    %\end{align*}
    Thus, our results show that an algorithm can take advantage of the correlation between contextual information and the labelings of different graphs to detect and estimate the signals even in certain regimes where efficiently recovering $\bm x$ from $\bm{Y}$ alone or recovering $\bm x_\ell$ from $\bm{G}_{\ell}$ alone is information-theoretically impossible. 
\end{remark}

\begin{remark}
    Under the same setting as in \cite{MN23}, a direct implication of our result is that in this model there is \emph{no statistical-computational gap} when the number of networks is an arbitrary constant. Nevertheless, we would like to remark that our information-theoretic lower bound for recovery extends to the setting $L=o(\log n)$ (see Remark~\ref{remark-larger-L} for details). Our recovery algorithmic results also extend to the regime $L=o(\log n)$, although in this case our algorithm takes pseudo-polynomial running time $n^{O(\log L)}$. As a result, the recovery threshold remains to be characterized by $F(\mu,\rho,\gamma;\{ \lambda_{\ell} \},\{ \epsilon_{\ell} \})=1$. In contrast, we expect the detection problem to behave differently in the two regimes $L=O(1)$ and $L=\omega(1)$.
\end{remark}

\begin{remark}
    While preparing this manuscript, we became aware of the following connection between our results and the work \cite{YLS24+}. In the absence of contextual information, our model reduces to $L$ sparse SBMs with correlated labelings. In this special case, our established threshold simplifies precisely to
    \begin{align}
        \max\left\{ \max_{1 \leq \ell \leq L} \big\{ \epsilon_{\ell}^2 \lambda_{\ell} \big\}, \sum_{\ell=1}^{L} \frac{ \rho^4 \epsilon_{\ell}^2 \lambda_{\ell} }{ 1-(1-\rho^4) \epsilon_{\ell}^2 \lambda_{\ell} } \right\} = 1 \,.  \label{eq-threshold-multi-layer-SBM}
    \end{align}
    This aligns with the model and proposed threshold studied in \cite{YLS24+}. Their analysis, which employs techniques from statistical physics, rigorously establishes \eqref{eq-threshold-multi-layer-SBM} under the assumption that the average degrees $\lambda_{\ell}$ grows \emph{logarithmically} with respect to $n$. This regime permits a Gaussian approximation central to their method. Consequently, their approach does not extend to the (arguably more interesting) constant degree regime. The authors of \cite{YLS24+} conjectured that the same threshold \eqref{eq-threshold-multi-layer-SBM} remains valid for constant $\lambda_\ell$. Our results confirm this conjecture when the number of networks $L$ is an arbitrary constant.   
\end{remark}

\subsection{Discussions and other related works}{\label{subsec:related-works}}

{\bf Multi-layer SBMs.} The problem of community detection in multilayer networks has recently attracted considerable attention. Much of the existing literature, particularly early work in the area, models multilayer networks using the Stochastic Block Model (SBM) and assumes that community assignments remain constant across all layers (see, e.g., \cite{BC20+, LCL20, MN23}). This assumption, however, is often unrealistic in practice, as argued in \cite{VMGP16, CLM22}. In many applications, community assignments are not identical across layers but are instead expected to be correlated. To address this, \cite{CLM22} introduced a tractable model for such correlated communities and analyzed the performance of a two-step algorithm combining spectral clustering with a maximum a posteriori (MAP) refinement. 

Our model reduces to the same model as \cite{CLM22} when no contextual information is observed. However, while their theoretical analysis focuses on the high signal-to-noise-ratio (SNR) regime where exact recovery is possible, we focus on the more challenging low SNR setting and study the problem of weak recovery. Our model is closest in spirit to the one in \cite{MN23}. However, we go beyond the homogeneous community assignment critical in that work. Finally, the recent work \cite{LZZ24} studies testing for latent community structures in multilayer dense networks.

\

\noindent{\bf Subgraph counts.} Our algorithm is based on subgraph counting, a method widely used for network analysis in both theory \cite{MNS15, BDER16} and practice \cite{ADH+08, RPS+21}. For instance, in community detection, counting cycles of logarithmic length has been shown to achieve the optimal threshold for distinguishing a symmetric stochastic block model (SBM) from an Erdős–Rényi graph in the sparse regime \cite{MNS15, MSS25a}. Similarly, in the dense regime, counting signed cycles yields asymptotically optimal power \cite{Ban18, BM17}. Other approaches based on counting non-backtracking or self-avoiding walks \cite{Mas14, MNS18, BLM15, AS15, AS18} have also been developed to achieve sharp thresholds for related community recovery problems.

In the setting of contextual multi-layer SBMs, however, conventional subgraph counting (such as counting simple paths or cycles) is inherently suboptimal, as it fails to leverage correlations between the contextual information and the labelings across different graph layers. Our work is instead aligned with recent advances in multi-modal learning \cite{Li25+}, which emphasize the importance of counting \emph{decorated paths and cycles}. Concretely, we consider cycles whose edges are ``decorated'' according to whether they originate from the contextual matrix $\bm Y$ or one of the graph layers $\bm G_{\ell}$. This decoration scheme ensures that the number of admissible decorated cycles grows exponentially faster than the number of unlabeled cycles, allowing us to surpass the BBP threshold/KS threshold within a single spiked model or a single SBM.

While the concept is natural, its analysis poses significant technical challenges. Especially in sparse networks, the sparsity induces complex correlations among different decorated cycles, making it difficult to control their counts precisely. To overcome this, we introduce a delicate \emph{weighting scheme}, where each decorated cycle is assigned a weight based on its specific combinatorial structure (see \eqref{eq-def-f-mathcal-H} and \eqref{eq-def-Phi-i,j-mathcal-J} for details). This weighted approach is a key departure from ``unweighted'' methods, and we believe such weighting is indeed crucial for an efficient algorithm to achieve the sharp threshold in this model.

\

\noindent{\bf Recovery-to-detection reduction.} Although well-developed methods exist for proving recovery lower bounds in SBMs, such as connecting the recovery problem to broadcasting on trees \cite{MNS15, MSS25a} or deriving the limiting mutual information (see, e.g., \cite{DAM17, ZZ16, GMZZ17} for single SBM and \cite{MN23, YLS24+} for multi-layer SBM), these approaches are not straightforward to apply in our sparse and noisy setting. The challenge is twofold. First, the local weak limit in contextual multi-layer SBMs corresponds to a broadcasting problem on multiple trees with correlated labels and contextual information, which appears analytically challenging. Second, the computation of limiting mutual information heavily relies on a Gaussian approximation scheme that relates the SBM to a suitable spiked matrix, thereby restricting the method to regimes with diverging degrees.

Instead, we adopt a reduction-based approach, which leverages the fact that weak recovery is at least as ``hard'' as detection. Our method is inspired by and can be viewed as a statistical variant of \cite{DHSS25}; essentially, we argue that if a statistic achieves weak recovery, then it can also be used to construct a statistic for strong detection (in a suitable sense, see Lemma~\ref{lem-bounded-Adv} for details). This perspective allows us to derive recovery lower bounds by analyzing the more tractable detection problem, where direct calculation of chi-square divergence is feasible. We believe this approach is not only effective for the present problem but also constitutes a general and easily implementable methodology that may be applied to a broader class of problems.

\ 

\noindent{\bf Open problems.} Our work has focused on contextual multi-layer SBMs with two balanced communities. A natural extension is to settings with multiple or unbalanced communities. We expect that our algorithmic techniques remain applicable as long as the number of communities is fixed, although the resulting detection thresholds may no longer be as explicit as those presented here. For a larger number of communities, we anticipate further information–computation gaps to emerge, consistent with phenomena observed in simpler models \cite{AS18, MSS25a}. This extension warrants a dedicated investigation, which we leave for future work.

Another interesting direction is the design of more practical algorithms that do not rely on the color-coding scheme, since our current recovery algorithm runs in time polynomial with a high-degree exponent in $n$. For community recovery in a single block model, spectral methods based on the non-backtracking matrix are known to achieve statistical performance comparable to counting self-avoiding walks \cite{KMM+13, BLM15}. Extending such spectral approaches to the contextual and multi-layer setting would require substantial new insights, and we also leave this question open for future study.

\subsection{Notation and paper organization}{\label{subsec:notation}}

We record in this subsection some notation conventions. For two probability measures $\mathbb P$ and $\mathbb Q$, we denote the total variation distance between them as $\mathsf{TV}(\mathbb P,\mathbb Q)$. The chi-squared divergence from $\Pb$ to $\Qb$ is defined as $\chi^2(\Pb \| \Qb)= \mathbb E_{X\sim\Qb}[ (\frac{\mathrm{d}\Pb}{\mathrm{d}\Qb}(X))^2 ]$. For a matrix or a vector $M$, we will use $M^{\top}$ to denote its transpose. For a $k*k$ matrix $M=(m_{ij})_{k*k}$, let $\mathsf{det}(M)$ and $\mathsf{tr}(M)$ be the determinant and trace of $M$, respectively. Denote $M \succ 0$ if $M$ is positive definite and $M \succeq 0$ if $M$ is positive semidefinite. Furthermore, if $M$ has real eigenvalues, we denote $\varsigma_1(M) \geq \ldots \geq \varsigma_{k}(M)$ as the eigenvalues of $M$. For two $k*l$ matrices $M_1$ and $M_2$, we define their inner product to be
\begin{align*}
    \big\langle M_1,M_2 \big\rangle:=\sum_{i=1}^k \sum_{j=1}^l M_1(i,j)M_2(i,j) \,.
\end{align*}
In addition, for a $k*l$ matrix $M$, define
\begin{align*}
    \| M \|_{\operatorname{F}} = \langle M,M \rangle^{\frac{1}{2}} = \left( \sum_{i=1}^{k} \sum_{j=1}^{l} M_{i,j}^2 \right)^{\frac{1}{2}}, \
    \| M \|_{\operatorname{op}} = \sqrt{\varsigma_1(M M^{\top})}, 
\end{align*}
to be its Frobenius norm and operator norm respectively. We will use $\mathbb{I}_{k}$ to denote the $k*k$ identity matrix (and we drop the subscript if the dimension is clear from the context). We will use the following notation conventions on graphs.

{\em Labeled graphs}. Denote by $\mathsf K_n$ the complete graph with vertex set $[n]$. For any graph $H$, let $V(H)$ denote the vertex set of $H$ and let $E(H)$ denote the edge set of $H$. We say $H$ is a subgraph of $G$, denoted by $H\subset G$, if $V(H) \subset V(G)$ and $E(H) \subset E(G)$. For all $v \in V(H)$, define $\mathsf{deg}_H(v)=\#\{ e \in E(H): v \in e \}$ to be the degree of $v$ in $H$. We say $v$ is an isolated vertex of $H$, if $\mathsf{deg}_H(v)=0$. Denote $\mathsf I(H)$ as the set of isolated vertices of $H$. We say $v$ is a leaf of $H$, if $\mathsf{deg}_H(v)=1$. Denote $\mathsf{L}(H)$ as the set of leaves in $H$. For $H,S \subset \mathsf K_n$, denote by $H \cap S$ the graph with vertex set given by $V(H) \cap V(S)$ and edge set given by $E(H)\cap E(S)$, and denote by $S \cup H$ the graph with vertex set given by $V(H) \cup V(S)$ and edge set $E(H) \cup E(S)$.  

{\em Graph isomorphisms and unlabeled graphs.} Two graphs $H$ and $H'$ are isomorphic, denoted by $H\cong H'$, if there exists a bijection $\sigma:V(H) \to V(H')$ such that $(\sigma(u),\sigma(v)) \in E(H')$ if and only if $(u,v)\in E(H)$. Denote by $[H]$ the isomorphism class of $H$; it is customary to refer to these isomorphic classes as unlabeled graphs. Let $\mathsf{Aut}(H)$ be the number of automorphisms of $H$ (graph isomorphisms to itself). 

We use standard asymptotic notations: for two sequences $a_n$ and $b_n$ of positive numbers, we write $a_n = O(b_n)$, if $a_n<Cb_n$ for an absolute constant $C$ and for all $n$ (similarly we use the notation $O_h$ if the constant $C$ is not absolute but depends only on $h$); we write $a_n = \Omega(b_n)$, if $b_n = O(a_n)$; we write $a_n = \Theta(b_n)$, if $a_n =O(b_n)$ and $a_n = \Omega(b_n)$; we write $a_n = o(b_n)$ or $b_n = \omega(a_n)$, if $a_n/b_n \to 0$ as $n \to \infty$. For notational simplicity, we assume throughout the paper that $\mu,\gamma,\{ \lambda_{\ell} \},\{ \epsilon_{\ell} \}$ are fixed constants and suppress dependence on them, e.g., we simply write $O(1)$ instead of $O_{\mu,\gamma,\{ \lambda_{\ell} \},\{ \epsilon_{\ell} \}}(1)$. For two real numbers $a$ and $b$, we let $a \vee b = \max \{ a,b \}$ and $a \wedge b = \min \{ a,b \}$. For two sets $A$ and $B$, we define $A\sqcup B$ to be the disjoint union of $A$ and $B$ (so the notation $\sqcup$ only applies when $A, B$ are disjoint). The indicator function of sets $A$ is denoted by $\mathbf{1}_{A}$. In addition, we use both $|A|$ and $\#A$ to denote the cardinality of $A$.

The rest of the paper is organized as follows. In Section~\ref{sec:info-lower-bound}, we prove the information-theoretic lower bounds for detection and recovery stated in Theorem~\ref{MAIN-THM-informal}. In Section~\ref{sec:efficient-algs}, we propose efficient detection and recovery algorithms and formally state the algorithmic upper bounds in Theorem~\ref{MAIN-THM-informal}. Section~\ref{sec:stat-analysis} provides the statistical analysis of our algorithms. In Section~\ref{sec:numer-results} we conduct an empirical evaluation of our algorithms and compare their performance with single-channel methods. Several auxiliary proofs are postponed to the appendices to ensure a smooth flow of presentation.

\section{Information-theoretic lower bound}{\label{sec:info-lower-bound}}

In this section, we prove that strong detection and weak recovery are information-theoretically impossible when $L=O(1)$ and $F(\mu,\rho,\gamma,\{ \lambda_{\ell} \},\{ \epsilon_{\ell} \})<1$. Specifically, throughout this section we will assume that 
\begin{equation}{\label{eq-assum-lower-bound}}
    L=O(1) \mbox{ and } F(\mu,\rho,\gamma,\{ \lambda_{\ell} \},\{ \epsilon_{\ell} \}) \leq 1-\delta \mbox{ for some constant } \delta>0 \,.
\end{equation}

\subsection{The detection lower bound}{\label{subsec:info-lower-bound-detection}}

In this subsection we show that strong detection is information-theoretically impossible under \eqref{eq-assum-lower-bound}. The first step of our argument is to employ suitable truncations. Specifically, we choose a sufficiently small constant $\iota=\iota( L,\mu,\rho,\gamma,\{ \lambda_{\ell} \},\{ \epsilon_{\ell} \} )>0$ such that
\begin{align}{\label{eq-choice-iota}}
    \frac{ (1+\iota)^2\mu^2 }{ \gamma } \leq 1-\Omega(1) \mbox{ and } \frac{ (1+\iota)^2\mu^2 }{ \gamma } + \sum_{ 1 \leq \ell \leq L } \frac{ \rho^4 \epsilon_{\ell}^2 \lambda_{\ell} }{ 1-(1-\rho^4) \epsilon_{\ell}^2 \lambda_{\ell} } \leq 1-\Omega(1) \,.
\end{align}
Recall that $\bm u\sim \mathcal N(0,\mathbb I_p)$ under $\Pb$.
Define the event 
\begin{equation}{\label{eq-def-E-diamond}}
    \mathcal E_{\diamond}:=\Big\{ (1-\iota)p \leq \| \bm u \|^2 \leq (1+\iota)p \Big\} \,.
\end{equation}
Define $\widetilde{\Pb}:=\Pb(\cdot \mid\mathcal E_{\diamond})$. Since standard Gaussian concentration inequality implies that $\Pb(\mathcal E_{\diamond})=1-o(1)$, we immediately have $\mathsf{TV}(\widetilde{\Pb},\Pb)=o(1)$, and thus it suffices to show that $\mathsf{TV}(\widetilde{\Pb},\Qb)=o(1)$. Proposition~\ref{prop-bound-chi-2-divergence} gives $\chi^2(\widetilde{\Pb} \| \Qb)=O(1)$, which implies $\mathsf{TV}(\widetilde{\Pb},\Qb)=o(1)$ (see, e.g., ~\cite[Equation 2.27]{Tsy09}), and therefore strong detection is impossible (recall that impossibility for strong detection is equivalent to $\mathsf{TV}(\Pb,\Qb) = o(1)$).

%We now state the main result in this section, which bounds the chi-square divergence between $\widetilde{\Pb}$ and $\Qb$.
\begin{proposition}{\label{prop-bound-chi-2-divergence}}
    Suppose \eqref{eq-assum-lower-bound} holds. Then we have $\chi^2(\widetilde{\Pb} \| \Qb)=O(1)$.
\end{proposition}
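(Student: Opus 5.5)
We compute the second moment $1+\chi^2(\widetilde{\Pb}\|\Qb)=\E_{\Qb}[(d\widetilde{\Pb}/d\Qb)^2]$ by writing $d\widetilde{\Pb}/d\Qb$ as an average over the latent variables $(\bm x,\bm z_1,\ldots,\bm z_L,\bm u)$, with $\bm u$ conditioned on $\mathcal E_{\diamond}$. We then take two independent replicas $(\bm x,\{\bm z_\ell\},\bm u)$ and $(\bm x',\{\bm z'_\ell\},\bm u')$. Since the channels are conditionally independent, the second moment factorizes inside the replica expectation:
\begin{align*}
1+\chi^2=\E_{\text{replicas}}\Big[\exp\Big(\tfrac{\mu}{n}\langle \bm x,\bm x'\rangle\langle \bm u,\bm u'\rangle\Big)\prod_{\ell=1}^{L}\prod_{(i,j)\in\operatorname{U}_n}\Big(1+\tfrac{\epsilon_\ell^2\lambda_\ell}{n-\lambda_\ell}\,\bm x_\ell(i)\bm x_\ell(j)\bm x'_\ell(i)\bm x'_\ell(j)\Big)\Big] \,.
\end{align*}
The Gaussian factor is the standard spiked-Wishart computation. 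For the graph factor, set $\sigma_\ell=\bm x_\ell\odot\bm x'_\ell$. Each factor equals $\exp\big(\tfrac{\epsilon_\ell^2\lambda_\ell}{2n}(\langle\sigma_\ell,\mathbf 1\rangle^2-n)+O(1/n)\cdot(\text{quadratic correction})\big)$, with a deterministic correction $\exp(-\epsilon_\ell^2\lambda_\ell/2-\epsilon_\ell^4\lambda_\ell^2/4+o(1))$ coming from the second-order terms.

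Next, reduce to a finite-dimensional overlap problem. Let $s=\langle\bm x,\bm x'\rangle/\sqrt n$ and $t_\ell=\langle\sigma_\ell,\mathbf 1\rangle/\sqrt n$. Because $\sigma_\ell(i)=\bm x(i)\bm x'(i)\,\bm z_\ell(i)\bm z'_\ell(i)$ and $\E[\bm z_\ell(i)\bm z'_\ell(i)]=\rho^2$, the vector $(s,t_1,\ldots,t_L)$ is a normalized sum of i.i.d.\ bounded vectors. By the CLT it converges to a Gaussian with $\mathrm{Var}(s)=\mathrm{Var}(t_\ell)=1$, $\mathrm{Cov}(s,t_\ell)=\rho^2$ and $\mathrm{Cov}(t_\ell,t_k)=\rho^4$ for $\ell\neq k$. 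On $\mathcal E_{\diamond}$, integrating out $\langle\bm u,\bm u'\rangle$ (approximately $\mathcal N(0,\|\bm u\|^2)$ given $\bm u$) yields at most $\exp(\tfrac{(1+\iota)\mu^2 p}{2n^2}\langle\bm x,\bm x'\rangle^2)$, hence a factor $\exp(\tfrac{(1+\iota)^2\mu^2}{2\gamma}s^2)$ up to the usual log-determinant correction. The limiting second moment is therefore
\[
C\cdot\E\exp\Big(\tfrac12\big(a s^2+\textstyle\sum_\ell b_\ell t_\ell^2\big)\Big),\qquad a=\tfrac{(1+\iota)^2\mu^2}{\gamma},\ b_\ell=\epsilon_\ell^2\lambda_\ell \,.
\]
This is finite if and only if $\Sigma^{-1}-\mathrm{diag}(a,b_1,\ldots,b_L)\succ0$. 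The plan is to verify this from \eqref{eq-choice-iota} and $\max_\ell b_\ell<1$ by a Schur-complement computation. Write $t_\ell=\rho^2 s'+\ldots$ where $s'$ is the hidden common component. A cleaner route is to note that, conditioned on $\bm x\odot\bm x'$, the $t_\ell$ are independent with conditional variance $1-\rho^4$ and mean $\rho^2 s$. Integrating out each $t_\ell$ gives the factor $(1-(1-\rho^4)b_\ell)^{-1/2}\exp\big(\tfrac{\rho^4 b_\ell s^2}{2(1-(1-\rho^4)b_\ell)}\big)$. The final Gaussian integral in $s$ then converges exactly when $a+\sum_\ell\frac{\rho^4 b_\ell}{1-(1-\rho^4)b_\ell}<1$, which is precisely \eqref{eq-choice-iota}, and each step requires $(1-\rho^4)b_\ell<1$, which follows from $b_\ell<1$.

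The main obstacle is making this rigorous. Convergence in distribution does not give convergence of exponential moments, so we need uniform integrability of $\exp(\frac12(as^2+\sum b_\ell t_\ell^2))$ for the Bernoulli sums. I would handle this with a comparison inequality: for such quadratic exponentials of sums of i.i.d.\ symmetric-ish $\pm1$ structured variables, Bernoulli moments are dominated by the corresponding Gaussian moments. Concretely, expand $\exp$ of the quadratic form in a power series, or linearize it via Hubbard–Stratonovich with auxiliary Gaussians $g_0,g_1,\ldots,g_L$, turning it into $\E\exp(\langle g,\cdot\rangle)$ of a sum of i.i.d.\ terms. Then bound the per-coordinate moment generating function $\E\exp(n^{-1/2}(g_0\bm x\bm x'(i)+\sum_\ell g_\ell\sigma_\ell(i)))$ by the Gaussian one, using $\cosh$-type inequalities conditionally on $\bm x\bm x'(i)$. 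If the exact $\cosh y\le e^{y^2/2}$ comparison fails because of the correlated structure, the fallback is to split into the regime $|s|,|t_\ell|\le\eta\sqrt n$, where a large-deviation bound with a slightly perturbed quadratic form suffices, and its complement, where the linear terms dominate. A constant margin is available by strict inequality. Finally, the $O(1/n)$ corrections of the graph factors, and the $\bm u$-truncation that controls $\langle\bm u,\bm u'\rangle$, contribute only bounded multiplicative constants, giving $\chi^2(\widetilde{\Pb}\|\Qb)=O(1)$.
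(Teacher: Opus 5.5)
Your reduction is sound. The replica formula, the graph factor, and your evaluation of the limiting Gaussian integral are correct, and the last gives exactly the condition in \eqref{eq-choice-iota}. The $\bm u$-step should be stated as an inequality rather than an approximation: given the norms, $\langle \bm u,\bm u'\rangle=\|\bm u\|\|\bm u'\|\xi$ with $\xi$ the cosine of a uniform angle, $\E e^{c\xi}\le e^{c^2/(2p)}$, and then $\|\bm u\|^2,\|\bm u'\|^2\le(1+\iota)p$.

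The gap is the step you flag yourself: a bound on $\E\exp\bigl(\tfrac12(as^2+\sum_\ell b_\ell t_\ell^2)\bigr)$ that is uniform in $n$ with the sharp constants. This is the whole content of the proposition, and the mechanisms you sketch do not deliver it. Write $\omega_i=\bm x(i)\bm x'(i)$, $\tau_{\ell,i}=\bm z_\ell(i)\bm z_\ell'(i)$ and $X_i=(\omega_i,\omega_i\tau_{1,i},\ldots,\omega_i\tau_{L,i})$.

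\begin{itemize}
\item \emph{Hubbard--Stratonovich.} After this step the directions $(g_0,\ldots,g_L)$ take all signs. For mixed-sign directions the per-coordinate comparison $\E\exp(\langle\phi,X_i\rangle)\le\exp(\tfrac12\phi^{\top}\Sigma\phi)$ is false. For $L=1$ and $\phi_0=-\rho^2\phi_1$ one has $\langle\phi,X_i\rangle=\phi_1\omega_i(\tau_{1,i}-\rho^2)$. Its fourth moment $(1-\rho^4)(1+3\rho^4)\phi_1^4$ exceeds $3(1-\rho^4)^2\phi_1^4$ when $\rho^4>1/3$, so the inequality already fails at fourth order for small $\phi$.
\item \emph{Integrating out $t_\ell$.} For the same reason, your conditional ``integrate out $t_\ell$'' computation does not transfer to the discrete model. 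The variable $\tau_{\ell,i}-\rho^2$ is asymmetric, hence not sub-Gaussian with proxy equal to its variance $1-\rho^4$. Any valid proxy (e.g.\ Hoeffding's $1$) shrinks $1-(1-\rho^4)b_\ell$ and loses the threshold.
\item \emph{The fallback.} The region $|s|,|t_\ell|\le\eta\sqrt n$ can be absorbed by the margin. But when $|s|$ or $|t_\ell|$ is of order $\sqrt n$, you need a global inequality between the quadratic form and the joint rate function of $X_i$. This does not follow from the Hessian condition $\Sigma^{-1}\succ\mathrm{diag}(a,b_1,\ldots,b_L)$: by Legendre duality, the example above forces the rate function below $\tfrac12 v^{\top}\Sigma^{-1}v$ somewhere. ``The linear terms dominate'' is not an argument for it.
\end{itemize}

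The missing idea is to carry out the power-series comparison termwise, as the paper does in Lemma~\ref{lem-Bernoulli-Gaussian-moment-compare}.

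\begin{itemize}
\item The exponential has nonnegative coefficients in the monomials $s^{2\alpha}\prod_\ell t_\ell^{2\alpha_\ell}$.
\item By the multinomial expansion, each such moment is a nonnegative combination of products over $i$ of per-coordinate mixed moments $\E\bigl[\omega_i^{\beta}\prod_\ell(\omega_i\tau_{\ell,i})^{\beta_\ell}\bigr]=\mathbf 1\{\beta+\textstyle\sum_\ell\beta_\ell\text{ even}\}\,\rho^{2N_{\mathrm{odd}}}$, where $N_{\mathrm{odd}}$ counts the odd $\beta_\ell$.
\item These are nonnegative, and each is dominated by the Gaussian mixed moment $\E[\zeta^{\beta}\prod_\ell\eta_\ell^{\beta_\ell}]$, because all covariances $1,\rho^2,\rho^4$ are nonnegative. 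By Wick's formula $\E[\eta_\ell^2 M]\ge\E[M]$ for any monomial $M$, which strips even parts of the exponents. Writing the remaining $\eta_\ell=\rho^2\zeta+\sqrt{1-\rho^4}\zeta_\ell$ leaves $\rho^{2N_{\mathrm{odd}}}\E[\zeta^{\beta+N_{\mathrm{odd}}}]\ge\rho^{2N_{\mathrm{odd}}}$.
\end{itemize}

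Positivity of every mixed moment on both sides is exactly what makes the comparison valid, and it is destroyed once you pass to arbitrary-sign linear statistics. With this lemma in hand, your Gaussian computation finishes the proof.
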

\begin{remark}
    It is standard to deduce from Proposition~\ref{prop-bound-chi-2-divergence} that strong detection between $\Pb$ and $\Qb$ is information-theoretically impossible given \eqref{eq-assum-lower-bound}. Thus, we omit further details here and simply refer the readers to \cite[Proposition~2.5]{PWBM18}. 
\end{remark}

Now we present the proof of Proposition~\ref{prop-bound-chi-2-divergence}. To this end, denote by $\mathfrak F$ the distribution of $\| \bm u \|^2/p$ under $\widetilde{\Pb}$. Clearly $\mathfrak F$ is supported in $[1-\iota,1+\iota]$. Also, for any $|t|\leq \iota$ define
\begin{equation}{\label{eq-def-Pb-t}}
    \Pb_{t}:=\Pb\left( \cdot\mid \|\bm u\|^2=(1+t)p \right) \,.
\end{equation}
It is straightforward to check from the Cauchy-Schwarz inequality that
\begin{align*}
    \chi^2( \widetilde{\Pb}\| \Qb ) \leq \mathbb E_{\Qb}\left[ \left( \int_{-\iota}^{\iota} \frac{ \mathrm{d}\Pb_{t} }{ \mathrm{d}\Qb } \mathrm{d}\mathfrak F(t) \right)^2 \right] \leq \mathbb E_{\Qb}\left[ \int_{-\iota}^{\iota} \left( \frac{ \mathrm{d}\Pb_{t} }{ \mathrm{d}\Qb } \right)^2 \mathrm{d}\mathfrak F(t) \right] = \int_{-\iota}^{\iota} \chi^2(\Pb_t \|\Qb) \mathrm{d}\mathfrak F(t) \,.
\end{align*}
Thus, to prove Proposition~\ref{prop-bound-chi-2-divergence} it suffices to bound the chi-square divergence between $\Pb_t$ and $\Qb$, as incorporated in the next lemma.
\begin{lemma}{\label{lem-bound-chi-2-divergence-given-norm}}
    Suppose that \eqref{eq-assum-lower-bound} holds and we choose $\iota$ according to \eqref{eq-choice-iota}. Then we have $\chi^2(\Pb_t\|\Qb)=O(1)$ for all $|t| \leq \iota$.
\end{lemma}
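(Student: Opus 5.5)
We follow the standard second-moment route: write $\chi^2(\Pb_t\|\Qb)+1$ as an expectation over two independent replicas of the planted randomness, compute it exactly in the Gaussian part and bound it in the graph part, and finish with a Gaussian-type generating-function estimate whose finiteness is exactly the condition $F<1$.

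\textbf{Replica formula.} Let $(\bm x,\bm z_1,\dots,\bm z_L,\bm u)$ and $(\bm x',\bm z_1',\dots,\bm z_L',\bm u')$ be two independent copies of the hidden variables under $\Pb_t$, so $\|\bm u\|^2=\|\bm u'\|^2=(1+t)p$. Then
\begin{align*}
1+\chi^2(\Pb_t\|\Qb)=\mathbb E\Big[\exp\Big(\tfrac{\mu}{n}\langle \bm x,\bm x'\rangle\langle \bm u,\bm u'\rangle\Big)\prod_{\ell=1}^L \mathbb E_{\Qb}\Big[\tfrac{\mathrm d\Pb^{\bm x_\ell}_{\ell}}{\mathrm d\Qb}\tfrac{\mathrm d\Pb^{\bm x'_\ell}_{\ell}}{\mathrm d\Qb}\Big]\Big].
\end{align*}
Here the Gaussian factor is the standard computation for the spiked matrix. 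Each graph factor equals
\[
\prod_{i<j}\Big(1+\tfrac{\epsilon_\ell^2\lambda_\ell}{n-\lambda_\ell}\,\bm x_\ell(i)\bm x_\ell(j)\bm x'_\ell(i)\bm x'_\ell(j)\Big).
\]

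\textbf{Reduction to overlaps.} Write $\sigma=\bm x\odot\bm x'$ and $\tau_\ell=\bm z_\ell\odot\bm z'_\ell$. The $\tau_\ell(i)$ are i.i.d.\ with mean $\rho^2$. The graph factor is then at most
\[
\exp\Big(\tfrac{\epsilon_\ell^2\lambda_\ell}{2n}\big(\langle\sigma,\tau_\ell\rangle^2-n\big)+O(1)\Big).
\]
For the Gaussian factor, condition on the norms and integrate the direction: $\langle\bm u,\bm u'\rangle$ is distributed as $(1+t)p\,\theta$ with $\theta$ the first coordinate of a uniform point on the sphere. Averaging $\exp\big(\mu(1+t)\tfrac{p}{n}\langle\bm x,\bm x'\rangle\theta\big)$ over $\theta$ gives roughly
\[
\exp\Big(\tfrac{(1+\iota)^2\mu^2}{2\gamma}\,\tfrac{\langle\bm x,\bm x'\rangle^2}{n}\Big),
\]
using a Laplace/large-deviation bound for spherical projections valid while the coefficient stays below $1$. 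This is the role of the truncation $\iota$ and of the first inequality in \eqref{eq-choice-iota}.

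\textbf{Main obstacle: the Bernoulli moment generating function.} It remains to bound
\[
\mathbb E\exp\Big(\tfrac{a}{2n}\Big(\sum_i\sigma_i\Big)^2+\sum_\ell\tfrac{b_\ell}{2n}\Big(\sum_i\sigma_i\tau_\ell(i)\Big)^2\Big),\qquad a=\tfrac{(1+\iota)^2\mu^2}{\gamma},\ b_\ell=\epsilon_\ell^2\lambda_\ell.
\]
I would compare these Bernoulli moments with Gaussian ones; this comparison inequality is the step I expect to be hardest. First linearize the squares by Hubbard--Stratonovich, introducing Gaussians $g_0,g_1,\dots,g_L$. Then average over the i.i.d.\ coordinates $(\sigma_i,\tau_{1}(i),\dots,\tau_L(i))$ and use $\log\cosh$-type bounds to dominate the resulting product by the corresponding Gaussian integral. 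That integral is $\det(\mathbb I-\tfrac1n\Sigma^{1/2}M\Sigma^{1/2})^{-1/2}$-like, with the covariance of $(\sigma_i,\sigma_i\tau_1(i),\dots)$ equal to $\mathbb I+\rho^4(J-\mathbb I)$ off the first block. Finiteness requires the matrix $\mathrm{diag}(a,b_1,\dots,b_L)$ times this covariance to have spectral radius $<1$. Via a Schur complement this reduces to $b_\ell<1$ and
\[
a+\sum_\ell \frac{\rho^4 b_\ell}{1-(1-\rho^4)b_\ell}<1,
\]
which is exactly $F<1$. The delicate point is making the comparison uniform, since large deviations of the overlaps are only controlled by the $\cosh$ domination. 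I would handle it by splitting into small overlaps ($|\langle\sigma,\tau_\ell\rangle|\le\eta n$), where the Gaussian comparison is tight, and large overlaps, where a crude entropy bound shows the contribution is $o(1)$ thanks to the slack $\delta$. This yields $\chi^2(\Pb_t\|\Qb)=O(1)$ uniformly in $|t|\le\iota$.
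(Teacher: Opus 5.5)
\textbf{Assessment.} The proposal has a genuine gap at its key step: bounding the Bernoulli exponential moment of the overlaps.

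\textbf{What is fine.} Your reduction steps are correct and are essentially those of the paper. This covers the replica formula and the graph factor, where your $1+y\le e^{y}$ bound cleanly replaces the cited SBM computation. It also covers the spherical step: in fact $\mathbb E[\xi^{2k}]\le (2k-1)!!/p^{k}$ gives $\mathbb E e^{c\xi}\le e^{c^2/(2p)}$ for every $c$, so no restriction on the coefficient is needed. The final Gaussian/Schur-complement computation is also right.

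\textbf{Where it fails.} After Hubbard--Stratonovich, your ``$\log\cosh$-type bounds'' would have to supply $\mathbb E e^{\langle h,X\rangle}\le e^{\frac12 h^{\top}\Sigma h}$ for $X=(\sigma,\sigma\tau_1,\dots,\sigma\tau_L)$ and all $h$. This is false. For $L=1$ and $h=(s,-s)$ one has $\mathbb E e^{\langle h,X\rangle}=1+(1-\rho^2)\sinh^2 s$, versus $e^{(1-\rho^2)s^2}$. The $s^4$ coefficients compare as $(1-\rho^2)/3>(1-\rho^2)^2/2$ whenever $\rho^2>1/3$. Equivalently, the rate function of the overlap vector does not dominate $\frac12 m^{\top}\Sigma^{-1}m$ globally.

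The defect is fourth order, so it is harmless for small overlaps. Everything therefore rests on your large-overlap step, and a ``crude entropy bound thanks to the slack $\delta$'' does not suffice there. The slack $\delta$ controls only the quadratic behaviour at the origin. For $|m|\ge\eta$ you need the global, correlation-aware inequality $I(m)>\frac12\big(a m_0^2+\sum_\ell b_\ell m_\ell^2\big)$. Marginal (Hoeffding-type) bounds give only $I(m)\ge\frac12\max_j m_j^2$. That cannot beat the exponent, because $F<1$ permits $a+\sum_\ell b_\ell>1$ (e.g.\ two layers with $b_\ell$ near $1$ and $\rho$ small). Proving this global inequality is the whole difficulty, and your proposal does not supply it.

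\textbf{The missing idea.} The paper uses a termwise moment comparison (Lemma~\ref{lem-Bernoulli-Gaussian-moment-compare}), which avoids large deviations entirely.
\begin{enumerate}
\item The exponent is a nonnegative combination of squared overlaps. So you can expand the exponential into even mixed moments $\mathbb E\big[S_0^{2\alpha}\prod_\ell S_\ell^{2\alpha_\ell}\big]$ with nonnegative coefficients, and expand each moment over coordinates.
\item Per coordinate, the Bernoulli moment equals $\mathbf 1_{\{\beta+\sum_\ell\beta_\ell \text{ even}\}}\rho^{2N_{\mathrm{odd}}}\ge 0$.
\item By Wick's formula, using that all covariances of $(U,V_1,\dots,V_L)$ are nonnegative, the corresponding Gaussian moment is at least the contribution of one suitable pairing. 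That contribution is exactly $\rho^{2N_{\mathrm{odd}}}$.
\item Summing back, the Gaussian side reassembles into $n^{\alpha+\sum_\ell\alpha_\ell}\,\mathbb E\big[U^{2\alpha}\prod_\ell V_\ell^{2\alpha_\ell}\big]$. This gives the bound \eqref{eq-chi-2-divergence-relax-2} for every $n$, non-asymptotically.
\end{enumerate}
The argument exploits that only even powers of each individual overlap appear, i.e.\ the quadratic form is diagonal; a pointwise MGF comparison cannot see this. Your Hubbard--Stratonovich route can only be repaired by averaging over the signs of the auxiliary Gaussians before comparing, and that reduces to the same moment argument.
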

The rest part of this section is devoted to the proof of Lemma~\ref{lem-bound-chi-2-divergence-given-norm}. To this end, note that under $\Pb_t$, we have $\bm u$ is uniformly sampled from the sphere $\sqrt{(1+t)p} \cdot \mathbb S^d$. The next lemma provides a tractable bound on the chi-square divergence $\chi^2(\Pb_t\|\Qb)$.
\begin{lemma}{\label{lem-chi-2-divergence-relax}}
    Suppose \eqref{eq-assum-lower-bound} holds. Then for all $|t| \leq \iota$, we have
    \begin{equation}{\label{eq-chi-2-divergence-relax}}
        \chi^2(\Pb_t\|\Qb) \leq \mathbb E_{ \substack{ \bm x, \bm x_1,\ldots,\bm x_{L} \\ \bm x',\bm x_1',\ldots,\bm x_L' } }\left\{ \exp\left( \frac{(1+t)^2\mu^2}{2\gamma}\left( \frac{ \langle \bm x,\bm x' \rangle }{ \sqrt{n} } \right)^2 + \sum_{1 \leq \ell \leq L} \frac{\epsilon_{\ell}^2\lambda_{\ell}}{2} \left( \frac{ \langle \bm x_{\ell},\bm x_{\ell}' \rangle }{ \sqrt{n} } \right)^2 \right) \right\} \,.
    \end{equation}
    Here $(\bm x',\bm x_1',\ldots,\bm x_L')$ is an independent copy of $(\bm x,\bm x_1,\ldots,\bm x_L)$ (as defined in \eqref{eq-def-x-ell}).
\end{lemma}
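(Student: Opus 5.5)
We compute the second moment $\chi^2(\Pb_t\|\Qb)+1=\mathbb E_{\Qb}[(\mathrm d\Pb_t/\mathrm d\Qb)^2]$ by writing the likelihood ratio as a mixture over the latent variables $(\bm x,\{\bm x_\ell\},\bm u)$ and introducing an independent replica $(\bm x',\{\bm x'_\ell\},\bm u')$. Conditioned on the latents, the observations $\bm Y,\bm G_1,\ldots,\bm G_L$ are independent under both measures, and $\Qb$ is a product measure. The second moment therefore factorizes as
\[
\mathbb E_{\mathrm{latents},\,\mathrm{latents}'}\Big[ \mathbb E_{\Qb}\Big[\tfrac{\mathrm d\Pb(\bm Y\mid \bm x,\bm u)}{\mathrm d\Qb}\tfrac{\mathrm d\Pb(\bm Y\mid \bm x',\bm u')}{\mathrm d\Qb}\Big] \prod_{\ell=1}^{L} \mathbb E_{\Qb}\Big[\tfrac{\mathrm d\Pb(\bm G_\ell\mid \bm x_\ell)}{\mathrm d\Qb}\tfrac{\mathrm d\Pb(\bm G_\ell\mid \bm x'_\ell)}{\mathrm d\Qb}\Big]\Big].
\]
We then bound each factor by an exponential of a squared overlap, which yields the displayed bound; the constant $1$ is absorbed because the right-hand side of \eqref{eq-chi-2-divergence-relax} is at least $1$ and is meant to be $O(1)$.

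\textbf{Gaussian factor.} With mean matrices $M=\sqrt{\mu/n}\,\bm x\bm u^\top$ and $M'$ defined analogously, the standard Gaussian computation gives $\exp(\langle M,M'\rangle)=\exp\big(\tfrac{\mu}{n}\langle\bm x,\bm x'\rangle\langle\bm u,\bm u'\rangle\big)$. Here $\bm u,\bm u'$ are independent and uniform on the sphere of radius $\sqrt{(1+t)p}$, so $\langle \bm u,\bm u'\rangle=(1+t)p\,w$, where $w$ is a single coordinate of a uniform unit vector in $\mathbb R^p$. We integrate over $w$ with $s:=\mu(1+t)p\langle \bm x,\bm x'\rangle/n$. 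Since $w$ is $1/p$-sub-Gaussian (its mgf is bounded by that of $\mathcal N(0,1/p)$, a standard fact following from the Gaussian representation $w=g_1/\|g\|$ or from known spherical mgf bounds), we obtain
\[
\mathbb E e^{sw}\le e^{s^2/(2p)}=\exp\Big(\tfrac{(1+t)^2\mu^2 p}{2n^2}\langle \bm x,\bm x'\rangle^2\Big)=\exp\Big(\tfrac{(1+t)^2\mu^2}{2\gamma}\big(\tfrac{\langle\bm x,\bm x'\rangle}{\sqrt n}\big)^2\Big),
\]
using $p/n=1/\gamma$. This matches the first term exactly. The precise sub-Gaussian constant $1/p$ is the delicate point; if only an approximate mgf bound is available, the error would have to be absorbed into $\iota$.

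\textbf{Graph factors.} For a single edge with connection probability $q=\lambda/n$ and planted probabilities $q(1+\epsilon\sigma)$, $q(1+\epsilon\sigma')$, where $\sigma=x_\ell(i)x_\ell(j)$, the per-edge term is $1+q\epsilon^2\sigma\sigma'/(1-q)$. We bound it by the exponential $\le\exp\big(\tfrac{\lambda\epsilon^2}{n(1-\lambda/n)}\sigma\sigma'\big)$, and the product over pairs $i<j$ gives
\[
\exp\Big(\tfrac{\epsilon^2\lambda}{n(1-\lambda/n)}\sum_{i<j}\sigma_{ij}\sigma'_{ij}\Big),\qquad \sum_{i<j}\sigma_{ij}\sigma'_{ij}=\tfrac12\big(\langle\bm x_\ell,\bm x_\ell'\rangle^2-n\big).
\]
This produces $\exp\big(\tfrac{\epsilon_\ell^2\lambda_\ell}{2}(1+O(1/n))\langle\bm x_\ell,\bm x'_\ell\rangle^2/n-\tfrac{\epsilon_\ell^2\lambda_\ell}{2}(1+O(1/n))\big)$. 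The negative constant term is at most $1$ in exponential and can be dropped. The $O(1/n)$ correction multiplies $\langle\bm x_\ell,\bm x_\ell'\rangle^2/n\le n$, so it contributes at most an $O(1)$ factor, e^{O(1)}, in the exponent. Strictly, this leaves the stated inequality with a multiplicative constant. To remove it, I would instead keep the exact per-edge factor $1+\tfrac{q\epsilon^2}{1-q}\sigma\sigma'$ and note that the sign-sum identity with the negative $-n$ term cancels the correction, since $\tfrac{\epsilon^2\lambda}{2n}\cdot\tfrac{\lambda/n}{1-\lambda/n}(\langle\cdot\rangle^2-n)\le \tfrac{\epsilon^2\lambda^2}{2n}(1+o(1))$, which is bounded. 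If an additive $e^{O(1)}$ slack remains, it is harmless for the $O(1)$ goal. Multiplying the Gaussian factor and the $L$ graph factors then gives \eqref{eq-chi-2-divergence-relax}.
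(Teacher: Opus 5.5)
Your route is the paper's: replica factorization, the exact Gaussian cross term $e^{\langle M,M'\rangle}$, and domination of the spherical overlap by $\mathcal N(0,1/p)$ via $\E w^{2k}=\frac{(2k-1)!!}{p(p+2)\cdots(p+2k-2)}\le\frac{(2k-1)!!}{p^k}$. That bound holds exactly (it is the paper's computation), so the Gaussian factor gives the first term with no loss. Since the paper's $\chi^2$ is the second moment itself, no ``absorbing the $1$'' is needed either. Your per-edge identity and $1+y\le e^y$ are also correct. With $a_\ell=\epsilon_\ell^2\lambda_\ell/(n-\lambda_\ell)$ and $V_\ell=\langle\bm x_\ell,\bm x_\ell'\rangle/\sqrt n$ they give
\[
\E_{\Qb}\Big[\tfrac{\mathrm d\Pb_{\bm x_\ell}}{\mathrm d\Qb}\tfrac{\mathrm d\Pb_{\bm x_\ell'}}{\mathrm d\Qb}\Big]\le\exp\Big(\tfrac{\epsilon_\ell^2\lambda_\ell}{2}V_\ell^2+D_\ell-c_\ell\Big),\qquad D_\ell=\tfrac{\epsilon_\ell^2\lambda_\ell^2}{2(n-\lambda_\ell)}V_\ell^2,\quad c_\ell=\tfrac{\epsilon_\ell^2\lambda_\ell n}{2(n-\lambda_\ell)}.
\]
The gap is the final step. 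Since $V_\ell^2$ can be as large as $n$, $D_\ell$ can be of order $\epsilon_\ell^2\lambda_\ell^2/2$, not $\epsilon_\ell^2\lambda_\ell^2/(2n)$ as you wrote. The negative term $c_\ell\approx\epsilon_\ell^2\lambda_\ell/2$ dominates $D_\ell$ only when $\lambda_\ell\le 1$.

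No bound that is pointwise in $(\bm x_\ell,\bm x_\ell')$ can repair this. At $\bm x_\ell'=\pm\bm x_\ell$ the exact factor is $(1+a_\ell)^{\binom n2}$, which exceeds the target $e^{\epsilon_\ell^2\lambda_\ell n/2}$ by $\exp\big(\frac{\epsilon_\ell^2\lambda_\ell}{2}[\lambda_\ell(1-\frac{\epsilon_\ell^2}{2})-1]+o(1)\big)$. This is strictly larger than $1$ for, e.g., $L=1$, $\lambda_1=2$, $\epsilon_1^2=0.4$ and small $\mu,\rho$, a choice that satisfies \eqref{eq-assum-lower-bound}. So as written you prove \eqref{eq-chi-2-divergence-relax} only up to a multiplicative constant $e^{O(L)}$. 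That suffices for Lemma~\ref{lem-bound-chi-2-divergence-given-norm}, but it is not the statement.

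The missing idea is to remove the constant in expectation rather than pointwise, and this is where \eqref{eq-assum-lower-bound} genuinely enters.
\begin{itemize}
\item Let $Q$ be the exponent on the right of \eqref{eq-chi-2-divergence-relax}. All its terms are nonnegative, so $Q+\sum_\ell D_\ell\le(1+\delta_n)Q$ with $\delta_n=\max_\ell\lambda_\ell/(n-\lambda_\ell)=O(1/n)$.
\item Fix $\eta>0$ so small that $(1+\eta)Q$ is still subcritical; the $\Omega(1)$ margins in \eqref{eq-assum-lower-bound} and \eqref{eq-choice-iota} allow this. Then $\E e^{(1+\eta)Q}=O(1)$ uniformly in $|t|\le\iota$, by Lemma~\ref{lem-Bernoulli-Gaussian-moment-compare} and the Gaussian computation in the proof of Lemma~\ref{lem-bound-chi-2-divergence-given-norm}.
\item Log-convexity of $s\mapsto\log\E e^{sQ}$ (H\"older), together with $\E e^{Q}\ge 1$, gives $\E e^{(1+\delta_n)Q}\le\E e^{Q}\cdot(\E e^{(1+\eta)Q})^{\delta_n/\eta}=(1+O(1/n))\,\E e^{Q}$.
\item Hence the second moment is at most $e^{-\sum_\ell c_\ell}(1+O(1/n))\,\E e^{Q}\le\E e^{Q}$ for large $n$, because $e^{-\sum_\ell c_\ell}\le e^{-\frac12\sum_\ell\epsilon_\ell^2\lambda_\ell}$ is a fixed constant below $1$.
\end{itemize}
For comparison, the paper reaches the constant-free form by quoting the asymptotic of \cite{MNS15} with a prefactor $[1+O(n^{-1})]$ and letting $e^{-\epsilon_\ell^2\lambda_\ell/2-(\epsilon_\ell^2\lambda_\ell)^2/4}$ absorb it. But the true prefactor is $\exp(D_\ell+O(n^{-1}))$, which is $1+O(n^{-1})$ only when $V_\ell^2=O(1)$, so the same averaging step is implicitly needed there too.
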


The proof of Lemma~\ref{lem-chi-2-divergence-relax} is postponed to Section~\ref{subsec:proof-lem-2.4} of the appendix.
In light of Lemma~\ref{lem-chi-2-divergence-relax}, in order to prove Lemma~\ref{lem-bound-chi-2-divergence-given-norm}, it suffices to bound the right-hand side of \eqref{eq-chi-2-divergence-relax}. The main challenge is that the inner products $\langle \bm x,\bm x' \rangle$ and $\langle \bm x_{\ell},\bm x_{\ell}' \rangle$ have non-vanishing correlations, leading to the breakdown of existing methods for the single signal setting (e.g., the method in \cite{PWBM18} or in \cite{LS23}). However, there is a clear intuition on how to control the right-hand side of \eqref{eq-chi-2-divergence-relax}: from central limit theorem, we expect that 
\begin{align}
    \left( \frac{ \langle \bm x,\bm x' \rangle }{ \sqrt{n} }, \frac{ \langle \bm x_1,\bm x_1' \rangle }{ \sqrt{n} }, \ldots, \frac{ \langle \bm x_L,\bm x_L' \rangle }{ \sqrt{n} } \right) \mbox{ behaves like } \left( U,V_1,\ldots,V_{\ell} \right) \,,  \label{eq-intuition-Gaussian-approx}
\end{align}
where $U,V_1,\ldots,V_L$ are jointly mean-zero Gaussian variables such that
\begin{align}
    \mathbb E\big[ U^2 \big]=\mathbb E\big[ V_\ell^2 \big]=1 \mbox{ and } \mathbb E\big[ UV_{\ell} \big]=\rho^2, \mathbb E\big[ V_{\ell}V_{\ell'} \big]=\rho^4 \,.  \label{eq-Gaussian-covariance-structure}
\end{align}
Note that equivalently we can write $V_{\ell}=\rho^2 U+\sqrt{1-\rho^4} W_\ell$ such that $U,W_1,\ldots,W_L$ are i.i.d.\ standard normal variables. The key in our argument is to show an analog of the Gaussian approximation scheme in \eqref{eq-intuition-Gaussian-approx} designed specifically for the Rademacher variables, as incorporated in the next lemma.
\begin{lemma}{\label{lem-Bernoulli-Gaussian-moment-compare}}
    Let $U,V_1,V_L$ are jointly mean-zero Gaussian variables defined in \eqref{eq-Gaussian-covariance-structure}. Then for any $\alpha,\alpha_1,\ldots,\alpha_L \in \mathbb N$, we have
    \begin{align*}
        \mathbb E_{ \substack{ \bm x, \bm x_1,\ldots,\bm x_{L} \\ \bm x',\bm x_1',\ldots,\bm x_L' } }\left[ \left( \frac{ \langle \bm x,\bm x' \rangle }{ \sqrt{n} } \right)^{2\alpha} \left( \frac{ \langle \bm x_1,\bm x_1' \rangle }{ \sqrt{n} } \right)^{2\alpha_1} \ldots \left( \frac{ \langle \bm x_L,\bm x_L' \rangle }{ \sqrt{n} } \right)^{2\alpha_L} \right] \leq \mathbb E_{U,V_1,\dots,V_L}\left[ U^{2\alpha} V_1^{2\alpha_1} \ldots V_L^{2\alpha_L} \right] \,.
    \end{align*}
    In particular, by Taylor expansion, we have that the right-hand side of \eqref{eq-chi-2-divergence-relax} is bounded by
    \begin{equation}{\label{eq-chi-2-divergence-relax-2}}
        \mathbb E\left[ \exp\left( \frac{(1+t)^2\mu^2}{2\gamma} \cdot U^2 + \sum_{1 \leq \ell \leq L} \frac{ \epsilon_{\ell}^2 \lambda_{\ell} }{2} \cdot V_{\ell}^2 \right) \right] \,.
    \end{equation}
\end{lemma}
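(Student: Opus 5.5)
Write $a_i=\bm x(i)\bm x'(i)$ and $b_{\ell,i}=\bm z_\ell(i)\bm z_\ell'(i)$. Then $\langle \bm x,\bm x'\rangle=\sum_i a_i$ and $\langle \bm x_\ell,\bm x_\ell'\rangle=\sum_i a_i b_{\ell,i}$. Here the $a_i$ are i.i.d.\ Rademacher, the $b_{\ell,i}$ are i.i.d.\ $\pm1$ with mean $\rho^2$, and all of these are mutually independent. Hence the vectors $\xi_i:=(a_i,a_ib_{1,i},\dots,a_ib_{L,i})\in\{\pm1\}^{L+1}$, $1\le i\le n$, are i.i.d., mean zero, and have exactly the covariance \eqref{eq-Gaussian-covariance-structure}. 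For example, $\mathbb E[a_i\cdot a_ib_{\ell,i}]=\rho^2$ and $\mathbb E[a_ib_{\ell,i}a_ib_{\ell',i}]=\rho^4$. The plan is therefore to prove a general moment comparison: for i.i.d.\ vectors $\xi_i$ of this form and $S=n^{-1/2}\sum_i\xi_i$, every even monomial moment of $S$ is dominated by the corresponding moment of the Gaussian $G=(U,V_1,\dots,V_L)$. Since the coefficients $\frac{(1+t)^2\mu^2}{2\gamma},\frac{\epsilon_\ell^2\lambda_\ell}{2}$ are nonnegative, the second claim follows by expanding the exponential into its power series and comparing term by term with monotone convergence. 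Any monomial containing an odd power has the same value under both laws, so only even monomials need attention.

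The key structural fact is a representation of $\xi_i$ as a product of independent symmetric signs. Write $b_{\ell,i}=1$ with probability $\rho^2$ and a fresh Rademacher $r_{\ell,i}$ otherwise (a mixture). Equivalently, a multilinear expansion of $\xi_i$'s joint moments shows that every mixed moment $\mathbb E[a^{k_0}\prod_\ell (ab_\ell)^{k_\ell}]$ is either $0$ or a nonnegative number, namely $\rho^{2\cdot\#\{\ell:k_\ell \text{ odd}\}}$ when the parity constraints hold. I would then expand
\begin{align*}
\mathbb E\Big[\prod_{j=0}^{L} S_j^{2\alpha_j}\Big]=n^{-|\alpha|}\sum_{\text{index maps}}\prod_{i}\mathbb E\Big[\prod_j \xi_{i,j}^{m_{i,j}}\Big],
\end{align*}
where $m_{i,j}$ is the number of times coordinate $j$ is assigned to site $i$. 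This is a sum of nonnegative terms indexed by set partitions of the $2|\alpha|$ factors into blocks, one block per site. The Gaussian moment is given by Wick's formula as the sum over perfect matchings of products of covariances, all nonnegative here. For each partition, the number of site assignments is at most $n^{\#\text{blocks}}$, so only partitions whose blocks all have size $\ge2$ survive, giving at most $n^{|\alpha|}$ after normalization.

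The main step is to show that, for a fixed partition into blocks of sizes $2k_1,\dots$, the block moment $\mathbb E[\prod_{c\in B}\xi_{c}]$ is at most the sum, over perfect matchings of $B$, of the products of pairwise covariances. Summing over blocks and using that refining a partition into matchings is injective then gives domination by Wick's sum over all perfect matchings. This is where I expect the real work. The block moment is $\mathbb E[a^{|B|}]\prod_\ell \mathbb E[b_\ell^{m_\ell}]$, which equals $\prod_\ell \rho^{2[m_\ell\text{ odd}]}$. Meanwhile a single well-chosen matching already contributes at least this: pair factors within the same coordinate as much as possible (each such pair has covariance $1$), and pair the leftover odd elements. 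If exactly two coordinates $\ell,\ell'\ge1$ are left over, their pair costs $\rho^4$. If an $a$-coordinate is paired with an $\ell$-coordinate, the cost is $\rho^2$. In every case the product equals $\rho^{2\#\{\text{odd }m_\ell\}}$, since block parity forces the number of odd coordinates to be even.

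The bookkeeping must also ensure that the $n^{-|\alpha|}\cdot\#\{\text{assignments}\}\le 1$ factor and the partition-to-matching map do not double count. I would handle this by assigning to each partition the canonical matching described above, and noting that distinct partitions yield distinct matchings, since the blocks are recovered from the matching together with the site labels. This completes the plan.
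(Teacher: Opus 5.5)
You have the local ingredient right. The block moment is $m(B)=\mathbf 1_{|B|\text{ even}}\,\rho^{2N_{\mathrm{odd}}}$, and your canonical matching has Wick weight exactly $m(B)$. Every other Wick term is nonnegative, because all covariances in \eqref{eq-Gaussian-covariance-structure} are nonnegative. This is precisely the per-site inequality \eqref{eq-compare-Berboulli-Gaussian-moment-reduced} of the paper, reached more directly than the paper's reduction ($\eta_\ell^2\to 1$, then $\eta_\ell\to\rho^2\zeta$).

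\textbf{The gap is in the global bookkeeping.} You bound the normalized number of site assignments realizing a partition $\pi$, namely $n^{-|\alpha|}\,n(n-1)\cdots(n-|\pi|+1)$, by $1$. You then claim that $\pi\mapsto M_\pi$ is injective, but it is not. If all four factors have coordinate $0$, the one-block partition $\{\{1,2,3,4\}\}$ and the two-block partition equal to its canonical pairing have the same image. No better choice of canonical matching can rescue this. For $\alpha=2$, $\alpha_\ell=0$ your chain would assert $\sum_\pi\prod_{B\in\pi}m(B)=1+3=4\le 3=\mathbb E[U^4]$, which is false. The factor you discarded, $n^{-2}\cdot n=n^{-1}$ for the one-block partition, is exactly what makes the true value $3-2/n$ fit under $3$.

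\textbf{The repair is to keep the site labels} rather than summing them out partition by partition. Take an index map $\iota:[2|\alpha|]\to[n]$ whose kernel has only even blocks, and let $M(\iota)$ be the union of the canonical matchings of its blocks. Write $w(M)$ for the product of covariances over the pairs of $M$. Then:
\begin{itemize}
\item the term of $\iota$ equals $n^{-|\alpha|}w(M(\iota))$, and $\iota$ is constant on each pair of $M(\iota)$;
\item for a fixed perfect matching $M$, exactly $n^{|\alpha|}$ index maps are constant on its pairs;
\item hence the left-hand side of the lemma is at most $\sum_M w(M)$, which is Wick's formula for the Gaussian side.
\end{itemize}

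Alternatively, follow the paper. Realize $(U,V_1,\ldots,V_L)$ as $n^{-1/2}\sum_i g_i$ with i.i.d.\ Gaussian copies $g_i$, and expand both sides over the same index maps. Then compare term by term via $0\le m(B)\le m_G(B)$, where $m_G(B)$ is the Gaussian block moment. This needs no global matching combinatorics at all.

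Separately, your remark that monomials containing an odd exponent agree under both laws is false. For example, $\mathbb E[U^3V_1]=3\rho^2$, while the corresponding Rademacher value is $(3-2/n)\rho^2$. It is harmless, since only even exponents arise from expanding the exponential.
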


The proof of Lemma~\ref{lem-Bernoulli-Gaussian-moment-compare} is incorporated in Section~\ref{subsec:proof-lem-2.5} of the appendix. Now we can finish the proof of Lemma~\ref{lem-bound-chi-2-divergence-given-norm}.

\begin{proof}[Proof of Lemma~\ref{lem-bound-chi-2-divergence-given-norm}]
    Using Lemmas~\ref{lem-chi-2-divergence-relax} and \ref{lem-Bernoulli-Gaussian-moment-compare}, it suffices to show that $\eqref{eq-chi-2-divergence-relax-2}=O(1)$ given \eqref{eq-choice-iota} and $L=O(1)$. Since we can write $V_{\ell}=\rho^2 U+\sqrt{1-\rho^4}W_\ell$ such that $U,W_1,\ldots,W_L$ are i.i.d.\ standard normal variables, we then have
    \begin{align*}
        \eqref{eq-chi-2-divergence-relax-2} &= \mathbb E\left[ \exp\left( \frac{(1+t)^2\mu^2}{2\gamma} \cdot U^2 + \sum_{1 \leq \ell \leq L} \frac{ \epsilon_{\ell}^2 \lambda_{\ell} }{2} \cdot \big( \rho^2 U+\sqrt{1-\rho^4}W_\ell \big)^2 \right) \right] \\
        &= \prod_{1\le \ell\le L}\left( 1-\epsilon_\ell^2\lambda_\ell(1-\rho^4) \right)^{-1/2} \cdot \mathbb E\left[ \exp\left( U^2 \left( \frac{(1+t)^2\mu^2}{2\gamma} + \sum_{1 \leq \ell \leq L} \frac{ \rho^4 (\epsilon_{\ell}^2 \lambda_{\ell})^2 }{ 2( 1-(1-\rho^4)\epsilon_\ell^2 \lambda_\ell ) } \right) \right) \right] \\ &= \Bigg( 1-\frac{(1+t)^2\mu^2}{\gamma} - \sum_{1 \leq \ell \leq L} \frac{ \rho^4 (\epsilon_{\ell}^2 \lambda_{\ell})^2 }{ ( 1-(1-\rho^4)\epsilon_\ell^2 \lambda_\ell ) }\Bigg)^{-1/2}\prod_{1\le \ell\le L}(1-\epsilon_\ell^2\lambda_\ell(1-\rho^4))^{-1/2} = O(1) \,,
    \end{align*}
    where the second equality follows from the fact that for $W\sim\mathcal N(0,1)$ and $2AC^2<1$, 
    \begin{align*}
        \E\left[ e^{ A(BU+CW)^2 } \mid U \right]=(1-2AC^2)^{-1/2} \exp\left(\frac{AB^2U^2}{1-2AC^2}\right) \,,
    \end{align*}
    here $2AC^2=(1-\rho^4)\epsilon_\ell^2\lambda_\ell=1-\Omega(1)$ for all $1\le \ell\le L$, so the condition holds; the last equality follows from $\E[\exp(AU^2)]=(1-2A)^{-1/2}$ for $A<\tfrac12$, as well as
    \begin{align*}
        \frac{(1+t)^2\mu^2}{2\gamma}+\sum_{1\le \ell\le L} \frac{\rho^4(\epsilon_\ell^2\lambda_\ell)^2}{2(1-(1-\rho^4)\epsilon_\ell^2\lambda_\ell)}<\frac12
    \end{align*}
    by \eqref{eq-choice-iota} and $|t|\le \iota$.
\end{proof}

\subsection{The recovery lower bound}{\label{subsec:info-lower-bound-recovery}}

In this subsection, we show that weak recovery is information-theoretically impossible under \eqref{eq-assum-lower-bound}. As we mentioned in Section~\ref{subsec:related-works}, we will reduce the recovery lower bound to the chi-square divergence calculation in Section~\ref{subsec:info-lower-bound-detection}.
Our argument combines a proof by contradiction with the above reduction technique.
Specifically, we summarize our proof outline for the impossibility of weak recovery in Figure~\ref{fig:logic-flow-impossible-IT}.

\begin{figure}[ht]
    \centering
    \includegraphics[width=0.63\linewidth]{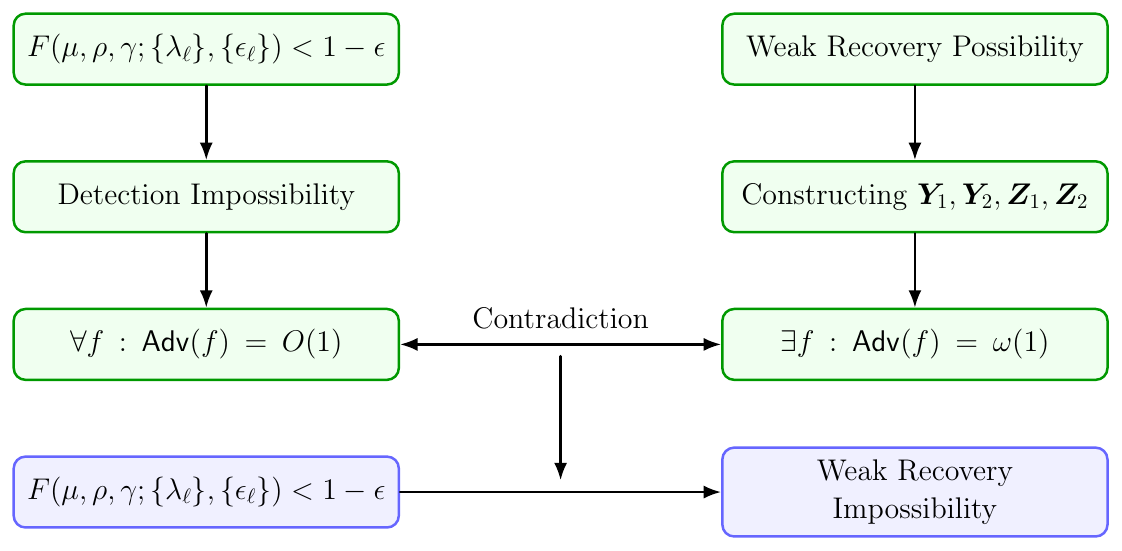}
    \caption{Proof outline for the impossibility of weak recovery}
    \label{fig:logic-flow-impossible-IT}
\end{figure}

To this end, recall Definition~\ref{def-contxtual-multilayer-SBM}. Note that the problem becomes easier as $\mu$ increases. Moreover, for any $\epsilon\in (0,1)$ and all parameters such that $F(\mu,\rho,\gamma;\{ \lambda_{\ell} \},\{ \epsilon_{\ell} \})<1-\epsilon$, we can always choose a small constant $\mu<\mu'$ such that $F(\mu',\rho,\gamma;\{ \lambda_{\ell} \},\{ \epsilon_{\ell} \})<1-\frac{\epsilon}{2}$. Thus, without loss of generality we may assume that $\mu=\Theta(1)$. The first step of our argument is to introduce external randomness by sampling $\bm W \in \mathbb R^{n*p}$ constituting i.i.d.\ standard normal entries (independent of $(\bm Y,\bm G_1,\ldots,\bm G_{L})$). Denote $\Pb_{\bullet}$ as the joint law of $(\bm Y,\bm G_1,\ldots,\bm G_L) \sim \Pb$ and $\bm W$, and denote $\Qb_{\bullet}$ as the joint law of $(\bm Y,\bm G_1,\ldots,\bm G_L) \sim \Qb$ and $\bm W$. In addition, recall \eqref{eq-def-E-diamond}. Denote $\widetilde{\Pb}_{\bullet}$ as the joint law of $(\bm Y,\bm G_1,\ldots,\bm G_L) \sim \Pb(\cdot\mid\mathcal E_{\diamond})$ and $\bm W$. We first derive an immediate corollary of Proposition~\ref{prop-bound-chi-2-divergence}.

\begin{lemma}{\label{lem-bounded-Adv}}
    For all functions $f$ measurable with $(\bm Y,\bm G_1,\ldots,\bm G_L)$ and $\bm W$, we have
    \begin{equation}{\label{eq-bounded-Adv}}
        \mathsf{Adv}(f):= \frac{ \mathbb E_{\widetilde\Pb_{\bullet}}[f] }{ \sqrt{ \mathbb E_{\Qb_{\bullet}}[f^2] } } = O(1) \,. 
    \end{equation}
\end{lemma}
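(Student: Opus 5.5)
The plan is to derive Lemma~\ref{lem-bounded-Adv} from Proposition~\ref{prop-bound-chi-2-divergence} by a change of measure followed by Cauchy--Schwarz. The external randomness $\bm W$ plays no role in the divergence, so the only work is to check that the likelihood ratio factorizes.

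First I would note that under both $\widetilde{\Pb}_{\bullet}$ and $\Qb_{\bullet}$, the variable $\bm W$ is independent of $(\bm Y,\bm G_1,\ldots,\bm G_L)$ and has the same law (i.i.d.\ standard normal entries). Therefore
\begin{equation*}
    \frac{\mathrm{d}\widetilde{\Pb}_{\bullet}}{\mathrm{d}\Qb_{\bullet}}(\bm Y,\bm G_1,\ldots,\bm G_L,\bm W)=\frac{\mathrm{d}\widetilde{\Pb}}{\mathrm{d}\Qb}(\bm Y,\bm G_1,\ldots,\bm G_L)\,.
\end{equation*}
As a consequence, $\chi^2(\widetilde{\Pb}_{\bullet}\|\Qb_{\bullet})=\chi^2(\widetilde{\Pb}\|\Qb)$. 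The right-hand side is $O(1)$ by Proposition~\ref{prop-bound-chi-2-divergence}, which applies under the standing assumption \eqref{eq-assum-lower-bound}.

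Next, for any measurable $f$ with $\mathbb E_{\Qb_{\bullet}}[f^2]<\infty$ (otherwise the statement is vacuous), I would write $\mathbb E_{\widetilde{\Pb}_{\bullet}}[f]=\mathbb E_{\Qb_{\bullet}}\big[f\cdot \tfrac{\mathrm{d}\widetilde{\Pb}_{\bullet}}{\mathrm{d}\Qb_{\bullet}}\big]$. This requires $\widetilde{\Pb}_{\bullet}\ll \Qb_{\bullet}$, which holds for two reasons. The Gaussian part has full support. The graph part of $\Qb$ gives positive mass to every graph configuration, since each $\mathcal G(n,\lambda_\ell/n)$ charges every graph on $[n]$.

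Then Cauchy--Schwarz gives
\begin{equation*}
    \mathbb E_{\widetilde{\Pb}_{\bullet}}[f]\le \sqrt{\mathbb E_{\Qb_{\bullet}}[f^2]}\cdot\sqrt{\mathbb E_{\Qb_{\bullet}}\Big[\Big(\tfrac{\mathrm{d}\widetilde{\Pb}_{\bullet}}{\mathrm{d}\Qb_{\bullet}}\Big)^2\Big]}\,.
\end{equation*}
Dividing by $\sqrt{\mathbb E_{\Qb_{\bullet}}[f^2]}$ yields $\mathsf{Adv}(f)\le \sqrt{\chi^2(\widetilde{\Pb}\|\Qb)}=O(1)$. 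This holds with the paper's convention $\chi^2=\mathbb E_{\Qb}[(\mathrm{d}\Pb/\mathrm{d}\Qb)^2]$; under the other convention one adds $1$ inside the square root.

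There is no real obstacle here. The only points needing care are the factorization of the likelihood ratio in $\bm W$ and absolute continuity. The bound is uniform in $f$ because the constant comes solely from Proposition~\ref{prop-bound-chi-2-divergence}.
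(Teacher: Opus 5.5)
Your proposal is correct and is essentially the paper's proof: the likelihood ratio $\frac{\mathrm{d}\widetilde{\Pb}_{\bullet}}{\mathrm{d}\Qb_{\bullet}}$ does not depend on $\bm W$, so Cauchy--Schwarz gives $\mathsf{Adv}(f)\le\sqrt{\chi^2(\widetilde{\Pb}\|\Qb)}=O(1)$ by Proposition~\ref{prop-bound-chi-2-divergence}. Your added checks on absolute continuity and on the convention for $\chi^2$ are sound, but the paper does not need them.
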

\begin{proof}
    Note that $\frac{\mathrm{d}\widetilde\Pb_{\bullet}}{\mathrm{d}\Qb_{\bullet}} (\bm W,\bm Y,\bm G_1,\ldots,\bm G_L) = \frac{\mathrm{d}\widetilde\Pb}{ \mathrm{d}\Qb }(\bm Y,\bm G_1,\ldots,\bm G_L)$. Using Cauchy-Schwarz inequality, we then have (let $\mathcal L=\frac{ \mathrm{d}\widetilde\Pb_{\bullet} }{ \mathrm{d}\Qb_{\bullet} }$)
    \begin{align*}
        \mathsf{Adv}(f) = \frac{ \mathbb E_{\Qb_{\bullet}}[ \mathcal L \cdot f ] }{ \sqrt{ \mathbb E_{\Qb_{\bullet}}[f^2] } } \leq \sqrt{ \mathbb E_{\Qb_{\bullet}}[\mathcal L^2] } = \sqrt{ \chi^2( \widetilde\Pb_{\bullet} \| \Qb_{\bullet} ) } = \sqrt{ \chi^2(\widetilde\Pb\|\Qb) } = O(1) \,,
    \end{align*}
    where the last equation follows from Proposition~\ref{prop-bound-chi-2-divergence}.
\end{proof}

In the rest of this subsection, we will show the following result.
\begin{lemma}{\label{lem-reduction}}
    Suppose that for some $F(\mu,\rho,\gamma,\{ \lambda_{\ell} \},\{ \epsilon_{\ell} \})<1-\epsilon$, given  
    \begin{align*}
        (\bm Y,\bm G_1,\ldots,\bm G_L) \sim \mathcal S(n,p;\mu,\rho,\{ \lambda_{\ell} \},\{ \epsilon_{\ell} \})
    \end{align*}
    there exists an estimator $\mathcal X(\bm Y,\bm G_1,\ldots,\bm G_L)$ achieves weak recovery in the sense of Definition~\ref{def-weak-recovery}. Then for some $F(\mu',\rho,\gamma,\{ \lambda_{\ell} \},\{ \epsilon_{\ell} \})<1-\epsilon'$ and 
    \begin{align*}
        (\bm Y,\bm G_1,\ldots,\bm G_L) \sim \mathcal S(n,p;\mu',\rho,\{ \lambda_{\ell} \},\{ \epsilon_{\ell} \}) \,,
    \end{align*}
    there exists a function $f=f(\bm W,\bm Y,\bm G_1,\ldots,\bm G_L)$ (here $\bm W$ is the external randomness) such that $\mathsf{Adv}(f)=\omega(1)$. 
\end{lemma}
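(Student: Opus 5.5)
The plan is to turn the weak-recovery estimator into a distinguishing statistic by planting a little extra rank-one signal that we generate ourselves. Let $\mathcal X$ be the estimator at signal strength $\mu$. Choose $\mu'>\mu$ close enough to $\mu$ that $F(\mu',\ldots)<1-\epsilon'$, and set $\mu'^2=\mu^2+\eta^2$ with a small constant $\eta>0$.

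Given $(\bm Y,\bm G_1,\ldots,\bm G_L)$ at strength $\mu'$ and the external Gaussian matrix $\bm W$, split the matrix into two independent pieces:
\[
\bm Y^{(1)}=\tfrac{\mu}{\mu'}\bm Y+\tfrac{\eta}{\mu'}\bm W,\qquad \bm Y^{(2)}=\tfrac{\eta}{\mu'}\bm Y-\tfrac{\mu}{\mu'}\bm W .
\]
Under $\Pb_\bullet$, conditionally on $(\bm x,\bm u)$ the noise parts of $\bm Y^{(1)}$ and $\bm Y^{(2)}$ are independent standard Gaussian matrices. Moreover $\bm Y^{(1)}=\sqrt{\mu/n}\,\bm x\bm u^\top+\text{noise}$ and $\bm Y^{(2)}=(\eta/\mu)\sqrt{\mu/n}\,\bm x\bm u^\top+\text{noise}$. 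So $(\bm Y^{(1)},\bm G_1,\ldots,\bm G_L)\sim\mathcal S(n,p;\mu,\ldots)$. Under $\Qb_\bullet$, both pieces are pure noise and independent of each other and of the graphs.

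Compute $\widehat{\mathcal X}=\mathcal X(\bm Y^{(1)},\bm G_1,\ldots,\bm G_L)/\|\mathcal X\|_{\Fop}$ and consider the statistic
\[
f=\big\langle \widehat{\mathcal X},\,\bm Y^{(2)}\bm Y^{(2)\top}-p\,\mathbb I_n\big\rangle,
\]
possibly with the diagonal of $\widehat{\mathcal X}$ removed.

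Under $\Qb_\bullet$, $\bm Y^{(2)}$ is independent of $\widehat{\mathcal X}$. The matrix $\bm Y^{(2)}\bm Y^{(2)\top}-p\mathbb I$ has uncorrelated centered entries, off-diagonal variance $p$ and diagonal variance $2p$. Hence $\E_{\Qb_\bullet}[f^2]\le 2p\,\E\|\widehat{\mathcal X}\|_{\Fop}^2=O(n)$.

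Under $\Pb_\bullet$, condition on $\bm x,\bm u$, the graphs and $\bm Y^{(1)}$. The cross terms have conditional mean zero, and the remaining mean is
\[
\tfrac{\eta^2}{\mu n}\,\|\bm u\|^2\,\langle\widehat{\mathcal X},\bm x\bm x^\top\rangle .
\]
On $\mathcal E_\diamond$ we have $\|\bm u\|^2\ge(1-\iota)p$, so this is at least $c\,\tfrac{p}{n}\langle\widehat{\mathcal X},\bm x\bm x^\top\rangle$. Weak recovery gives $\E\langle\widehat{\mathcal X},\bm x\bm x^\top\rangle\ge c\|\bm x\bm x^\top\|_{\Fop}=cn$, so $\E f\gtrsim n/\gamma$. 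Passing from $\Pb_\bullet$ to $\widetilde\Pb_\bullet$ costs only $o(1)$ in probability. The inner product is bounded by $n$, so that loss is $o(n)$ in expectation. Altogether
\[
\mathsf{Adv}(f)\gtrsim n/\sqrt{n}=\sqrt n=\omega(1).
\]

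The main obstacle is making this bookkeeping rigorous on the conditioned measure $\widetilde\Pb_\bullet$. Conditioning on $\mathcal E_\diamond$ only involves $\bm u$, which is independent of the split noise, so I expect the conditional-mean computation to go through unchanged. One must also ensure $\mathcal X$ is used at exactly strength $\mu$, which the splitting guarantees. If the diagonal terms cause trouble, I will zero out the diagonal of $\widehat{\mathcal X}$. This changes $\langle\widehat{\mathcal X},\bm x\bm x^\top\rangle$ by at most $\sqrt n$, since the diagonal part is bounded by $\|\mathrm{diag}\,\widehat{\mathcal X}\|_1\le\sqrt n$. The argument then contradicts Lemma~\ref{lem-bounded-Adv} applied at parameter $\mu'$.
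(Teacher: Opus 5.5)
Your proposal follows the paper's route in its essentials. The paper uses the same Gaussian splitting of $\bm Y$ with the external matrix $\bm W$, producing an independent strength-$\mu$ copy fed to $\mathcal X$ and a residual copy. It also uses the same quadratic test $\langle \mathcal X,\bm Y^{(2)}\bm Y^{(2)\top}-p\mathbb I_n\rangle$. The difference is in how the test is used:
\begin{itemize}
\item The paper thresholds this quantity at $\Theta(n\|\mathcal X\|_{\Fop})$ and takes $f$ to be the indicator. It shows the event has $\widetilde\Pb_\bullet$-probability at least $c/2-o(1)$, using Markov's inequality on the normalized overlap and then Chebyshev for the cross and noise terms. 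It shows the $\Qb_\bullet$-probability is $O(1/n)$.
\item You keep the raw normalized statistic and compute first and second moments. This only requires the cross and noise terms to have vanishing conditional mean under $\widetilde\Pb_\bullet$, with no concentration under the planted law, and gives $\mathsf{Adv}(f)\gtrsim\sqrt n$ directly.
\end{itemize}
Both give $\omega(1)$.

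One step needs repair. The inequality $\|\bm u\|^2\langle\widehat{\mathcal X},\bm x\bm x^\top\rangle\ge(1-\iota)p\,\langle\widehat{\mathcal X},\bm x\bm x^\top\rangle$ is false wherever the overlap is negative. Weak recovery only controls the mean of the overlap, not its sign. You also cannot factor the expectation, because $\widehat{\mathcal X}$ depends on $\bm u$ through $\bm Y^{(1)}$. The paper's thresholding sidesteps exactly this, since on its event the overlap is positive.

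In your moment version there are two easy fixes:
\begin{itemize}
\item Write $\|\bm u\|^2=p+(\|\bm u\|^2-p)$. Use $|\langle\widehat{\mathcal X},\bm x\bm x^\top\rangle|\le n$ and $\E_{\widetilde\Pb}\bigl|\|\bm u\|^2-p\bigr|=O(\sqrt p)$. This gives $\E_{\widetilde\Pb_\bullet}[\|\bm u\|^2\langle\widehat{\mathcal X},\bm x\bm x^\top\rangle]\ge (c-o(1))pn-O(n\sqrt p)$.
\item Alternatively, take $\iota\le c/4$, which \eqref{eq-choice-iota} permits.
\end{itemize}

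Two smaller bookkeeping points. First, in Definition~\ref{def-contxtual-multilayer-SBM} the signal amplitude is $\sqrt{\mu/n}$. The split should therefore use coefficients $\sqrt{\mu/\mu'}$ and $\sqrt{(\mu'-\mu)/\mu'}$. With your $\mu/\mu'$ and $\eta/\mu'$, the first piece is at strength $\mu^2/\mu'$ rather than $\mu$, where the estimator has no guarantee. Second, the $(i,j)$ and $(j,i)$ entries of $\bm Y^{(2)}\bm Y^{(2)\top}$ coincide, so the entries are not uncorrelated. Your bound $\E_{\Qb_\bullet}[f^2]\le 2p\|\widehat{\mathcal X}\|_{\Fop}^2$ still holds, since the exact value is $2p\|(\widehat{\mathcal X}+\widehat{\mathcal X}^\top)/2\|_{\Fop}^2$.
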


\begin{remark}
    Combining Lemmas~\ref{lem-bounded-Adv} and \ref{lem-reduction}, we know that if \eqref{eq-assum-lower-bound} holds, then weak recovery is information-theoretically impossible.
\end{remark}

The rest of this subsection is devoted to the proof of Lemma~\ref{lem-reduction}. To this end, suppose that for some parameters $\mu,\rho,\gamma,\{ \lambda_{\ell} \},\{ \epsilon_{\ell} \}$ satisfying \eqref{eq-assum-lower-bound}, given 
\begin{align*}
    (\bm Y,\bm G_1,\ldots,\bm G_L) \sim \mathcal S(n,p,\mu,\rho,\{ \lambda_{\ell} \},\{ \epsilon_{\ell} \}),
\end{align*}
then there exists an estimator $\mathcal X(\bm Y,\bm G_1,\ldots,\bm G_L)$ satisfying Definition~\ref{def-weak-recovery}, i.e., 
\begin{align*}
    \mathbb E_{\Pb}\Bigg[ \frac{ \langle \mathcal X, \bm x \bm x^{\top} \rangle }{ \| \mathcal X \|_{\Fop} \| \bm x \bm x^{\top} \|_{\Fop} } \Bigg] \geq c \mbox{ for some constant } c>0 \,.
\end{align*}
Using standard Markov's inequality, we then have
\begin{align*}
    \Pb\Bigg( \frac{ \langle \mathcal X, \bm x \bm x^{\top} \rangle }{ \| \mathcal X \|_{\Fop} \| \bm x \bm x^{\top} \|_{\Fop} } \geq \frac{c}{2} \Bigg) = 1-\Pb \Bigg(1-\frac{ \langle \mathcal X, \bm x \bm x^{\top} \rangle }{ \| \mathcal X \|_{\Fop} \| \bm x \bm x^{\top} \|_{\Fop} }\ge 1-\frac{c}{2}\Bigg)\ge 1-\frac{1-c}{1-\frac{c}{2}} \geq \frac{c}{2} \,.
\end{align*}
Recall the choice of $\iota$ in \eqref{eq-choice-iota} and the definition of $\widetilde{\Pb}=\Pb(\cdot\mid\mathcal E_{\diamond})$ in \eqref{eq-def-E-diamond}. Since $\mathsf{TV}(\Pb,\widetilde{\Pb})=o(1)$, we then have
\begin{align*}
    \widetilde\Pb\Bigg( \frac{ \langle \mathcal X, \bm x \bm x^{\top} \rangle }{ \| \mathcal X \|_{\Fop} \| \bm x \bm x^{\top} \|_{\Fop} } \geq \frac{c}{2} \Bigg) \geq \frac{c}{2}-o(1) \,.
\end{align*}
We now choose a sufficiently small constant $\kappa>0$ such that (recall \eqref{eq-choice-iota})
\begin{equation}{\label{eq-choice-kappa}}
    F\left( (1+\kappa^2)(1+\iota)\mu, \rho,\gamma,\{ \lambda_{\ell} \},\{ \epsilon_{\ell} \} \right) < 1-\Omega(1) \,.
\end{equation}
Now, suppose $(\bm Y,\bm G_1,\ldots,\bm G_L) \sim \mathcal S(n,p, \mu(1+\kappa^2),\rho,\{ \lambda_{\ell} \},\{ \epsilon_{\ell} \} )$ and recall we introduce external randomness $\bm W$. We set
\begin{align}
    \bm Y_1 = \frac{ 1 }{ \sqrt{1+\kappa^2} } \left( \bm Y+ \kappa \bm W \right), \quad \bm Y_2 = \frac{ 1 }{ \sqrt{1+\kappa^{-2}} } \left( \bm Y - \kappa^{-1} \bm W \right) \,.  \label{eq-split-into-two-matrices}
\end{align}
The reasons for the definition in \eqref{eq-split-into-two-matrices} is as follows (recall Definition~\ref{def-contxtual-multilayer-SBM}):
\begin{itemize}
    \item Under $(\bm Y,\bm G_1,\ldots,\bm G_L) \sim\Qb$, we have $(\bm Y_1,\bm Y_2)=(\bm Z_1,\bm Z_2)$, where
    \begin{align}
        \bm Z_1 = \frac{ 1 }{ \sqrt{1+\kappa^2} } \left( \bm Z+ \kappa \bm W \right), \quad \bm Z_2 = \frac{ 1 }{ \sqrt{1+\kappa^{-2}} } \left( \bm Z - \kappa^{-1} \bm W \right) \,.  \label{eq-behavior-Y-1,2-Qb}
    \end{align}
    In particular, $\bm Z_1,\bm Z_2$ are matrices with i.i.d.\ standard normal entries and $\bm Z_2$ is \underline{independent of} $(\bm Z_1,\bm G_1,\ldots,\bm G_L)$.
    \item Under $(\bm Y,\bm G_1,\ldots,\bm G_L) \sim\Pb$, we have
    \begin{align}
        \bm Y_1 = \frac{ \sqrt{\mu} }{ \sqrt{n} } \bm x \bm u^{\top} + \bm Z_1, \quad \bm Y_2 = \frac{ \sqrt{ \mu(1+\kappa^2) } }{ \sqrt{n(1+\kappa^{-2})} } \bm x \bm u^{\top} + \bm Z_2 \,,  \label{eq-behavior-Y-1,2-Pb}
    \end{align}
    where 
    \begin{align*}
        \bm Z_1 = \frac{ 1 }{ \sqrt{1+\kappa^2} } \left( \bm Z+ \kappa \bm W \right), \quad \bm Z_2 = \frac{ 1 }{ \sqrt{1+\kappa^{-2}} } \left( \bm Z - \kappa^{-1} \bm W \right) \,.
    \end{align*}
    In particular, $\bm Z_1,\bm Z_2$ are matrices with i.i.d.\ standard normal entries and $\bm Z_2$ is \underline{independent of} $(\bm Y_1,\bm G_1,\ldots,\bm G_L)$. Also we have $(\bm Y_1,\bm G_1,\ldots,\bm G_L) \sim \mathcal S(n,p;\mu,\rho,\gamma,\{ \lambda_{\ell} \},\{ \epsilon_{\ell} \})$.
\end{itemize}

Now, since $(\bm Y_1,\bm G_1,\ldots,\bm G_L) \sim \mathcal S(n,p;\mu,\rho,\gamma,\{ \lambda_{\ell} \},\{ \epsilon_{\ell} \})$ under $\Pb_{\bullet}$, we can find an estimator 
\begin{align}{\label{eq-generate-estimator}}
    \mathcal X( \bm Y_1,\bm G_1,\ldots,\bm G_L ) \mbox{ such that } \widetilde\Pb_{\bullet}\Bigg( \frac{ \langle \mathcal X, \bm x \bm x^{\top} \rangle }{ \| \mathcal X \|_{\Fop} \| \bm x \bm x^{\top} \|_{\Fop} } \geq \frac{c}{2} \Bigg) \geq \frac{c}{2}-o(1)
\end{align}
The next lemma shows that $\langle \mathcal X, \bm Y_2 \bm Y_2^{\top} - p \mathbb I_n \rangle$ is ``large'' under $\widetilde{\Pb}_{\bullet}$ and ``small'' under $\Qb_{\bullet}$.
\begin{lemma}{\label{lem-behavior-Pb-Qb}}
    Suppose we choose $\mathcal X( \bm Y_1,\bm G_1,\ldots,\bm G_L )$ as in \eqref{eq-generate-estimator}. Then 
    \begin{align}
        &\widetilde{\Pb}_{\bullet}\left( \big\langle \mathcal X, \bm Y_2 \bm Y_2^{\top} - p \mathbb I_n \big\rangle \geq \frac{ c\mu(1-\iota)(1+\kappa^2)n }{ 4(1+\iota)(1+\kappa^{-2}) } \| \mathcal X \|_{\Fop} \right) \geq \frac{c}{2}-o(1) \,;  \label{eq-behavior-Pb} \\
        &\Qb_{\bullet}\left( \big\langle \mathcal X, \bm Y_2 \bm Y_2^{\top} - p \mathbb I_n \big\rangle \geq \frac{ c\mu (1-\iota)(1+\kappa^2)n }{ 4(1+\iota)(1+\kappa^{-2}) } \| \mathcal X \|_{\Fop} \right) \leq n^{-\Omega(1)} \,.  \label{eq-behavior-Qb}
    \end{align}
\end{lemma}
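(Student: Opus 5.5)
The plan is to condition throughout on $(\bm Y_1,\bm G_1,\ldots,\bm G_L,\bm x,\bm u)$. Under both measures $\bm Z_2$ is independent of this $\sigma$-field, so $\mathcal X=\mathcal X(\bm Y_1,\bm G_1,\ldots,\bm G_L)$ can be treated as a fixed matrix. We may also assume $\mathcal X$ is symmetric, replacing it by $(\mathcal X+\mathcal X^\top)/2$: this leaves $\langle \mathcal X, M\rangle$ unchanged for symmetric $M$ and does not increase $\|\mathcal X\|_{\Fop}$. Write $\beta:=\sqrt{\mu(1+\kappa^2)/(n(1+\kappa^{-2}))}$. Under $\widetilde\Pb_\bullet$ we expand
\begin{align*}
\bm Y_2\bm Y_2^\top-p\mathbb I_n=\beta^2\|\bm u\|^2\bm x\bm x^\top+\beta\big(\bm x\bm u^\top\bm Z_2^\top+\bm Z_2\bm u\bm x^\top\big)+\big(\bm Z_2\bm Z_2^\top-p\mathbb I_n\big) \,,
\end{align*}
and bound the three contributions to $\langle\mathcal X,\cdot\rangle$ separately.

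\emph{Signal term.} On the event in \eqref{eq-generate-estimator} we have $\langle \mathcal X,\bm x\bm x^\top\rangle\ge \tfrac c2 n\|\mathcal X\|_{\Fop}$, since $\|\bm x\bm x^\top\|_{\Fop}=n$. On $\mathcal E_\diamond$ we have $\|\bm u\|^2\ge(1-\iota)p$. Hence this term is at least $\tfrac c2\cdot\frac{\mu(1+\kappa^2)(1-\iota)p}{1+\kappa^{-2}}\|\mathcal X\|_{\Fop}$. Since $p=n/\gamma$, this is a constant multiple of $n\|\mathcal X\|_{\Fop}$, and it exceeds the threshold in \eqref{eq-behavior-Pb} by a constant factor; I will absorb the $\gamma$ and $(1+\iota)$ constants there.

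\emph{Cross term.} Conditionally, $\langle \mathcal X,\bm x\bm u^\top\bm Z_2^\top\rangle=\bm x^\top\mathcal X\bm Z_2\bm u$ is centered Gaussian with variance $\|\mathcal X^\top\bm x\|^2\|\bm u\|^2\le n^2 p\|\mathcal X\|_{\Fop}^2$. After multiplying by $\beta=O(n^{-1/2})$, its standard deviation is $O(n\|\mathcal X\|_{\Fop})\cdot O(n^{-1/2}\sqrt p/\sqrt n)$. I need to redo this carefully: the variance is actually at most $\|\mathcal X\|_{\op}^2\|\bm x\|^2\|\bm u\|^2\le \|\mathcal X\|_{\Fop}^2\, n p$, so the standard deviation times $\beta$ is $O(\sqrt{np}\cdot n^{-1/2})\|\mathcal X\|_{\Fop}=O(\sqrt n)\|\mathcal X\|_{\Fop}$. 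This is $o(n\|\mathcal X\|_{\Fop})$ with probability $1-o(1)$.

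\emph{Noise term.} $\langle \mathcal X,\bm Z_2\bm Z_2^\top-p\mathbb I\rangle=\sum_{k=1}^p(\bm z^{(k)\top}\mathcal X\bm z^{(k)}-\mathsf{tr}\,\mathcal X)$, where $\bm z^{(k)}$ are the columns of $\bm Z_2$. This is a sum of $p$ i.i.d.\ centered Gaussian chaoses, each with variance $2\|\mathcal X\|_{\Fop}^2$. By Hanson--Wright applied to the vectorization (a block-diagonal quadratic form with matrix $\mathbb I_p\otimes\mathcal X$, which has Frobenius norm $\sqrt p\|\mathcal X\|_{\Fop}$ and operator norm at most $\|\mathcal X\|_{\Fop}$), we get
\begin{align*}
\Pb\big(|\cdot|\ge s\big)\le 2\exp\Big(-c'\min\Big\{\frac{s^2}{p\|\mathcal X\|_{\Fop}^2},\frac{s}{\|\mathcal X\|_{\Fop}}\Big\}\Big) \,.
\end{align*}
Taking $s=\theta n\|\mathcal X\|_{\Fop}$ with $\theta$ a constant, both arguments of the minimum are $\Omega(n)$, so the bound is $e^{-\Omega(n)}$. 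This gives \eqref{eq-behavior-Pb} by a union bound with the first two estimates.

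\emph{The $\Qb_\bullet$ side.} Here $\bm Y_2=\bm Z_2$ is independent of $\mathcal X$, so the statistic reduces exactly to the noise term. The same Hanson--Wright bound, with $s$ equal to the threshold in \eqref{eq-behavior-Qb} (a constant times $n\|\mathcal X\|_{\Fop}$), gives $e^{-\Omega(n)}\le n^{-\Omega(1)}$.

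The main obstacle is purely bookkeeping: in the $\widetilde\Pb_\bullet$ estimate the signal term must beat the stated threshold by a fixed margin that dominates the fluctuations. Beyond tracking the constants, the only subtlety is that conditional independence of $\bm Z_2$ must hold under $\widetilde\Pb_\bullet$ as well. It does, because $\mathcal E_\diamond$ depends only on $\bm u$.
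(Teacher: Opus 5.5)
Your proof follows the paper's route. It uses the same three-term expansion of $\langle \mathcal X,\bm Y_2\bm Y_2^{\top}-p\mathbb I_n\rangle$ and the same signal bound from \eqref{eq-generate-estimator} with $\|\bm u\|^2\ge(1-\iota)p$ on $\mathcal E_{\diamond}$. It also uses the same conditional Gaussian tail for the cross term, with variance of order $np\|\mathcal X\|_{\Fop}^2$, and only the noise term under $\Qb_{\bullet}$. The one difference is the noise term. The paper computes the conditional second moment $2p\|(\mathcal X+\mathcal X^{\top})/2\|_{\Fop}^2\le 2p\|\mathcal X\|_{\Fop}^2$ in \eqref{eq-simple-2nd-moment-calculus} and applies Chebyshev. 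That gives failure probability $O(p/n^2)=O(n^{-1})$, which is all \eqref{eq-behavior-Qb} asks for. Hanson--Wright gives $e^{-\Omega(n)}$, which is stronger but not needed. Your union bound (subtracting the two failure probabilities from $c/2-o(1)$) is also the correct form of the paper's last display, which instead adds the three probabilities.

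The step you should not wave through is the constant. Your signal bound is $\frac{c\mu(1-\iota)(1+\kappa^2)n}{2\gamma(1+\kappa^{-2})}\|\mathcal X\|_{\Fop}$. This beats the threshold in \eqref{eq-behavior-Pb} only when $\gamma<2(1+\iota)$, and nothing in the setting bounds $\gamma$ above. No argument can close this gap. Since $\langle\mathcal X,\bm x\bm x^{\top}\rangle\le n\|\mathcal X\|_{\Fop}$ and $\|\bm u\|^2\le(1+\iota)p$, the signal term is at most $\frac{\mu(1+\iota)(1+\kappa^2)n}{\gamma(1+\kappa^{-2})}\|\mathcal X\|_{\Fop}$ for every estimator. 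That lies below the stated threshold by a constant factor once $\gamma>4(1+\iota)^2/(c(1-\iota))$, while the other two terms are $o(n\|\mathcal X\|_{\Fop})$ with high probability. So for such $\gamma$ the event in \eqref{eq-behavior-Pb} has probability $o(1)$, and the statement as written could only hold vacuously. The paper's proof has the same slip: its three pieces carry $\gamma$, and the final display silently switches to $1+\iota$.

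The correct statement uses the threshold $\frac{c\mu(1-\iota)(1+\kappa^2)n}{4\gamma(1+\kappa^{-2})}\|\mathcal X\|_{\Fop}$ in both \eqref{eq-behavior-Pb} and \eqref{eq-behavior-Qb}. This change is harmless for Lemma~\ref{lem-reduction}, which only needs one common threshold of order $n\|\mathcal X\|_{\Fop}$ that the signal clears by a constant margin. Your $\Qb_{\bullet}$ bound holds for any such threshold. State this correction explicitly instead of ``absorbing'' constants; with it, your argument is complete.
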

The proof of Lemma~\ref{lem-behavior-Pb-Qb} is postponed to Section~\ref{subsec:proof-lem-2.7} of the appendix. Now we can finish the proof of Lemma~\ref{lem-reduction}.
\begin{proof}[Proof of Lemma~\ref{lem-reduction}]
    Define $\mu'=\mu\sqrt{1+\kappa^2}$ and  
    \begin{equation}{\label{eq-choice-contradicting-f}}
        f=f(\bm W;\bm Y,\bm G_1,\ldots,\bm G_L):= \mathbf 1\left\{ \big\langle \mathcal X, \bm Y_2 \bm Y_2^{\top} - p \mathbb I_n \big\rangle \geq \frac{ c\mu(1-\iota)(1+\kappa^2)n }{ 4(1+\iota)(1+\kappa^{-2}) } \| \mathcal X \|_{\Fop} \right\} \,.
    \end{equation}
    Lemma~\ref{lem-behavior-Pb-Qb} then implies that $\mathbb E_{\Pb}[f]=\Omega(1)$ and $\mathbb E_{\Qb}[f^2] \leq n^{-\Omega(1)}$. This yields that $\mathsf{Adv}(f)\geq n^{\Omega(1)}$ and thus completes the proof of Lemma~\ref{lem-reduction}.
\end{proof}

\begin{remark}{\label{remark-larger-L}}
    It is easy to check that our reduction remains valid as long as $\chi^2(\widetilde{\Pb}\|\Qb)=n^{o(1)}$. Since our proof of Proposition~\ref{prop-bound-chi-2-divergence} yields that $\chi^2(\widetilde{\Pb}\|\Qb)=n^{o(1)}$ as long as $F(\mu,\rho,\gamma;\{ \lambda_{\ell} \},\{ \epsilon_{\ell} \})<1$ and $L=o(\log n)$, our argument actually shows that recovery is information-theoretically impossible as long as $F(\mu,\rho,\gamma;\{ \lambda_{\ell} \},\{ \epsilon_{\ell} \})<1$ and $L=o(\log n)$.
\end{remark}

%\remarkhd{To be added by hd: logic flow figure and intuition explanation on the construction for $\mathbf 1 \{...\}$}

\section{Efficient algorithms based on counting decorated graphs}{\label{sec:efficient-algs}}

In this section, we prove that strong detection and weak recovery can be achieved by efficient algorithms when $L=O(1)$ and $F(\mu,\rho,\gamma;\{ \lambda_{\ell} \},\{ \epsilon_{\ell} \})>1$. Note that when $\frac{\mu^2}{\gamma}>1$, we are able to detect and estimate the latent labeling $\bm x$ using the single observation $\bm Y$. Similarly, when $\epsilon_{\ell}^2 \lambda_{\ell}>1$, we are able to detect and estimate the labeling $\bm x_{\ell}$ using the single observation $\bm G_{\ell}$\footnote{In addition, since $\bm x,\bm x_{\ell}$ are positively correlated, we have that any estimator $\mathcal X(\bm G_{\ell})$ that is positively correlated with $\bm x_{\ell} \bm x_{\ell}^{\top}$ is also positively correlated with $\bm x \bm x^{\top}$.}. Thus, throughout this section we will assume that
\begin{equation}{\label{eq-assum-upper-bound}}
    \frac{\mu^2}{\gamma}, \epsilon^2_1 \lambda_1,\ldots, \epsilon^2_{L} \lambda_L \leq 1, \quad \frac{ \mu^2 }{ \gamma } + \sum_{\ell=1}^{L} \frac{ \rho^4 (\epsilon_{\ell}^2 \lambda_{\ell})^2 }{ 1-(1-\rho^4) (\epsilon_{\ell}^2 \lambda_{\ell})^2 } > 1+\epsilon \mbox{ for some constant } \epsilon>0 \,.
\end{equation}
Recall that $\operatorname{U}_n$ is the set of unordered pairs $(i,j)$ where $1\le i<j\le n$. We first introduce the notion of decorated graphs, which is the key to our algorithm. See Figure~\ref{fig:decorated-cycle-path} for examples on decorated graphs.
\begin{DEF}[Factor graph]{\label{def-link-graph}}
    Denote $\mathsf K_{n,p}$ as the graph generated as follows. Define the vertex set of $\mathsf{K}_{n,p}$ as $V^{\mathsf a} \cup V^{\mathsf b}$, where $V^{\mathsf a}=\{ a_1,\ldots,a_n \}$ and $V^{\mathsf b}=\{ b_1,\ldots,b_p \}$. In addition, define the edge set of $\mathsf K_{n,p}$ as $E^{\mathsf a} \cup E^{\mathsf b}$, where 
    \begin{align*}
        E^{\mathsf a}=\big\{ (a_i,a_j): (i,j) \in \operatorname{U}_n \big\}, \quad E^{\mathsf b}=\big\{ (a_i,b_k): i \in [n], k \in [p] \big\} \,.
    \end{align*}
\end{DEF}
\begin{DEF}[Decorated graphs]{\label{def-decorated-graph}}
    For any graph $H\subset \mathsf{K}_{n,p}$, we say $\chi_H:E(H) \to \{ 0,1,\ldots,L \}$ is a \emph{decoration} of $E(H)$, if
    %\remarksg{It would be better to replace $\gamma(e)$ by some other notation? It is the same as the ratio $n/p$. Will edit.}
    \begin{align*}
        \chi_H(e)=0 \Longleftrightarrow e \in E^{\mathsf b}(\mathsf K_{n,p}) \,.
    \end{align*}
    We say $H=(V(H),E(H),\chi_H)$ is a decorated graph, if $(V(H),E(H))$ is a graph and $\chi_H$ is a decoration of $E(H)$. We define
    \begin{equation}{\label{eq-def-V-a-V-b-E-a-E-b(H)}}
        \begin{aligned}
            & V^{\mathsf a}(H)=V^{\mathsf a} \cap V(H), \quad V^{\mathsf b}(H)=V^{\mathsf b} \cap V(H) \,; \\
            & E^{\mathsf a}(H)=E^{\mathsf a} \cap E(H), \quad E^{\mathsf b}(H)=E^{\mathsf b} \cap E(H) \,.
        \end{aligned}
    \end{equation}
    For $0 \leq \ell \leq L$, define
    \begin{equation}{\label{eq-def-E-ell-decorated}}
        E_{\ell}(H)=\big\{ e \in E(H): \chi_H(e)=\ell \big\} \,.
    \end{equation}
    (Note that $E_0(H)=E^{\mathsf b}(H)$ by definition).
    In addition, for $0 \leq \ell \leq L$ define
    \begin{equation}{\label{eq-def-V-ell-decorated}}
        V_{\ell}(H) = \big\{ v \in V(H): (v,u) \in E_{\ell}(H) \mbox{ for some } u \in V(H) \big\} \,.
    \end{equation}
    Also define $H_{\ell}$ to be the subgraph of $H$ with
    \begin{equation}{\label{eq-def-H-ell}}
        V(H_{\ell}) = V_{\ell}(H), \ E(H_{\ell})=E_{\ell}(H) \,.
    \end{equation}
    We say $H=(V(H),E(H),\chi_H)$ is a decorated cycle (respectively, decorated path), if $(V(H),E(H))$ is a cycle (respectively, path). Finally, for a decorated cycle or a decorated path $H$, define
    \begin{equation}{\label{eq-def-mathsf-dif}}
    \begin{aligned}
        &\mathsf{dif}_0(H):= V_0(H) \cap \big( \cup_{1 \leq \ell \leq L} V_{\ell}(H) \big) \,; \\ 
        &\mathsf{dif}(H):= \cup_{1 \leq \ell<\ell' \leq L} (V_{\ell}(H) \cap V_{\ell'}(H)) \,.
    \end{aligned}
    \end{equation}
    In addition, denote by $S \Cap H$ the graph induced by the edge set $\cup_{0 \leq \ell \leq L} (E_\ell(S)\cap E_\ell(H))$ (in particular, edge induced graphs have no isolated vertices), and let $S \cap H$ be the \emph{undecorated graph} with $V(S \cap K)=V(S) \cap V(K)$ and $E(S \cap K)=E(S) \cap E(K)$. 
\end{DEF}

\subsection{The detection statistic and theoretical guarantees}{\label{subsec:detect-stat}}

We first focus on the detection problem. Recall \eqref{eq-def-contextual-multilayer-SBM}. For each $1 \leq \ell \leq L$ and $(i,j) \in \operatorname{U}_n$, define
\begin{align}
    \overline{\bm G}_{\ell}(i,j):= \frac{1}{\sqrt{\lambda_{\ell}/n}} \Big( \bm G_{\ell}(i,j) - \frac{\lambda_{\ell}}{n} \Big) \,. \label{eq-def-overline-G-ell-i,j}
\end{align}
In addition, for any decorated graph $S \subset \mathsf{K}_{n,p}$, we define
\begin{equation}{\label{eq-def-f-H,A}}
    f_{S} := \prod_{ (a_i,b_k) \in E_0(S) } \bm Y(i,k) \prod_{1 \leq \ell \leq L} \prod_{(a_i,a_j) \in E_\ell(S) } \overline{\bm G}_{\ell}(i,j) \,.
\end{equation}
Let $\aleph\in\mathbb N$ be a parameter that will be decided later and define $\mathcal H=\mathcal H(\aleph)$ to be the collection of unlabeled decorated cycles $[H]$ such that $|V^{\mathsf a}(H)|=\aleph$. It is clear that for all $[H] \in \mathcal H(\aleph)$ we have (see Figure~\ref{fig:decorated-cycle})
\begin{equation}{\label{eq-basic-property-mathcal-H}}
    |E(H)|=\aleph+\tfrac{1}{2}|E_0(H)|, \quad |V^{\mathsf b}(H)|=\tfrac{1}{2}|E_0(H)| \,.
\end{equation}

% =========================
% Preamble: make sure you have
% =========================
% \usepackage{tikz}
% \usepackage{subfig}   % provides \subfloat
% and your color defs, e.g. myRed/myBlue/myGreen/myPurple
% =========================

\begin{figure}[htbp]
\centering

% ============================================================
% (a) original schematic (kept as-is)
% ============================================================
\subfloat[A decorated cycle $H$ with $L=3$ colors. Here $V^{\mathsf{b}}(H) = \{v_2,v_6\}$, $V^{\mathsf{a}}(H) = V(H)\setminus V^{\mathsf b}(H)$, $\mathsf{dif}_0(H) = \{v_1,v_3,v_5,v_7\}$, and $\mathsf{dif}(H) = \{v_4,v_9,v_{11}\}$.\label{fig:decorated-cycle}]{
\includegraphics[width=0.45\linewidth]{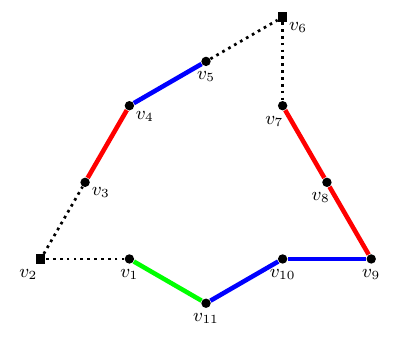}
}\hfill 
\subfloat[A decorated path $H$ with $L=3$ colors. Here $V^{\mathsf{b}}(H) = \{v_2,v_5,v_{11}\}$, $V^{\mathsf{a}}(H) = V(H)\setminus V^{\mathsf b}(H)$, $\mathsf{dif}_0(H) = \{v_3,v_4,v_6,v_{10}\}$, and $\mathsf{dif}(H) = \{v_7,v_{8}\}$.
\label{fig:decorated-path}]{
\includegraphics[width=0.45\linewidth]{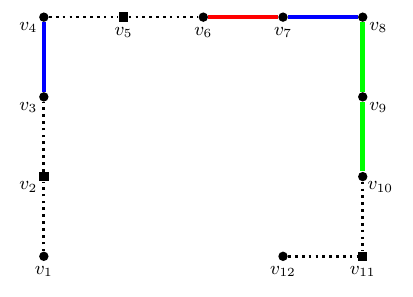}
}
\caption{Decorated cycle and path: disks denote vertices in $V^{\mathsf a}(H)$, squares denote vertices in $V^{\mathsf b}(H)$, and the edge colors (among $L$ total colors) indicate which graph $\bm{G}_\ell$ the edge is drawn from, $\ell \in [L]$.}
\label{fig:decorated-cycle-path}
%\remarksg{It is advisable to replace some edges in the middle part of right-hand side (e.g., $v5-v6$) by some dotted lines.}
\end{figure}

%\begin{figure}
%    \centering
%    \includegraphics[width=0.4\linewidth]{decorated_cycle.pdf}
%        \caption{Decorated cycle: disks denote vertices in $V^{\mathsf a}(H)$, squares denote vertices in $V^{\mathsf b}(H)$, and the edge colors (among $L$ total colors) indicate which graph $\bm{G}_\ell$ the edge is drawn from, $\ell \in [L]$.}
%    \label{}
%\end{figure}
In addition, define 
\begin{equation}{\label{eq-def-f-mathcal-H}}
\begin{split}
    f_{\mathcal H}%&:=\frac{1}{\sqrt{\beta_{\mathcal H}}}\sum_{ [H] \in \mathcal H } \mathbb E_\mathbb P[f_H] \sum_{ S \subset \mathsf{K}_{n,p},S \cong H } f_{S}\\
    &:=\frac{1}{\sqrt{\beta_{\mathcal H}}}\sum_{ [H] \in \mathcal H } \frac{ \Xi(H) }{ n^{\frac{1}{2}\aleph} p^{ \frac{1}{4}|E_0(H)| }  } \sum_{ S \subset \mathsf{K}_{n,p},S \cong H } f_{S} \,.
\end{split}
\end{equation}
Here
\begin{equation}{\label{eq-def-Xi(H)}}
    \Xi(H)= \rho^{ |\mathsf{dif}_0(H)|+2|\mathsf{dif}(H)| } \big( \tfrac{\mu^2}{\gamma} \big)^{ \frac{1}{4}|E_0(H)| } \prod_{1 \leq \ell \leq L} \big( \epsilon^2_{\ell} \lambda_{\ell} \big)^{ \frac{1}{2}|E_{\ell}(H)| } 
\end{equation}
and 
\begin{equation}{\label{eq-def-beta-mathcal-H}}
    \beta_{\mathcal H} = \sum_{ [H] \in \mathcal H } \frac{ \Xi(H)^2 }{ |\mathsf{Aut}(H)| } \,.
\end{equation}
We remark that the coefficients in \eqref{eq-def-f-mathcal-H} are chosen so that 
\begin{align*}
    \mathbb E_\mathbb P[f_H]=[1+o(1)] \frac{ \Xi(H) }{ n^{\frac{1}{2}\aleph} p^{ \frac{1}{4}|E_0(H)| }  } \,, 
\end{align*}
and we refer to Section~\ref{sec:stat-analysis} for more details of the choice of such coefficients.

Algorithm~\ref{alg:detection-meta} below describes our proposed method for detection in contextual multi-layer SBMs.
\begin{breakablealgorithm}{\label{alg:detection-meta}}
\caption{Detection in contextual multi-layer SBMs}
    \begin{algorithmic}[1]
    \STATE {\bf Input:} A rectangular matrix $\bm Y$ and $L$ adjacency matrices $\bm G_1,\ldots,\bm G_L$, a family $\mathcal H$ of non-isomorphic decorated graphs, and a threshold $\tau\geq 0$.
    \STATE Compute $f_{\mathcal H}(\bm Y,\bm G_1,\ldots,\bm G_L)$ according to \eqref{eq-def-f-mathcal-H}. 
    \STATE Let $\mathtt q=1$ if $f_{\mathcal H}(\bm Y,\bm G_1,\ldots,\bm G_L) \geq \tau$ and $\mathtt q=0$ if $f_{\mathcal H}(\bm Y,\bm G_1,\ldots,\bm G_L) < \tau$.
    \STATE {\bf Output:} $\mathtt q$.
    \end{algorithmic}
\end{breakablealgorithm}
We now show that Algorithm~\ref{alg:detection-meta} succeeds under \eqref{eq-assum-upper-bound} and an appropriate choice of the parameter $\aleph$.
The key ingredients are (i) a lower bound on the signal mean $\E_\Pb[f_{\mathcal H}]$, and (ii) control of the fluctuations of $f_{\mathcal H}$, quantified by $\E_{\mathcal Q}[f^2_{\mathcal H}]$ and $\operatorname{Var}_\Pb[f_{\mathcal H}]$, both of which rely on bounds $\beta_{\mathcal H}$.
To this end, we first show the following bound on $\beta_{\mathcal H}$. 
\begin{lemma}{\label{lem-bound-beta-mathcal-H}}
    Define the matrix
    \begin{equation}{\label{eq-def-mathbf-P}}
        \mathbf P = 
        \begin{pmatrix}
            \frac{\mu^2}{\gamma} & 0 & 0 & \ldots & 0 \\
            0 & \epsilon^2_1 \lambda_1 & 0 & \ldots & 0 \\
            0 & 0 & \epsilon^2_2 \lambda_2 & \ldots & 0 \\
            \vdots & \vdots & \vdots & \ddots & \vdots  \\
            0 & 0 & 0 & \ldots & \epsilon^2_L \lambda_L
        \end{pmatrix}
        \begin{pmatrix}
            1 & \rho^2 & \rho^2 & \ldots & \rho^2 \\
            \rho^2 & 1 & \rho^4 & \ldots & \rho^4 \\
            \rho^2 & \rho^4 & 1 & \ldots & \rho^4 \\
            \vdots & \vdots & \vdots & \ddots & \vdots \\
            \rho^2 & \rho^4 & \rho^4 & \ldots & 1 
        \end{pmatrix} \,.
    \end{equation}
    Also denote $\sigma_+(\mathbf P)$ the largest eigenvalue of $\mathbf P$. Then there exists a constant $D=\Theta(1)$ such that 
    \begin{equation}{\label{eq-bound-beta-mathcal-H}}
        \frac{D^{-1}}{\aleph^2} \sigma_+(\mathbf P)^{\aleph} \leq \beta_{\mathcal H} \leq D \sigma_+(\mathbf P)^{\aleph} \,,
    \end{equation}
    In addition, if \eqref{eq-assum-upper-bound} holds, then there exists a constant $\delta>0$ such that $\sigma_+(\mathbf P)>1+\delta$. 
\end{lemma}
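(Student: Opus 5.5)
The plan is to reduce $\beta_{\mathcal H}$ to a trace of a power of $\mathbf P$ by a transfer-matrix encoding of decorated cycles, and then to locate $\sigma_+(\mathbf P)$ through a rank-one perturbation (secular equation) argument.

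\textbf{Encoding cycles as cyclic words.} Read a decorated cycle $[H]\in\mathcal H(\aleph)$ through its $\aleph$ vertices in $V^{\mathsf a}(H)$, taken in cyclic order. Between two consecutive $\mathsf a$-vertices there are only two possibilities. Either there is a single edge of $E^{\mathsf a}$ with colour $\ell\in[L]$, or there is a length-two path $a_i\!-\!b_k\!-\!a_j$ with colour $0$, because $\mathsf b$-vertices only have $\mathsf a$-neighbours. So $H$ corresponds to a cyclic word $(c_1,\ldots,c_\aleph)\in\{0,1,\ldots,L\}^\aleph$, defined up to rotation and reflection.

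\textbf{Reading off $\Xi(H)^2$.} By \eqref{eq-basic-property-mathcal-H}, each letter $0$ contributes two edges of $E_0$. Hence $\Xi(H)^2$ contains the factor $(\mu^2/\gamma)$ once per letter $0$ and the factor $\epsilon_\ell^2\lambda_\ell$ once per letter $\ell$.

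For the $\rho$-factor $\rho^{2|\mathsf{dif}_0(H)|+4|\mathsf{dif}(H)|}$, look at the two letters $c_{s},c_{s+1}$ meeting at the $s$-th $\mathsf a$-vertex.
\begin{itemize}
\item If $c_s=c_{s+1}$, the factor is $1$.
\item If exactly one of them is $0$, the vertex lies in $\mathsf{dif}_0$ and contributes $\rho^2$.
\item If they are distinct and both nonzero, the vertex lies in $\mathsf{dif}$ and contributes $\rho^4$.
\end{itemize}
These are exactly the entries of the second matrix $M$ in \eqref{eq-def-mathbf-P}. Writing $\Lambda=\mathrm{diag}(\mu^2/\gamma,\epsilon_1^2\lambda_1,\ldots)$, the sum of $\Xi^2$ over all rooted and oriented words is therefore $\mathsf{tr}((\Lambda M)^\aleph)=\mathsf{tr}(\mathbf P^\aleph)$.

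\textbf{From words to unlabeled cycles.} The dihedral group of order $2\aleph$ acts on rooted oriented words, and $\Xi$ is invariant under it. The stabiliser of a word is identified with $\mathsf{Aut}(H)$: every automorphism of the decorated cycle must send $\mathsf a$-vertices to $\mathsf a$-vertices, since $\mathsf b$-vertices are exactly the middle vertices of the $0$-letters. By orbit counting, $\beta_{\mathcal H}=\frac{1}{2\aleph}\mathsf{tr}(\mathbf P^\aleph)$. (Small-$\aleph$ degeneracies, for instance a $2$-cycle, can be excluded since $\aleph$ will be taken large.)

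\textbf{Trace bounds.} Since $M\succeq 0$ (shown below), $\mathbf P=\Lambda M$ is similar to $A:=\Lambda^{1/2}M\Lambda^{1/2}\succeq0$. All its eigenvalues are therefore real and nonnegative, and
\[
\sigma_+(\mathbf P)^\aleph\le \mathsf{tr}(\mathbf P^\aleph)\le (L+1)\,\sigma_+(\mathbf P)^\aleph .
\]
This gives \eqref{eq-bound-beta-mathcal-H} with room to spare: the upper bound even gains a factor $1/\aleph$, and the lower bound only needs $1/(2\aleph)\ge 1/\aleph^2$.

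\textbf{Locating $\sigma_+(\mathbf P)$.} Decompose $M=ww^\top+\mathrm{diag}(0,1-\rho^4,\ldots,1-\rho^4)$ with $w=(1,\rho^2,\ldots,\rho^2)^\top$; this also shows $M\succeq0$. Then
\[
A=D+\tilde u\tilde u^\top,\qquad D=\mathrm{diag}\big(0,(1-\rho^4)\epsilon_1^2\lambda_1,\ldots\big),\qquad \tilde u=\Lambda^{1/2}w .
\]
Under \eqref{eq-assum-upper-bound} every $\epsilon_\ell^2\lambda_\ell\le1$, so $\max D\le 1-\rho^4<1$ (if $\rho=1$ then $D=0$ and the same argument applies). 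For $s>\max D$, $s$ is an eigenvalue of $A$ if and only if $g(s):=\tilde u^\top(sI-D)^{-1}\tilde u=1$. The function $g$ is continuous and strictly decreasing on $(\max D,\infty)$ and tends to $0$ as $s\to\infty$. Moreover
\[
g(1)=\frac{\mu^2}{\gamma}+\sum_\ell\frac{\rho^4\epsilon_\ell^2\lambda_\ell}{1-(1-\rho^4)\epsilon_\ell^2\lambda_\ell}.
\]

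To show $g(1)>1+\epsilon$, note that \eqref{eq-assum-upper-bound} as printed has $(\epsilon_\ell^2\lambda_\ell)^2$ in place of $\epsilon_\ell^2\lambda_\ell$. Since $a\mapsto \rho^4a/(1-(1-\rho^4)a)$ is increasing and $a^2\le a$ for $a\le1$, either reading gives $g(1)>1+\epsilon$.

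Hence the largest root $s^*$ of $g(s)=1$ satisfies $s^*>1$. Quantitatively, the derivative $|g'|$ is bounded on $[1,2]$ by a constant depending only on the fixed parameters, since $1-\max D\ge\rho^4>0$. Therefore $g(1+\delta)>1$ for some $\delta=\Theta(\epsilon)$, which gives $\sigma_+(\mathbf P)\ge s^*>1+\delta$.

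\textbf{Main obstacle.} I expect the delicate point to be the bookkeeping in the word encoding. One must check that the graph automorphisms of a decorated cycle coincide with the dihedral symmetries of its word, in particular that they preserve the $\mathsf a/\mathsf b$ bipartition and the colours. One must also check that the $\mathsf{dif}_0$ and $\mathsf{dif}$ exponents match the transfer weights vertex by vertex, including a vertex whose two incident letters are both $0$, which correctly gets weight $1$.
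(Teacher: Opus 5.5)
Your proposal is correct, and it proves the lemma by a different and sharper route than the paper.

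\textbf{Bounding $\beta_{\mathcal H}$.} Your encoding of a decorated cycle as a cyclic word over $\{0,1,\ldots,L\}$ is the same bijection $H\mapsto\widetilde H$ the paper uses. From there the paper proceeds differently:
\begin{itemize}
\item it roots the cycle and cuts it into a \emph{linear} word, discarding the weight of the closing vertex (a loss of at most $\rho^6$);
\item it bounds $|\mathsf{Aut}(H)|\le\aleph$ and the number of rootings crudely, which produces the $\aleph^{-2}$ in the lower bound;
\item it evaluates the linear-word sums through the recursion $(\mathsf X,\mathsf Y_1,\ldots,\mathsf Y_L)(\aleph)=\mathbf P\,(\mathsf X,\mathsf Y_1,\ldots,\mathsf Y_L)(\aleph-1)$ and an eigen-expansion whose leading coefficient is asserted to be positive.
\end{itemize}
By keeping the wrap-around factor and using orbit--stabiliser, you get the exact identity $\beta_{\mathcal H}=\frac{1}{2\aleph}\mathsf{tr}(\mathbf P^\aleph)$ for $\aleph\ge 3$. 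This gives the sharp order $\beta_{\mathcal H}\asymp\aleph^{-1}\sigma_+(\mathbf P)^\aleph$. It also needs no Perron--Frobenius input, since the trace of a power of a matrix with nonnegative spectrum automatically dominates $\sigma_+(\mathbf P)^\aleph$.

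What the paper's linear recursion buys is reuse. It transfers verbatim to decorated paths in Lemma~\ref{lem-bound-beta-mathcal-J}. There the sum is a bilinear form in $\mathbf P^{\aleph-1}$ rather than a trace, so positivity of the overlap with the top eigenvector is genuinely needed.

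\textbf{Showing $\sigma_+(\mathbf P)>1+\delta$.} The paper treats $\widetilde{\mathbf P}$ as a Gaussian covariance and uses that $\mathbb E[e^{X^\top X/2}]<\infty$ iff $\lambda_{\max}<1$. It then imports the Gaussian computation from the proof of Lemma~\ref{lem-bound-chi-2-divergence-given-norm} and appeals to continuity and monotonicity in the parameters. Your secular equation for $A=D+\tilde u\tilde u^\top$ is the same determinant identity, $\det(I-A)=\prod_i(1-D_{ii})\,(1-g(1))$ with your diagonal $D$, but used directly. It yields an explicit $\delta=\Theta(\epsilon)$ without the monotonicity appeal. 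Your remark that the squared $(\epsilon_\ell^2\lambda_\ell)^2$ printed in \eqref{eq-assum-upper-bound} is harmless is also right.

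\textbf{A small correction.} Your stated reason that automorphisms preserve the $\mathsf a/\mathsf b$ types fails for the all-$0$ word. Its decorated cycle is a monochromatic $2\aleph$-cycle, which has colour-preserving symmetries that swap types. This is immaterial: the embedding count \eqref{eq-enumerate-H-in-K-n,p} forces $\mathsf{Aut}(H)$ to mean type- and colour-preserving automorphisms. With that convention, your identification of $\mathsf{Aut}(H)$ with the dihedral stabiliser of the word is exact. For any word with a nonzero letter it holds even for merely colour-preserving automorphisms, because maximal $0$-runs have even length and $\mathsf a$-endpoints.
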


The proof of Lemma~\ref{lem-bound-beta-mathcal-H} is incorporated in Section~\ref{subsec:proof-lem-3.2} of the appendix. Our main result for the detecting algorithm for the contextual multi-layer SBMs can be summarized as follows:
\begin{proposition}{\label{main-prop-detection}}
    Suppose that \eqref{eq-assum-upper-bound} holds, and we choose  
    \begin{align}{\label{eq-condition-strong-detection}}
        \omega(1)=\aleph=o(\tfrac{\log n}{\log\log n})  \,.
    \end{align}
    Then we have
    \begin{align}{\label{eq-moment-control}}
        \mathbb E_{\Pb}\big[ f_{\mathcal H} \big]=\omega(1), \ \mathbb E_{\Qb}\big[ f_{\mathcal H}^2 \big]=1+o(1) \mbox{, and } \operatorname{Var}_{\Pb}\big[ f_{\mathcal H} \big]=o(1) \cdot \mathbb E_{\Pb}\big[ f_{\mathcal H} \big]^2 \,.
    \end{align}
\end{proposition}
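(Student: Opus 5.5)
The proof has three parts: compute first moments under $\Pb$, use orthogonality under $\Qb$, and bound the covariances of pairs of decorated cycles under $\Pb$. Lemma~\ref{lem-bound-beta-mathcal-H} supplies the normalization $\beta_{\mathcal H}$ throughout.

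\textbf{First moment.} For a labeled decorated cycle $S\cong H$ with $[H]\in\mathcal H$, condition on $(\bm x,\{\bm z_\ell\},\bm u)$. Each $E_0$-edge $(a_i,b_k)$ contributes $\sqrt{\mu/n}\,\bm x(i)\bm u(k)$. Each $E_\ell$-edge $(a_i,a_j)$ contributes $\E[\overline{\bm G}_\ell(i,j)\mid\cdot]=\epsilon_\ell\sqrt{\lambda_\ell/n}\,\bm x_\ell(i)\bm x_\ell(j)$. Since every $b_k$ has degree two in the cycle, we have $\E[\bm u(k)^2]=1$. The product of the signs then telescopes around the cycle. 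Vertices where two incident edges carry the same decoration give $\bm x(i)^2=1$ or $\bm x_\ell(i)^2=1$. A vertex in $\mathsf{dif}_0(H)$ leaves a factor $\bm z_\ell(i)$, with mean $\rho$. A vertex in $\mathsf{dif}(H)$ leaves $\bm z_\ell(i)\bm z_{\ell'}(i)$, with mean $\rho^2$. Collecting powers of $n$ and using $p=n/\gamma$ gives
\[
\E_{\Pb}[f_S]=\frac{\Xi(H)}{n^{\aleph/2}p^{|E_0(H)|/4}}.
\]
The conditioning on $\|\bm u\|^2$ is not needed here. Since $\#\{S\cong H\}=(1+o(1))\,n^{\aleph}p^{|E_0|/2}/|\mathsf{Aut}(H)|$ when $\aleph=o(\sqrt n)$, this yields
\[
\E_{\Pb}[f_{\mathcal H}]=(1+o(1))\sqrt{\beta_{\mathcal H}}\ \ge\ \frac{D^{-1/2}}{\aleph}\,\sigma_+(\mathbf P)^{\aleph/2}=\omega(1),
\]
by Lemma~\ref{lem-bound-beta-mathcal-H} with $\sigma_+(\mathbf P)>1+\delta$ and $\aleph=\omega(1)$.

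\textbf{Second moment under $\Qb$.} Under $\Qb$ the entries $\bm Y(i,k)$ and $\overline{\bm G}_\ell(i,j)$ are independent, mean zero and of unit variance. Hence $\E_\Qb[f_Sf_{S'}]=\mathbf 1\{S=S'\}$ as decorated graphs, since $f_S$ is a product of distinct entries. Therefore
\[
\E_\Qb[f_{\mathcal H}^2]=\beta_{\mathcal H}^{-1}\sum_{[H]}\frac{\Xi(H)^2}{n^{\aleph}p^{|E_0|/2}}\,\#\{S\cong H\}=1+o(1).
\]

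\textbf{Variance under $\Pb$; this is the main obstacle.} Expand $\operatorname{Var}_\Pb[f_{\mathcal H}]$ as a sum over pairs $(S,K)$ of the covariance $\E_\Pb[f_Sf_K]-\E_\Pb[f_S]\E_\Pb[f_K]$, grouped by the intersection pattern $S\Cap K$ and $S\cap K$. The plan for each pattern is as follows.

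\begin{enumerate}[label=(\roman*)]
\item \emph{Vertex-disjoint pairs.} Since $\bm x$, $\bm z_\ell$ and $\bm u$ are i.i.d.\ over coordinates, $f_S$ and $f_K$ are independent, so the covariance is $0$. An $\bm u$-coordinate shared only through $V^{\mathsf b}$ is handled the same way.
\item \emph{Pairs sharing vertices but no edges.} Conditioning on the labels at the shared vertices still makes the $\rho$-factors factorize, so the covariance is at most a multiplicative $1+O(\aleph^{2}/n)$ correction.
\item \emph{Pairs sharing edges, $S\Cap K\neq\emptyset$.} Bound $\E_\Pb[f_Sf_K]$ by decomposing $S\cup K$. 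Shared edges contribute second moments, with $\E[\overline{\bm G}_\ell^2]=O(1)$ and $\E[\bm Y^2]=O(1)$ up to the signal part. The remaining edges form paths whose expectation is computed as in the first-moment step. The key point is that sparse-graph second moments $\E[\overline{\bm G}_\ell(i,j)^k]\approx(n/\lambda_\ell)^{k/2-1}$ appear only for repeated edges, and a cycle union with $s$ shared edges loses a factor $n^{-\Omega(\text{\#components})}$. Summing over patterns, using the $(1+o(1))$ count of embeddings and at most $(C\aleph)^{O(\#\text{components})}$ ways to place overlaps, each overlap pattern should contribute at most
\[
\E_\Pb[f_{\mathcal H}]^2\cdot \frac{(C\aleph)^{O(k)}}{n^{\Omega(k)}},
\]
or, for overlaps forming a long path, $O(\beta_{\mathcal H}^{-1}\sigma_+(\mathbf P)^{\aleph})$ times a polynomial in $\aleph$.
\end{enumerate}

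The condition $\aleph=o(\log n/\log\log n)$ makes $\aleph^{O(\aleph)}=n^{o(1)}$, which absorbs the combinatorial factors. Using $\beta_{\mathcal H}\ge D^{-1}\aleph^{-2}\sigma_+^\aleph$, I expect the path-overlap terms to be controlled by a transfer-matrix bound in $\mathbf P$ along the shared path. This should give a total of $O(\aleph^{O(1)})=o(\beta_{\mathcal H})\cdot\E_\Pb[f_{\mathcal H}]^2/\beta_{\mathcal H}$, using that $\beta_{\mathcal H}$ grows exponentially in $\aleph$. The delicate part is verifying that the weights $\Xi$ make the overlap sums bounded by powers of $\mathbf P$ with spectral radius $\sigma_+(\mathbf P)$. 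Here I would first try to express the sum over decorations of a shared path as an entry of $\mathbf P^{k}$, or of a closely related symmetric matrix, and then invoke Lemma~\ref{lem-bound-beta-mathcal-H}.
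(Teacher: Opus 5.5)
Your first-moment computation under $\Pb$ and the orthogonality argument under $\Qb$ match the paper's Lemma~\ref{lem-mean-var-f-H-part-1}. The variance step, however, is only a plan, and the mechanism you describe does not give the bound you assert.

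\textbf{The diagonal term.} The paper's key input is Lemma~\ref{lem-est-cov-f-S-f-K}:
$\E_{\Pb}[f_Sf_K]\le C^{|\mathsf L(S\Cap K)|+|\mathsf L(S\cap K)|+1}\,\mathtt P(S,K)\,\Omega(S\Cap K)/\sqrt{\Omega(S)\Omega(K)}$.
Its constant grows with the number of \emph{leaves} of the overlap, not with the number of shared edges. It comes from expanding the conditional second moments $\prod(1+\epsilon_\ell\bm x_\ell(i)\bm x_\ell(j))\prod(1+\tfrac{\mu}{n}\bm x(i)^2\bm u(k)^2)$ over the shared edges. A subset $\mathtt F$ of shared graph edges has nonzero expectation only if its leaves lie in $\mathsf L(S\Cap K)\cup V(\mathtt E)$, so at most $2^{|\mathsf L(S\Cap K)|+2|\mathtt E|+1}$ terms survive.

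Your recipe, bounding each shared edge's second moment by $O(1)$ and treating the rest as a first moment, instead gives a factor $C^{|E(S\Cap K)|}$. This is harmless for $S\neq K$, where it is $n^{o(1)}$, but it breaks the diagonal $S=K$. There the relative contribution becomes $\beta_{\mathcal H}^{-2}\sum_{[H]\in\mathcal H}C^{|E(H)|}\Xi(H)^2/|\mathsf{Aut}(H)|$. Since $\beta_{\mathcal H}$ grows only like $\sigma_+(\mathbf P)^{\aleph}$, with $\sigma_+(\mathbf P)$ possibly just $1+\delta$ while $C$ is a fixed constant such as $1+\epsilon_\ell$, this need not vanish. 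You need $\E_{\Pb}[f_S^2]=O(1)$ uniformly in $S$, which makes the diagonal $C/\beta_{\mathcal H}=o(1)$. Your claimed shape $(C\aleph)^{O(k)}$, with $k$ the number of overlap components, is right, but it requires exactly this parity/leaf argument, which is missing.

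\textbf{The off-diagonal terms need no transfer matrix.} If $S\neq K$ and $V(S)\cap V(K)\neq\emptyset$, then $|E(S\Cap K)|\le |V(S)\cap V(K)|-1$, because the matching-decoration overlap of two distinct decorated cycles is a forest. Fix $S$ and the overlap size $t$. The number of $K\cong H$ is at most $n^{|V(H)|-t+o(1)}/|\mathsf{Aut}(H)|$, while $\Omega(S\Cap K)$ gains only $n^{|E(S\Cap K)|}$, so every such class loses a full factor $n^{-1}$. Since $\aleph=o(\log n/\log\log n)$, all remaining weights are $n^{o(1)}$: $\mathtt P(S,K)\le 1\le n^{o(1)}\,\Xi(S)\Xi(K)$, $C^{\#\text{leaves}}$, and $(2\aleph)!$. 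The whole off-diagonal sum is therefore $n^{-1+o(1)}$; this is Lemma~\ref{lem-detection-most-technical}. So $\sigma_+(\mathbf P)$ enters only through $\beta_{\mathcal H}=\omega(1)$ on the diagonal. Your alternative bound $O(\beta_{\mathcal H}^{-1}\sigma_+(\mathbf P)^{\aleph})\cdot\mathrm{poly}(\aleph)$ for long-path overlaps would not be $o(1)$ if read relative to $\E_{\Pb}[f_{\mathcal H}]^2$.

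\textbf{Two false claims in (i)--(ii).}
\begin{itemize}
\item Pairs sharing only a $b$-vertex are not independent, since $\E[\bm u(k)^4]=3\neq 1$.
\item For pairs sharing only vertices, the per-pair ratio $\E_{\Pb}[f_Sf_K]/(\E_{\Pb}[f_S]\E_{\Pb}[f_K])$ is of constant order (powers of $\rho^{-1}$, or $3$), not $1+O(\aleph^2/n)$.
\end{itemize}
What is small is the number of such pairs, and they are covered by the same $n^{-1}$ counting.
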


Combining these variance bounds with Chebyshev’s inequality, we arrive at the following sufficient condition for the statistic $f_{\mathcal H}$ to achieve strong detection.
\begin{thm}{\label{MAIN-THM-detection}}
    Suppose that \eqref{eq-assum-upper-bound} holds and we choose $\aleph$ according to \eqref{eq-condition-strong-detection}. Then the testing error satisfies
    \begin{equation}{\label{eq-testing-error}}
        \Pb\big( f_{\mathcal H} \leq\tau \big) + \Qb\big( f_{\mathcal H} \geq\tau \big) = o(1) \,,
    \end{equation}
    where the threshold is chosen as
    \begin{equation*}
        \tau = c \cdot \mathbb E_{\Pb}\big[ f_{\mathcal H} \big]
    \end{equation*}
    for any fixed constant $0<c<1$. In particular, Algorithm~\ref{alg:detection-meta} described above achieves strong detection between $\Pb$ and $\Qb$.
\end{thm}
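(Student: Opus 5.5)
The plan is to derive Theorem~\ref{MAIN-THM-detection} directly from Proposition~\ref{main-prop-detection} using Chebyshev's inequality, once under $\Pb$ and once under $\Qb$. Fix $0<c<1$ and set $\tau=c\,\mathbb E_{\Pb}[f_{\mathcal H}]$. Proposition~\ref{main-prop-detection} gives $\mathbb E_{\Pb}[f_{\mathcal H}]=\omega(1)$, so $\tau=\omega(1)$ as well, and this is what makes the null error small.

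For the type~II error under $\Pb$, the event $f_{\mathcal H}\le\tau$ forces $f_{\mathcal H}-\mathbb E_{\Pb}[f_{\mathcal H}]\le -(1-c)\mathbb E_{\Pb}[f_{\mathcal H}]$. Chebyshev's inequality then gives
\begin{align*}
    \Pb\big( f_{\mathcal H}\le \tau \big) \le \frac{\operatorname{Var}_{\Pb}[f_{\mathcal H}]}{(1-c)^2\,\mathbb E_{\Pb}[f_{\mathcal H}]^2} = \frac{o(1)}{(1-c)^2}=o(1)\,,
\end{align*}
where we use the third bound in \eqref{eq-moment-control}.

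For the type~I error under $\Qb$, we avoid computing $\mathbb E_{\Qb}[f_{\mathcal H}]$ (it is in fact $0$, since each $f_S$ is a product over distinct edges of centered independent variables). Markov's inequality applied to $f_{\mathcal H}^2$ suffices:
\begin{align*}
    \Qb\big( f_{\mathcal H}\ge\tau \big) \le \Qb\big( f_{\mathcal H}^2\ge \tau^2 \big) \le \frac{\mathbb E_{\Qb}[f_{\mathcal H}^2]}{c^2\,\mathbb E_{\Pb}[f_{\mathcal H}]^2} = \frac{1+o(1)}{\omega(1)} = o(1)\,.
\end{align*}
Here the first inclusion uses $\tau>0$, which holds for large $n$. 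Adding the two bounds gives \eqref{eq-testing-error}. Because $f_{\mathcal H}$ is thresholded exactly as in Algorithm~\ref{alg:detection-meta}, that algorithm achieves strong detection in the sense of Definition~\ref{def-strong-detection}.

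Given Proposition~\ref{main-prop-detection}, this deduction has no real obstacle; all the difficulty sits inside the Proposition, which we take as given. The one point needing care is that $\tau$ depends on $\mathbb E_{\Pb}[f_{\mathcal H}]$. This quantity is deterministic given the parameters, and by the remark after \eqref{eq-def-beta-mathcal-H} it equals $\sqrt{\beta_{\mathcal H}}\,(1+o(1))$, which is computable up to a $1+o(1)$ factor. Perturbing $c$ by a $1+o(1)$ factor does not affect either bound, so the test stays valid.
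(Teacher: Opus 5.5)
Your argument is correct and matches the paper's proof. Both use Chebyshev's inequality under $\Pb$ together with $\operatorname{Var}_{\Pb}[f_{\mathcal H}]=o(1)\cdot\mathbb E_{\Pb}[f_{\mathcal H}]^2$, and your Markov bound on $f_{\mathcal H}^2$ under $\Qb$ is exactly the paper's Chebyshev step, since $\mathbb E_{\Qb}[f_{\mathcal H}]=0$ gives $\operatorname{Var}_{\Qb}[f_{\mathcal H}]=\mathbb E_{\Qb}[f_{\mathcal H}^2]$. One small citation slip in your side remark: the asymptotic $\mathbb E_{\Pb}[f_{\mathcal H}]=(1+o(1))\sqrt{\beta_{\mathcal H}}$ comes from Lemma~\ref{lem-mean-var-f-H-part-1}, not from the remark after \eqref{eq-def-beta-mathcal-H}, and that remark is not needed for the theorem.
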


\begin{remark}
    From a computational perspective, the evaluation of each
    \begin{align*}
        \sum_{ S \subset \mathsf K_{n,p}: S \cong H } f_{S}
    \end{align*}
    in \eqref{eq-def-f-mathcal-H} by exhaustive search takes $n^{O(\aleph)}$ time which is super-polynomial when $\aleph=\omega(1)$. To resolve this computational issue, we can find a polynomial-time algorithm to compute an approximation $\widetilde{f}_{\mathcal H}$ for $f_{\mathcal H}$ using the strategy of color coding \cite{AYZ95, AR02, HS17, MWXY24, Li25+}. Similarly as in \cite{HS17, MWXY24, Li25+}, we can show that the statistic $\widetilde f_{\mathcal H}$ achieves strong detection under the same condition as in Theorem~\ref{MAIN-THM-detection}. Since this type of analysis is now standard in the color coding literature, we omit the details and refer readers to e.g., \cite{Li25+} for details. For brevity, analogous color-coding arguments in the recovery algorithm are also omitted.
\end{remark}

\subsection{The recovery statistic and theoretical guarantees}

Recall \eqref{eq-def-f-H,A}. Let $\aleph\in\mathbb N$ be a parameter that will be decided later. In addition, define $\mathcal J_{\star}=\mathcal J_{\star}(\aleph)$ to be the collection of unlabeled decorated paths $[H]$ such that $|V^{\mathsf a}(H)|=\aleph+1$, and define (recall \eqref{eq-def-V-ell-decorated} and that $\mathsf L(H)$ denotes the set of leaves of $H$)
\begin{equation}{\label{eq-def-mathcal-J}}
    \mathcal J=\mathcal J(\aleph):= \Big\{ [H] \in \mathcal J_{\star}: \mathsf{L}(H) \subset V^{\mathsf a}(H) \Big\} \,.
\end{equation}
See Figure~\ref{fig:decorated-path} for an example of a decorated path. It is clear that if we label the vertices in $V(H)$ in the order $v_1,\ldots,v_{m+1}$ such that $(v_i,v_{i+1}) \in E(H)$, then we have $v_1,v_{m+1} \in V^{\mathsf a}(H)$. Also, for all $[H] \in \mathcal J(\aleph)$ we have
\begin{equation}{\label{eq-basic-property-mathcal-J}}
    |E(H)|=\aleph+\tfrac{1}{2}|E_0(H)|,\ |V^{\mathsf b}(H)|=\tfrac{1}{2}|E_0(H)| \,.
\end{equation}
Recall \eqref{eq-def-Xi(H)}. Define 
\begin{equation}{\label{eq-def-Phi-i,j-mathcal-J}}
    \Phi_{u,v}^{\mathcal J} := \sum_{[H] \in \mathcal J} \frac{ \Xi(H) }{ n^{\frac{1}{2}\aleph-1} p^{\frac{1}{4}|E_0(H)|} \beta_{\mathcal J} } \sum_{ \substack{ S \subset \mathsf K_{n,p}: S \cong H \\ \mathsf L(S)=\{ a_u,a_v \} } } f_{S} \, 
\end{equation}
for each $u,v \in [n]$, where $a_u,a_v$ are vertices corresponding to $u,v$. Here 
\begin{equation}{\label{eq-def-beta-mathcal-J}}
    \beta_{\mathcal J} = \sum_{[H] \in \mathcal J} \frac{ \Xi(H)^2 }{ |\mathsf{Aut}(H)| } \,.
\end{equation}
Our proposed method of recovery in contextual multi-layer SBMs is as follows.
\begin{breakablealgorithm}{\label{alg:recovery-meta}}
\caption{Recovery in contextual multi-layer SBMs}
    \begin{algorithmic}[1]
    \STATE {\bf Input:} A rectangular matrix $\bm Y$ and $L$ adjacency matrices $\bm G_1,\ldots,\bm G_L$, a family $\mathcal J$ of non-isomorphic decorated graphs.
    \STATE For each pair $u,v \in [n]$, compute $\Phi_{u,v}^{\mathcal J}$ as in \eqref{eq-def-Phi-i,j-mathcal-J}. 
    \STATE Use the correlation preserving projection technique in \cite[Section~2.3]{HS17} to round the matrix $\Phi^{\mathcal J}$ into a positive definite matrix $\widehat{\Phi}$.
    \STATE {\bf Output:} $\widehat \Phi$.
    \end{algorithmic}
\end{breakablealgorithm}
Similarly as in Lemma~\ref{lem-bound-beta-mathcal-H}, we need the following bound on $\beta_{\mathcal J}$.

\begin{lemma}{\label{lem-bound-beta-mathcal-J}}
    Recall the definition of $\mathbf P$ in \eqref{eq-def-mathbf-P} and recall we let $\sigma_+(\mathbf P)$ be the largest eigenvalue of $\mathbf P$. Then there exists a constant $D=\Theta(1)$ such that 
    \begin{equation}{\label{eq-bound-beta-mathcal-J}}
        D^{-1} \sigma_+(\mathbf P)^{\aleph} \leq \beta_{\mathcal J}, \beta_{\mathcal J_{\star}} \leq D \sigma_+(\mathbf P)^{\aleph} \,,
    \end{equation}
    In addition, if \eqref{eq-assum-upper-bound} holds, then there exists a constant $\delta>0$ such that $\sigma_+(\mathbf P)>1+\delta$. 
\end{lemma}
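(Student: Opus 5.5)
The plan is to write $\beta_{\mathcal J}$ (and $\beta_{\mathcal J_\star}$) as a transfer-matrix sum along the path. An unlabeled decorated path $[H]\in\mathcal J_\star$ with $|V^{\mathsf a}(H)|=\aleph+1$ is determined, up to reversal, by a word of length $\aleph$. The $t$-th letter records how consecutive $\mathsf a$-vertices $v_t,v_{t+1}$ are joined: either by an $\mathsf a$-edge with decoration $\ell\in[L]$, or by a two-step detour $a-b-a$ through a $\mathsf b$-vertex, which we call state $0$. For $\mathcal J_\star$ one additionally allows a pendant $\mathsf b$-leaf at each end; this changes the sum only by a bounded factor.

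Since paths have $|\mathsf{Aut}(H)|\in\{1,2\}$, we get $\beta_{\mathcal J}\asymp\sum_{\text{words}}\Xi(H)^2$. Now factor $\Xi(H)^2$ along the word.
\begin{itemize}
\item A letter $0$ contributes two $E_0$-edges, hence the factor $(\mu^2/\gamma)^{2\cdot\frac14\cdot 2}=\mu^2/\gamma$.
\item A letter $\ell\geq1$ contributes $(\epsilon_\ell^2\lambda_\ell)^{\frac12\cdot2}=\epsilon_\ell^2\lambda_\ell$.
\item Each interior $\mathsf a$-vertex $v_{t+1}$ lies in $\mathsf{dif}_0(H)$ exactly when one adjacent letter is $0$ and the other is nonzero. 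It lies in $\mathsf{dif}(H)$ exactly when the adjacent letters are distinct and both nonzero. The vertex sets $\mathsf{dif}_0$ and $\mathsf{dif}$ are disjoint as long as a vertex has at most two colors, which holds in a path.
\end{itemize}
The $\rho$-factor in $\Xi(H)^2$ is $\rho^{2|\mathsf{dif}_0|+4|\mathsf{dif}|}$. This equals the product over consecutive letter pairs $(s,s')$ of $R_{s,s'}$, where $R$ is the second matrix in \eqref{eq-def-mathbf-P}: $R_{ss}=1$, $R_{0\ell}=\rho^2$, $R_{\ell\ell'}=\rho^4$.

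Hence, with $\Lambda=\mathrm{diag}(\mu^2/\gamma,\epsilon_1^2\lambda_1,\dots)$,
\begin{align*}
\beta_{\mathcal J}\asymp \mathbf 1^{\top}\Lambda(R\Lambda)^{\aleph-1}\mathbf 1=\mathbf 1^{\top}(\Lambda R)^{\aleph-1}\Lambda\mathbf 1,
\end{align*}
up to boundary factors of order $\Theta(1)$. Note that $\mathbf P=\Lambda R$. It is similar to $\Lambda^{1/2}R\Lambda^{1/2}$, which is symmetric, and $R\succ0$ whenever $\rho<1$ (it is a Gram/covariance matrix of $(U,V_1,\dots,V_L)$ in \eqref{eq-Gaussian-covariance-structure}).

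The bounds then follow from Perron--Frobenius. $\mathbf P$ has strictly positive entries as long as $\rho>0$, so its Perron vector $w$ is strictly positive, and $\mathbf 1$ and $\Lambda\mathbf 1$ are comparable to $w$ and to the left eigenvector up to constants depending only on the parameters, with $L=O(1)$. This gives $D^{-1}\sigma_+^{\aleph}\leq\beta\leq D\sigma_+^{\aleph}$. For the upper bound we use $\mathbf P^{k}\mathbf 1\leq C\mathbf P^k w=C\sigma_+^k w$, and for the lower bound the analogous inequality in the reverse direction. The $\mathsf{Aut}$ factor and the reversal double counting are absorbed into $D$. The same argument applies to $\mathcal J_\star$ after adding the end states.

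For the second claim, that \eqref{eq-assum-upper-bound} forces $\sigma_+(\mathbf P)>1+\delta$, we use a Schur complement. Write $R=\rho^4\mathbf 1\mathbf 1^\top$-type structure with $V_\ell=\rho^2U+\sqrt{1-\rho^4}W_\ell$. Then $\det(\mathbb I-\mathbf P)=0$ can be analyzed via the symmetric form $\mathbb I-\Lambda^{1/2}R\Lambda^{1/2}$. Since each $\epsilon_\ell^2\lambda_\ell(1-\rho^4)<1$, the block on the graph coordinates is positive definite, and the matrix determinant lemma reduces $\sigma_+>1$ to
\begin{align*}
\frac{\mu^2}{\gamma}+\sum_\ell\frac{\rho^4\epsilon_\ell^2\lambda_\ell}{1-(1-\rho^4)\epsilon_\ell^2\lambda_\ell}>1,
\end{align*}
which mirrors the Gaussian computation in the proof of Lemma~\ref{lem-bound-chi-2-divergence-given-norm}. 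This quantity exceeds $1+\epsilon$ under the assumption, and continuity in the uniform margin gives a constant $\delta$.

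The main obstacle is the exact bookkeeping at the ends of the path. This includes the leaf condition $\mathsf L(H)\subset V^{\mathsf a}(H)$ versus $\mathcal J_\star$ with $\mathsf b$-leaves, the endpoints not counting in $\mathsf{dif}$, and the factor-$2$ automorphisms of palindromic words. All of these only affect constants, which is what $D$ absorbs.
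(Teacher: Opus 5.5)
Your proposal is correct. The two-sided bound follows the paper: you encode a path by its word, use $|\mathsf{Aut}(H)|\le 2$, and sum to get $\mathbf 1^{\top}\mathbf P^{\aleph-1}\Lambda\mathbf 1$ (with your $\Lambda$). This is exactly the paper's recursion $(\mathsf X,\mathsf Y_1,\ldots,\mathsf Y_L)(\aleph)=\mathbf P\,(\mathsf X,\mathsf Y_1,\ldots,\mathsf Y_L)(\aleph-1)$.

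The differences lie in how the two conclusions are extracted.
\begin{itemize}
\item \textbf{Growth rate.} The paper appeals to an eigen-expansion with positive leading coefficient ($\mathbf D_{l,1}>0$). Your Perron--Frobenius sandwich $c\,w\le \mathbf 1,\ \Lambda\mathbf 1\le C\,w$ (entrywise, using $\rho>0$) gives both bounds uniformly in $\aleph$. It never needs to control the other eigen-coefficients. You also identify the pendant $\mathsf b$-leaf end states for $\mathcal J_\star$, which the paper only says can be handled ``in the same manner''.
\item \textbf{Threshold $\sigma_+(\mathbf P)>1+\delta$.} The paper combines the criterion $\mathbb E[e^{X^{\top}X/2}]<\infty \iff \lambda_{\max}(\widetilde{\mathbf P})<1$, the Gaussian computation in the proof of Lemma~\ref{lem-bound-chi-2-divergence-given-norm}, and a continuity/monotonicity remark. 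Your Schur-complement argument is the same computation in linear-algebraic form: integrating out the $W_\ell$ is taking the Schur complement. It yields the strict equivalence and the margin directly. Write $R=\mathrm{diag}(0,1-\rho^4,\ldots,1-\rho^4)+gg^{\top}$ with $g=(1,\rho^2,\ldots,\rho^2)^{\top}$ and set $h=\Lambda^{1/2}g$. Then $s\mathbb I-\widetilde{\mathbf P}=A_s-hh^{\top}$ with $A_s$ diagonal. Hence $\sigma_+(\mathbf P)>s$ iff $\frac{\mu^2}{\gamma s}+\sum_{\ell}\frac{\rho^4\epsilon_\ell^2\lambda_\ell}{s-(1-\rho^4)\epsilon_\ell^2\lambda_\ell}>1$, and taking $s=1+\delta$ produces $\delta$.
\end{itemize}

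Two precision points.
\begin{itemize}
\item The positive definite block in your Schur complement must be the diagonal part $A_1$ from this splitting, or equivalently the $W$-block in the $(U,W_1,\ldots,W_L)$ coordinates. The graph-coordinate block of $\mathbb I-\Lambda^{1/2}R\Lambda^{1/2}$ itself need not be positive definite. For example, with two layers and $\epsilon_\ell^2\lambda_\ell=1$ it equals $\rho^4(\mathbb I-\mathbf 1\mathbf 1^{\top})$.
\item The combined term you use is the one appearing in $F$. The version with $(\epsilon_\ell^2\lambda_\ell)^2$ written in \eqref{eq-assum-upper-bound} is no larger, because $\epsilon_\ell^2\lambda_\ell\le 1$ and $y\mapsto \rho^4y/(1-(1-\rho^4)y)$ is increasing. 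So that assumption still implies your condition.
\end{itemize}
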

The proof of Lemma~\ref{lem-bound-beta-mathcal-J} is incorporated in Section~\ref{subsec:proof-lem-3.6} of the appendix. 
Compared with Lemma~\ref{lem-bound-beta-mathcal-H} for $\beta_{\mathcal H}$, the lower bounds for
$\beta_{\mathcal J}$ and $\beta_{\mathcal J_\star}$ are smaller by a factor $\aleph^{-2}$.
This is due to the automorphism normalization: a decorated path has at most $2$ automorphisms,
whereas a length-$\aleph$ cycle has $\Theta(\aleph)$ automorphisms. See Appendix~\ref{sub:supp-proofs-sec-3} for more details.
The key to our argument is the following result, which shows that $\Phi_{u,v}^{\mathcal J}$ is positively correlated with $\bm x_u \bm x_v$ and has bounded variance. 
\begin{proposition}{\label{main-prop-recovery}}
    Suppose that \eqref{eq-assum-upper-bound} holds and we choose $\aleph$ such that 
    \begin{align}{\label{eq-condition-weak-recovery}}
        \aleph=\Theta_{\delta}(\log n), \quad (1+\delta)^{\aleph} \geq n^2 \,.
    \end{align}
    Then we have 
    \begin{align}
        \mathbb E_{\Pb}\Big[ \Phi^{\mathcal J}_{u,v} \cdot \bm x_u \bm x_v \Big] \geq \rho^2 \mbox{ and } \mathbb E_{\Pb}\Big[ \big( \Phi^{\mathcal J}_{u,v} \big)^2 \Big] \leq O(1) \,.  \label{eq-L2-estimation-error}
    \end{align}
\end{proposition}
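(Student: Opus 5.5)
We would prove the two bounds in \eqref{eq-L2-estimation-error} separately: a first-moment computation for each decorated path, and a second-moment computation organized by the overlap pattern of pairs of decorated paths.

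\textbf{First moment.} Fix $u\neq v$ and $S\cong H$ with $[H]\in\mathcal J$ and $\mathsf L(S)=\{a_u,a_v\}$. Conditionally on $(\bm x,\bm x_1,\ldots,\bm x_L,\bm u)$, the factors in $f_S$ are independent. Each edge $(a_i,a_j)\in E_\ell(S)$ contributes $\epsilon_\ell\sqrt{\lambda_\ell/n}\,\bm x_\ell(i)\bm x_\ell(j)$. Each $\mathsf b$-vertex $b_k$ with its two incident edges $(a_i,b_k),(a_j,b_k)$ contributes $\frac{\mu}{n}\bm x(i)\bm x(j)\bm u_k^2$, which has mean $\frac{\mu}{n}\bm x(i)\bm x(j)$ after averaging over $\bm u$. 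Multiplying by $\bm x_u\bm x_v$ and averaging over $\bm z_\ell$, every internal $\mathsf a$-vertex yields $\E[\bm x_\ell(i)\bm x_{\ell'}(i)]$, i.e.\ a factor $\rho$ for a vertex in $\mathsf{dif}_0$ and $\rho^2$ for a vertex in $\mathsf{dif}$, and $1$ otherwise. The two endpoints contribute $\E[\bm x(u)\bm x_\ell(u)]=\rho$ when the incident edge has color $\ell\ge 1$, and $1$ when it has color $0$.

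Using $p=n/\gamma$, this gives
\[
\E_{\Pb}[f_S\,\bm x_u\bm x_v]=\frac{\Xi(H)}{n^{\aleph/2}p^{|E_0(H)|/4}}\cdot \rho^{\#\{\text{endpoints with nonzero color}\}}\ge \rho^2\,\frac{\Xi(H)}{n^{\aleph/2}p^{|E_0(H)|/4}},
\]
with all terms nonnegative. The number of $S\cong H$ with the prescribed leaves is about $n^{\aleph-1}p^{|V^{\mathsf b}(H)|}/|\mathsf{Aut}(H)|$, up to a factor $1-O(\aleph^2/n)$ and a factor $2$ for orientation. Summing and using the normalization by $\beta_{\mathcal J}$ in \eqref{eq-def-Phi-i,j-mathcal-J} then yields $\E_{\Pb}[\Phi^{\mathcal J}_{u,v}\bm x_u\bm x_v]\ge\rho^2(1-o(1))$. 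The exact constant $\rho^2$ follows once the normalization is adjusted by the orientation factor; if needed we absorb a $1\pm o(1)$ factor, since only positivity of order $\rho^2$ is used.

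\textbf{Second moment.} Expand
\[
\E_{\Pb}\big[(\Phi^{\mathcal J}_{u,v})^2\big]=\sum_{S,K}c_Sc_K\,\E_{\Pb}[f_Sf_K],
\]
where $S,K$ range over decorated paths from $a_u$ to $a_v$. Classify each pair by the decorated overlap $S\Cap K$ and the undecorated intersection $S\cap K$.

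\emph{Disjoint-interior pairs.} When $S$ and $K$ share only their endpoints, the conditional expectation factorizes up to the shared labels at $u$ and $v$. The sum is then at most $(\sum_S c_S\E f_S)^2$, which by the first-moment computation (with the lost factors of $\rho$ now bounded by $1$) is $O(1)$.

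\emph{Overlapping pairs.} A shared edge with the same color gives $\E[\overline{\bm G}_\ell(i,j)^2]=1+O(1/n)$ in place of the squared signal $\epsilon_\ell^2\lambda_\ell/n$, and similarly for shared $\bm Y$-entries. Each shared segment of $m$ vertices thus gains roughly $n^{m}\cdot$(weights) but loses $n^{m}$ in the vertex count. We would bound the contribution of pairs whose union has $k$ shared segments by
\[
\beta_{\mathcal J}^{-2}\,\sigma_+(\mathbf P)^{\aleph}\,(C\aleph)^{O(k)}\,n^{-\Omega(k)}
\]
times $O(1)$. The bound uses Lemma~\ref{lem-bound-beta-mathcal-J} together with the transfer-matrix interpretation: sums of $\Xi(H)^2$ over decorated segments are controlled by powers of $\mathbf P$. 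The leading overlapping term, where $S=K$ up to decoration, equals
\[
\frac{\sum_H \Xi(H)^2 n^{\aleph-1}p^{|V^{\mathsf b}|}/|\mathsf{Aut}|}{n^{\aleph-2}p^{|E_0|/2}\beta_{\mathcal J}^2}\asymp \frac{n}{\beta_{\mathcal J}}\le n\,(1+\delta)^{-\aleph}\le n^{-1}.
\]
This is exactly where the condition $(1+\delta)^\aleph\ge n^2$ from \eqref{eq-condition-weak-recovery} enters, with $\sigma_+(\mathbf P)>1+\delta$ supplied by Lemma~\ref{lem-bound-beta-mathcal-J}.

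\textbf{Main obstacle.} The hard step is the sparse-graph correction in the overlapping pairs. A shared edge produces $\E[\overline{\bm G}_\ell^2]\approx1$ rather than the signal size, and shared vertices carrying different colors in $S$ and $K$ create extra $\rho$-correlations. Enumerating partial overlaps with $k$ segments, each of length up to $\aleph=\Theta(\log n)$, requires controlling a combinatorial factor $\aleph^{O(k)}$ against the $n^{-1/2}$-type gain per segment. We would first prove a uniform bound
\[
|\E_{\Pb}[f_Sf_K]|\le \mathrm{(weights)}\cdot n^{-|E(S\cap K)|/2}\cdots,
\]
via the conditional independence, and then sum over overlap shapes using generating functions in $\mathbf P$.
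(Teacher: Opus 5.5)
Your first-moment computation matches the paper's Lemma~\ref{lem-conditional-exp-given-endpoints}; your endpoint factor ($\rho$ exactly when the edge at that endpoint has a nonzero color) is the correct one. Your treatment of pairs meeting only at $a_u,a_v$ is sound, provided you read $\mathbb E f_S$ there as $\Xi(S)\Omega(S)^{-1/2}$, since $\mathbb E_{\Pb}[f_S]=0$ for a path. Your treatment of the diagonal $S=K$ is also sound.

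\textbf{The gap.} You claim that pairs with $k$ shared segments contribute at most $\beta_{\mathcal J}^{-2}\sigma_+(\mathbf P)^{\aleph}(C\aleph)^{O(k)}n^{-\Omega(k)}$. This is false whenever a shared segment contains $a_u$ or $a_v$. Such an anchored segment with $m$ common edges gains $n^{m}$ from the edge second moments. It costs only $n^{m}$ in vertex freedom, because one of its $m+1$ vertices is already pinned, so there is no net power of $n$. Concretely, consider pairs that share only the first edge at $a_u$ (same color) and otherwise meet only at $a_v$. They already contribute an amount bounded below by a positive constant. More generally, anchored overlaps of $\mathsf a$-lengths $\mathtt a,\mathtt b$ at the two ends contribute on the order of $\sigma_+(\mathbf P)^{-\mathtt a-\mathtt b}$. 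Together with the interior-disjoint pairs, these terms form the order-one part of $\mathbb E_{\Pb}[(\Phi^{\mathcal J}_{u,v})^2]$. The diagonal is therefore not the leading overlap; it is one of the smallest terms.

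\textbf{What is missing} is the paper's case $\mathtt T=1$. Split $S\cap K$ into the component $S_1\ni a_u$ and the component $S_2\ni a_v$. Sum each of $S_1,S_2,\widetilde S_1,\widetilde K_1$ with Lemma~\ref{lem-contribution-sub-chain}. This gives $\sum_{\mathtt a+\mathtt b\le\aleph}\sigma_+(\mathbf P)^{-\mathtt a-\mathtt b+4}=O(1)$, a bound that comes from geometric decay using $\sigma_+(\mathbf P)>1$, not from any polynomial gain. Only floating shared components, which contain neither endpoint, earn a factor $n^{-1}$ each; this is the paper's $n^{-\mathtt T+1}$ for $\mathtt T\ge 2$. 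Even there, the weight is $\sigma_+(\mathbf P)^{2\aleph-m}$ for total overlap $m$, not $\sigma_+(\mathbf P)^{\aleph}$. That discrepancy is harmless only because of the $n^{-1}$ factor.

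\textbf{The unresolved obstacle.} What you call the main obstacle is exactly where the paper needs two ingredients that your plan leaves unspecified.
\begin{enumerate}
\item The covariance bound of Lemma~\ref{lem-est-cov-f-S-f-K}. Expand the sparse factors $1+\epsilon_\ell\bm x_\ell(i)\bm x_\ell(j)$ on shared edges. A term survives only if the leaves of the selected edge set lie in $\mathsf L(S\Cap K)\cup V(\mathtt E)$. This yields the factor $C^{|\mathsf L(S\Cap K)|+|\mathsf L(S\cap K)|+1}$, which is $C^{O(\mathtt T)}$ after the reduction below.
\item The reduction of Lemma~\ref{lem-reduce-to-good-S,K} to pairs with $E(S)\cap E(K)=E(S\Cap K)$. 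Each shared edge carrying different colors in $S$ and $K$ costs a factor $n^{-1}$, which beats the at most $\aleph^{m}$ recolorings.
\end{enumerate}
With these two bounds and the anchored-versus-floating distinction above, your planned summation over overlap shapes via powers of $\mathbf P$ goes through.
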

Combining these variance bounds with Markov's inequality, we arrive at the following sufficient condition for Algorithm~\ref{alg:recovery-meta} to achieve weak recovery.
\begin{thm}{\label{MAIN-THM-recovery}}
    Suppose that \eqref{eq-assum-upper-bound} holds, and we choose $\aleph$ according to \eqref{eq-condition-weak-recovery}. Then we have (below we write $\widehat{\Phi}$ to be the output of Algorithm~\ref{alg:recovery-meta})
    \begin{align*}
        \mathbb E_\Pb\left[ \frac{ |\langle \widehat \Phi,\bm x \bm x^{\top} \rangle| }{ \| \widehat \Phi \|_{\Fop} \| \bm x \bm x^{\top} \|_{\Fop} } \right] \geq \Omega(1) \,.
    \end{align*}
    In particular, Algorithm~\ref{alg:recovery-meta} described above achieves weak recovery.
\end{thm}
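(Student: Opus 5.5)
The plan is to derive Theorem~\ref{MAIN-THM-recovery} from Proposition~\ref{main-prop-recovery} in two steps. First we show that the raw matrix $\Phi^{\mathcal J}$ correlates with $\bm x\bm x^{\top}$ in expectation. Then we apply the correlation preserving projection of \cite[Section~2.3]{HS17}.

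\textbf{Step 1: correlation and norm of $\Phi^{\mathcal J}$.} Summing the first bound in \eqref{eq-L2-estimation-error} over all pairs $u\neq v$ gives
\begin{equation*}
\mathbb E_{\Pb}\big[\langle \Phi^{\mathcal J},\bm x\bm x^{\top}\rangle\big] \geq \rho^2 n(n-1).
\end{equation*}
We set the diagonal of $\Phi^{\mathcal J}$ to zero, or treat it in the same way; it contributes at most $O(n)$ to the second moment. Summing the second bound gives $\mathbb E_{\Pb}[\|\Phi^{\mathcal J}\|_{\Fop}^2]\leq O(n^2)$. Since $\|\bm x\bm x^{\top}\|_{\Fop}=n$, the ratio $\langle\Phi^{\mathcal J},\bm x\bm x^{\top}\rangle/(\|\Phi^{\mathcal J}\|_{\Fop}\, n)$ is controlled by these two moments.

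\textbf{Step 2: from moments to high probability.} Write $A=\langle \Phi^{\mathcal J},\bm x\bm x^{\top}\rangle/n^2$ and $B=\|\Phi^{\mathcal J}\|_{\Fop}/n$. We have $\mathbb E[A]\geq \rho^2-o(1)$. By Cauchy--Schwarz, $|A|\leq B$ and $\mathbb E[B^2]\leq C$.
\begin{itemize}
\item By Markov's inequality, $\Pb(B> M)\leq C/M^2$.
\item Also $\mathbb E[A\mathbf 1\{A\geq \rho^2/2\}]\geq \rho^2/2$. Combined with $\mathbb E[A^2]\leq C$, the Paley--Zygmund inequality gives $\Pb(A\geq\rho^2/2)\geq \rho^4/(4C)$.
\end{itemize}
Choose $M$ large enough that $C/M^2\leq \rho^4/(8C)$. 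Then with probability at least $\rho^4/(8C)=\Omega(1)$ we have $A\geq \rho^2/2$ and $B\leq M$. On this event the normalized correlation is at least $\rho^2/(2M)=\Omega(1)$.

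\textbf{Step 3: rounding.} We invoke the guarantee of the correlation preserving projection \cite[Theorem~2.3]{HS17}. It says that if $\langle \Phi,\bm x\bm x^{\top}\rangle\geq \delta\|\Phi\|_{\Fop}\|\bm x\bm x^{\top}\|_{\Fop}$, then the projected matrix $\widehat\Phi$ is positive semidefinite with entries bounded suitably, and satisfies
\begin{equation*}
\langle\widehat\Phi,\bm x\bm x^{\top}\rangle\geq \delta'\,\|\widehat\Phi\|_{\Fop}\|\bm x\bm x^{\top}\|_{\Fop}, \qquad \delta'=\delta'(\delta)>0.
\end{equation*}
Applying it on the good event and noting that the integrand is nonnegative inside the absolute value yields
\begin{equation*}
\mathbb E_{\Pb}\!\left[\frac{|\langle\widehat\Phi,\bm x\bm x^{\top}\rangle|}{\|\widehat\Phi\|_{\Fop}\,n}\right]\geq \Omega(1)\cdot\delta'=\Omega(1).
\end{equation*}

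\textbf{Main obstacle.} The real difficulty lies in Proposition~\ref{main-prop-recovery}, which we assume here. Within this argument, the delicate points are bookkeeping. One is the diagonal entries, where $\Phi_{u,u}$ is not defined by paths. The other is checking that the constants in the \cite{HS17} projection depend only on the correlation level, not on $n$. These points are routine.
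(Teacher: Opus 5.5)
Your proposal is correct and follows essentially the same route as the paper. Both use Proposition~\ref{main-prop-recovery} to get a constant-probability event on which $\langle \Phi^{\mathcal J},\bm x\bm x^{\top}\rangle \geq \Omega(1)\cdot n\,\|\Phi^{\mathcal J}\|_{\Fop}$, apply the correlation preserving projection of \cite{HS17} at a fixed constant correlation level, and pass to the expectation using nonnegativity of the integrand. The only difference is in how the good event is extracted: you use a Paley--Zygmund-type bound on $A$ plus Markov on $B$, while the paper uses Chebyshev on $\|\Phi^{\mathcal J}\|_{\Fop}$, a Cauchy--Schwarz truncation of $\mathbb E_{\Pb}[\langle \Phi^{\mathcal J},\bm x\bm x^{\top}\rangle]$, and a reverse-Markov step on the bounded part.
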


\begin{remark}
   Note that our estimator naturally satisfies $\widehat{\Phi}\succeq 0$, according to \cite[Lemma~3.5]{HS17} and \cite[Section~5]{LS23}, by considering $\bm w \sim \mathcal N(0,\widehat{\Phi})$ and let $\widehat{\bm x}$ be coordinate-wise signs of $\bm w$, we can get an estimator $\widehat{\bm x} \in \{ -1,+1 \}^n$ such that  
    \begin{align*}
        \mathbb E_{\Pb}\left[ \frac{ |\langle \bm x,\widehat{\bm x} \rangle| }{ \|\bm x\|\| \widehat{\bm x} \| } \right] \geq \Omega(1) \,,
    \end{align*}
    given \eqref{eq-assum-upper-bound} holds and we choose $\aleph$ according to \eqref{eq-condition-weak-recovery}.
\end{remark}

\section{Statistical analysis of the subgraph counts}{\label{sec:stat-analysis}}

\subsection{Proof of Proposition~\ref{main-prop-detection}}{\label{subsec:proof-prop-3.4}}

\begin{lemma}{\label{lem-mean-var-f-H-part-1}}
    Suppose that \eqref{eq-assum-upper-bound} holds, and we choose $\aleph$ according to \eqref{eq-condition-strong-detection}. Then
    \begin{align}
        &\mathbb E_{\Qb}[ f_{\mathcal H} ] =0 \,, \label{eq-mean-Qb-f-H} \\
        &\mathbb E_{\Pb}[ f_{\mathcal H} ] =[1+o(1)] \sqrt{\beta_{\mathcal H}} = \omega(1) \,, \label{eq-mean-pb-f-H} \\
        &\mathbb E_{\Qb}[ f_{\mathcal H}^2 ] = 1+o(1) \,. \label{eq-var-Qb-f-H}
    \end{align}
\end{lemma}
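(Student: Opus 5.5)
The plan is to prove the three claims separately, each time expanding $f_{\mathcal H}$ as a weighted sum over labeled decorated cycles $S\cong H$ and using the product structure of $f_S$.

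\textbf{Mean under $\Qb$.} Under $\Qb$, the entries $\bm Y(i,k)$ are i.i.d.\ centered, and the $\overline{\bm G}_\ell(i,j)$ are independent and centered. A decorated cycle uses each edge (with its decoration) exactly once, so every $f_S$ is a product of distinct independent centered variables. Hence $\mathbb E_{\Qb}[f_S]=0$, which gives \eqref{eq-mean-Qb-f-H}.

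\textbf{Second moment under $\Qb$.} We have $\mathbb E_{\Qb}[f_Sf_{S'}]=0$ unless $S=S'$ as decorated graphs. When $S=S'$, each factor has unit second moment: $\mathbb E\bm Y(i,k)^2=1$ and $\mathbb E\overline{\bm G}_\ell(i,j)^2=1-\lambda_\ell/n$. So $\mathbb E_{\Qb}[f_S^2]=\prod_{\ell}(1-\lambda_\ell/n)^{|E_\ell(S)|}=1+o(1)$, uniformly, since $|E(S)|=O(\aleph)=o(n)$. The number of labeled copies of $H$ is
\[
\frac{n^{\underline{\aleph}}\,p^{\underline{|E_0(H)|/2}}}{|\mathsf{Aut}(H)|}=[1+o(1)]\,\frac{n^{\aleph}p^{|E_0(H)|/2}}{|\mathsf{Aut}(H)|},
\]
because $\aleph^2=o(n)$. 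Plugging the coefficients from \eqref{eq-def-f-mathcal-H} gives
\[
\mathbb E_{\Qb}[f_{\mathcal H}^2]=\frac{1+o(1)}{\beta_{\mathcal H}}\sum_{[H]}\frac{\Xi(H)^2}{|\mathsf{Aut}(H)|}=1+o(1),
\]
which is \eqref{eq-var-Qb-f-H}.

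\textbf{Mean under $\Pb$.} This is the substantive step. For a fixed labeled $S$, condition on $\bm x,\bm z_1,\ldots,\bm z_L,\bm u$. The conditional means are:
\begin{itemize}
\item $\mathbb E[\bm Y(i,k)\mid\cdot]=\sqrt{\mu/n}\,\bm x(i)\bm u(k)$;
\item $\mathbb E[\overline{\bm G}_\ell(i,j)\mid\cdot]=\epsilon_\ell\sqrt{\lambda_\ell/n}\,\bm x_\ell(i)\bm x_\ell(j)$.
\end{itemize}
By conditional independence, $\mathbb E[f_S\mid\cdot]$ factorizes.
\begin{enumerate}
\item Each $b_k\in V^{\mathsf b}(S)$ has degree $2$, so it contributes $\mathbb E\bm u(k)^2=1$, up to the $\mathcal E_\diamond$-free normalization. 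Under $\Pb$ the $\bm u(k)$ are independent, so $\mathbb E\bm u(k)^2=1$ exactly.
\item The $E_0$-edges contribute $(\mu/n)^{|E_0|/2}$ times $\bm x(i)\bm x(j)$ paired through $b_k$. The graph edges contribute $\prod_\ell(\epsilon_\ell^2\lambda_\ell/n)^{|E_\ell|/2}$ times products of $\bm x_\ell$.
\item In the total product, each $a_i$ appears with two labels, one from each incident edge. These labels are $\bm x(i)$ (from an $E_0$-edge) or $\bm x_\ell(i)=\bm x(i)\bm z_\ell(i)$.
\item If both incident edges have the same label, the contribution is $1$. If the labels are $\bm x$ and $\bm x_\ell$ (a vertex in $\mathsf{dif}_0$), the contribution is $\mathbb E\bm z_\ell(i)=\rho$. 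If the labels are $\bm x_\ell$ and $\bm x_{\ell'}$ with $\ell\ne\ell'$ (a vertex in $\mathsf{dif}$), the contribution is $\rho^2$.
\end{enumerate}
The vertices are independent, so
\[
\mathbb E_{\Pb}[f_S]=\Xi(H)\,n^{-\aleph/2}p^{-|E_0|/4}.
\]
To see the powers, use $\mu^{|E_0|/2}n^{-|E_0|/2}=(\mu^2/\gamma)^{|E_0|/4}p^{-|E_0|/4}n^{-|E_0|/4}$ together with $|E_0|/4+\sum_\ell|E_\ell|/2=\aleph/2$; the latter follows from the degree-$2$ count at the $a$-vertices.

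Summing over copies, $\mathbb E_{\Pb}[f_{\mathcal H}]=[1+o(1)]\beta_{\mathcal H}/\sqrt{\beta_{\mathcal H}}=[1+o(1)]\sqrt{\beta_{\mathcal H}}$. By Lemma~\ref{lem-bound-beta-mathcal-H},
\[
\sqrt{\beta_{\mathcal H}}\ge D^{-1/2}\aleph^{-1}(1+\delta)^{\aleph/2}=\omega(1),
\]
since $\aleph=\omega(1)$.

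\textbf{Main obstacle.} The hard part is the clean vertex-by-vertex factorization in the $\Pb$-mean: verifying that the $\bm z_\ell$ averages give exactly $\rho^{|\mathsf{dif}_0|+2|\mathsf{dif}|}$, and keeping the $(1+o(1))$ errors from falling factorials uniform over $[H]\in\mathcal H$, using $\aleph=o(\log n/\log\log n)$.
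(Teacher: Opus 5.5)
Your proposal is correct and follows essentially the same route as the paper. In both, centering and orthogonality under $\Qb$ give the first and third claims, the conditional vertex-by-vertex factorization under $\Pb$ yields $\mathbb E_{\Pb}[f_S]=\Xi(S)\,n^{-\aleph/2}p^{-|E_0(S)|/4}$, the copy count $[1+o(1)]\,n^{\aleph}p^{|E_0(H)|/2}/|\mathsf{Aut}(H)|$ collapses the sum to $[1+o(1)]\sqrt{\beta_{\mathcal H}}$, and Lemma~\ref{lem-bound-beta-mathcal-H} gives $\omega(1)$.
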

\begin{proof}
    Recall~\eqref{eq-def-overline-G-ell-i,j} and~\eqref{eq-def-f-mathcal-H}. Since $\mathbb E_{\Qb}[ f_S(\bm X,\bm Y) ]=0$ for all $S \subset \mathsf K_n$ and $E(S) \neq \emptyset$, we have $\mathbb E_{\Qb}[f_{\mathcal H}]=0$ by linearity. In addition, recall \eqref{eq-def-contextual-multilayer-SBM}. Conditioned on $\bm x,\bm u$ and $\bm x_1,\ldots,\bm x_L$, 
    \begin{equation*}
        \big\{ \bm Y(i,k):i \in [n],k \in [p] \big\} \bigcup \Big( \cup_{1 \leq \ell \leq L} \big\{ \overline{\bm G}_{\ell}(i,j): (i,j) \in \operatorname{U}_n \big\} \Big)
    \end{equation*}
    is a collection of conditionally independent random variables, with
    \begin{align*}
        \mathbb E_{\Pb}\big[ \bm Y(i,k) \mid \bm x,\bm u,\bm x_{1},\ldots,\bm x_L \big]&= \tfrac{ \sqrt{\mu} }{ \sqrt{n} } \bm x(i) \bm u(k),\\ \mathbb E_{\Pb}\big[ \overline{\bm G}_{\ell}(i,j) \mid \bm x,\bm u,\bm x_{1},\ldots,\bm x_L \big]&=\tfrac{1}{\sqrt{\lambda_\ell/n}}\cdot \tfrac{\epsilon_\ell \lambda_\ell}{n}\bm x_{\ell}(i) \bm x_{\ell}(j) =  \tfrac{ \sqrt{\epsilon_{\ell}^2\lambda_{\ell}} }{ \sqrt{n} } \bm x_{\ell}(i) \bm x_{\ell}(j) \,.
    \end{align*}
    Thus, we have (recall \eqref{eq-def-f-H,A})
    \begin{align}
        \mathbb E_{\Pb}\big[ f_S \big] &= \mathbb E_{\bm x,\bm u,\bm x_{1},\ldots,\bm x_L}\mathbb E_{\Pb}\big[ f_S \mid \bm x,\bm u,\bm x_1,\ldots,\bm x_L \big] \nonumber \\
        &= \mathbb E_{\bm x,\bm u,\bm x_{1},\ldots,\bm x_L}\Bigg\{ \prod_{(a_i,b_k)\in E_0(S)} \tfrac{ \sqrt{\mu} }{ \sqrt{n} } \bm x(i) \bm u(k) \prod_{1 \leq \ell \leq L} \prod_{(a_i,a_j)\in E_{\ell}(S)} \tfrac{ \sqrt{\epsilon_{\ell}^2\lambda_{\ell}} }{ \sqrt{n} } \bm x_{\ell}(i) \bm x_{\ell}(j) \Bigg\} \nonumber \\
        &= \frac{ \mu^{\frac{1}{2}|E_0(S)|} \prod_{1 \leq \ell \leq L} (\epsilon_\ell^2\lambda_\ell)^{\frac{1}{2}|E_\ell(S)|} }{ n^{\frac{1}{2}|E(S)|} } \mathbb E\Bigg\{ \prod_{(a_i,b_k)\in E_0(S)} \bm x(i) \bm u(k) \prod_{1 \leq \ell \leq L} \prod_{(a_i,a_j)\in E_{\ell}(S)} \bm x_{\ell}(i) \bm x_{\ell}(j) \Bigg\} \nonumber \\
        &\overset{\mathrm{(a)}}{=}  \frac{ \mu^{\frac{1}{2}|E_0(S)|} \prod_{1 \leq \ell \leq L} (\epsilon_\ell^2\lambda_\ell)^{\frac{1}{2}|E_\ell(S)|} }{ n^{\frac{1}{2}\aleph+\frac{1}{4}|E_0(S)|} } \cdot \rho^{|\mathsf{dif}_0(S)|+2|\mathsf{dif}(S)|} \nonumber \\
        &\overset{\mathrm{(b)}}{=} \frac{ (\mu^2/\gamma)^{\frac{1}{4}|E_0(S)|} \prod_{1 \leq \ell \leq L} (\epsilon_\ell^2\lambda_\ell)^{\frac{1}{2}|E_\ell(S)|} \rho^{|\mathsf{dif}_0(S)|+2|\mathsf{dif}(S)|} }{ n^{\frac{1}{2}\aleph} p^{\frac{1}{4}|E_0(S)|} } \overset{\mathrm{(c)}}{=} \frac{ \Xi(S) }{ n^{\frac{1}{2}\aleph} p^{\frac{1}{4}|E_0(S)|} } \,.  \label{eq-cal-expectation}
    \end{align}
    Here $\mathrm{(b)}$ follows from $\gamma = n/p$; $\mathrm{(c)}$ is due to~\eqref{eq-def-Xi(H)};
    $\mathrm{(a)}$ follows from $\aleph = |E(S)| - \tfrac{1}{2}|E_0(S)|$ in \eqref{eq-basic-property-mathcal-H} and (below we write $a_i \sim e$ if a vertex $a_i$ is incident to an edge $e$)
    \begin{align*}
        &\mathbb E\bigg\{ \prod_{(a_i,b_k)\in E_0(S)} \bm x(i) \bm u(k) \prod_{1 \leq \ell \leq L} \prod_{(a_i,a_j)\in E_{\ell}(S)} \bm x_{\ell}(i) \bm x_{\ell}(j) \bigg\} \\
        =\ & \mathbb E\bigg\{ \prod_{b_k \in V^{\mathsf b}(S)} \bm u(k)^2 \prod_{ a_i \in V^{\mathsf a}(S) } \bm x(i)^{ \Lambda_0(i) } \bm x_1(i)^{\Lambda_1(i)} \ldots \bm x_L(i)^{\Lambda_L(i)}  \bigg\} \\
        =\ & \prod_{ a_i \in V^{\mathsf a}(S) } \mathbb E\Big[ \bm x(i)^{\Lambda_0(i)}\bm x_1(i)^{\Lambda_1(i)} \ldots \bm x_L(i)^{\Lambda_L(i)} \Big] \,, \mbox{ where } \Lambda_\ell(i)= \#\big\{ e \in E_\ell(S): a_i \sim e \big\} \,,    \end{align*}
    so it is clear that
    \begin{itemize}
        \item If $a_i \in \mathsf{dif}_0(S)$, then $\Lambda_0(i)=1$ and $\Lambda_\ell(i)=1$ for some $1 \leq \ell\leq L$ (with all others being $0$), and thus this contributes a factor of $\rho$ to the product (recall \eqref{eq-def-x-ell});
        \item If $a_i \in \mathsf{dif}(S)$, then $\Lambda_0(i)=0$ and $\Lambda_\ell(i),\Lambda_{\ell'}(i)=1$ for exactly two $1 \leq \ell<\ell'\leq L$, and thus this contributes a factor of $\rho^2$ to the product (recall \eqref{eq-def-x-ell});
        \item For all other $a_i$, $\Lambda_\ell(i)=2$ for some $1 \leq \ell \leq L$ (with all others being $0$), and thus this contributes a factor of $1$ to the product.
    \end{itemize}
    Thus, we have
    \begin{align*}
        \mathbb E_{\Pb}[f_{\mathcal H}] &\overset{\eqref{eq-def-f-mathcal-H}}{=} \frac{1}{ \sqrt{\beta_{\mathcal H}} } \sum_{ [H] \in \mathcal H } \frac{ \Xi(H) }{ n^{\aleph/2} p^{|E_0(H)|/4} } \sum_{ S \subset \mathsf{K}_{n,p}, S \cong H } \Xi(S) n^{ -\frac{1}{2}\aleph } p^{-\frac{1}{4}|E_0(S)|} \\
        &= \frac{1}{ n^{\aleph} p^{|E_0(H)|/2} \sqrt{\beta_{\mathcal H}} } \sum_{[H] \in \mathcal H} \Xi(H)^2 \cdot \#\big\{ S \cong \mathsf K_{n,p}: S \cong H \big\} \\
        & = [1+o(1)] \cdot \frac{1}{ \sqrt{\beta_{\mathcal H}} }  \sum_{[H] \in \mathcal H} \frac{ \Xi(H)^2 }{ |\mathsf{Aut}(H)| } \overset{\eqref{eq-def-beta-mathcal-H}}{=} [1+o(1)]\sqrt{\beta_{\mathcal H}} \,,
    \end{align*}
    where the third equality follows from
    \begin{align}
        \nonumber \#\big\{ S \cong \mathsf K_{n,p}: S \cong H \big\} &=\frac{\prod_{i=0}^{|V^{\mathsf a}(H)|-1}(n-i)\prod_{j=0}^{|V^{\mathsf b}(H)|-1}(p-j)}{|\mathsf{Aut}(H)|}\\\label{eq-enumerate-H-in-K-n,p}&= \frac{ (1+o(1)) n^{|V^{\mathsf a}(H)|} p^{|V^{\mathsf b}(H)|} }{ |\mathsf{Aut}(H)| } \overset{\eqref{eq-basic-property-mathcal-H}}{=} \frac{ (1+o(1))n^{\aleph} p^{|E_0(H)|/2} }{ |\mathsf{Aut}(H)| } \,.
    \end{align}
    Also, from Lemma~\ref{lem-bound-beta-mathcal-H} and our choice of $\ell$ in \eqref{eq-condition-strong-detection} we see that $\sqrt{\beta_{\mathcal H}}=\omega(1)$. 
    Consequently, we have $\mathbb E_{\Pb}[f_{\mathcal H}]  = \omega(1)$.
    Finally, note that
    \begin{align}{\label{eq-standard-orthogonal}}
        \mathbb E_{\Qb}\big[ f_{S}^2 \big]=1+o(1) \mbox{ and } \mathbb E_{\Qb}\big[ f_{S} f_{K} \big] = 0 \mbox{ for } S \neq K \,,
    \end{align}
    we have
    \begin{align*}
        \mathbb E_{\Qb}[f_{\mathcal H}^2] &\overset{\eqref{eq-def-f-mathcal-H}}{=} [1+o(1)] \sum_{ [H],[I] \in \mathcal H } \frac{ \Xi(H)\Xi(I) }{ n^{\aleph} p^{\frac{1}{4}(|E_0(H)|+|E_0(I)|)} \beta_{\mathcal H} } \sum_{ \substack{ S,K \subset \mathsf K_{n,p} \\ S \cong H, K \cong I } } \mathbf 1_{ \{ S=K \} } \\
        &~\ = \sum_{ [H] \in \mathcal H } \frac{ \Xi(H)^2 \#\{ S \cong \mathsf K_{n,p}: S \cong H \} }{ n^{\aleph} p^{\frac{1}{2}|E_0(H)|} \beta_{\mathcal H} }   = \frac{ 1+o(1) }{ \beta_{\mathcal H} } \sum_{ [H] \in \mathcal H } \frac{ \Xi(H)^2 }{ |\mathsf{Aut}(H)| } \overset{\eqref{eq-def-beta-mathcal-H}}{=} 1+o(1) \,. \qedhere
    \end{align*}
\end{proof}

Now we bound the variance of $f_{\mathcal H}$ under the alternative hypothesis $\Pb$, as incorporated in the following lemma.
\begin{lemma}{\label{lem-mean-var-f-H-part-2}}
    Assume that \eqref{eq-assum-upper-bound} holds, and we choose $\aleph$ according to \eqref{eq-condition-strong-detection}. Then we have
    \begin{equation}{\label{eq-var-Pb-f-H}}
        \frac{ \operatorname{Var}_{\Pb}[f_{\mathcal H}] }{ \mathbb E_{\Pb}[f_{\mathcal H}]^2 } = o(1) \,.
    \end{equation}
\end{lemma}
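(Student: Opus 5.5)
We expand the variance as a double sum over pairs of decorated cycles. By \eqref{eq-def-f-mathcal-H},
\begin{align*}
\operatorname{Var}_{\Pb}[f_{\mathcal H}] = \frac{1}{\beta_{\mathcal H}} \sum_{[H],[I]\in\mathcal H} \frac{\Xi(H)\Xi(I)}{n^{\aleph}p^{\frac14(|E_0(H)|+|E_0(I)|)}} \sum_{S\cong H,\,K\cong I} \Big( \mathbb E_{\Pb}[f_Sf_K]-\mathbb E_{\Pb}[f_S]\mathbb E_{\Pb}[f_K] \Big) \,.
\end{align*}
We split the inner sum according to the overlap.

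\textbf{Vertex-disjoint pairs.} If $V(S)\cap V(K)=\emptyset$, the only dependence between $f_S$ and $f_K$ is through the shared latent variables $\bm x,\bm x_\ell,\bm u$. Conditioning on these, we expand the product of latent moments. On disjoint vertex sets the factors over $a$-vertices are independent across coordinates. The $\bm u$-factors on disjoint $b$-vertices are independent, except that $\mathcal E_\diamond$ is not used here; we work under $\Pb$, where they are independent. Hence the covariance vanishes exactly. If a truncation were needed, the correction would be $O(1/p)$ per pair and would be summed trivially.

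\textbf{Overlapping pairs.} For pairs with $V(S)\cap V(K)\neq\emptyset$, we bound $|\mathbb E_{\Pb}[f_Sf_K]|$ by conditioning on the latent variables. Edges in $S\Cap K$ (same edge, same decoration) contribute second moments. For $\overline{\bm G}_\ell$ such an edge contributes $1+O(\lambda_\ell^{-1/2}n^{-1/2})$; for $\bm Y$ it contributes $1+\mu\bm u(k)^2/n$, which is $O(1)$ after taking expectation over $\bm u$. Each edge of $S\triangle K$ contributes its conditional mean of order $n^{-1/2}$. For decorated edges shared by the underlying graphs but carrying different decorations, we use the conditional independence of the $L+1$ channels: each such edge gives a product of two conditional means, again of order $n^{-1/2}$ per edge. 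We then classify the pair by the undecorated union $S\cup K$. Let $\mathsf J=S\cap K$ have $k$ vertices and $m$ edges; it is a union of $c$ paths (or all of $S$ when $S=K$). Counting embeddings, the number of pairs $(S,K)$ with prescribed shapes and overlap pattern is at most $n^{2\aleph-k_a}p^{|V^{\mathsf b}(S)|+|V^{\mathsf b}(K)|-k_b}$. The overlap removes $k-m\ge c$ powers of $n$ (or $p\asymp n$) relative to $\mathbb E_{\Pb}[f_S]\mathbb E_{\Pb}[f_K]$, after accounting for the lost $n^{-1/2}$ factors on the $m$ shared edges. Together with the $\rho$-weights (all at most $1$) and the $\Xi$ weights, each overlap component costs a factor $O(\aleph^{O(1)}/n)\cdot C^{\text{overlap size}}$. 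The decorations of the overlapping portion sum to at most $\sigma_+(\mathbf P)^{\text{length}}$ by the transfer-matrix argument behind Lemma~\ref{lem-bound-beta-mathcal-H}.

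\textbf{Case $S=K$.} This term gives $\beta_{\mathcal H}^{-1}\sum \Xi(H)^2/|\mathsf{Aut}(H)|\cdot O(1)=O(1)$. Since $\mathbb E_{\Pb}[f_{\mathcal H}]^2\asymp\beta_{\mathcal H}\ge \aleph^{-2}(1+\delta)^{\aleph}/D=\omega(1)$ by Lemma~\ref{lem-bound-beta-mathcal-H} and \eqref{eq-mean-pb-f-H}, this contributes $o(1)$ to the ratio in \eqref{eq-var-Pb-f-H}.

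\textbf{Main obstacle: partial overlaps.} These are the genuinely sparse contributions, where the overlap has $c\ge1$ components. Summing over the number of components and the arrangement, we aim to show their total, relative to $\mathbb E_{\Pb}[f_{\mathcal H}]^2\asymp \beta_{\mathcal H}$, is at most
\begin{align*}
\sum_{c\ge1} \Big(\frac{(2\aleph)^{O(1)} C^{\aleph}}{n}\Big)^{c}\,\aleph^{O(1)} \,,
\end{align*}
which is $o(1)$ because $\aleph=o(\log n/\log\log n)$. The key difficulty is controlling the decorated overlap: edges where $S$ and $K$ share the underlying edge with different colors, and $\mathsf{dif}$ vertices inside the overlap that alter the $\rho$-powers. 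We handle it by bounding each $\rho$-factor by $1$ and each decoration choice by a constant. This costs only $C^{\aleph}=n^{o(1)}$, which the $n^{-c}$ gain absorbs.
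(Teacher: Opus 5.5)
Your overall plan is the paper's. You expand the variance over pairs $(S,K)$. You show that vertex-disjoint pairs have exactly zero covariance, and your conditioning argument there is correct. You bound the diagonal by $O(1/\beta_{\mathcal H})$. Overlapping off-diagonal pairs then give $n^{-1+o(1)}$, because the overlap saves at least one more vertex than same-colored edges, and all $e^{O(\aleph)}$ and $\aleph^{O(\aleph)}$ losses are absorbed via \eqref{eq-condition-strong-detection}. The paper does this last step with the single inequality $|E(S\Cap K)|\le|V(S)\cap V(K)|-1$. You should likewise count only edges of $S\Cap K$, not of the undecorated $S\cap K$: otherwise, when $S,K$ have the same underlying cycle but different colors, your accounting gives $k-m=0$.

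\textbf{The gap is in the diagonal term.} You assert that a shared $\overline{\bm G}_\ell$-edge contributes $1+O(n^{-1/2})$. In fact $\mathbb E_{\Pb}[\overline{\bm G}_\ell(i,j)^2\mid \bm x,\bm x_1,\ldots,\bm x_L]=(1+O(n^{-1}))(1+\epsilon_\ell\bm x_\ell(i)\bm x_\ell(j))$, whose random part is of order one. For partial overlaps this is harmless: each factor is at most $2+O(n^{-1})$, and $2^{\aleph}=n^{o(1)}$ is absorbed by the $n^{-1}$ gain. For $S=K$, however, there is no power of $n$ to spare. The honest crude bound is only $\mathbb E_{\Pb}[f_S^2]\le(1+\max_\ell\epsilon_\ell)^{\aleph}(1+o(1))$, while $\beta_{\mathcal H}$ is only of order $\sigma_+(\mathbf P)^{\aleph}$ up to $\mathrm{poly}(\aleph)$, and $\sigma_+(\mathbf P)$ may be barely above $1$. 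So your claim that the $S=K$ term is $O(1)$, i.e.\ $\sup_S\mathbb E_{\Pb}[f_S^2]=O(1)$, does not follow from what you wrote.

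\textbf{The missing idea} is the parity cancellation built into Lemma~\ref{lem-est-cov-f-S-f-K}.
\begin{itemize}
\item Write the conditional second moment as $a_\ell+b_\ell\bm x_\ell(i)\bm x_\ell(j)$ with $0<a_\ell\le1$ and $0<b_\ell\le\epsilon_\ell$.
\item Expand $\prod_{e\in E^{\mathsf a}(S)}$ as a sum over subsets $\mathtt F\subset E^{\mathsf a}(S)$.
\item Suppose some $a$-vertex meets exactly one edge of $\mathtt F$, of color $\ell$. Then $\bm x_\ell(i)$ appears to the first power, since the $\bm Y$-edges contribute only $\bm x(i)^2=1$. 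Hence that term's latent expectation vanishes.
\item On a cycle, this leaves only $\mathtt F=\emptyset$, plus $\mathtt F=E(S)$ when $E_0(S)=\emptyset$. The latter contributes at most $\prod_e\epsilon_{\chi(e)}\,\rho^{2|\mathsf{dif}(S)|}\le1$.
\item Combined with $\prod_{b_k}\mathbb E[(1+\mu\bm u(k)^2/n)^2]=1+o(1)$, this gives $\mathbb E_{\Pb}[f_S^2]\le2+o(1)$. This is what the paper's bound $C\,\mathtt P(S,S)=C$ encodes.
\end{itemize}
With this, the diagonal contributes $O(1/\beta_{\mathcal H})=o(1)$, as you intended.
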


Assuming Lemma~\ref{lem-mean-var-f-H-part-2}, we can complete the proof of Proposition~\ref{main-prop-detection}.
\begin{proof}[Proof of Proposition~\ref{main-prop-detection} assuming Lemma~\ref{lem-mean-var-f-H-part-2}]
    It suffices to note that combining Lemmas~\ref{lem-mean-var-f-H-part-1} and \ref{lem-mean-var-f-H-part-2} yields Proposition~\ref{main-prop-detection}.
\end{proof}

The remaining part of this subsection is devoted to the proof of Lemma~\ref{lem-mean-var-f-H-part-2}. Note that
\begin{align}
    \frac{ \operatorname{Var}_{\Pb}[f_{\mathcal H}] }{ \mathbb E_{\Pb}[f_{\mathcal H}]^2 } \overset{\eqref{eq-def-f-mathcal-H},\text{Lemma~\ref{lem-mean-var-f-H-part-1}}}{=}\ & [1+o(1)] \sum_{[H],[I] \in \mathcal H} \frac{ \Xi(H) \Xi(I) }{ n^{\aleph} p^{ \frac{1}{4}(|E_0(H)|+|E_0(I)|) } \beta_{\mathcal H}^2 }  \sum_{ \substack{ S,K \subset \mathsf K_n \\ S \cong H, K \cong I } } \operatorname{Cov}_{\Pb}\big( f_S, f_K \big) \,,  \label{eq-var-Pb-f-H-relax-1}
\end{align}
where $\operatorname{Cov}_{\Pb}\big( f_S, f_K \big):=\E_{\Pb}[f_Sf_K] - \E_{\P}[f_S]\E_\P[f_K]$.
The first step of our proof is to show the following bound on $\operatorname{Cov}_{\Pb}(f_S,f_K)$. Define 
\begin{equation}{\label{eq-def-Omega(S)}}
    \Omega(S)=n^{|E(S)|-\frac{1}{2}|E_0(S)|} p^{\frac{1}{2}|E_0(S)|} \,.
\end{equation}
For $[H],[I]\in \mathcal H(\aleph)$, we have $n^{\aleph} p^{(|E_0(H)|+|E_0(I)|)/4} = \Omega(H)^{1/2}\Omega(I)^{1/2}$ since $\aleph = |E(H)| - \tfrac{1}{2}|E_0(H)|$ for all $H\in \mathcal H(\aleph)$.
We also define
\begin{align}
    \mathtt P(S,K):=\ & \big( \tfrac{\mu^2}{\gamma} \big)^{ \frac{1}{4} |E_{0}(S) \triangle E_{0}(K)| } \prod_{1 \leq \ell \leq L} (\epsilon^2_{\ell} \lambda_{\ell})^{ \frac{1}{2}|E_{\ell}(S) \triangle E_{\ell}(K)| } \nonumber  \\
    & \cdot \rho^{|\mathsf{dif}_0(S) \setminus V(K)|+ |\mathsf{dif}_0(K) \setminus V(S)| +2|\mathsf{dif}(S) \setminus V(K)|+ 2|\mathsf{dif}(K) \setminus V(S)| }  \,.  \label{eq-def-mathtt-P}
\end{align}

\begin{lemma}{\label{lem-est-cov-f-S-f-K}}
    Suppose that $[S],[K] \in \mathcal H$. We have 
    \begin{equation}{\label{eq-cov-equal-0-non-intersect}}
        \operatorname{Cov}_{\Pb}\big( f_S,f_K \big)=0 \mbox{ if } V(S) \cap V(K) = \emptyset \,.
    \end{equation}
    Recall that $\mathsf L (H)$ denotes the set of leaves in $H$.
    For general decorated paths or decorated cycles $S,K$, there exists a constant $C>0$ such that
    \begin{align}{\label{eq-est-general-cov-f-S-f-K}}
        \mathbb E_{\Pb}\big[ f_Sf_K \big] \leq \frac{ C^{|\mathsf{L}(S \Cap K)|+|\mathsf{L}(S \cap K)|+1} \mathtt P(S,K) \Omega(S \Cap K) }{ \Omega(S)^{\frac{1}{2}} \Omega(K)^{\frac{1}{2}} }  \,.
    \end{align} 
\end{lemma}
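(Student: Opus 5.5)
We will prove both claims by conditioning on the latent variables $(\bm x,\bm u,\bm x_1,\ldots,\bm x_L)$, under which all entries $\bm Y(i,k)$ and $\overline{\bm G}_\ell(i,j)$ are independent. For \eqref{eq-cov-equal-0-non-intersect}: if $V(S)\cap V(K)=\emptyset$, then $f_S$ and $f_K$ depend on disjoint families of entries. Their conditional expectations are products of factors $\bm x(i)\bm u(k)$ and $\bm x_\ell(i)\bm x_\ell(j)$ over disjoint vertex sets. Since the coordinates $(\bm x(i),\bm x_1(i),\ldots,\bm x_L(i))$ are i.i.d.\ across $i$ and the $\bm u(k)$ are i.i.d.\ across $k$, these conditional expectations are independent. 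Hence $\mathbb E_\Pb[f_Sf_K]=\mathbb E_\Pb[f_S]\mathbb E_\Pb[f_K]$.

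For \eqref{eq-est-general-cov-f-S-f-K}, we split $E(S)\cup E(K)$ by decoration into three parts: edges in $S\Cap K$ (the same decorated edge appears in both, so it appears squared), edges in $E_\ell(S)\triangle E_\ell(K)$, and $E^{\mathsf a}$ edges shared with different colours (these are independent layers, so they count as symmetric-difference edges in their respective layers). Conditionally on the latents, each entry of the symmetric difference contributes its conditional mean, namely $\sqrt{\mu/n}\,\bm x(i)\bm u(k)$ or $\sqrt{\epsilon_\ell^2\lambda_\ell/n}\,\bm x_\ell(i)\bm x_\ell(j)$. For each squared entry we bound the conditional second moment: $\mathbb E[\overline{\bm G}_\ell(i,j)^2\mid\cdot]\le 1+O(1/n)\le C$, and $\mathbb E[\bm Y(i,k)^2\mid\cdot]=1+\mu\bm u(k)^2/n$. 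Because squared entries have nonnegative conditional second moments, while the means from the symmetric difference carry signs, we first bound all squared factors by constants. This yields a factor $C^{|E(S\Cap K)|}$, which is too crude, so we refine it. The powers of $\bm u(k)$ remain inside the expectation and give $\mathbb E[\bm u(k)^{m}]\le C$ for $m\le 4$. The power of $n$ is exactly $n^{-\frac12|E(S)\triangle E(K)|}$ from the means; combining this with $\gamma=n/p$ and counting vertices gives the ratio $\Omega(S\Cap K)/\sqrt{\Omega(S)\Omega(K)}$ together with the factors $(\mu^2/\gamma)^{\frac14|E_0\triangle|}\prod_\ell(\epsilon_\ell^2\lambda_\ell)^{\frac12|E_\ell\triangle|}$.

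Next we take the expectation over the latents of the product of $\bm x,\bm x_\ell$ signs, factorising over vertices as in the proof of Lemma~\ref{lem-mean-var-f-H-part-1}. A vertex in $\mathsf{dif}_0(S)\setminus V(K)$ or $\mathsf{dif}(S)\setminus V(K)$ only sees $S$'s edges and contributes exactly $\rho$ or $\rho^2$. This produces $\mathtt P(S,K)$. Every vertex in $V(S)\cap V(K)$ contributes a factor at most $1$ in absolute value (expectations of $\pm1$ products), except that we must account for the squared-entry corrections.

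The main obstacle is to show that the number of constant factors $C$ is controlled by $|\mathsf L(S\Cap K)|+|\mathsf L(S\cap K)|+1$ rather than by $|E(S\Cap K)|$. We handle this by using the exact conditional second moments: $\mathbb E[\overline{\bm G}_\ell^2\mid\cdot]=1+O(\epsilon_\ell/n)\bm x_\ell(i)\bm x_\ell(j)+O(1/n)$, and similarly $\bm Y^2$ gives $1+O(1/n)$. Each correction term costs an extra $n^{-1}$, which compensates for at most one lost vertex. Expanding the product, the main term $1$ leaves the vertex-sign structure intact. The correction terms can be charged to the components of the path forest $S\Cap K$, which has at most $|\mathsf L(S\Cap K)|/2+1$ components since $S\Cap K$ is a union of paths, or all of a cycle. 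Within each component, the interior vertices have degree $2$ in both $S$ and $K$, so their signs cancel exactly. Only the endpoints, which are leaves of $S\Cap K$ or of $S\cap K$ (the latter arising where the graphs share vertices but differ in colour), can carry a nonunit factor, and each such vertex is bounded by $C$. Summing the correction expansions with geometric weights $O(1/n)^{k}$ against at most $n^{k}$ losses gives the final constant $C^{|\mathsf L(S\Cap K)|+|\mathsf L(S\cap K)|+1}$.
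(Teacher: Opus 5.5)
Your argument for \eqref{eq-cov-equal-0-non-intersect} is fine: conditional independence given the latents, plus independence of the latent coordinates over disjoint vertex sets, gives it directly.

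\textbf{The gap.} The refinement for \eqref{eq-est-general-cov-f-S-f-K} rests on a false estimate. Since $\bm G_\ell(i,j)^2=\bm G_\ell(i,j)$, one has
\[
\mathbb E_{\Pb}\big[\overline{\bm G}_\ell(i,j)^2\mid \bm x,\bm u,\bm x_1,\ldots,\bm x_L\big]=\big(1+\epsilon_\ell\bm x_\ell(i)\bm x_\ell(j)\big)\big(1-\tfrac{2\lambda_\ell}{n}\big)+\tfrac{\lambda_\ell}{n}=1+\epsilon_\ell\bm x_\ell(i)\bm x_\ell(j)+O(n^{-1}).
\]
So the label-dependent correction on a shared graph edge is of order one, not $O(\epsilon_\ell/n)$. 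In the sparse regime, the second moment of the normalized indicator is essentially the edge probability divided by $\lambda_\ell/n$. Only the Gaussian corrections $\frac{\mu}{n}\bm u(k)^2$ coming from $\bm Y(i,k)^2$ carry a factor $n^{-1}$.

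Consequently, expanding $\prod_{e\in E^{\mathsf a}(S\Cap K)}\big(1+\epsilon_{\chi(e)}\bm x_{\chi(e)}(i)\bm x_{\chi(e)}(j)\big)$ produces terms indexed by subsets $\mathtt F\subset E^{\mathsf a}(S\Cap K)$ with weights $\prod_\ell \epsilon_\ell^{|\mathtt F_\ell|}$. There is no $n^{-1}$ per correction, so your bookkeeping ``$O(1/n)^k$ against $n^k$ losses'' is unavailable. Bounding these terms by their size simply returns $(1+\max_\ell\epsilon_\ell)^{|E(S\Cap K)|}$, the crude bound you set out to avoid.

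\textbf{The missing idea.} Most of these correction terms vanish identically, by a parity argument. Let $\mathtt E\subset E_0(S)\cap E_0(K)$ index the Gaussian corrections. Suppose some leaf $a_i$ of $\mathtt F$ is not in $\mathsf L(S\Cap K)\cup V(\mathtt E)$. Then $a_i$ is an interior vertex of $S\Cap K$. Because $S$ and $K$ have maximum degree $2$, $a_i$ has no edges in $S\cup K$ other than its two edges in $S\Cap K$. Hence $\bm x_\ell(i)$ appears to the first power, $\bm x(i)$ to an even power, and no other $\bm x_{\ell'}(i)$ appears, so the expectation is $0$.

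The surviving $\mathtt F$ are therefore unions of whole segments of $S\Cap K$ between consecutive points of $\mathsf L(S\Cap K)\cup V(\mathtt E)$. There are at most $2^{|\mathsf L(S\Cap K)|+2|\mathtt E|+1}$ of them. Each surviving term is nonnegative and obeys the same vertex-by-vertex bound: $\rho$ or $\rho^2$ at $\mathsf{dif}_0$/$\mathsf{dif}$ vertices outside the other graph, at most $1$ at other $a$-vertices, and $O(1)$ at $b$-vertices in $\mathsf L(S\cap K)\cup V(\mathtt E)$. Summing over $\mathtt E$ with weights $(\mu/n)^{|\mathtt E|}$ then absorbs the $2^{2|\mathtt E|}$ and constant factors and yields $C^{|\mathsf L(S\Cap K)|+|\mathsf L(S\cap K)|+1}$.

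This cancellation is exactly where the leaf count of $S\Cap K$ enters the bound. Your remark that interior signs ``cancel exactly'' covers only the main term, not the order-one corrections, so your proposal has no substitute for this step.
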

The proof of Lemma~\ref{lem-est-cov-f-S-f-K} is incorporated in Section~\ref{subsec:proof-lem-4.3} of the appendix. By \eqref{eq-cal-expectation}, we have $\E_\P[f_S]\ge 0$ for any $[S]\in \mathcal H$, and thus $\operatorname{Cov}_{\Pb}\big( f_S,f_K \big)\le \mathbb E_{\Pb}\big[ f_Sf_K \big]$.
Using Lemma~\ref{lem-est-cov-f-S-f-K}, we see that the right-hand side of \eqref{eq-var-Pb-f-H-relax-1} can be written as (recall \eqref{eq-def-Omega(S)})
\begin{align}
    [1+o(1)] \sum_{ \substack{ S,K \subset \mathsf K_{n,p}: [S],[K] \in \mathcal H \\ V(S) \cap V(K) \neq \emptyset } } \frac{ C^{|\mathsf{L}(S \Cap K)|+|\mathsf{L}(S \cap K)|+1} \Xi(S) \Xi(K) \mathtt P(S,K) \Omega(S \Cap K) }{ \Omega(S) \Omega(K) \beta_{\mathcal H}^2 }  \,.  \label{eq-var-Pb-f-H-relax-2}
\end{align}
Now we split \eqref{eq-var-Pb-f-H-relax-2} into two parts, the first part counts the contribution from $S=K$ (in this case we have $S \Cap K=S$ and $\mathsf L(S\Cap K) = \mathsf L(S\cap K) = \emptyset$): 
\begin{align}
    \sum_{ S \in \mathsf K_{n,p}: [S] \in \mathcal H } \frac{ C \Xi(S)^2 \cdot \mathtt P(S,S) }{ \Omega(S) \beta_{\mathcal H}^2 }  \,.  \label{eq-var-Pb-f-H-relax-3-Part-1}
\end{align}
The second part counts the contribution from $V(S) \cap V(K) \neq \emptyset$ and $S \neq K$:
\begin{align}
    \sum_{ \substack{ S,K \subset \mathsf K_{n,p}: [S],[K] \in \mathcal H \\ V(S) \cap V(K) \neq \emptyset, S \neq K } } \frac{ C^{|\mathsf{L}(S \Cap K)|+|\mathsf{L}(S \cap K)|+1} \Xi(S) \Xi(K) \mathtt P(S,K) \Omega(S\Cap K) }{ \beta_{\mathcal H}^2 \Omega(S) \Omega(K) }  \,,  \label{eq-var-Pb-f-H-relax-3-Part-2}
\end{align}
We now bound \eqref{eq-var-Pb-f-H-relax-3-Part-1} and \eqref{eq-var-Pb-f-H-relax-3-Part-2} separately via the following lemma.

\begin{lemma}{\label{lem-detection-most-technical}}
    Suppose that \eqref{eq-assum-upper-bound} holds and we choose $\aleph$ according to \eqref{eq-condition-strong-detection}. Then we have
    \begin{align}
        & \eqref{eq-var-Pb-f-H-relax-3-Part-1} \leq \frac{1+o(1)}{\beta_{\mathcal H}} = o(1) \,.  \label{eq-bound-var-Pb-f-H-relax-3-Part-1} \\
        & \eqref{eq-var-Pb-f-H-relax-3-Part-2} \leq n^{-1+o(1)} = o(1) \,.  \label{eq-bound-var-Pb-f-H-relax-3-Part-2}
    \end{align}
\end{lemma}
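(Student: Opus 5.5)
The plan is to treat the two bounds separately: Part~1 is a direct computation, and Part~2 needs an enumeration of overlap patterns.

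\textbf{Part~1.} For $S=K$ the symmetric differences are empty and all $\mathsf{dif}$ sets lie in $V(K)$, so $\mathtt P(S,S)=1$. Moreover $\Omega(S)=n^{\aleph}p^{\frac12|E_0(S)|}$. Summing over labeled copies with \eqref{eq-enumerate-H-in-K-n,p} gives
\begin{align*}
\eqref{eq-var-Pb-f-H-relax-3-Part-1} = \frac{C(1+o(1))}{\beta_{\mathcal H}^2}\sum_{[H]\in\mathcal H}\frac{\Xi(H)^2}{|\mathsf{Aut}(H)|} = \frac{C(1+o(1))}{\beta_{\mathcal H}} \,.
\end{align*}
The constant $C$ here is harmless. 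If one wants the literal factor $1+o(1)$, note that for $S=K$ the covariance bound is just $\mathbb E_{\Pb}[f_S^2]\le(1+o(1))$ times the main term, which can be checked directly. By Lemma~\ref{lem-bound-beta-mathcal-H}, $\beta_{\mathcal H}\ge D^{-1}\aleph^{-2}(1+\delta)^{\aleph}$, and with $\aleph=\omega(1)$ this tends to infinity. Hence Part~1 is $o(1)$.

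\textbf{Part~2: setup.} We organize the sum by the overlap graph $T=S\Cap K$ and the undecorated intersection $S\cap K$. The key identity is
\begin{align*}
\frac{\Xi(S)\Xi(K)\mathtt P(S,K)}{\Omega(S)\Omega(K)}
\end{align*}
factors as follows. The shared decorated edges of $T$ contribute $\Xi$-weight squared. The edges in the symmetric difference contribute weight to the first power, through $\mathtt P$. The $\rho$-factors at vertices of $\mathsf{dif}$ sets inside $V(S)\cap V(K)$ are bounded by~$1$.

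We then fix $S$ and count choices of $K$. Since $S$ and $K$ are cycles sharing a vertex but $S\neq K$, the graph $T$ is a union of $m\ge0$ paths (vertices of $V(S)\cap V(K)$ outside $T$ add further isolated pieces). The part of $K$ outside $T$ then consists of some number $r\ge1$ of decorated path segments joining endpoints of $T$, or a single excursion when $T$ is just shared vertices.

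\textbf{Part~2: counting.} Each such segment with $k$ new $\mathsf a$-vertices has $n^{k}p^{(\cdot)}$ placements, but $\Omega$ charges $n^{k+1}p^{(\cdot)}$. Each segment therefore gains a factor $n^{-1}$ against its endpoints, up to the number of endpoint choices, which is $\aleph^{O(1)}$ per segment. Summing the weights of a segment over its decorations is a path-type sum, controlled like $\beta_{\mathcal J}$ in Lemma~\ref{lem-bound-beta-mathcal-J}. This gives at most $D\,\sigma_+(\mathbf P)^{\text{length}}$ per segment, with the transfer-matrix structure of $\mathbf P$ handling the $\rho$-factors at color changes. Similarly the decorated $S$ contributes $\sigma_+(\mathbf P)^{\aleph}$ overall.

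Collecting terms, the total becomes
\begin{align*}
\sum_{r\ge1}\big(C\aleph^{O(1)}D\big)^{r} n^{-r}\frac{\sigma_+(\mathbf P)^{2\aleph}\,\aleph^{O(1)}}{\beta_{\mathcal H}^2} \,.
\end{align*}
The leaf factors $C^{|\mathsf L(\cdot)|}$ are bounded by $C^{O(r)}$, since the number of leaves is at most twice the number of segments. By the lower bound in Lemma~\ref{lem-bound-beta-mathcal-H} we have $\sigma_+^{2\aleph}/\beta_{\mathcal H}^2\le D^2\aleph^4$. Because $\aleph=o(\log n/\log\log n)$, we get $\aleph^{O(r)}=n^{o(r)}$, so the sum is $n^{-1+o(1)}$.

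\textbf{Main obstacle.} The delicate step is the $\rho$-bookkeeping. We must show that the vertex factors $\rho^{|\mathsf{dif}_0|+2|\mathsf{dif}|}$, restricted to the non-shared part together with the part of $S$, combine into a quantity dominated by a product of $\mathbf P$-type transfer sums. Equivalently, we need that splitting a cycle's weight into path pieces costs at most a constant factor per break point. We plan to attempt this by writing each weight as an entry of a power of the symmetrized matrix $\mathbf D^{1/2}\mathbf R\mathbf D^{1/2}$, where $\mathbf P=\mathbf D\mathbf R$. The ingredient to establish is that the entries of its $k$-th power are bounded by $O(\sigma_+^k)$, using positivity of $\mathbf R$.
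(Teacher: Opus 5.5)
Your Part~1 is the paper's computation; the paper likewise only gets $C(1+o(1))/\beta_{\mathcal H}$, and only the $o(1)$ matters. For Part~2 your segment-decomposition route is sound, but it is much heavier than the paper's. The step you single out as the main obstacle is not really one.

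The paper uses that under \eqref{eq-condition-strong-detection} every factor $c^{O(\aleph)}$ or $\aleph^{O(\aleph)}$ is $n^{o(1)}$. Since $\mu^2/\gamma,\epsilon_\ell^2\lambda_\ell,\rho\le 1$, one has $\mathtt P(S,K)\le 1\le n^{o(1)}\Xi(S)\Xi(K)$. Likewise $C^{|\mathsf L(S\cap K)|+|\mathsf L(S\Cap K)|}=n^{o(1)}$ and $\Omega(K)=n^{|E(K)|+o(1)}$. The summand is then at most $\Xi(S)^2\Xi(K)^2\, n^{|E(S\Cap K)|-|V(K)|+o(1)}/(\Omega(S)\beta_{\mathcal H}^2)$. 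For fixed $S$ and $[H]$, the number of $K\cong H$ with $|V(K)\cap V(S)|=t$ is at most $n^{|V(H)|-t+o(1)}/|\mathsf{Aut}(H)|$, and $|E(S\Cap K)|\le t-1$ whenever $S\neq K$ share a vertex. So the $K$-sum is $n^{-1+o(1)}\beta_{\mathcal H}$, and the $S$-sum supplies the other $\beta_{\mathcal H}$.

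This needs no per-segment decoration sums and no $\rho$-bookkeeping. The whole cost of replacing $\mathtt P(S,K)$ by $\Xi(S)\Xi(K)$ is $e^{O(\aleph)}=n^{o(1)}$. The same holds for bounding all shared-vertex $\rho$-factors by $1$, as you do in your setup. This is the same absorption you already use for $\aleph^{O(r)}$.

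If you keep your decomposition, the obstacle closes by elementary counting rather than matrix analysis. At vertices interior to a shared decorated path, the $\rho$-exponents coming from $S$ and from $K$ coincide. Mismatches therefore occur only at the $O(r)$ junction vertices, each costing $\rho^{-O(1)}$; this is exactly the counting in Lemma~\ref{lem-bound-Xi-mathtt-P}. The entry bound you want is immediate: $\mathbf P=\mathbf U\widetilde{\mathbf P}\mathbf U^{-1}$ with $\widetilde{\mathbf P}$ symmetric positive semidefinite, so the entries of $\mathbf P^k$ are $O(\sigma_+(\mathbf P)^k)$. This is already encoded in \eqref{eq-formula-X-ell-Y-ell} and Lemma~\ref{lem-contribution-sub-chain}.

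What your route buys is a bound of order $\mathrm{poly}(\aleph)\,n^{-1}$, with only constant losses per junction and no reliance on $\aleph\log\aleph=o(\log n)$. That is why the paper deploys the same segment machinery for the recovery variance (Lemma~\ref{lem-recovery-most-technical}). There $\aleph=\Theta(\log n)$, so $e^{O(\aleph)}$ is no longer $n^{o(1)}$.
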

The proof of Lemma~\ref{lem-detection-most-technical} is postponed to Section~\ref{subsec:proof-lem-4.4} in the appendix. 
Clearly, plugging \eqref{eq-bound-var-Pb-f-H-relax-3-Part-1} and \eqref{eq-bound-var-Pb-f-H-relax-3-Part-2} into \eqref{eq-var-Pb-f-H-relax-2} yields Lemma~\ref{lem-mean-var-f-H-part-2}.

\subsection{Proof of Theorem~\ref{MAIN-THM-detection}}{\label{subsec:proof-thm-3.5}}

With Proposition~\ref{main-prop-detection}, we are ready to prove Theorem~\ref{MAIN-THM-detection}.
\begin{proof}[Proof of Theorem~\ref{MAIN-THM-detection}]
    Under condition \eqref{eq-condition-strong-detection}, from Proposition~\ref{main-prop-detection} we get that
    \begin{align*}
        \mathbb E_{\Qb}[f_{\mathcal H}]=0, \ \mathbb E_{\Qb}[f_{\mathcal H}^2] = 1+o(1), \ \mathbb E_{\Pb}[f_{\mathcal H}] = \omega(1), \ \operatorname{Var}_{\Pb}[f_{\mathcal H}] = o(1) \cdot \mathbb E_{\Pb}[f_{\mathcal H}]^2 \,.
    \end{align*}
    Thus, for any constant $0<c<1$ we obtain
    \begin{align*}
        &\Pb\Big( f_{\mathcal H} \leq c\cdot \mathbb E_{\Pb}[f_{\mathcal H}] \Big) \leq \frac{ \operatorname{Var}_{\Pb}[f_{\mathcal H}] }{ (1-c)^2 \mathbb E_{\Pb}[f_{\mathcal H}]^2 } = o(1) \,, \\
        &\Qb\Big( f_{\mathcal H} \geq c\cdot \mathbb E_{\Pb}[f_{\mathcal H}] \Big) \leq \frac{ \operatorname{Var}_{\Qb}[f_{\mathcal H}] }{ c^2 \mathbb E_{\Pb}[f_{\mathcal H}]^2 } = o(1) \,.  \qedhere
    \end{align*}
\end{proof}

\subsection{Proof of Proposition~\ref{main-prop-recovery}}{\label{subsec:proof-prop-3.8}}

\begin{lemma}{\label{lem-conditional-exp-given-endpoints}}
    For all $[S] \in \mathcal J$ and $\mathsf L(S)=\{ a_u,a_v \}$, we have 
    \begin{align*}
        \mathbb E_{\Pb}\big[ f_S \cdot \bm x_u \bm x_v \big] = \frac{ \Xi(S) }{ n^{\frac{1}{2}\aleph} p^{\frac{1}{4}|E_0(S)|} } \cdot \rho^{\mathbf 1_{u \in V_0(S)}+\mathbf 1_{v \in V_0(S)}} \,.
    \end{align*}
\end{lemma}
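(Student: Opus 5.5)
The plan is to repeat the computation \eqref{eq-cal-expectation} from the proof of Lemma~\ref{lem-mean-var-f-H-part-1}, keeping track of the two extra factors $\bm x_u\bm x_v$ attached to the endpoints. First, condition on $\bm x,\bm u,\bm x_1,\ldots,\bm x_L$. Conditionally, the entries of $\bm Y$ and the $\overline{\bm G}_\ell$ are independent, with means $\sqrt{\mu/n}\,\bm x(i)\bm u(k)$ and $\sqrt{\epsilon_\ell^2\lambda_\ell/n}\,\bm x_\ell(i)\bm x_\ell(j)$. Since $\bm x_u\bm x_v$ is measurable with respect to the conditioning, we get
\begin{align*}
\mathbb E_{\Pb}[f_S\,\bm x_u\bm x_v] = \frac{\mu^{\frac12|E_0(S)|}\prod_{\ell}(\epsilon_\ell^2\lambda_\ell)^{\frac12|E_\ell(S)|}}{n^{\frac12|E(S)|}}\;\mathbb E\Big[\bm x(u)\bm x(v)\prod_{b_k\in V^{\mathsf b}(S)}\bm u(k)^2\prod_{a_i\in V^{\mathsf a}(S)}\bm x(i)^{\Lambda_0(i)}\prod_{\ell=1}^{L}\bm x_\ell(i)^{\Lambda_\ell(i)}\Big].
\end{align*}
Because $S$ is a path whose leaves lie in $V^{\mathsf a}$, every $b$-vertex has degree $2$ and contributes $\mathbb E[\bm u(k)^2]=1$. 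The expectation then factorizes over $a$-vertices, since the pairs $(\bm x(i),\bm z_\ell(i))$ are independent across $i$.

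Second, handle the interior $a$-vertices exactly as in the three bullet points after \eqref{eq-cal-expectation}. A vertex in $\mathsf{dif}_0(S)$ contributes $\mathbb E[\bm x(i)\bm x_\ell(i)]=\mathbb E[\bm z_\ell(i)]=\rho$. A vertex in $\mathsf{dif}(S)$ contributes $\mathbb E[\bm z_\ell(i)\bm z_{\ell'}(i)]=\rho^2$. Any other interior vertex contributes $1$.

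Third, handle the two leaves $a_u,a_v$, each of degree one with a single incident edge. If that edge is in $E_0$, the leaf carries $\bm x(u)$, and with the extra $\bm x_u$ (read as $\bm x(u)$) the factor is $\bm x(u)^2=1$. Note that the displayed formula in the statement instead charges a $\rho$ exactly when $u\in V_0(S)$. This suggests the intended reading of $\bm x_u$ is the label relevant to the graph layers, so that an $E_0$-edge leaf gives $\mathbb E[\bm x(u)\bm x_\ell(u)]=\rho$ and a graph-edge leaf gives a square equal to $1$. The key step is therefore to fix the convention for $\bm x_u\bm x_v$ so the leaf computation yields $\rho^{\mathbf 1_{u\in V_0(S)}}$. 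With the statement as written, the leaf gives a factor $\rho$ precisely when the incident edge and the multiplier refer to different label vectors; I would verify that this is what $\mathbf 1_{u\in V_0(S)}$ encodes and adopt that convention. Also, since leaves have degree one, they are never in $\mathsf{dif}_0(S)$ or $\mathsf{dif}(S)$, so there is no double counting.

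Finally, collect powers. Using \eqref{eq-basic-property-mathcal-J}, $|E(S)|=\aleph+\frac12|E_0(S)|$, and $\gamma=n/p$ gives
\[
\mu^{\frac12|E_0|}\,n^{-\frac12\aleph-\frac14|E_0|}=(\mu^2/\gamma)^{\frac14|E_0|}\,n^{-\frac12\aleph}\,p^{-\frac14|E_0|},
\]
exactly as in step (b) of \eqref{eq-cal-expectation}. Combined with the interior factor $\rho^{|\mathsf{dif}_0(S)|+2|\mathsf{dif}(S)|}$, this reproduces $\Xi(S)$ from \eqref{eq-def-Xi(H)}, times the leaf factor $\rho^{\mathbf 1_{u\in V_0(S)}+\mathbf 1_{v\in V_0(S)}}$. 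The only real obstacle is the convention issue at the leaves; the rest is bookkeeping identical to Lemma~\ref{lem-mean-var-f-H-part-1}.
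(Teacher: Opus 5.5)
Your conditioning, the factorization over vertices, the treatment of interior $a$-vertices and of $b$-vertices, and the collection of powers of $n,p,\mu$ match the paper's computation exactly. The gap is at the two leaves, the one step where this lemma differs from Lemma~\ref{lem-mean-var-f-H-part-1}. You leave that step unresolved and propose to settle it by changing the meaning of $\bm x_u$, and that fix is not available. The multiplier has to be $\bm x_u=\bm x(u)$, because Proposition~\ref{main-prop-recovery} is used in Theorem~\ref{MAIN-THM-recovery} through $\langle\Phi^{\mathcal J},\bm x\bm x^{\top}\rangle$. Moreover, $\Phi^{\mathcal J}_{u,v}$ sums over paths $S$ whose leaf edges carry every possible decoration, so the multiplier cannot depend on the decoration of the leaf edge of $S$. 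Taking instead $\bm x_\ell(u)$ for a fixed $\ell$ fails once $L\ge 2$: a leaf attached by an $E_{\ell'}$-edge with $\ell'\neq\ell$ then contributes $\mathbb E[\bm x_\ell(u)\bm x_{\ell'}(u)]=\rho^2$, not $1$.

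With $\bm x_u=\bm x(u)$, your own leaf computation is the correct one. A leaf on an $E_0$-edge contributes $\mathbb E[\bm x(u)^2]=1$, and a leaf on an $E_\ell$-edge contributes $\mathbb E[\bm x(u)\bm x_\ell(u)]=\mathbb E[\bm z_\ell(u)]=\rho$. So the true exponent is $\mathbf 1_{a_u\notin V_0(S)}+\mathbf 1_{a_v\notin V_0(S)}$. For $\rho<1$ the displayed identity is false whenever both leaves, or neither leaf, lie in $V_0(S)$.

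The paper's proof does not catch this because it asserts the leaf exponent without working out the leaf case. It ``recalls'' that both leaves lie in $V_0(S)$, which the definition of $\mathcal J$ does not give. Even if it held, the leaf factor would be $1$, not $\rho^2$.

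Nothing downstream is affected. The only later use, in \eqref{eq-conditional-mean-Phi-i,j}, is the bound $\mathbb E_{\Pb}[f_S\,\bm x_u\bm x_v]\ge\rho^2\,\Xi(S)\,n^{-\frac12\aleph}p^{-\frac14|E_0(S)|}$. This holds with either indicator, since the exponent is at most $2$ and $\rho\le1$.

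To complete your proof, commit to $\bm x_u=\bm x(u)$ and carry out the leaf case as you already did. Then state the corrected exponent, or this lower bound, rather than adopting a convention chosen to reproduce the misprinted formula.
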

\begin{proof}
    Denote $\pi_{u,v}$ as the law of $(\bm x,\bm x_1,\ldots,\bm x_L)$ given $\bm x_u,\bm x_v$. We then have
    \begin{align*}
        \mathbb E_{\Pb}\big[ f_S \cdot \bm x_u \bm x_v \big] = \mathbb E_{(\bm x,\bm x_1,\ldots,\bm x_L)\sim\pi_{u,v}}\Big[ \bm x_u \bm x_v \cdot \mathbb E_\mathbb P\big[ f_S \mid \bm x,\bm x_1,\ldots,\bm x_L \big] \Big] \,.
    \end{align*}
    Recall \eqref{eq-def-f-H,A}, we have
    \begin{align*}
        &\mathbb E_\mathbb P\big[ f_S \mid \bm x,\bm x_1,\dots,\bm x_L \big] = \mathbb E_{\bm u}\Bigg\{ \prod_{(a_i,b_k)\in E_0(S)} \tfrac{ \sqrt{\mu} }{ \sqrt{n} } \bm x(i) \bm u(k) \prod_{1 \leq \ell \leq L} \prod_{(a_i,a_j)\in E_{\ell}(S)} \tfrac{ \sqrt{\epsilon_{\ell}^2\lambda_{\ell}} }{ \sqrt{n} } \bm x_{\ell}(i) \bm x_{\ell}(j) \bigg\}  \\
        =\ & \frac{ \mu^{\frac{1}{2}|E_0(S)|} \prod_{1 \leq \ell \leq L} (\epsilon_\ell^2\lambda_\ell)^{\frac{1}{2}|E_\ell(S)|} }{ n^{\frac{1}{2}|E(S)|} } \mathbb E_{\bm u}\Bigg\{ \prod_{(a_i,b_k)\in E_0(S)} \bm x(i) \bm u(k) \prod_{1 \leq \ell \leq L} \prod_{(a_i,a_j)\in E_{\ell}(S)} \bm x_{\ell}(i) \bm x_{\ell}(j) \bigg\}  \\
        =\ & \frac{ (\mu^2/\gamma)^{\frac{1}{4}|E_0(S)|} \prod_{1 \leq \ell \leq L} (\epsilon_\ell^2\lambda_\ell)^{\frac{1}{2}|E_\ell(S)|} }{ n^{\frac{1}{2}\aleph} p^{\frac{1}{4}|E_0(S)|} } \mathbb E_{\bm u}\Bigg\{ \prod_{(a_i,b_k)\in E_0(S)} \bm x(i) \bm u(k) \prod_{1 \leq \ell \leq L} \prod_{(a_i,a_j)\in E_{\ell}(S)} \bm x_{\ell}(i) \bm x_{\ell}(j) \bigg\} \,,
    \end{align*}
    where the last equality follows from $n=\gamma p$ and \eqref{eq-basic-property-mathcal-J}. In addition, recall \eqref{eq-def-V-ell-decorated} and recall that for all $[S] \in \mathcal J$ with $\mathsf L(S)=\{ u,v \}$, we have $\{ u,v \} \in V_{0} \cap V^{\mathsf a}(S)$. Thus, similarly as in \eqref{eq-cal-expectation} we have
    \begin{align*}
        & \prod_{(a_i,b_k)\in E_0(S)} \bm x(i) \bm u(k) \prod_{1 \leq \ell \leq L} \prod_{(a_i,a_j)\in E_{\ell}(S)} \bm x_{\ell}(i) \bm x_{\ell}(j) \\
        =\ & \prod_{b_k \in V^{\mathsf b}(S)} \bm u(k)^2 \prod_{ a_i \in V^{\mathsf a}(S) } \bm x(i)^{ \Lambda_0(i) } \bm x_1(i)^{\Lambda_1(i)} \ldots \bm x_L(i)^{\Lambda_L(i)} \,,
    \end{align*}
    where $(\Lambda_0(i),\Lambda_1(i),\ldots,\Lambda_L(i))$ is defined similarly as in \eqref{eq-cal-expectation}. This yields that
    \begin{align*}
        & \mathbb E_{(\bm x,\bm x_1,\ldots,\bm x_L) \sim \pi_{u,v}}\left\{ \mathbb E_{\bm u} \left[ \bm x_u \bm x_v \cdot \prod_{(a_i,b_k)\in E_0(S)} \bm x(i) \bm u(k) \prod_{1 \leq \ell \leq L} \prod_{(a_i,a_j)\in E_{\ell}(S)} \bm x_{\ell}(i) \bm x_{\ell}(j) \right] \right\} \\
        =\ & \rho^{|\mathsf{dif}_0(S)|+2|\mathsf{dif}(S)|+\mathbf 1_{u \in V_0(S)} + \mathbf 1_{v \in V_0(S)}}  \,, 
    \end{align*}
    leading to the desired result (recall \eqref{eq-def-Xi(H)}).
\end{proof}

Based on Lemma~\ref{lem-conditional-exp-given-endpoints}, we see that 
\begin{align}
    &\mathbb E_{\Pb}\big[ \Phi^{\mathcal J}_{u,v} \cdot \bm x_u \bm x_v \big] \overset{\eqref{eq-def-Phi-i,j-mathcal-J}}{=} \sum_{[H] \in \mathcal J} \frac{ \Xi(H) }{ n^{\frac{1}{2}\aleph-1} p^{\frac{1}{4}|E_0(H)|} \beta_{\mathcal J} } \sum_{ \substack{ S \subset \mathsf K_n: S \cong H \\ \mathsf L(S)=\{ u,v \} } } \frac{ \Xi(H) \rho^{\mathbf 1_{u \in V_0(S)} + \mathbf 1_{v \in V_0(S)}} }{ n^{\frac{1}{2}\aleph} p^{\frac{1}{4}|E_0(H)|} \beta_{\mathcal J} }  \nonumber \\
    \geq\ & \rho^2 \sum_{[H] \in \mathcal J} \frac{ \Xi(H)^2 }{ n^{\aleph-1} p^{\frac{1}{2}|E_0(H)|} \beta_{\mathcal J} } \cdot \#\big\{ S \subset \mathsf K_n: S \cong H, \mathsf L(S)=\{ u,v \} \big\} \nonumber \\
    =\ & [1+o(1)] \cdot \rho^2 \sum_{[H] \in \mathcal J} \frac{ \Xi(H)^2 }{ n^{\aleph-1} p^{\frac{1}{2}|E_0(H)|} \beta_{\mathcal J} } \cdot \frac{ n^{\aleph-1} p^{\frac{1}{2}|E_0(H)|} }{ |\mathsf{Aut}(H)| }  \overset{\eqref{eq-def-beta-mathcal-J}}{=} (1+o(1)) \rho^2 \,.  \label{eq-conditional-mean-Phi-i,j}
\end{align}
We now bound the variance of $\Phi_{i,j}^{\mathcal J}$ via the following lemma.
\begin{lemma}{\label{lem-conditional-var-Phi-i,j}}
    Suppose that \eqref{eq-assum-upper-bound} holds and we choose $\aleph$ according to \eqref{eq-condition-weak-recovery}. Then
    \begin{equation}{\label{eq-conditional-var-Phi-i,j}}
        \mathbb E_{\Pb}\Big[ \big( \Phi_{u,v}^{\mathcal J} \big)^2 \Big]  \leq O(1) \mbox{ for all } u \neq v, u,v \in [n]  \,.
    \end{equation}
\end{lemma}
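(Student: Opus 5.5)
We would follow the same route as the variance analysis for $f_{\mathcal H}$ in Section~\ref{subsec:proof-prop-3.4}, now specialized to decorated paths with prescribed endpoints. Expanding the square via \eqref{eq-def-Phi-i,j-mathcal-J} gives
\begin{align*}
    \mathbb E_{\Pb}\big[ (\Phi^{\mathcal J}_{u,v})^2 \big] = \sum_{[H],[I] \in \mathcal J} \frac{ \Xi(H)\Xi(I) }{ n^{\aleph-2} p^{\frac14(|E_0(H)|+|E_0(I)|)} \beta_{\mathcal J}^2 } \sum_{ \substack{ S \cong H,\, K \cong I \\ \mathsf L(S)=\mathsf L(K)=\{a_u,a_v\} } } \mathbb E_{\Pb}[f_S f_K] \,.
\end{align*}
Since $\aleph=|E(S)|-\frac12|E_0(S)|$ for paths in $\mathcal J$, the normalization equals $n^{2}\,\Omega(S)^{1/2}\Omega(K)^{1/2}$ by \eqref{eq-def-Omega(S)}. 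We then apply the general bound \eqref{eq-est-general-cov-f-S-f-K} of Lemma~\ref{lem-est-cov-f-S-f-K}, which is stated for decorated paths as well. This gives
\begin{align*}
    \mathbb E_{\Pb}[f_Sf_K] \le \frac{C^{|\mathsf L(S\Cap K)|+|\mathsf L(S\cap K)|+1}\,\mathtt P(S,K)\,\Omega(S\Cap K)}{\Omega(S)^{1/2}\Omega(K)^{1/2}} \,.
\end{align*}
Note that here we bound the second moment, not the covariance, because $\mathbb E_{\Pb}[f_S]\mathbb E_{\Pb}[f_K]$ need not vanish. Since $S$ and $K$ always share $a_u,a_v$, there is no analogue of \eqref{eq-cov-equal-0-non-intersect}.

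Next we organize the sum by the intersection pattern. Write $J=S\Cap K$, which is a union of vertex-disjoint decorated subpaths, and let $k$ be its number of components. We classify pairs $(S,K)$ by the isomorphism type of $J$, i.e., by how $S\cup K$ looks.
\begin{itemize}
    \item \emph{$J$ consists only of the endpoints ($E(J)=\emptyset$).} Then $S\cup K$ is a decorated cycle through $a_u,a_v$. The number of such pairs is about $n^{2\aleph-2}p^{\frac12(|E_0(S)|+|E_0(K)|)}$, so this term is roughly $\big(\sum_{[H]\in\mathcal J}\Xi(H)^2/|\mathsf{Aut}(H)|\big)^2/\beta_{\mathcal J}^2$. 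Here $\mathtt P(S,K)$ reproduces $\Xi(S)\Xi(K)$ up to the $\rho$-factors at $a_u,a_v$, and those factors are at most $1$. So this term is $O(1)$, and it is the main contribution, essentially $\mathbb E[\Phi]^2/\rho^4$.
    \item \emph{$J=S=K$.} This term is about $n^{2}\cdot n^{\aleph-1}\cdots/(n^{\aleph}\beta_{\mathcal J})$. The full overlap saves the factor $\Omega(S)$, but we lose the free choice of the interior. We get $O(n\cdot\beta_{\mathcal J}^{-1}\,\mathrm{poly}(\aleph))$, which is $O(1)$ precisely because \eqref{eq-condition-weak-recovery} together with Lemma~\ref{lem-bound-beta-mathcal-J} gives $\beta_{\mathcal J}\ge D^{-1}(1+\delta)^{\aleph}\ge n^2/D$.
    \item \emph{Intermediate overlaps.} Each shared segment of $J$ with $m$ edges reduces the free vertex count of $K$ by about $m-1$. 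It also replaces a weight $\mathtt P$ by roughly $\sigma_+(\mathbf P)^{-m}$ relative to the main term, while $\Omega(J)$ compensates the $n$-powers. Counting components and endpoints, a configuration with $k$ shared segments costs a factor of about $(C\aleph)^{O(k)}n^{-(k-1)}\beta_{\mathcal J}^{-1}$ times the free-path weights. This is summable and gives $O(1)$ (in fact $o(1)$ for $k\ge2$), using the bound $\beta_{\mathcal J}\gtrsim n^2$ for the overlap pieces. The $C^{|\mathsf L|}$ factors contribute only $C^{O(k)}$, which is absorbed since $\aleph=O(\log n)$.
\end{itemize}

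The main obstacle is this intermediate-overlap enumeration: bounding, uniformly in the shape of $J$, the weighted number of ways to complete $J$ into two decorated paths with the given weights. It requires a transfer-matrix bound in which the sum over decorations of a path segment of length $m$ is at most $D\sigma_+(\mathbf P)^m$, using the same spectral argument as in Lemma~\ref{lem-bound-beta-mathcal-J}. The $\rho$-exponents in $\mathtt P(S,K)$ near the boundary of $J$ must also be checked to only help. We would first prove a lemma bounding the weighted count of decorated paths between fixed endpoints by $D\sigma_+(\mathbf P)^{m}n^{m-1}$. We would then do the combinatorics by charging each component of $J$ a factor $O(\aleph^2 n^{-1})\cdot\sigma_+(\mathbf P)^{-|E(\text{component})|}\cdot\beta$-ratio, as in the proof of \eqref{eq-bound-var-Pb-f-H-relax-3-Part-2}.
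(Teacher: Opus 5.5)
Your outline is the same as the paper's: expand the square as in \eqref{eq-var-Phi-i,j-relax-1}, bound each term by Lemma~\ref{lem-est-cov-f-S-f-K}, sort pairs by overlap pattern, and control each path piece by a transfer-matrix bound. Your proposed path lemma is exactly Lemma~\ref{lem-contribution-sub-chain}. The gap is in the step you call the main obstacle: you do not carry it out, and the sketch you give is wrong in two places.

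First, organizing by $J=S\Cap K$ is not enough. $E(S)\cap E(K)$ can strictly contain $E(S\Cap K)$, via shared edges with different colors. $S$ and $K$ can also share vertices not covered by $J$. Both the factor $C^{|\mathsf L(S\cap K)|}$ and the $\rho$-exponents in $\mathtt P(S,K)$ depend on these coincidences. For the same reason your first bullet is false as stated: $E(J)=\emptyset$ does not make $S\cup K$ a cycle. The paper handles this in two steps.
\begin{enumerate}
\item Lemma~\ref{lem-reduce-to-good-S,K} shows that summing over re-colorings of shared edges changes the total only by a factor $1+o(1)$. Each mismatched edge loses a factor $n$ in $\Omega(S\Cap K)$, while $\Xi$, $\mathtt P$ and $C^{|\mathsf L|}$ change by $O(1)$ and there are $O(\aleph)$ choices per mismatch.
\item Only then does it decompose by the components $S_1\ni a_u,\dots,S_{\mathtt T+1}\ni a_v$ of the \emph{undecorated} $S\cap K$, isolated shared vertices included. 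Lemma~\ref{lem-bound-Xi-mathtt-P} shows $\Xi(S)$, $\Xi(K)$ and $\mathtt P(S,K)$ factorize over the pieces up to $\rho^{-O(\mathtt T)}$.
\end{enumerate}

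Second, your power counting for partial overlaps is off. You do identify the $\sigma_+(\mathbf P)^{-m}$ factor, but you misplace the powers of $n$.
\begin{itemize}
\item A component containing $a_u$ or $a_v$ removes, up to constants, exactly as many free vertices of $K$ as $\Omega(S\Cap K)$ gives back. So it yields \emph{no} power of $n$. It is controlled only by the geometric factor $\sigma_+(\mathbf P)^{-(\mathtt a+\mathtt b)}$ in the overlap lengths.
\item Consequently the $\mathtt T=1$ sum is $\Theta(1)$, not $o(1)$. Two shared segments hanging off $a_u$ and $a_v$ already give a non-vanishing contribution.
\item Each of the other $\mathtt T-1$ components, even a single shared vertex, costs exactly one factor $n^{-1}$. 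This comes from the $n^{2\mathtt T}$ choices of junction points against the prefactor $n^{-3\mathtt T+1}$ in \eqref{eq-var-Phi-i,j-relax-4}. It beats the $\aleph^{O(\mathtt T)}$ choices of lengths and gives $n^{-\mathtt T+1+o(1)}$ for $\mathtt T\ge 2$.
\end{itemize}
Your statements that a segment ``reduces the free vertex count by about $m-1$'', that $k$ segments cost $n^{-(k-1)}\beta_{\mathcal J}^{-1}$, and that $k\ge 2$ gives $o(1)$ are all inconsistent with this. Also, condition \eqref{eq-condition-weak-recovery} plays no role for partial overlaps. It is needed only for the diagonal term $S=K$, which is $O(n\,\sigma_+(\mathbf P)^{-\aleph})$.
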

Based on Lemma~\ref{lem-conditional-var-Phi-i,j}, we can now complete the proof of Proposition~\ref{main-prop-recovery}.
\begin{proof}[Proof of Proposition~\ref{main-prop-recovery} assuming Lemma~\ref{lem-conditional-var-Phi-i,j} holds]
    Note that \eqref{eq-conditional-mean-Phi-i,j} immediately implies that $\mathbb E_{\Pb}[ \Phi^{\mathcal J}_{u,v} \cdot \bm x_u \bm x_v] \geq \rho^2$. Combined with Lemma~\ref{lem-conditional-var-Phi-i,j}, we have shown Proposition~\ref{main-prop-recovery}.
\end{proof}
The rest part of this subsection is devoted to the proof of Lemma~\ref{lem-conditional-var-Phi-i,j}. Recall $\Phi_{u,v}^{\mathcal J}$ defined in \eqref{eq-def-Phi-i,j-mathcal-J} and $\Omega(S)$ defined in \eqref{eq-def-Omega(S)}, we have
\begin{align}
    \mathbb E_{\Pb}\Big[ \big( \Phi_{u,v}^{\mathcal J} \big)^2 \Big] = \sum_{ \substack{ [S],[K] \in \mathcal J \\ \mathsf L(S)=\mathsf L(K)=\{ u,v \} } } \frac{ n^2 \Xi(S) \Xi(K) }{ \Omega(S)^{\frac{1}{2}} \Omega(K)^{\frac{1}{2}} \beta_{\mathcal J}^2 } \mathbb E_{\Pb}\big[ f_S f_K \big] \,.  \label{eq-var-Phi-i,j-relax-1}
\end{align}
In addition, by plugging Lemma~\ref{lem-est-cov-f-S-f-K} in \eqref{eq-var-Phi-i,j-relax-1}, we get that
\begin{align}
    \eqref{eq-var-Phi-i,j-relax-1} &\leq \sum_{ \substack{ [S],[K] \in \mathcal J \\ \mathsf L(S)=\mathsf L(K)=\{ u,v \} } } \frac{ n^2 \Xi(S) \Xi(K) }{ \Omega(S)^{\frac{1}{2}} \Omega(K)^{\frac{1}{2}} \beta_{\mathcal J}^2 } \cdot \frac{ C^{|\mathsf{L}(S \Cap K)|+|\mathsf{L}(S \cap K)|+1} \mathtt P(S,K) \Omega(S \Cap K) }{ \Omega(S)^{\frac{1}{2}} \Omega(K)^{\frac{1}{2}} } \nonumber \\
    &= \sum_{ \substack{ [S],[K] \in \mathcal J \\ \mathsf L(S)=\mathsf L(K)=\{ u,v \} } } \frac{ n^2 C^{|\mathsf{L}(S \Cap K)|+|\mathsf{L}(S \cap K)|+1} \Xi(S) \Xi(K) \mathtt P(S,K) \Omega(S \Cap K) }{ \Omega(S) \Omega(K) \beta_{\mathcal J}^2 } \,.  \label{eq-var-Phi-i,j-relax-2}
\end{align}
We first show the main contribution of \eqref{eq-var-Phi-i,j-relax-2} comes from those $(S,K)$ such that $E(S) \cap E(K)=E(S \Cap K)$, as incorporated in the following lemma.
\begin{lemma}{\label{lem-reduce-to-good-S,K}}
    Assume that \eqref{eq-assum-upper-bound} holds, we have
    \begin{align}
        \eqref{eq-var-Phi-i,j-relax-2} \leq [1+o(1)] \sum_{ \substack{ [S],[K] \in \mathcal J \\ \mathsf L(S)=\mathsf L(K)=\{ u,v \} \\ E(S) \cap E(K) = E(S \Cap K) } } \frac{ n^2 C^{2|\mathsf{L}(S \cap K)|+1} \Xi(S) \Xi(K) \mathtt P(S,K) \Omega(S \Cap K) }{ \Omega(S) \Omega(K) \beta_{\mathcal J}^2 } \,.  \label{eq-var-Phi-i,j-relax-3}
    \end{align}
\end{lemma}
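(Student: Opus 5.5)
The plan is to split the sum in \eqref{eq-var-Phi-i,j-relax-2} into \emph{good} pairs, with $E(S)\cap E(K)=E(S\Cap K)$, and \emph{bad} pairs, where some edge lies in both $S$ and $K$ with different decorations. Two facts drive the argument.

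First, for good pairs the leaf sets agree. Indeed $S\Cap K$ is then exactly the edge-induced graph of $S\cap K$, except for isolated common vertices, which are not leaves. Hence $|\mathsf L(S\Cap K)|\le|\mathsf L(S\cap K)|$, so $C^{|\mathsf L(S\Cap K)|+|\mathsf L(S\cap K)|+1}\le C^{2|\mathsf L(S\cap K)|+1}$. The good part of \eqref{eq-var-Phi-i,j-relax-2} is therefore bounded by the right-hand side of \eqref{eq-var-Phi-i,j-relax-3}, with no loss.

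Second, we need a matching lower bound for the good part. Pairs with $S=K$ are good, so \eqref{eq-var-Phi-i,j-relax-3} is at least its diagonal contribution. Using \eqref{eq-enumerate-H-in-K-n,p} restricted to fixed endpoints $u,v$, this diagonal term is of order
\[
\frac{n^2}{n^{\aleph-1}p^{|E_0|/2}\beta_{\mathcal J}^2}\sum\Xi(H)^2\,\frac{n^{\aleph-1}p^{|E_0|/2}}{|\mathsf{Aut}(H)|}\cdot\frac{\mathtt P(S,S)}{\Omega(S)}\,.
\]
Rather than computing this exactly, it suffices to show that the bad part is $o(1)$ times the good part. If the good part turns out to be $\Theta(1)$, then showing the bad part is $o(1)$ also suffices.

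To bound the bad part, map each bad pair $(S,K)$ to the pair $(S,K')$, where $K'$ is obtained from $K$ by re-decorating every edge of $E(S)\cap E(K)\setminus E(S\Cap K)$ with its color in $S$. Then $K'$ is still a decorated path with the same leaves, the same edge set and the same $|E_0|$, so $[K']\in\mathcal J$ and $\Omega(K')=\Omega(K)$. The pair $(S,K')$ is good, and $S\Cap K'$ contains $S\Cap K$ plus the recolored edges. Let $t\ge1$ be the number of recolored edges. We then compare weights term by term.
\begin{itemize}
\item $\Omega(S\Cap K')\ge n\,\Omega(S\Cap K)$ once $t\ge1$. Each graph-layer edge adds a factor $n$, and the edge types in $\Omega$ agree because a mismatch can only occur between graph colors $\ell\neq\ell'$; the $E^{\mathsf b}$ edges always have decoration $0$.
\item $\Xi(K)/\Xi(K')$ and $\mathtt P(S,K)/\mathtt P(S,K')$ are at most $C_0^{t}$. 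The $\epsilon_\ell^2\lambda_\ell$ factors are bounded above and below by constants, and each recolored edge changes $\mathsf{dif}$ and $\mathsf{dif}_0$ at $O(1)$ vertices, giving powers of $\rho^{-1}$.
\item The $C$-leaf factor grows by at most $C^{O(t)}$, since each mismatched edge creates $O(1)$ new leaves in $S\Cap K$.
\item The map $K\mapsto K'$ is at most $L^{t}$-to-one.
\end{itemize}
Multiplying these, each bad term is at most $(C_1)^t\,n^{-1}$ times some good term. Summing over $t\le 2\aleph=O(\log n)$ gives a total factor $n^{-1}C_1^{O(\log n)}$.

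The main obstacle is exactly this last step. Since $\aleph=\Theta_\delta(\log n)$, the factor $C_1^{t}$ could be as large as $n^{O(1)}$, which would not be beaten by the single $n^{-1}$. To handle this, refine the gain: each maximal run of mismatched edges on a path intersection should contribute its own factor $n^{-1}$, not just one factor overall. A run of $r$ mismatched consecutive edges lying in $S\cap K$ but outside $S\Cap K$ removes $r$ edges and creates at most $2$ new leaves. It also costs $n^{-r}$ in $\Omega(S\Cap K)$ relative to $S\Cap K'$, which beats $C_1^{r}$ for large $n$. Summing over run decompositions then gives a geometric series $\sum_{t\ge1}(C_1/n)^t=O(1/n)$. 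Carrying out this per-run accounting is the crux. The bad contribution is then $o(1)$ times the good one, which gives the factor $1+o(1)$ in \eqref{eq-var-Phi-i,j-relax-3}.
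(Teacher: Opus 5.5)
Your argument is essentially the paper's. Fixing $S$ and comparing each bad $K$ with its recolored version $K'$ (the paper's $\varphi^{-1}(\eta_S)$), each mismatched edge costs exactly a factor $n^{-1}$, because $\Omega$ is multiplicative over edges: $\Omega(S\Cap K')=n^{t}\,\Omega(S\Cap K)$. So your ``per-run'' refinement is automatic, and your first bullet, $\Omega(S\Cap K')\ge n\,\Omega(S\Cap K)$, was simply needlessly lossy.

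There is one counting slip. For fixed $S,K'$, the number of $K$ with exactly $t$ mismatches is $\binom{|E^{\mathsf a}(S)\cap E^{\mathsf a}(K')|}{t}(L-1)^t\le (L\aleph)^t$, not $L^t$. The series is therefore $\sum_{t\ge1}(C_1L\aleph/n)^t=O(\log n/n)$ rather than $O(1/n)$. This is still $o(1)$, which is exactly what the paper obtains with its $\aleph^m$ count.
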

\begin{proof}
    For notational convenience, define
    \begin{align}\label{eq-def-of-mathcalF-SK}
        \mathcal F(S,K)= \frac{ C^{|\mathsf{L}(S \cap K)|+|\mathsf{L}(S \Cap K)|+1} \Xi(S) \Xi(K) \mathtt P(S,K) \Omega(S \Cap K) }{ \Omega(S) \Omega(K) \beta_{\mathcal J}^2 } \,.
    \end{align}
    Fix an arbitrary decorated path $S$. Note that given $E(K),V(K)$ and the decorations $\chi_K$ restricted in $E(K) \setminus E(S)$, the decorated graph $K$ is uniquely determined by the decorations $\chi_K$ on $E(K) \cap E(S)$. In addition, for the edge $e \in E_0(K) \cap E_0(S)$, we must have $\chi_K(e)=0$ by Definition~\ref{def-decorated-graph}. In conclusion, when given $V(K),E(K)$, and $\zeta_K$ (which is defined below), we have $\varphi:K\longrightarrow ( V(K),E(K), \zeta_K,\eta_K )$ with
    \begin{align*}
        \zeta_K:E^{\mathsf a}(K) \setminus E^{\mathsf a}(S) \to \{ 1,\ldots,L \}, \ \eta_K: E^{\mathsf a}(K) \cap E^{\mathsf a}(S) \to \{ 1,\ldots,L \}
    \end{align*}
    is a bijection. Denote $\eta_S$ as the restriction of $\chi_S$ in $E^{\mathsf a}(K) \cap E^{\mathsf a}(S)$. We first show that for each fixed vertex set $V(K)=\mathsf V$, edge set $E(K)=\mathsf E$, and partial decorations $\zeta_K=\zeta$, we have
    \begin{align}
        \sum_{ \eta_K } \mathcal F\big( S,\varphi^{-1}(\mathsf V,\mathsf E,\zeta,\eta_K) \big) = [1+o(1)] \mathcal F\big(S,\varphi^{-1}( \mathsf V,\mathsf E,\zeta,\eta_S) \big) \,. \label{eq-core-argument-lem-4.7}
    \end{align}
    For convenience, we will write $\varphi^{-1}(\eta_K)$ instead of $\varphi^{-1}(\mathsf V,\mathsf E,\zeta,\eta_K)$ below. Let $\| \eta_K \|_0$ be the number of $e \in E^{\mathsf a}(K) \cap E^{\mathsf a}(S)$ such that $\eta_K(e) \neq \eta_S(e)$. Note that 
    \begin{align*}
        & |\mathsf{dif}(\varphi^{-1}(\eta_K)|-|\mathsf{dif}(\varphi^{-1}(\eta_S))| \leq 2\| \eta_K \|_0 \,; \\
        & |E_\ell(\varphi^{-1}(\eta_K)|-|E_\ell((\varphi^{-1}(\eta_S))| \leq \| \eta_K \|_0  \,.
    \end{align*}
    By the definition of $\varphi$, we note that $E_0(\varphi^{-1}(\eta_K)) = E_0(\varphi^{-1}(\eta_S))$. By~\eqref{eq-assum-upper-bound}, we have $\epsilon_\ell^2\lambda_\ell\le 1$ for all $1\le \ell\le L$. 
    Thus, we have
    \begin{align}
        \frac{ \Xi(\varphi^{-1}(\eta_K)) }{ \Xi(\varphi^{-1}(\eta_S))) } &\overset{\eqref{eq-def-Xi(H)}}{=} \rho^{ 2(|\mathsf{dif}(\varphi^{-1}(\eta_K)|-|\mathsf{dif}(\varphi^{-1}(\eta_S))|) } \prod_{1 \leq \ell \leq L} (\epsilon^2_\ell \lambda_\ell)^{ \frac{1}{2}(|E_\ell(\varphi^{-1}(\eta_K)|-|E_\ell((\varphi^{-1}(\eta_S))|) } \nonumber \\
        &\leq \rho^{ -4\| \eta_K \|_0 } \prod_{1 \leq \ell \leq L} (\epsilon^2_\ell \lambda_\ell)^{ -\frac{1}{2}\| \eta_K \|_0 } \,. \label{eq-bound-ratio-Xi}
    \end{align}
    Similarly, using
    \begin{align*}
        & \Big( \big| E_\ell(S) \triangle E_\ell(\varphi^{-1}(\eta_K)) \big|- \big| E_\ell(S) \triangle E_\ell((\varphi^{-1}(\eta_S)) \big| \Big) \\
        \leq\ & \big| E_\ell(\varphi^{-1}(\eta_K)) \triangle E_\ell((\varphi^{-1}(\eta_S)) \big| \leq 2\| \eta_K \|_0 \,,
    \end{align*}
    we can show that
    \begin{align}
        &\frac{ \mathtt P(S,\varphi^{-1}(\eta_K)) }{ \mathtt P(S,\varphi^{-1}(\eta_S))) } \nonumber \\
        =\ & \rho^{ 2(|\mathsf{dif}(\varphi^{-1}(\eta_K) \setminus V(S)|-|\mathsf{dif}(\varphi^{-1}(\eta_S)) \setminus V(S)|) } \prod_{1 \leq \ell \leq L} (\epsilon^2_\ell \lambda_\ell)^{ \frac{1}{2}(|E_\ell(S) \triangle E_\ell(\varphi^{-1}(\eta_K)|-|E_\ell(S) \triangle E_\ell((\varphi^{-1}(\eta_S))|) } \nonumber \\
        \leq\ & \rho^{ -4\| \eta_K \|_0 } \prod_{1 \leq \ell \leq L} (\epsilon^2_\ell \lambda_\ell)^{ -\| \eta_K \|_0 } \,, \label{eq-bound-ratio-mathtt-P}
    \end{align}
    where the equality follows from~\eqref{eq-def-mathtt-P} and $E_0(\varphi^{-1}(\eta_K)) = E_0(\varphi^{-1}(\eta_S))$; the inequality follows from $\rho\le 1$ and $\epsilon_\ell^2\lambda_\ell\le 1$ for all $1\le \ell\le L$.
    Finally, note that 
    \begin{align}\label{eq-bound-ratio-C-Omega}
        \frac{ C^{|\mathsf L(S \Cap\varphi^{-1}(\eta_K))|} }{ C^{|\mathsf L(S \Cap\varphi^{-1}(\eta_S))|} } \leq C^{2\| \eta_K \|_0}, \quad  \frac{ \Omega(S \Cap \varphi^{-1}(\eta_S)) }{ \Omega( S \Cap \varphi^{-1}(\eta_K) ) } \overset{\eqref{eq-def-Omega(S)}}{=} n^{\|\eta_K\|_0} \,.
    \end{align}
    By~\eqref{eq-def-of-mathcalF-SK},~\eqref{eq-bound-ratio-Xi},~\eqref{eq-bound-ratio-mathtt-P}, and~\eqref{eq-bound-ratio-C-Omega}, we obtain \begin{align*}
        \frac{\mathcal F(S,\varphi^{-1}(\eta_K))}{\mathcal F(S,\varphi^{-1}(\eta_S))}\le C^{2\| \eta_K \|_0} n^{-\| \eta_K \|_0} \rho^{-8\| \eta_K \|_0}\prod_{1\le \ell \le L}(\epsilon^2_\ell\lambda_\ell)^{-\tfrac{3}{2}\| \eta_K \|_0}.
    \end{align*}
    We thus have
    \begin{align*}
        & \sum_{ \eta_K } \mathcal F\big( S,\varphi^{-1}(\eta_K) \big) = \mathcal F\big(S,\varphi^{-1}(\eta_S) \big) + \sum_{m>0} \sum_{ \| \eta_K\|_0=m } \mathcal F\big(S,\varphi^{-1}(\eta_K) \big) \\
        \leq\ & \mathcal F\big(S,\varphi^{-1}(\eta_S) \big) \Big( 1+ \sum_{m>0} C^{2m}n^{-m} \rho^{ -8m } \prod_{1 \leq \ell \leq L} (\epsilon^2_\ell \lambda_\ell)^{ -3m/2 } \cdot \#\{ \| \eta_K \|_0=m \} \Big) \\
        \leq\ & \mathcal F\big(S,\varphi^{-1}(\eta_S) \big) \Big( 1+ \sum_{m>0}C^{2m} n^{-m} \rho^{ -8m } \prod_{1 \leq \ell \leq L} (\epsilon^2_\ell \lambda_\ell)^{ -3m/2} \cdot \aleph^m \Big) =(1+o(1)) \mathcal F\big(S,\varphi^{-1}(\eta_S) \big) \,,
    \end{align*}
    leading to \eqref{eq-core-argument-lem-4.7}. Note that we have $E(K) \cap E(S)=E(K \Cap S)$ if and only if $\eta_K(e)=\eta_S(e)$ for all $e \in E^{\mathsf a}(K) \cap E^{\mathsf a}(S)$. Thus, we have
    \begin{align*}
        \eqref{eq-var-Phi-i,j-relax-2} &= n^2 \sum_{ \substack{ [S],[K] \in \mathcal J \\ \mathsf L(S)=\mathsf L(K)=\{ u,v \} } } \mathcal F(S,K) \leq [1+o(1)] n^2 \sum_{ \substack{ [S],[K] \in \mathcal J \\ \mathsf L(S)=\mathsf L(K)=\{ u,v \} \\ E(S) \cap E(K)=E(S \Cap K) } } \mathcal F(S,K) \\
        &= [1+o(1)]\sum_{ \substack{ [S],[K] \in \mathcal J \\ \mathsf L(S)=\mathsf L(K)=\{ u,v \} \\ E(S) \cap E(K)=E(S \Cap K) } } \frac{ n^2 C^{|\mathsf{L}(S \Cap K)|+|\mathsf{L}(S \cap K)|+1} \Xi(S) \Xi(K) \mathtt P(S,K) \Omega(S \Cap K) }{ \Omega(S) \Omega(K) \beta_{\mathcal J}^2 } \\
        &\leq[1+o(1)] \sum_{ \substack{ [S],[K] \in \mathcal J \\ \mathsf L(S)=\mathsf L(K)=\{ u,v \} \\ E(S) \cap E(K)=E(S \Cap K) } } \frac{ n^2 C^{2|\mathsf{L}(S \cap K)|+1} \Xi(S) \Xi(K) \mathtt P(S,K) \Omega(S \Cap K) }{ \Omega(S) \Omega(K) \beta_{\mathcal J}^2 } \,,
    \end{align*}
    where the last inequality follows from $|\mathsf{L}(S \Cap K)| \leq |\mathsf L(S \cap K)|$ if $E(S \cap K)=E(S \Cap K)$ (since in this case $S \cap K$ is $S \Cap K$ adding some isolated vertices).
\end{proof}

Based on Lemma~\ref{lem-reduce-to-good-S,K}, it suffices to bound the right-hand side of \eqref{eq-var-Phi-i,j-relax-3}. We first introduce some notations. Suppose that $S \cap K$ has $\mathtt T+1$ connected components $S_1,\ldots,S_{\mathtt T+1}$ such that $a_u \in V(S_1),a_v \in V(S_{\mathtt T+1})$. In particular, if $S=K$ we just let $\mathtt T=0$ and $S_1=S=K$. Since $E(S) \cap E(K)=E(S \Cap K)$, we have that the decorated graph obtained by restricting $K$ in $V(S_i)$ equals $S_i$. Note that $S$ is a decorated path, thus $S_{\mathtt t}$ is a path for all $1 \leq \mathtt t \leq \mathtt T+1$ (we allow a path to be an isolated vertex). 
Thus, we also have $E(S) \setminus E(S_1 \cup S_2 \ldots \cup S_{\mathtt T})$ can be decomposed into paths $\widetilde{S}_1,\ldots,\widetilde{S}_{\mathtt T}$ such that $\widetilde{S}_{\mathtt t}$ lies between $S_{\mathtt t}$ and $S_{\mathtt t+1}$. 
Since $a_u\in V(S_1)$ and $a_v\in V(S_{\mathtt T+1})$, the intersection $S\cap K$ must contain
one more connected component than the remaining connected components that are formed by $E(S) \setminus E(S_1 \cup S_2 \ldots \cup S_{\mathtt T})$.
Similarly, $E(K) \setminus E(S_1 \cup S_2 \ldots \cup S_{\mathtt T})$ can be decomposed into disjoint paths $\widetilde{K}_1,\ldots,\widetilde{K}_{\mathtt T}$ such that $\widetilde{K}_{\mathtt t}$ lies between $K_{\mathtt t}$ and $K_{\mathtt t+1}$. Define
\begin{align}{\label{eq-def-S-cap-S-setminus-K-setminus}}
    S_{\cap}:=\big( S_1,\ldots,S_{\mathtt T+1} \big), \quad S_{\setminus}:=\big( \widetilde{S}_1,\ldots,\widetilde{S}_{\mathtt T} \big), \quad K_{\setminus}:=\big( \widetilde{K}_1,\ldots,\widetilde{K}_{\mathtt T} \big) \,.
\end{align}
See Figure~\ref{fig-example-S-cap} for an illustration.
In addition, define $\mathcal A_{\mathtt T}$ to be the collection of $S_{\cap},S_{\setminus},K_{\setminus}$ such that there exists $[S],[K] \in \mathcal J,E(S) \cap E(K)=E(S \Cap K)$ and $\mathsf L(S)=\mathsf L(K)=\{ a_u,a_v \}$ and $S_{\cap},S_{\setminus},K_{\setminus}$ is the desired decomposition of $S \cap K, S \setminus K, K \setminus S$ respectively. It is also clear that all $S,K$ such that $E(S) \cap E(K)=E(S \Cap K)$ is uniquely determined by $S_{\cap},S_{\setminus},K_{\setminus}$. The next lemma shows how to bound $\Xi(S)$, $\Xi(K)$ and $\mathtt P(S,K)$ using the quantity above. 

\begin{figure}[htbp]
\centering
\includegraphics[width=0.9\linewidth]{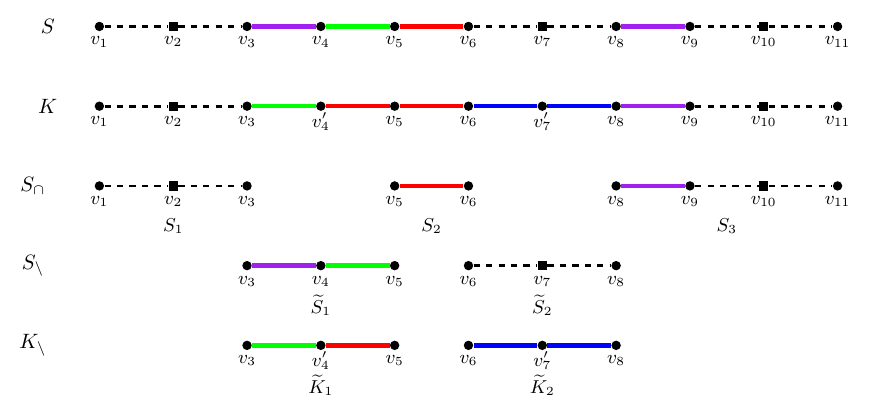}
\caption{An example of decorated paths $S$ and $K$ with $E(S)\cap E(K) = E(S\Cap K)$, together with $S_{\cap}$, $S_{\setminus}$, and $K_{\setminus}$, for $\mathtt T=2$.}
\label{fig-example-S-cap}
\end{figure}

\begin{lemma}{\label{lem-bound-Xi-mathtt-P}}
    For all $E(S) \cap E(K)=E(S \Cap K)$ and $S_{\cap},S_{\setminus},K_{\setminus} \in \mathcal A_{\mathtt T}$. We have 
    \begin{align}
        & \Xi(S) \leq \rho^{-10\mathtt T} \cdot \prod_{1 \leq \mathtt t \leq \mathtt T+1} \Xi(S_{\mathtt t}) \prod_{1 \leq \mathtt t \leq \mathtt T} \Xi(\widetilde S_{\mathtt t})  \,, \label{eq-bound-Xi(S)} \\
        & \Xi(K) \leq \rho^{-10\mathtt T} \cdot \prod_{1 \leq \mathtt t \leq \mathtt T+1} \Xi(K_{\mathtt t}) \prod_{1 \leq \mathtt t \leq \mathtt T} \Xi(\widetilde K_{\mathtt t}) \,,  \label{eq-bound-Xi(K)} \\
        & \mathtt P(S,K) \leq \rho^{-20\mathtt T} \prod_{1 \leq \mathtt t \leq \mathtt T} \Xi(\widetilde S_{\mathtt t}) \prod_{1 \leq \mathtt t \leq \mathtt T} \Xi(\widetilde K_{\mathtt t}) \,. \label{eq-bound-mathtt-P(S,K)}
    \end{align}
\end{lemma}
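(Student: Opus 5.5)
Since $\Xi$ is a product of local factors, the plan is to compare exponents factor by factor. Recall from \eqref{eq-def-Xi(H)} that $\Xi(H)$ multiplies three contributions: edge-weight factors $(\mu^2/\gamma)^{\frac14|E_0(H)|}\prod_\ell(\epsilon_\ell^2\lambda_\ell)^{\frac12|E_\ell(H)|}$, which are additive over any edge-disjoint decomposition; and the vertex factor $\rho^{|\mathsf{dif}_0(H)|+2|\mathsf{dif}(H)|}$, which is almost additive.

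\textbf{Proof of \eqref{eq-bound-Xi(S)}.} The edge sets of $S_1,\ldots,S_{\mathtt T+1},\widetilde S_1,\ldots,\widetilde S_{\mathtt T}$ partition $E(S)$, with edge decorations inherited from $S$. Hence the edge-weight part of $\Xi(S)$ equals exactly the product of the corresponding parts over the pieces.

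For the vertex part, note that whether a vertex lies in $\mathsf{dif}_0$ or $\mathsf{dif}$ depends only on the decorations of the (at most two) edges of $S$ incident to it. For every vertex not at a junction between consecutive pieces, both incident edges lie in the same piece, so its membership is the same in $S$ and in that piece. The junction vertices are the endpoints where some $S_{\mathtt t}$ meets $\widetilde S_{\mathtt t}$ or $\widetilde S_{\mathtt t}$ meets $S_{\mathtt t+1}$, so there are at most $2\mathtt T$ of them (and possibly $4\mathtt T$ if an $S_{\mathtt t}$ is an isolated vertex). At such a vertex, $S$ may gain a $\rho$ or $\rho^2$ factor that the pieces do not carry. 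Conversely, a piece's endpoint is a leaf and so contributes no factor.

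Each junction vertex changes the exponent of $\rho$ by at most $2$, so since $\rho\le1$ the discrepancy is bounded by $\rho^{-2\cdot 4\mathtt T}\le \rho^{-10\mathtt T}$. Only the direction matters here: $\Xi(S)$ can be smaller than the product, never larger beyond this factor. The same argument applied to $K$, using the decomposition $K_{\mathtt t}=S_{\mathtt t}$ and $\widetilde K_{\mathtt t}$, gives \eqref{eq-bound-Xi(K)}.

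\textbf{Proof of \eqref{eq-bound-mathtt-P(S,K)}.} Since $E(S)\cap E(K)=E(S\Cap K)$, we have $E_\ell(S)\triangle E_\ell(K)=\bigsqcup_{\mathtt t}\big(E_\ell(\widetilde S_{\mathtt t})\sqcup E_\ell(\widetilde K_{\mathtt t})\big)$ for each $\ell$. Thus the edge-weight part of $\mathtt P(S,K)$ in \eqref{eq-def-mathtt-P} is exactly the product of the edge-weight parts of $\Xi(\widetilde S_{\mathtt t})$ and $\Xi(\widetilde K_{\mathtt t})$.

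For the $\rho$ part, $\mathsf{dif}_0(S)\setminus V(K)$ consists of vertices of $S$ lying strictly inside some $\widetilde S_{\mathtt t}$, away from the junctions that belong to $V(K)$. For these vertices, membership in $\mathsf{dif}_0(S)$ coincides with membership in $\mathsf{dif}_0(\widetilde S_{\mathtt t})$, and the same holds for $\mathsf{dif}$. Conversely, interior vertices of $\widetilde S_{\mathtt t}$ are not in $V(K)$, since any shared vertex would belong to $S\cap K$. So the exponents match exactly except at the at most $2\mathtt T$ endpoints of each family, each costing at most $\rho^{-2}$. Accounting for both $S$ and $K$ gives at most $\rho^{-20\mathtt T}$.

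\textbf{Main obstacle.} The delicate step is the bookkeeping at junction vertices, including degenerate components where $S_{\mathtt t}$ is a single vertex, where $b$-vertices sit adjacent to a junction, or where a leaf $a_u$ lies in $V_0$. In each case I would check that the loss per junction is at most $\rho^{-2}$ per incident piece. The number of junctions is at most $4\mathtt T$, which fits within the generous constants $10$ and $20$.
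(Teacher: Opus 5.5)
Your proposal is correct and follows essentially the paper's route: the edge exponents are additive over the decomposition into $S_{\mathtt t}$, $\widetilde S_{\mathtt t}$ (respectively $\widetilde S_{\mathtt t}$, $\widetilde K_{\mathtt t}$ for $\mathtt P(S,K)$). The $\mathsf{dif}_0$ and $\mathsf{dif}$ counts are then compared vertex by vertex, and a discrepancy can occur only at the at most $2\mathtt T$ junction vertices (your ``possibly $4\mathtt T$'' is a harmless overcount). Your bookkeeping is in fact sharper than the paper's: junction vertices are leaves or isolated vertices of every piece, so $\Xi(S)\le\prod\Xi(\cdot)$ holds with no loss, and for $\mathtt P(S,K)$ the junctions lie in $V(S)\cap V(K)$, so the $\rho$-exponents match exactly.
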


The proof of Lemma~\ref{lem-bound-Xi-mathtt-P} is postponed to Section~\ref{subsec:proof-lem-4.8} of the appendix. 
Note that $S_{\mathtt t} = K_{\mathtt t}$ for $1\le \mathtt t\le \mathtt T+1$.
Using Lemma~\ref{lem-bound-Xi-mathtt-P}, we have that
\begin{align*}
     \Xi(S)\Xi(K)\mathtt P(S,K) \leq\ & \rho^{-40\mathtt T} \prod_{1\leq \mathtt t \leq \mathtt T+1} \Xi(S_{\mathtt t})^2 \cdot \prod_{1\leq \mathtt t \leq \mathtt T} \Xi(\widetilde S_{\mathtt t})^2 \prod_{1\leq \mathtt t \leq \mathtt T} \Xi(\widetilde K_{\mathtt t})^2 \\
     =\ & \rho^{-40\mathtt T} \Xi(S_{\cap})^2 \Xi(S_{\setminus})^2 \Xi(K_{\setminus})^2 \,,
\end{align*}
where \begin{align*}
     \Xi(S_{\cap}) := \prod_{1\le \mathtt t\le \mathtt T+1} \Xi(S_{\mathtt t}),\quad \Xi(S_{\setminus}) := \prod_{1\le \mathtt t\le \mathtt T} \Xi(\widetilde{S}_{\mathtt t}),\quad \Xi(K_{\setminus}) := \prod_{1\le \mathtt t\le \mathtt T} \Xi(\widetilde{K}_{\mathtt t}).
\end{align*}
We similarly define $\Omega(S_{\cap}),\Omega(S_{\setminus})$, and $\Omega(K_{\setminus})$.
In addition, since $E(S)\cap E(K) = E(S\Cap K)$, it is clear that 
\begin{align*}
    |\mathsf{L}(S \cap K)|\leq 2(\mathtt T+1) \mbox{ and } \frac{ \Omega(S)\Omega(K) }{ \Omega(S \Cap K) } = \Omega(S_{\cap}) \Omega(S_{\setminus}) \Omega(K_{\setminus}) \,.
\end{align*}
Thus, we can write \eqref{eq-var-Phi-i,j-relax-3} as (recall the definition of $\mathcal{A}_\mathtt T$ below \eqref{eq-def-S-cap-S-setminus-K-setminus})
\begin{align}
    \eqref{eq-var-Phi-i,j-relax-3} &\leq \sum_{\mathtt T \geq 0} \sum_{ S_{\cap}, S_{\setminus}, K_{\setminus} \in \mathcal A_{\mathtt T} } \frac{ n^2 C^{4\mathtt T+5} \rho^{-40\mathtt T} \Xi(S_{\cap})^2 \Xi(S_{\setminus})^2 \Xi(K_{\setminus})^2 }{ \Omega(S_{\cap}) \Omega(S_{\setminus}) \Omega(K_{\setminus}) \beta_{\mathcal J}^2 }  \nonumber  \\ 
    &\leq \sum_{\mathtt T \geq 0} \frac{C^{4\mathtt T+5}\rho^{-40\mathtt T} n^{-3\mathtt T+1}}{\beta_{\mathcal J}^2} \sum_{ S_{\cap}, S_{\setminus}, K_{\setminus} \in \mathcal A_{\mathtt T} } \prod_{1 \leq \mathtt t \leq \mathtt T+1} \frac{ n\Xi(S_{\mathtt t})^2 }{ \Omega(S_{\mathtt t}) } \prod_{1 \leq \mathtt t \leq \mathtt T} \frac{ n\Xi(\widetilde S_{\mathtt t})^2 }{ \Omega(\widetilde S_{\mathtt t}) }  \prod_{1 \leq \mathtt t \leq \mathtt T} \frac{ n\Xi(K_{\mathtt t})^2 }{ \Omega(\widetilde K_{\mathtt t}) }  \,.  \label{eq-var-Phi-i,j-relax-4}
\end{align}
We will bound \eqref{eq-var-Phi-i,j-relax-4} via the following lemma, thus completing the proof of Lemma~\ref{lem-conditional-var-Phi-i,j}. The proof of the following lemma is postponed to Section~\ref{subsec:proof-lem-4.9} of the appendix.
\begin{lemma}{\label{lem-recovery-most-technical}}
    Suppose that \eqref{eq-assum-upper-bound} holds and we choose $\aleph$ according to \eqref{eq-condition-weak-recovery}. Then $\eqref{eq-var-Phi-i,j-relax-4} \leq O(1)$. 
\end{lemma}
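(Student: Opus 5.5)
We have to bound \eqref{eq-var-Phi-i,j-relax-4} by $O(1)$. The plan is to sum over the free pieces one at a time, starting from the innermost components, using the sizes from Lemma~\ref{lem-bound-beta-mathcal-J}. Each piece (an intersection component $S_{\mathtt t}$, or an excess path $\widetilde S_{\mathtt t}$, $\widetilde K_{\mathtt t}$) is a decorated path in $\mathsf K_{n,p}$. The ratio $\Xi(\cdot)^2/\Omega(\cdot)$, summed over all labeled realizations with prescribed endpoints and a prescribed number $k$ of $\mathsf a$-edges, should behave like $n^{-1}\beta_{\mathcal J_\star}(k)$, or like $\beta_{\mathcal J_\star}(k)$ if only one endpoint is fixed. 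The bound $\beta_{\mathcal J_\star}(k)\le D\sigma_+(\mathbf P)^k$ from Lemma~\ref{lem-bound-beta-mathcal-J} gives this. Writing $\sigma=\sigma_+(\mathbf P)$, a piece with $k_{\mathtt t}$ $\mathsf a$-vertices will then contribute at most $D\sigma^{k_{\mathtt t}}$ per unit of freedom, times the number of ways to place it. Decorations are already inside $\beta_{\mathcal J_\star}$.

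\textbf{Step 1 (the case $\mathtt T=0$).} Here $S=K$, and the term is
\[
n^2\sum_{S}\frac{C^5\,\Xi(S)^2}{\Omega(S)\beta_{\mathcal J}^2}.
\]
Counting labeled paths with both leaves fixed at $a_u,a_v$ gives $n^{\aleph-1}p^{|E_0|/2}/|\mathsf{Aut}|$ up to $1+o(1)$, so the term is $O(n\beta_{\mathcal J}/\beta_{\mathcal J}^2)=O(n/\beta_{\mathcal J})$. This is $O(1)$ precisely because \eqref{eq-condition-weak-recovery} forces $\beta_{\mathcal J}\ge D^{-1}(1+\delta)^{\aleph}\ge D^{-1}n^2$. (Indeed this is the reason the condition $(1+\delta)^\aleph\ge n^2$ is imposed.)

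\textbf{Step 2 (general $\mathtt T$).} Fix the shape data: the lengths of the $2\mathtt T+1$ shared pieces and of the $2\mathtt T$ excess pieces. The total $\mathsf a$-length of the pieces of $S$ is $\aleph$, and the same holds for $K$. The shared endpoints of consecutive pieces are glued, so the number of free vertices is reduced. I would sum first over the excess paths $\widetilde S_{\mathtt t},\widetilde K_{\mathtt t}$ with both endpoints fixed; each costs $n^{-1}\cdot O(\sigma^{k})$ relative to the $n$ in its numerator. Then I sum over the components $S_{\mathtt t}$, where the interior components have both endpoints free. The net power of $n$ should come out as
\[
n^{-3\mathtt T+1}\cdot n^{\#\text{free endpoints}}\le n^{2},
\]
independent of $\mathtt T$, after cancelling against the counts. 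The powers of $\sigma$ multiply to
\[
\sigma^{\sum_{\mathtt t}|S_{\mathtt t}|+\sum_{\mathtt t}|\widetilde S_{\mathtt t}|+\sum_{\mathtt t}|\widetilde K_{\mathtt t}|}\le \sigma^{2\aleph},
\]
which matches $\beta_{\mathcal J}^2\ge D^{-2}\sigma^{2\aleph}$. The remaining loss per $\mathtt T$ is $C^{4}\rho^{-40}D^{O(1)}$ together with the number of ways to choose the $4\mathtt T+1$ piece lengths, which is at most $\aleph^{4\mathtt T+1}$. The required gain is a factor $n^{-1}$ for each additional $\mathtt T$: once both paths share $u,v$, each extra excursion pair introduces two extra branch points that are glued and thereby lose one free $n$. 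If that holds, then since $\aleph=O(\log n)$, the sum over $\mathtt T\ge1$ is dominated by
\[
\sum_{\mathtt T\ge1}\big(C'\aleph^4/n\big)^{\mathtt T}\cdot O(n^2/\beta_{\mathcal J}^{2}\cdot\sigma^{2\aleph})=o(1)\cdot O(1).
\]

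\textbf{Main obstacle.} The hard step is the exact bookkeeping of the powers of $n$ in Step 2. In particular I must check that the exponent $-3\mathtt T+1$ in \eqref{eq-var-Phi-i,j-relax-4}, combined with the free-vertex counts, gives $n^{2-\mathtt T}$ or better rather than $n^{2}$. Degenerate pieces also need care: excess paths of length one, or $S_{\mathtt t}$ reduced to a single vertex, for which the normalization $n\Xi^2/\Omega$ has no length-averaging. For these degenerate pieces I would bound the contribution directly by $O(1)$, using $\epsilon_\ell^2\lambda_\ell\le1$ and $\mu^2/\gamma\le1$ from \eqref{eq-assum-upper-bound}. A further issue is that the shared pieces contribute $\Xi(S_{\mathtt t})^2$, a square, while the excess pieces enter once for $S$ and once for $K$. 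I would handle this through the $\beta_{\mathcal J_\star}$ upper bound applied to both $S$ and $K$ lengths separately, so that the total is $\sigma^{2\aleph}$ up to a factor $\sigma^{-\sum|S_{\mathtt t}|}\le1$.
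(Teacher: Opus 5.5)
Your skeleton is the paper's: sum piece by piece with a sub-path bound, count the junction vertices, and match $\sigma_+(\mathbf P)^{2\aleph}$ against $\beta_{\mathcal J}^2$. Step~1 is fine; it gives $O(n/\beta_{\mathcal J})=O(1/n)$. The gap is the $n$-bookkeeping in Step~2, which leads you to a conclusion that cannot be true, namely that the $\mathtt T\ge 1$ part is $o(1)$.

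Here is why it cannot hold. By Cauchy--Schwarz and \eqref{eq-conditional-mean-Phi-i,j}, $\mathbb E_{\Pb}[(\Phi^{\mathcal J}_{u,v})^2]\ge(1-o(1))\rho^4$. This quantity is at most $(1+o(1))$ times \eqref{eq-var-Phi-i,j-relax-4}, and the $\mathtt T=0$ part is $o(1)$. So the $\mathtt T=1$ part is genuinely of order one; it contains the pairs $S,K$ that meet only at $a_u,a_v$.

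The correct count goes as follows. Write $\mathfrak V(S_{\mathtt t})=1$ if $S_{\mathtt t}$ is a single vertex. At level $\mathtt T$ there are $2\mathtt T-\sum_{\mathtt t}\mathfrak V(S_{\mathtt t})$ free junction vertices (the leaves of $S_1,\ldots,S_{\mathtt T+1}$ other than $a_u,a_v$), each with $O(n)$ choices. A single-vertex component has $n\Xi^2/\Omega=n$, not $O(1)$ as you propose for degenerate pieces; this is the factor $n^{\mathbf 1(u=v)}$ in Lemma~\ref{lem-contribution-sub-chain}. Hence the net power is $n^{-3\mathtt T+1}\cdot n^{2\mathtt T}=n^{1-\mathtt T}$, so there is no gain in $n$ at all at $\mathtt T=1$. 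Your target $n^{2-\mathtt T}$ would leave an $O(n)$ term. Your final display, read literally, is of order $n\aleph^4$ at $\mathtt T=1$.

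The missing idea is therefore how to make the $\mathtt T=1$ term $O(1)$ without a spare factor $n^{-1}$. Your crude count of $\aleph^{O(1)}$ piece lengths gives only $O((\log n)^{O(1)})$ there. The paper observes that once $\mathtt a=|V^{\mathsf a}(S_1)|$ and $\mathtt b=|V^{\mathsf a}(S_2)|$ are fixed, the lengths of $\widetilde S_1$ and $\widetilde K_1$ are forced into $\aleph-\mathtt a-\mathtt b+\{0,1,2\}$. So the $\sigma_+(\mathbf P)$-exponent is at most $2\aleph-\mathtt a-\mathtt b+4$, because the shared pieces are counted once while shortening both excess paths. After dividing by $\beta_{\mathcal J}^2\ge D^{-2}\sigma_+(\mathbf P)^{2\aleph}$, one is left with the convergent series $\sum_{\mathtt a,\mathtt b}\sigma_+(\mathbf P)^{-\mathtt a-\mathtt b}$, using $\sigma_+(\mathbf P)>1+\delta$. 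The factor $\sigma^{-\sum|S_{\mathtt t}|}$ that you noted and then discarded as $\le 1$ is exactly what is needed here.

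Only for $\mathtt T\ge 2$, where $n^{1-\mathtt T}\le n^{-1}$, can the crude count $O(1)^{\mathtt T}\aleph^{3\mathtt T+1}$ be absorbed, giving $n^{-1+o(1)}$. The count has $3\mathtt T+1$ pieces: $\mathtt T+1$ shared and $2\mathtt T$ excess, not $2\mathtt T+1$ shared.
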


\subsection{Proof of Theorem~\ref{MAIN-THM-recovery}}{\label{subsec:proof-thm-3.9}}

With Proposition~\ref{main-prop-recovery}, we are ready to prove Theorem~\ref{MAIN-THM-recovery}.
\begin{proof}[Proof of Theorem~\ref{MAIN-THM-recovery}]
    Note that Proposition~\ref{main-prop-recovery} implies that there exists a sufficiently large constant $C\geq 100\rho^{-2}$ where
    \begin{align*}
        \mathbb E_{\Pb}\Big[ \langle \Phi^{\mathcal J}, \bm x \bm x^{\top} \rangle \Big] \geq \rho^2 n^2 \mbox{ and } \mathbb E_{\Pb}\Big[ \| \Phi^{\mathcal J} \|_{\Fop}^2 \Big] \leq C n^2 \,.
    \end{align*}
    Thus, using Chebyshev's inequality, we have $\Pb(\| \Phi^{\mathcal J} \|_{\Fop} \geq C^2 n)\leq C^{-3}$, and thus
    \begin{align*}
        \mathbb E\Big[ \langle \Phi^{\mathcal J}, \bm x \bm x^{\top} \rangle \cdot \mathbf 1_{ \{ \| \Phi^{\mathcal J} \|_{\Fop} \geq C^2 n \} } \Big] &\overset{\mathrm{(a)}}{\le} n \mathbb E\Big[ \| \Phi^{\mathcal J} \|_{\Fop} \cdot \mathbf 1_{ \{ \| \Phi^{\mathcal J} \|_{\Fop} \geq C^2 n \} } \Big] \\
        &\overset{\mathrm{(b)}}{\le} n \Pb\Big( \| \Phi^{\mathcal J} \|_{\Fop} \geq C^2 n \Big)^{\frac{1}{2}} \mathbb E\Big[ \| \Phi^{\mathcal J} \|^2_{\Fop}  \Big]^{\frac{1}{2}} \leq C^{-1} n^2 \,,
    \end{align*}
    where $\mathrm{(a)}$ and $\mathrm{(b)}$ follow from the Cauchy-Schwarz inequality and $\|\bm x\|_{\Fop}^2 = n^2$.
    Thus, we have 
    \begin{align*}
        \mathbb E_{\Pb}\Big[ \langle \Phi^{\mathcal J}, \bm x \bm x^{\top} \rangle \cdot \mathbf 1_{ \{ \| \Phi^{\mathcal J} \|_{\Fop} \leq C^2n \} } \Big] \geq (\rho^2-C^{-1})n^2 \,.
    \end{align*}
    By Markov's inequality, we obtain
    \begin{align*}
        \Pb\left( \langle \Phi^{\mathcal J}, \bm x \bm x^{\top} \rangle \geq \frac{\rho^2 n^2}{2} \right) \geq C^{-2} \,.
    \end{align*}
    Combined with the fact that $\Pb( \| \Phi^{\mathcal J} \|_{\Fop} \geq C^2 n )\leq C^{-3}$, we get that
    \begin{align*}
        \Pb\left( \langle \Phi^{\mathcal J}, \bm x \bm x^{\top} \rangle \geq \frac{ \rho^2 }{ 2C^4 } \| \Phi^{\mathcal J} \|_{\Fop}^2 \right) \geq \Omega(1) \,.
    \end{align*}
    Similarly as in \cite[Section~2.3]{HS17}, we compute $\widehat{\Phi}$ with minimum Frobenius norm that satisfies the following constraints (below we use $\mathsf{ddiag}(M)$ to denote the diagonal part of a square matrix $M$):
    \begin{align*}
        \mathsf{ddiag}(\widehat{\Phi})=\mathbb I_n \,; \quad \frac{ \langle \widehat{\Phi},\Phi^{\mathcal J} \rangle }{ n \| \Phi^{\mathcal J} \|_{\Fop} } \geq \frac{\rho^2}{4C^4} \,; \quad \widehat{\Phi} \succeq 0 \,.
    \end{align*}
    We see that
    \begin{align*}
        \Pb\left( \langle \widehat\Phi, \bm x \bm x^{\top} \rangle \geq \frac{ \rho^2 }{ 4C^4 } \| \widehat\Phi \|_{\Fop} \| \bm x \bm x^{\top} \|_{\Fop} \right) \geq \Omega(1) \,.
    \end{align*}
    Thus, we have that (note that $\langle \widehat\Phi,\bm x \bm x^{\top} \rangle\geq 0$ since $\widehat\Phi\succeq0$) 
    \begin{align*}
        \mathbb E_{\Pb}\Bigg[ \frac{ \langle \widehat\Phi, \bm x \bm x^{\top} \rangle }{ \| \widehat\Phi \|_{\Fop} \| \bm x \bm x^{\top} \|_{\Fop} } \Bigg] \geq \frac{\rho^2}{4C^4} \cdot \Pb\Bigg( \frac{ \langle \widehat\Phi, \bm x \bm x^{\top} \rangle }{ \| \widehat\Phi \|_{\Fop} \| \bm x \bm x^{\top} \|_{\Fop} } \geq \frac{\rho^2}{4C^4} \Bigg) = \Omega(1) \,,
    \end{align*}
    i.e., $\widehat\Phi$ achieves weak recovery defined in Definition~\ref{def-weak-recovery}.
\end{proof}

\section{Numerical experiments}{\label{sec:numer-results}}

In this section, we present numerical results on synthetic data to verify our theoretical results. We first consider the detection problem by running Algorithm~\ref{alg:detection-meta} on a contextual inhomogeneous two-layer model, formally defined in Definition~\ref{def-contxtual-multilayer-SBM}. To this end, we generate $100$ i.i.d.\ tuples $(\bm Y,\bm G_1,\bm G_2)$ from $\Qb_{n,p,\lambda_1,\lambda_2}$, and another $100$ i.i.d.\ tuples from the contextual two-layer SBM $\Pb_{n,p;\mu,\rho,\lambda_1,\lambda_2,\epsilon_1,\epsilon_2}$.
%we independently generate $100$ tuples of $(\bm Y,\bm G_1,\bm G_2)$ that are independently sampled from $\Qb_{ n,p,\lambda_1,\lambda_2 }$, and another $100$ tuples $(\bm Y,\bm G_1,\bm G_2)$ from the of contextual $2$-layer SBMs $\Pb_{ n,p;\mu,\rho,\lambda_1,\lambda_2,\epsilon_1,\epsilon_2 }$.

\begin{figure}[htbp] 
\centering
\subfloat{%
  \includegraphics[width=0.35\linewidth]{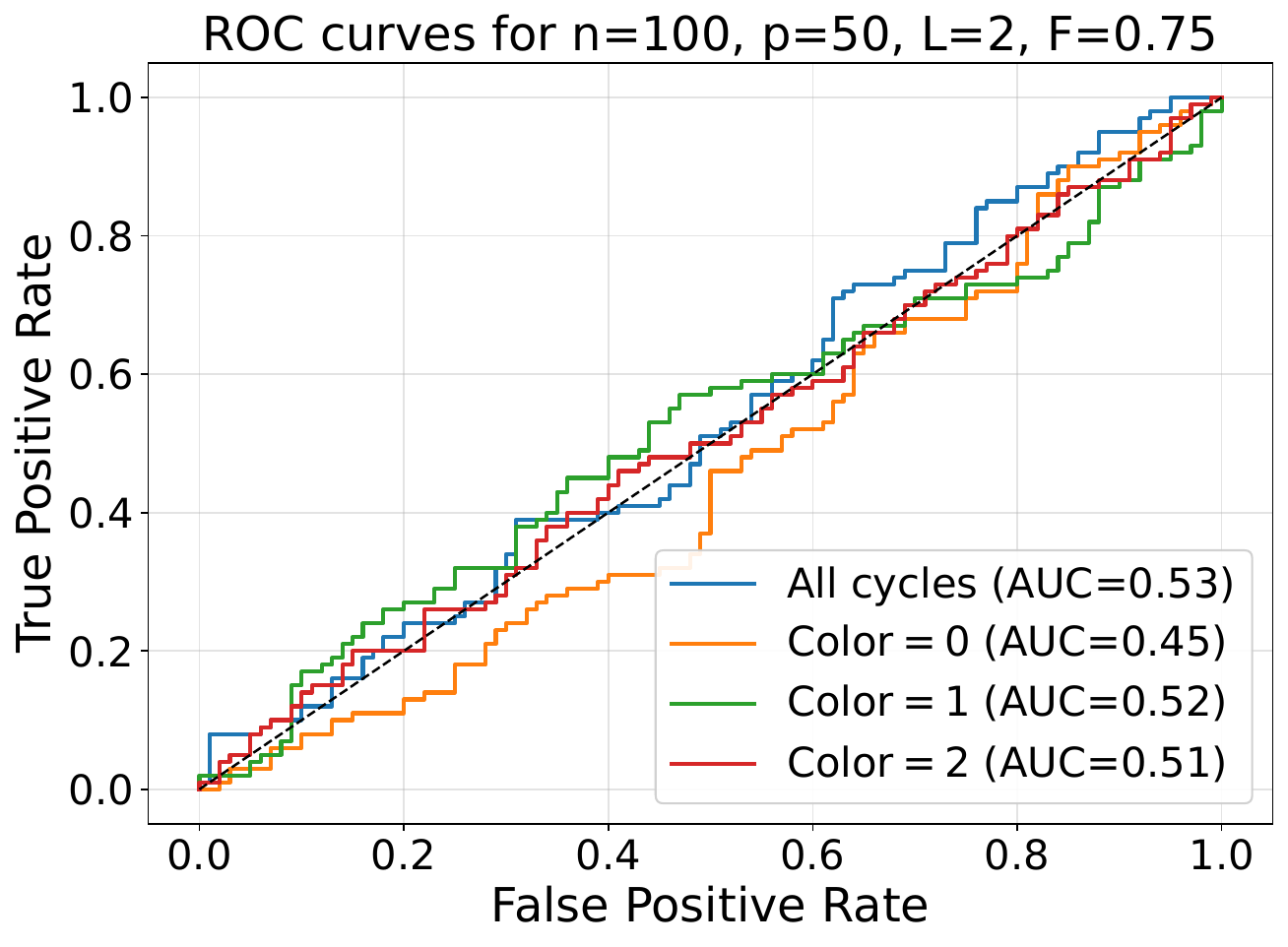}%
}\hspace{4mm}
\subfloat{%
  \includegraphics[width=0.35\linewidth]{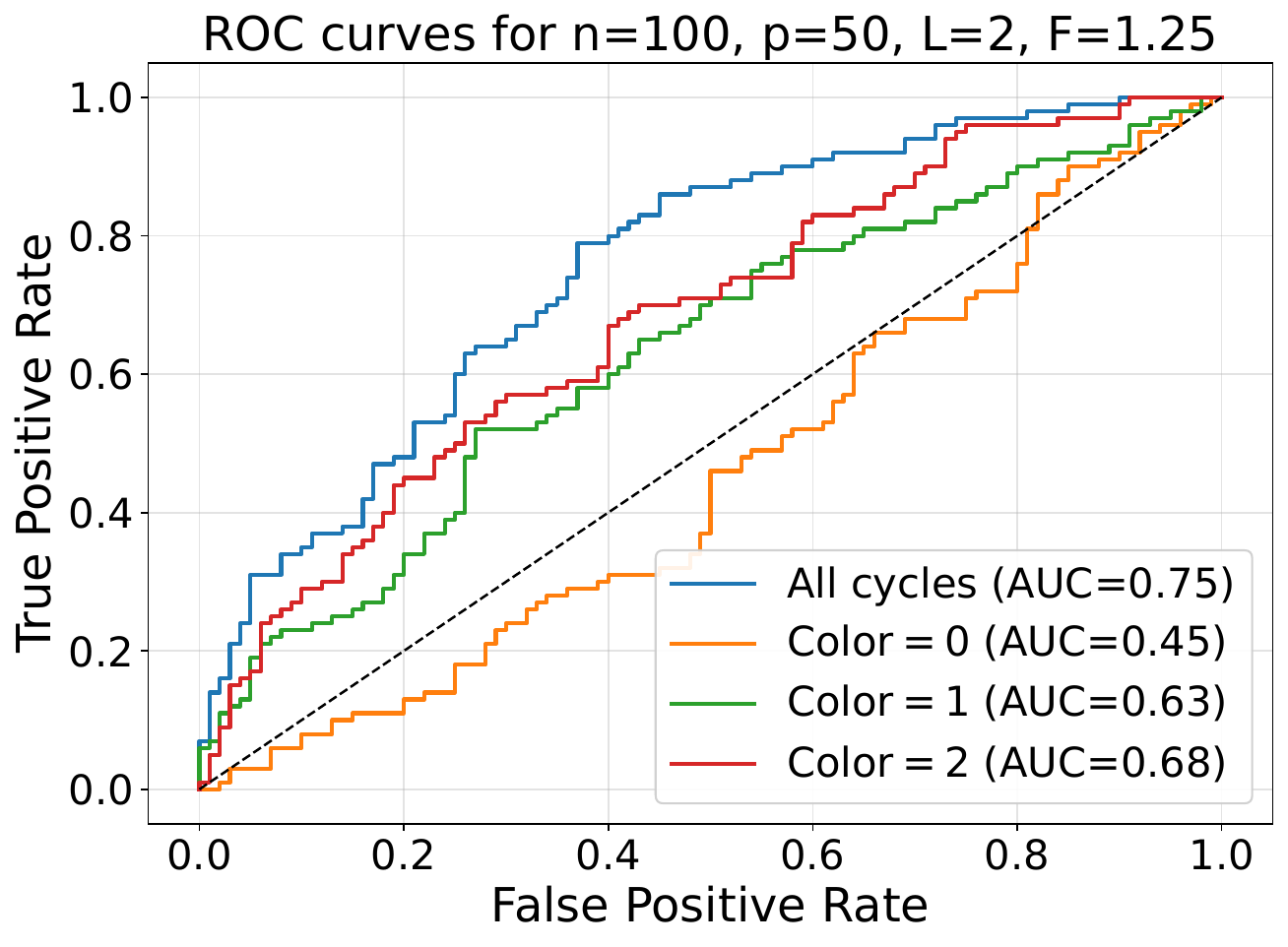}%
}

\subfloat{%
  \includegraphics[width=0.35\linewidth]{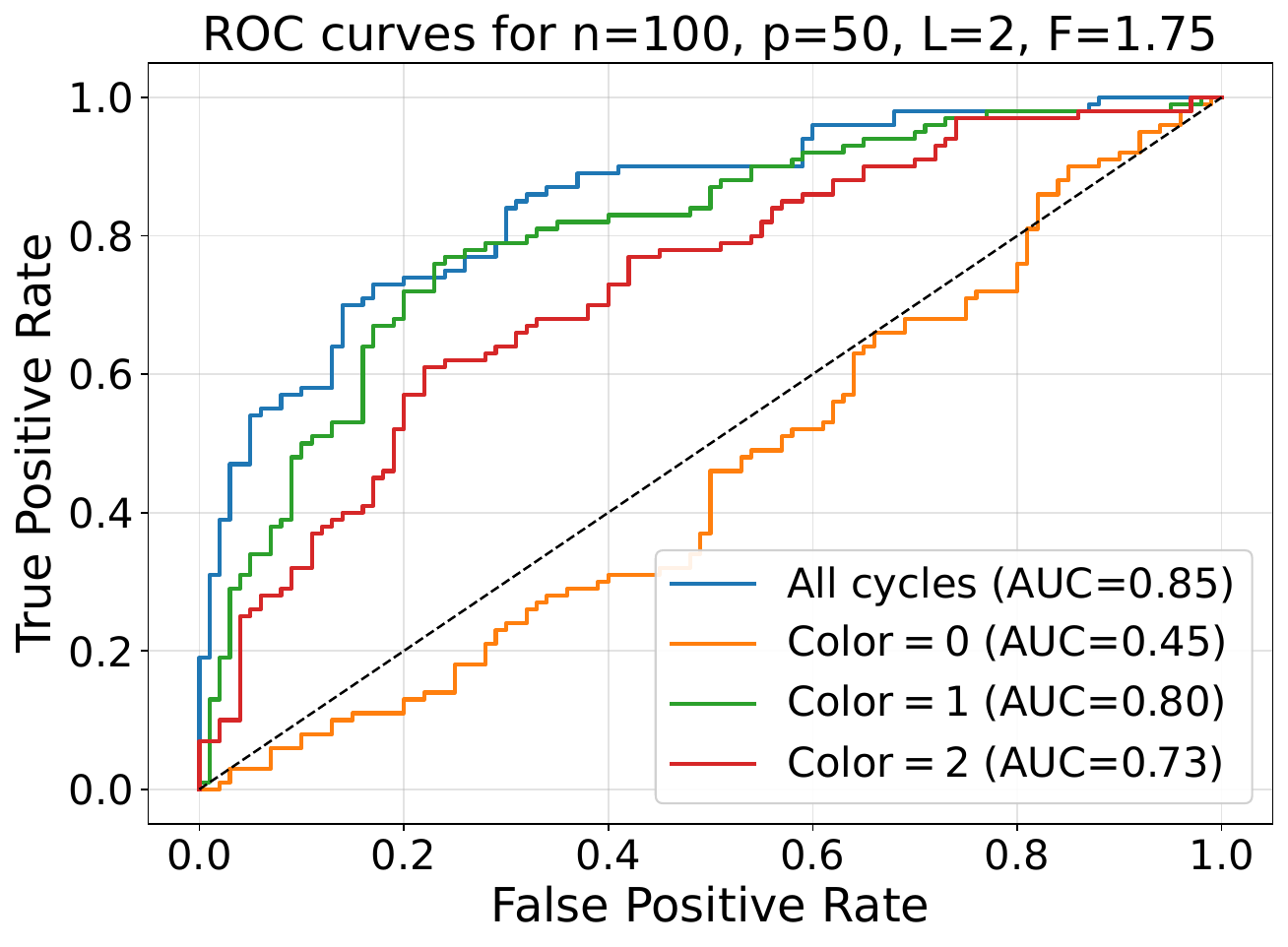}%
}
\hspace{4mm}
\subfloat{%
  \includegraphics[width=0.35\linewidth]{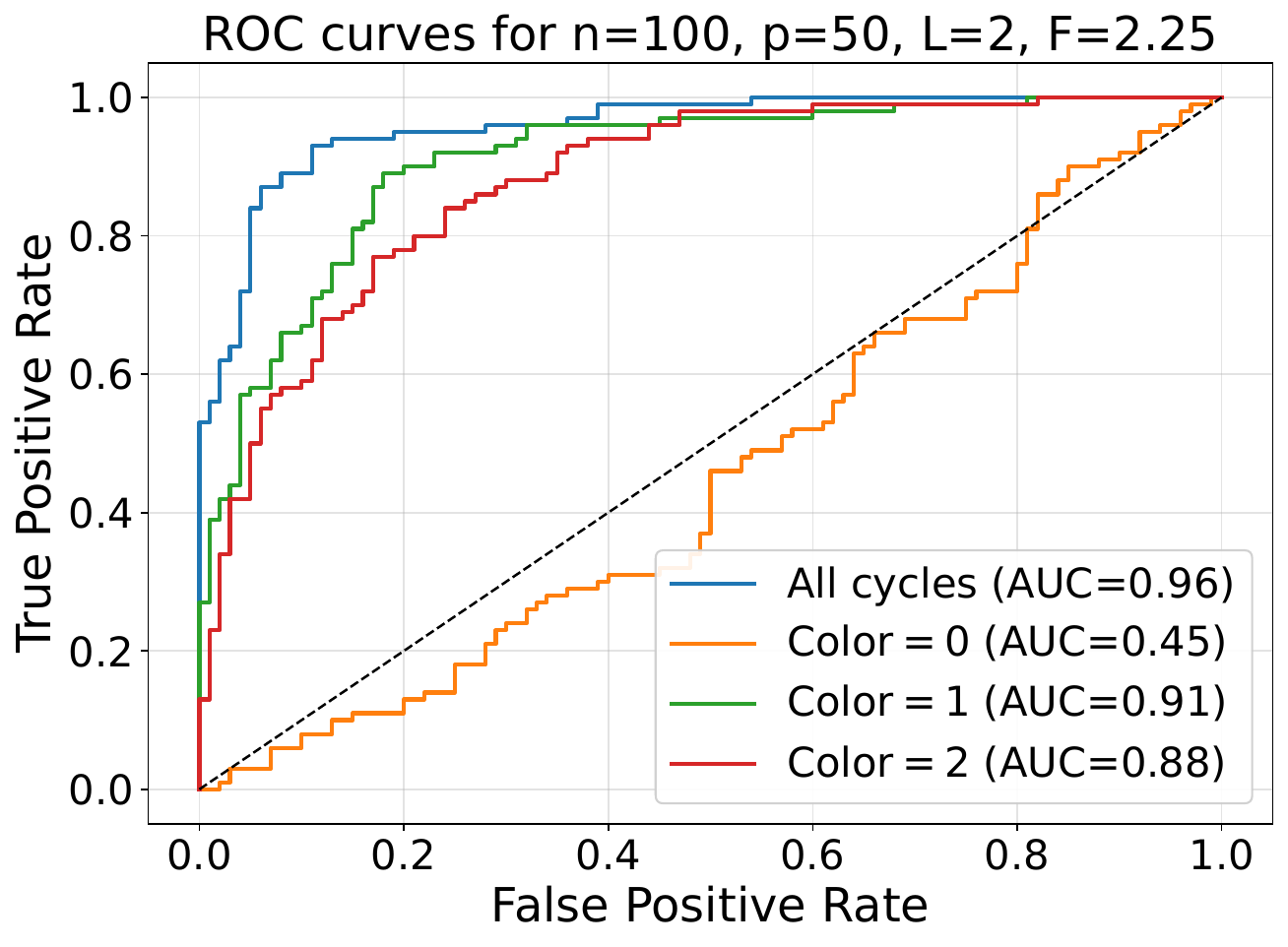}%
}
\caption{ROC curves for the contextual $2$-layer SBMs with $n=100$, $p=50$, $\gamma=2$ and $F(\mu,p,\gamma;\{\lambda_\ell\},\{\epsilon_\ell\})\in\{0.75,1.25,1.75, 2.25\}$.}
\label{fig:ROC-detection-simu-L2Cycle4}
\end{figure}

To compare the performance of our test statistic with single-channel statistics under different settings, we plot the Receiver Operating Characteristic (ROC) curves by varying detection threshold and plotting the true positive rate (one minus Type-II error) against the false positive rate (Type-I error). For comparison, we also plot the ROC curve for the single-channel method and random classifier, which is simply the diagonal. Finally, we compute the area under the curve (AUC), which can be interpreted as the probability that the test statistic for a graph pair drawn from the correlated model exceeds that for a graph pair drawn from the independent model.

In Figure~\ref{fig:ROC-detection-simu-L2Cycle4}, for each plot we fix $n=100,p=50,\gamma=2,L=2,\rho = 0.6,\mu=0.5, \epsilon_1 = \epsilon_2 = 0.5$, and vary $\lambda_1 = \lambda_2\in \{3,5,7,9\}$. The resulting $F(\mu,\rho,\gamma;\{ \lambda_1,\lambda_2 \},\{ \epsilon_1,\epsilon_2 \}) $ values are $\{ 0.75,1.25,1.75,2.25 \}$. As it was shown in \cite{MNS15} that optimal tests in single SBM are given by counting cycles, we focus on the counts of all decorated cycles (as proposed in our work) and the counts of single-colored cycles (corresponding to the single-channel method) with length $\aleph=4$. We observe that as $F$ increases, the ROC curve is moving toward the upper left corner and the AUC increases, demonstrating that our test statistic has improved performance. Moreover, we observe that the performance of our test statistics differs significantly between $F<1$ and $F>1$. This is consistent with the theoretical results, showing that our test statistic works as long as $F>1$.

We next study the recovery problem by running Algorithm~\ref{alg:recovery-meta} on the 2-layer contextual SBM. Specifically, we use the decorated path family $\mathcal J$ consisting of all decorated paths of length $\aleph=4$. To compare the performance of our recovery estimator $\hat{\Phi}$ with single-channel baselines, we report the Frobenius cosine similarity $\tfrac{|\langle \hat{\Phi}, \bm x\bm x^\top\rangle|}{\|\hat{\Phi}\|_{\Fop}\,\|\bm x\bm x^\top\|_{\Fop}}$ as a function of the aggregate signal strength $F(\mu,p,\gamma;\{\lambda_\ell\},\{\epsilon_\ell\})$. We also plot the same cosine similarity for the raw statistic $\Phi^{\mathcal J}$ constructed in Algorithm~\ref{alg:recovery-meta}.

In Figure~\ref{fig:ROC-recovery-simu-L2path5}, we fix $n=100$, $p=50$, $\gamma=2$, $\rho=0.5$, $\mu=0.75$, and set $\epsilon_\ell=0.5$ for all $\ell$. We further impose $\lambda_\ell=\lambda$ for all $\ell$, and vary $\lambda$ so that $F(\mu,p,\gamma;\{\lambda_\ell\},\{\epsilon_\ell\})\in\{0.3,0.5,0.7,0.9,1.1,1.3,1.5,1.7\}$. We consider both the statistic based on all decorated paths and the single-color variants with $\aleph=4$. As $F$ increases, the cosine similarity generally increases, indicating improved recovery performance; in contrast, the ``Color=$0$'' baseline remains  essentially flat since we vary only $\lambda$ (and thus $F$) while keeping $\mu$ fixed. Moreover, we observe a clear qualitative difference between the regimes $F<1$ and $F>1$, which is consistent with the phase transition as shown in Theorem~\ref{MAIN-THM-informal}.

\begin{figure}[htbp] 
\centering
\subfloat{%
\includegraphics[width=0.4\linewidth]{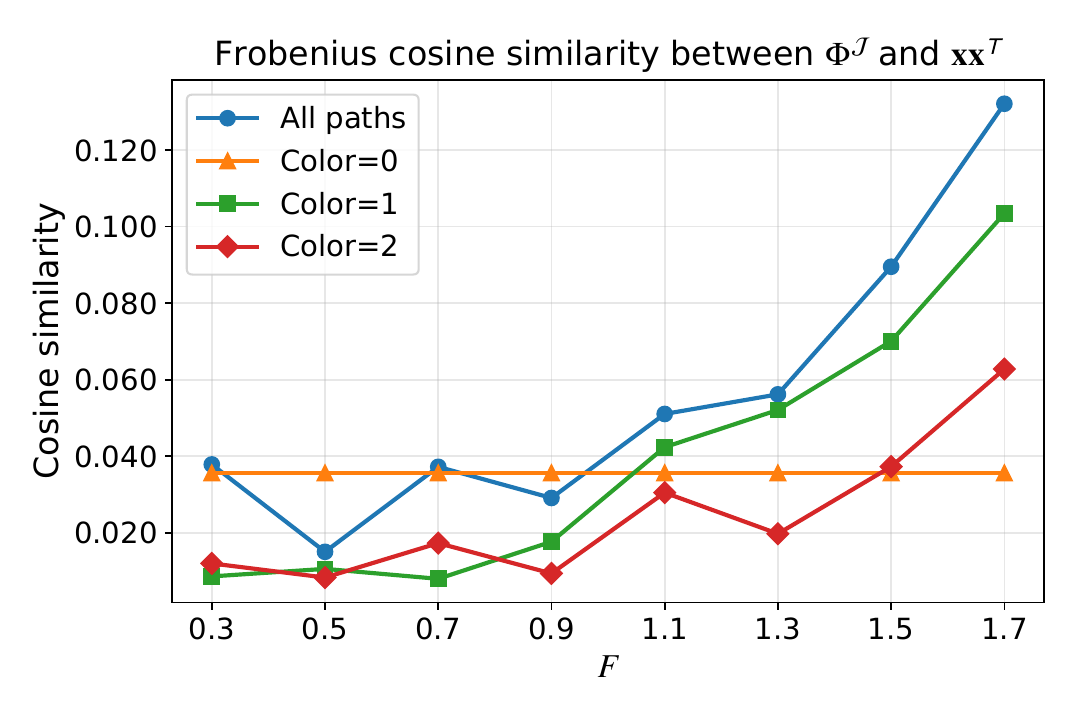}%
}\hspace{4mm}
\subfloat{%
  \includegraphics[width=0.4\linewidth]{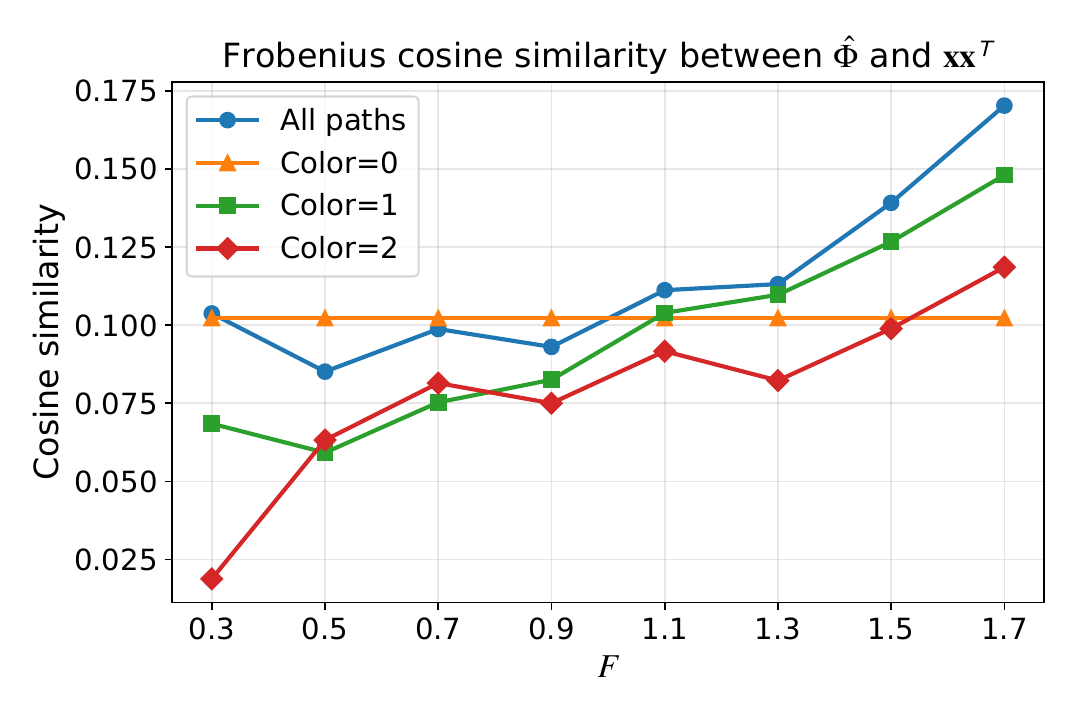}%
}
\caption{Cosine similarity $\frac{|\langle\hat{\Phi},\bm x\bm x^\top \rangle|}{\|\hat{\Phi} \|_{\Fop}\|\bm x\bm x^\top\|_{\Fop}}$ with $n=100$, $p=50$, $L=2$ and $F(\mu,p,\gamma;\{\lambda_\ell\},\{\epsilon_\ell\})\in\{0.3,0.5,0.7,0.9,1.1,1.3,1.5,1.7\}$.}
\label{fig:ROC-recovery-simu-L2path5}
\end{figure}

\section*{Acknowledgements}
S. Gong and Z. Li are partially supported by the National Key R\&D program of China (Project No. 2023YFA1010103) and the NSFC Key Program (Project No. 12231002).

\appendix

\section{Supplementary proofs in Section~\ref{sec:info-lower-bound}}{\label{sec:supp-proofs-sec-2}}

\subsection{Proof of Lemma~\ref{lem-chi-2-divergence-relax}}{\label{subsec:proof-lem-2.4}}

We prove Lemma~\ref{lem-chi-2-divergence-relax} by direct calculation. Denote $\Pb_{t,\bm x,\bm x_1,\ldots,\bm x_L}:=\Pb_t(\cdot\mid\bm x,\bm x_1,\ldots,\bm x_L)$. It is straightforward that
\begin{align*}
    \frac{ \mathrm{d}\Pb_t }{ \mathrm{d}\Qb }(\bm Y,\bm G_1,\ldots,\bm G_{L}) &= \mathbb E_{\bm x,\bm x_1,\ldots,\bm x_L}\left[ \frac{ \mathrm{d}\Pb_{t,\bm x,\bm x_1,\ldots,\bm x_{L}} }{ \mathrm{d}\Qb } (\bm Y,\bm G_1,\ldots,\bm G_{L}) \right] \\
    &= \mathbb E_{\bm x,\bm x_1,\ldots,\bm x_L}\left[ \frac{ \mathrm{d}\Pb_{t,\bm x} }{ \mathrm{d}\Qb } (\bm Y) \prod_{1 \leq \ell \leq L} \frac{ \mathrm{d}\Pb_{\bm x_{\ell}} }{ \mathrm{d}\Qb } (\bm G_{\ell}) \right] \,.
\end{align*}
Since $\bm Y$ and $\bm G_1,\ldots, \bm G_L$ are conditionally independent given $\bm x,\bm x_1,\ldots, \bm x_L$, applying the
replica's trick yields
\begin{align}
    \chi^2(\Pb_t\|\Qb) &= \mathbb E_{\Qb}\left\{ \mathbb E_{\bm x,\bm x_1,\ldots,\bm x_L}\left[ \frac{ \mathrm{d}\Pb_{t,\bm x} }{ \mathrm{d}\Qb } (\bm Y) \prod_{1 \leq \ell \leq L} \frac{ \mathrm{d}\Pb_{\bm x_{\ell}} }{ \mathrm{d}\Qb } (\bm G_{\ell}) \right]^2 \right\} \nonumber \\
    &= \mathbb E_{\Qb}\left\{ \mathbb E_{ \substack{ \bm x,\bm x_1,\ldots,\bm x_L \\ \bm x',\bm x_1',\ldots,\bm x_L' } }\left[ \frac{ \mathrm{d}\Pb_{t,\bm x} }{ \mathrm{d}\Qb } (\bm Y) \prod_{1 \leq \ell \leq L} \frac{ \mathrm{d}\Pb_{\bm x_{\ell}} }{ \mathrm{d}\Qb } (\bm G_{\ell}) \cdot \frac{ \mathrm{d}\Pb_{t,\bm x'} }{ \mathrm{d}\Qb } (\bm Y) \prod_{1 \leq \ell \leq L} \frac{ \mathrm{d}\Pb_{\bm x_{\ell}'} }{ \mathrm{d}\Qb } (\bm G_{\ell}) \right] \right\} \nonumber \\
    &= \mathbb E_{ \substack{ \bm x,\bm x_1,\ldots,\bm x_L \\ \bm x',\bm x_1',\ldots,\bm x_L' } } \left\{ \mathbb E_{\Qb}\left[ \frac{ \mathrm{d}\Pb_{t,\bm x} }{ \mathrm{d}\Qb } (\bm Y) \frac{ \mathrm{d}\Pb_{t,\bm x'} }{ \mathrm{d}\Qb } (\bm Y) \right] \prod_{1 \leq \ell \leq L} \mathbb E_{\Qb}\left[ \frac{ \mathrm{d}\Pb_{\bm x_{\ell}} }{ \mathrm{d}\Qb } (\bm G_{\ell}) \frac{ \mathrm{d}\Pb_{\bm x_{\ell}'} }{ \mathrm{d}\Qb } (\bm G_{\ell}) \right] \right\} \,. \label{eq-chi-2-div-reduce-1}
\end{align}
We now calculate the right-hand side of \eqref{eq-chi-2-div-reduce-1}. Note that under $\Pb_t$ we have $\bm u$ is uniformly distributed over the $p$-dimensional sphere $\sqrt{(1+t)p} \cdot \mathbb S^{p-1}$. Thus,
\begin{align*}
    \frac{ \mathrm{d}\Pb_{t,\bm x} }{ \mathrm{d}\Qb } (\bm Y) &= \mathbb E_{\bm u \sim \operatorname{Unif}(\sqrt{(1+t)p} \cdot \mathbb S^{p-1})}\left[ \frac{ \mathrm{d}\Pb_{t,\bm x,\bm u} }{ \mathrm{d}\Qb } (\bm Y) \right] \\
    &= \mathbb E_{\bm u \sim \operatorname{Unif}(\sqrt{(1+t)p} \cdot \mathbb S^{p-1})}\left[ \exp\left( \frac{ \sqrt{\mu} }{ \sqrt{n} } \langle \bm Y,\bm x \bm u^{\top} \rangle - \frac{\mu}{2n} \|\bm x\|^2 \|\bm u\|^2 \right) \right] \\
    &= \mathbb E_{\bm u \sim \operatorname{Unif}(\sqrt{(1+t)p} \cdot \mathbb S^{p-1})}\left[ \exp\left( \frac{ \sqrt{\mu} }{ \sqrt{n} } \langle \bm Y,\bm x \bm u^{\top} \rangle - \frac{(1+t)\mu p}{2} \right) \right] \,.
\end{align*}
Thus, we have
\begin{align}
    &\mathbb E_{\Qb}\left[ \frac{ \mathrm{d}\Pb_{t,\bm x} }{ \mathrm{d}\Qb } (\bm Y) \frac{ \mathrm{d}\Pb_{t,\bm x'} }{ \mathrm{d}\Qb } (\bm Y) \right] \nonumber \\ 
    =\ & \mathbb E_{\bm Y \sim \Qb}\mathbb E_{\bm u,\bm u' \sim \operatorname{Unif}(\sqrt{(1+t)p} \cdot \mathbb S^{p-1})}\left[ \exp\left( \frac{ \sqrt{\mu} }{ \sqrt{n} } \langle \bm Y,\bm x \bm u^{\top}+\bm x' (\bm u')^{\top} \rangle - (1+t)\mu p \right) \right] \nonumber \\
    \overset{\mathrm{(a)}}{=}\ & \mathbb E_{\bm u,\bm u' \sim \operatorname{Unif}(\sqrt{(1+t)p} \cdot \mathbb S^{p-1})}\left[ \exp\left( \frac{\mu}{2n} \| \bm x \bm u^{\top}+\bm x' (\bm u')^{\top} \|_{\Fop}^2 - (1+t)\mu p \right) \right] \nonumber \\
    \overset{\mathrm{(b)}}{=}\ & \mathbb E_{\bm u,\bm u' \sim \operatorname{Unif}(\sqrt{(1+t)p} \cdot \mathbb S^{p-1})}\left[ \exp\left( \frac{\mu}{n} \langle \bm x, \bm x' \rangle \langle \bm u,\bm u' \rangle \right) \right] \,,  \label{eq-chi-2-div-reduce-2-part-0}
\end{align}
where $\mathrm{(a)}$ follows from $\E [\exp(cZ)] = \exp(\frac{1}{2}c^2)$ for $Z\sim \mathcal N(0,1)$ and any $c\in \mathbb R$, and $\mathrm{(b)}$ follows from $\| \bm x\bm u^\top \|_F^2 = \|\bm x'(\bm u')^\top \|_F^2=n(1+t)p$. Let $\xi = \xi(\bm u,\bm u') = \frac{1}{(1+t)p} \langle \bm u,\bm u'\rangle $ for $\bm u,\bm u'\sim \operatorname{Unif}(\sqrt{(1+t)p} \cdot \mathbb S^{p-1})$. Then $\xi\in [-1,1]$ and has density 
\begin{align*}
    f_\xi(x) = \frac{\Gamma(p/2)}{\Gamma((p-1)/2)\Gamma(1/2)} (1-x^2)^{(p-3)/2}.
\end{align*}
For any integer $k\ge 1$, we have $\E [\xi^{2k-1}] = 0$ and \begin{align*}
    \E [\xi ^{2k}] = \frac{(2k-1)!!}{p(p+2)\cdots (p+2k-2)}\le \frac{(2k-1)!!}{p^{k}}.
\end{align*}
Consequently, for any $c\in \mathbb R$, \begin{align*}
    \E[\exp(c \xi)] = \sum_{k=0}^\infty \frac{c^{2k}}{(2k)!} \E[\xi^{2k}]\le \sum_{k=0}^\infty \frac{c^{2k}}{(2k)!}\cdot \frac{(2k-1)!!}{p^{k}} = \sum_{k=0}^{\infty } \frac{1}{k!}\Big(\frac{c^2}{2p}\Big)^k = \exp\Big(\frac{c^2}{2p}\Big).
\end{align*}
Combining this with~\eqref{eq-chi-2-div-reduce-2-part-0}, we conclude that (recall that $n=\gamma p$) 
\begin{align}
    \mathbb E_{\Qb}\left[ \frac{ \mathrm{d}\Pb_{t,\bm x} }{ \mathrm{d}\Qb } (\bm Y) \frac{ \mathrm{d}\Pb_{t,\bm x'} }{ \mathrm{d}\Qb } (\bm Y) \right]\leq\ & \exp\left( \frac{1}{2p}\left(\frac{\mu(1+t)p}{n}\langle \bm x, \bm x' \rangle\right)^2 \right) = \exp\left( \frac{(1+t)^2\mu^2}{2\gamma n} \langle \bm x, \bm x' \rangle^2 \right) \,.  \label{eq-chi-2-div-reduce-2-part-1}
\end{align}
In addition, for any $1\le \ell \le L$, from \cite[Section~5]{MNS15} we have
\begin{align}\label{eq-chi-2-div-reduce-2-part-2}
    \mathbb E_{\Qb}\left[ \frac{ \mathrm{d}\Pb_{\bm x_{\ell}} }{ \mathrm{d}\Qb } (\bm G_{\ell}) \frac{ \mathrm{d}\Pb_{\bm x_{\ell}'} }{ \mathrm{d}\Qb } (\bm G_{\ell}) \right] = [1+O(n^{-1})] \exp\left( -\frac{\epsilon^2_{\ell}\lambda_{\ell}}{2} - \frac{ (\epsilon^2_{\ell}\lambda_{\ell})^2 }{4} + \frac{ \epsilon_\ell^2 \lambda_\ell }{2} \left( \frac{\langle \bm x_\ell,\bm x_\ell' \rangle}{\sqrt{n}} \right)^2 \right) \,.
\end{align}
Plugging \eqref{eq-chi-2-div-reduce-2-part-1} and \eqref{eq-chi-2-div-reduce-2-part-2} into \eqref{eq-chi-2-div-reduce-1} yields Lemma~\ref{lem-chi-2-divergence-relax}.

\subsection{Proof of Lemma~\ref{lem-Bernoulli-Gaussian-moment-compare}}{\label{subsec:proof-lem-2.5}}

Define $\zeta,\eta_1,\ldots,\eta_L \in \mathbb R^n$ such that for $1 \leq i \leq n$, we have
\begin{align*}
    \big( \zeta(i),\eta_1(i),\ldots,\eta_L(i) \big) \overset{i.i.d.}{\sim} \big( U,V_1,\ldots,V_L \big) \,.
\end{align*}
Then it suffices to show that for any $\alpha,\alpha_1,\ldots,\alpha_L \in \mathbb N$ we have
\begin{align*}
    \mathbb E\left[ \big( \langle \bm x,\bm x' \rangle \big)^{2\alpha} \big( \langle \bm x_1,\bm x_1' \rangle \big)^{2\alpha_1} \ldots \big( \langle \bm x_L,\bm x_L' \rangle \big)^{2\alpha_L} \right] \leq \mathbb E\left[ \big( \langle \zeta,\mathbbm 1_n \rangle \big)^{2\alpha} \big( \langle \eta_1,\mathbbm 1_n \rangle \big)^{2\alpha_1} \ldots \big( \langle \eta_L,\mathbbm 1_n \rangle \big)^{2\alpha_L} \right] \,,
\end{align*}
where $\mathbbm 1_n \in \mathbb{R}^n$ denotes the all-ones vector (i.e., $\mathbbm 1_n (i)=1$ for all $1\le i\le n$). Recall that the tuples $\big(\bm x(i),\bm x'(i),\bm x_1(i),\bm x_1'(i),\ldots,\bm x_L(i),\bm x_L'(i)\big), 1\le i\le n$ are mutually independent (i.e., independent across different indices $i$). Hence,
\begin{align*}
    & \mathbb E\Big[ \big( \langle \bm x,\bm x' \rangle \big)^{2\alpha} \big( \langle \bm x_1,\bm x_1' \rangle \big)^{2\alpha_1} \ldots \big( \langle \bm x_L,\bm x_L' \rangle \big)^{2\alpha_L} \Big] \\
    =\ & \sum_{ \substack{ \beta,\beta_1,\ldots,\beta_L \in \mathbb N^n \\ |\beta|=2\alpha,|\beta_\ell|=2\alpha_\ell } } \prod_{1 \leq i \leq n} \mathbb E\Big[ \big( \bm x(i)\bm x'(i) \big)^{\beta(i)} \big( \bm x_1(i)\bm x_1'(i) \big)^{\beta_1(i)} \ldots \big( \bm x_L(i)\bm x_L'(i) \big)^{\beta_L(i)} \Big] \,.
\end{align*}
Similarly, we have 
\begin{align*}
    & \mathbb E\Big[ \big( \langle \zeta,\mathbbm 1 \rangle \big)^{2\alpha} \big( \langle \eta_1,\mathbbm 1 \rangle \big)^{2\alpha_1} \ldots \big( \langle \zeta_L,\mathbbm 1 \rangle \big)^{2\alpha_L} \Big] \\
    =\ & \sum_{ \substack{ \beta,\beta_1,\ldots,\beta_L \in \mathbb N^n \\ |\beta|=2\alpha,|\beta_\ell|=2\alpha_\ell } } \prod_{1 \leq i \leq n} \mathbb E\Big[ \big( \zeta(i) \big)^{\beta(i)} \big( \eta_1(i) \big)^{\beta_1(i)} \ldots \big( \eta_L(i) \big)^{\beta_L(i)} \Big] \,.
\end{align*}
Thus, it suffices to show that for all $1 \leq i \leq n$ and all $\beta,\beta_1,\ldots,\beta_L \in \mathbb N$, we have
\begin{align}
    0 \leq \mathbb E\Big[ \big( \bm x(i)\bm x'(i) \big)^{\beta} \big( \bm x_1(i)\bm x_1'(i) \big)^{\beta_1} \ldots \big( \bm x_L(i)\bm x_L'(i) \big)^{\beta_L} \Big] \leq \mathbb E\Big[ \big( \zeta(i) \big)^{\beta} \big( \eta_1(i) \big)^{\beta_1} \ldots \big( \eta_L(i) \big)^{\beta_L} \Big] \,.  \label{eq-compare-Berboulli-Gaussian-moment-reduced}
\end{align}
Recall from \eqref{eq-def-x-ell} that $\bm x_\ell(i) \bm x'_{\ell}(i) = \bm x(i) \bm x'(i) \bm z_\ell(i) \bm z_\ell'(i)$, where $\{ \bm x(i)\bm x'(i), \bm z_1(i) \bm z_1'(i), \ldots, \bm z_L(i) \bm z_L'(i) \}$ are independent variables in $\{ -1,+1 \}$ with (also recall \eqref{eq-def-z-ell})
\begin{align*}
    \Pb\big( \bm x(i)\bm x'(i)=1 \big) =\frac{1}{2}, \quad \Pb\big( \bm z_\ell(i) \bm z_\ell'(i)=1 \big)=\frac{ 1+\rho^2 }{ 2 } \,.
\end{align*}
Thus we have
\begin{align}
    &\nonumber \mathbb E\left[ \big( \bm x(i)\bm x'(i) \big)^{\beta} \big( \bm x_1(i)\bm x_1'(i) \big)^{\beta_1} \ldots \big( \bm x_L(i)\bm x_L'(i) \big)^{\beta_L} \right] \\ 
    \nonumber =\ & \mathbb E\Big[\big( \bm x(i)\bm x'(i) \big)^{\beta+\beta_1+\cdots+\beta_L} \Big] \cdot \prod_{k=1}^L \mathbb E \Big[\big( \bm z_k(i) \bm z_k'(i) \big)^{\beta_k} \Big] \\
    \nonumber =\ & \mathbf 1_{ \beta+\beta_1+\ldots+\beta_L \text{ is even} } \cdot \prod_{k=1}^L\big(\mathbf{1}_{\beta_k\text{ is even}}+ \rho^2 \mathbf{1}_{\beta_k\text{ is odd}} \big) \\ 
    =\ &\mathbf 1_{ \beta+\beta_1+\ldots+\beta_L \text{ is even} } \cdot \rho^{2N_{\operatorname{odd}}}\,,  \label{eq:Bernoulli-moment-reduced}
\end{align}
where $N_{\operatorname{odd}} = \#\{k:\beta_k\text{ is odd}\}$. Now we turn to the Gaussian counterpart. Since we have $\eta_\ell(i)=\rho^2 \zeta(i)+\sqrt{1-\rho^4} \zeta_\ell(i)$, where $\zeta(i),\zeta_1(i),\ldots,\zeta_\ell(i)$ are i.i.d.\ standard normal variables, it is straightforward to check that for all $\beta,\beta_1,\ldots,\beta_L \in \mathbb N$ we have
\begin{align}
    \label{eq:gaussian-moment-zetai}&\mathbb E\Big[ \zeta(i)^2 \cdot \big( \zeta(i) \big)^{\beta} \big( \zeta_1(i) \big)^{\beta_1} \ldots \big( \zeta_L(i) \big)^{\beta_L} \Big] \geq \mathbb E\Big[ \zeta(i)^2 \Big] \mathbb E\Big[ \big( \zeta(i) \big)^{\beta} \big( \zeta_1(i) \big)^{\beta_1} \ldots \big( \zeta_L(i) \big)^{\beta_L} \Big] \geq 0 \,; \\\label{eq:gaussian-moment-zetaelli}
    &\mathbb E\Big[ \zeta_{\ell}(i)^2 \cdot \big( \zeta(i) \big)^{\beta} \big( \zeta_1(i) \big)^{\beta_1} \ldots \big( \zeta_L(i) \big)^{\beta_L} \Big] \geq \mathbb E\Big[ \zeta(i)^2 \Big] \mathbb E\Big[ \big( \zeta(i) \big)^{\beta} \big( \zeta_1(i) \big)^{\beta_1} \ldots \big( \zeta_L(i) \big)^{\beta_L} \Big] \geq 0 \,.
\end{align}
Indeed, since $\zeta(i),\zeta_1(i),\ldots,\zeta_L(i)$ are independent, the expectations factorize; moreover, for $\zeta\sim\mathcal N(0,1)$ one has $\E[\zeta^{\beta}]=0$ for odd $\beta$ and $\E[\zeta^{\beta}]\ge 0$ for even $\beta$. In particular, $\E[\zeta^{\beta+2}]=(\beta+1)\E[\zeta^{\beta}]\ge \E[\zeta^{2}]\,\E[\zeta^{\beta}]$ as $\E[\zeta^{2}]=1$, which yields the displayed inequalities. Recall that $\eta_\ell(i)=\rho^2 \zeta(i)+\sqrt{1-\rho^4} \zeta_\ell(i)$ for any $1\le \ell\le L$ and $\zeta(i),\zeta_1(i),\ldots,\zeta_\ell(i)$ are i.i.d.\ standard normal variables. Expanding each $(\eta_\ell(i))^{\beta_\ell}$ via the binomial theorem expresses $\zeta(i)^\beta\prod_{\ell=1}^L (\eta_\ell(i))^{\beta_\ell}$ as a finite sum of monomials in the independent variables $\zeta(i),\zeta_1(i),\ldots,\zeta_L(i)$ with nonnegative coefficients. Applying the previously established inequalities~\eqref{eq:gaussian-moment-zetai} and~\eqref{eq:gaussian-moment-zetaelli} term-by-term and summing up yields the following inequalities for $\eta_\ell(i)$:
\begin{align*}
    &\mathbb E\Big[ \zeta(i)^2 \cdot \big( \zeta(i) \big)^{\beta} \big( \eta_1(i) \big)^{\beta_1} \ldots \big( \eta_L(i) \big)^{\beta_L} \Big] \geq \mathbb E\Big[ \zeta(i)^2 \Big] \mathbb E\Big[ \big( \zeta(i) \big)^{\beta} \big( \eta_1(i) \big)^{\beta_1} \ldots \big( \eta_L(i) \big)^{\beta_L} \Big] \geq 0 \,; \\
    &\mathbb E\Big[ \eta_{\ell}(i)^2 \cdot \big( \zeta(i) \big)^{\beta} \big( \eta_1(i) \big)^{\beta_1} \ldots \big( \eta_L(i) \big)^{\beta_L} \Big] \geq \mathbb E\Big[ \eta_\ell(i)^2 \Big] \mathbb E\Big[ \big( \zeta(i) \big)^{\beta} \big( \eta_1(i) \big)^{\beta_1} \ldots \big( \eta_L(i) \big)^{\beta_L} \Big] \geq 0 \,.
\end{align*}
Indeed, this also follows from Wick's formula (see, e.g., \cite[Theorem 1.28]{J97}): expanding the relevant Gaussian moments as sums over pairings, the pairing that matches the two extra copies of $\zeta(i)$ (resp.\ $\eta_\ell(i)$) contributes $\E[\zeta(i)^2]\E[\cdots]$ (resp.\ $\E[\eta_\ell(i)^2]\E[\cdots]$), while all remaining pairings are products of (nonnegative) covariances and hence are nonnegative, yielding the desired lower bounds. Let $\lfloor x\rfloor$ be the largest integer not exceeding $x$. Since $\E [\eta_1(i)^2]=1$, and hence we can reduce the exponent by $2$ repeatedly:
\begin{align*}
    \mathbb E\Big[ \big( \zeta(i) \big)^{\beta} \big( \eta_1(i) \big)^{\beta_1} \ldots \big( \eta_L(i) \big)^{\beta_L} \Big]\ge \mathbb E\Big[ \big( \zeta(i) \big)^{\beta} \big( \eta_1(i) \big)^{\beta_1-2\lfloor \beta_1/2\rfloor} \ldots \big( \eta_L(i) \big)^{\beta_L} \Big].
\end{align*}
Similarly, we have 
\begin{align*}
    &\mathbb E\Big[ \big( \zeta(i) \big)^{\beta} \big( \eta_1(i) \big)^{\beta_1} \ldots \big( \eta_L(i) \big)^{\beta_L} \Big] \\
    \geq\ & \mathbb E\Big[ \big( \zeta(i) \big)^{\beta} \big( \eta_1(i) \big)^{\beta_1-2\lfloor\beta_1/2\rfloor} \ldots \big( \eta_L(i) \big)^{\beta_L-2\lfloor\beta_L/2\rfloor} \Big] =  \mathbb E\Big[ \big( \zeta(i) \big)^{\beta} \prod_{ 1 \leq \ell \leq L: \beta_{\ell} \text{ is odd} } \eta_\ell(i) \Big] \\
    \overset{\mathrm{(a)}}{\geq}\ & \mathbb E\Big[ \big( \zeta(i) \big)^{\beta} \prod_{ 1 \leq \ell \leq L: \beta_{\ell} \text{ is odd} } \big( \rho^2 \zeta(i) \big) \Big] \overset{\mathrm{(b)}}{\geq} \mathbf 1_{ \beta+\beta_1+\ldots+\beta_L \text{ is even} } \cdot \rho^{2N_{\mathrm{odd}}} \,,
\end{align*}
where $\mathrm{(a)}$ follows from $\eta_\ell(i) = \rho^2 \zeta(i)+\sqrt{1-\rho^4}\zeta_\ell(i)$ for all $1\le \ell\le L$ and~\eqref{eq:gaussian-moment-zetai}; $\mathrm{(b)}$ follows from $\E \big[ (\zeta(i))^{2k}\big]\ge 1$ for $\zeta(i)\sim \mathcal N(0,1)$ and any integer $k$. Combining this with~\eqref{eq:Bernoulli-moment-reduced}, we complete the proof of~\eqref{eq-compare-Berboulli-Gaussian-moment-reduced}, which in turn concludes the proof of Lemma~\ref{lem-Bernoulli-Gaussian-moment-compare}.

\subsection{Proof of Lemma~\ref{lem-behavior-Pb-Qb}}{\label{subsec:proof-lem-2.7}}

We first prove \eqref{eq-behavior-Qb}. Recall \eqref{eq-behavior-Y-1,2-Qb}, under $\Qb_{\bullet}$ we have $\bm Y_2=\bm Z_2$ which is a standard Gaussian matrix independent with $(\bm Y_1,\bm G_1,\ldots,\bm G_L)$ (and thus also independent with $\mathcal X$). 
Let $\bm Z_2 := (\bm Z_{2,1},\ldots,\bm Z_{2,p})$, where $\{\bm Z_{2,i}\}_{i=1}^p$ follows independent multivariate Gaussian distribution $\mathcal N(0, \mathbb I_n)$.
Thus, conditioned on $\mathcal X$ we have
\begin{align*}
     \mathbb E_{\Qb_{\bullet}}\left[ \big\langle \mathcal X, \bm Y_2 \bm Y_2^{\top} - p \mathbb I_n \big\rangle^2 \mid \mathcal X \right] &= \mathbb E_{\bm Z_2}\Big[ \big\langle \mathcal X, \bm Z_2 \bm Z_2^{\top} - p \mathbb I_n \big\rangle^2 \Big] \\&= \E_{\bm Z_2} \Big[ \big\langle \mathcal X, \sum_{i=1}^p(\bm Z_{2,i} \bm Z_{2,i}^{\top} -  \mathbb I_n) \big\rangle^2 \Big] = p \mathbb E_{\bm Z_2}\Big[ \big\langle \mathcal X, \bm Z_{2,1} \bm Z_{2,1}^{\top} - \mathbb I_n \big\rangle^2 \Big]\,.
\end{align*}
We note that  \begin{align*}
    \big\langle \mathcal X, \bm Z_{2,1} \bm Z_{2,1}^{\top} - \mathbb I_n \big\rangle = \bm Z_{2,1}^{\top} \mathcal X \bm Z_{2,1} - \mathsf{tr}(\mathcal X) = \bm Z_{2,1}^{\top} \left( \frac{\mathcal X+\mathcal X^\top}{2} \right) \bm Z_{2,1} - \mathsf{tr}\left(\frac{\mathcal X+\mathcal X^\top}{2}\right)
\end{align*}
and $\E_{\bm Z_2} \Big[\big\langle \mathcal X, \bm Z_{2,1} \bm Z_{2,1}^{\top} - \mathbb I_n \big\rangle\Big] = 0$. Therefore, 
\begin{align}
    \nonumber\mathbb E_{\Qb_{\bullet}}\Big[ \big\langle \mathcal X, \bm Y_2 \bm Y_2^{\top} - p \mathbb I_n \big\rangle^2 \mid \mathcal X \Big] &= p \E_{\bm Z_2}\left[\operatorname{Var}\left(\bm Z_{2,1}^{\top} \left(\frac{\mathcal X+\mathcal X^\top}{2}\right) \bm Z_{2,1}\right)\right] \\
    &\overset{(a)}{=} 2p\left\lVert\frac{\mathcal X+\mathcal X^\top}{2} \right\rVert_{\Fop}^2\leq 2p \| \mathcal X \|_{\Fop}^2 \,,  \label{eq-simple-2nd-moment-calculus}
\end{align}
where $\mathrm{(a)}$ follows from the variance formula for Gaussian quadratic forms (see, e.g., \cite[Theorem 12.12]{MN19}).
Thus, from a simple Chebyshev's inequality we see that \eqref{eq-behavior-Qb} holds. 

We then prove \eqref{eq-behavior-Pb}. Recall \eqref{eq-behavior-Y-1,2-Pb}, we can decompose $\langle \mathcal X, \bm Y_2 \bm Y_2^{\top} - p \mathbb I_n \rangle$ into the following terms:
\begin{align}
    \big\langle \mathcal X, \bm Y_2 \bm Y_2^{\top} - p \mathbb I_n \big\rangle &= \frac{ \mu(1+\kappa^2) \| u \|^2  }{ n(1+\kappa^{-2}) } \cdot \big\langle \mathcal X, \bm x \bm x^{\top} \big\rangle \label{eq-behavior-Pb-part-1}  \\
    &+ 2 \frac{ \sqrt{ \mu(1+\kappa^2) } }{ \sqrt{n(1+\kappa^{-2})} } \cdot \big\langle \mathcal X, \bm x \bm u^{\top} \bm Z_2^{\top} \big\rangle \label{eq-behavior-Pb-part-2} \\
    &+ \big\langle \mathcal X, \bm Z_2 \bm Z_2^{\top} - p \mathbb I_n \big\rangle \,.  \label{eq-behavior-Pb-part-3}
\end{align}
Note that $\widetilde{\Pb}_{\bullet}=\widetilde{\Pb}(\cdot\mid\mathcal E_{\diamond})$, where $\mathcal E_{\diamond}$ is defined in \eqref{eq-def-E-diamond}. Thus, under $\widetilde{\Pb}_{\bullet}$ we always have $\| \bm u \|^2 \geq (1-\iota)p=(1-\iota)n/\gamma$. Combined with \eqref{eq-generate-estimator}, we see that  
\begin{align}
    & \widetilde{\Pb}_{\bullet}\left( \eqref{eq-behavior-Pb-part-1} \geq \frac{ c\mu(1-\iota)(1+\kappa^2)n }{ 2\gamma(1+\kappa^{-2}) } \cdot \| \mathcal X \|_{\Fop}  \right) \nonumber \\
    =\ &\widetilde{\Pb}_{\bullet}\left( \eqref{eq-behavior-Pb-part-1} \geq \frac{c \mu(1-\iota)(1+\kappa^2) }{2 \gamma(1+\kappa^{-2}) } \cdot \| \mathcal X \|_{\Fop} \| \bm x \bm x^{\top} \|_{\Fop} \right) \nonumber \\
    \ge \ & \widetilde{\Pb}_{\bullet}\left( \frac{ \mu(1+\kappa^2)  }{ n(1+\kappa^{-2}) }\cdot \frac{(1-\iota)n}{\gamma} \cdot \big\langle \mathcal X, \bm x \bm x^{\top} \big\rangle \geq \frac{c \mu(1-\iota)(1+\kappa^2) }{2 \gamma(1+\kappa^{-2}) } \cdot \| \mathcal X \|_{\Fop}\| \bm x \bm x^{\top} \|_{\Fop} \right) \nonumber \\
    \overset{\eqref{eq-generate-estimator}}{\ge}\ & \frac{c}{2} - o(1) \,.  \label{eq-behavior-Pb-part-1-bound}
\end{align}
In addition, using \eqref{eq-simple-2nd-moment-calculus} and a Chebyshev's inequality, we see that
\begin{align}
   \widetilde{\Pb}_{\bullet}\left( |\eqref{eq-behavior-Pb-part-3}| \geq \frac{c \mu(1-\iota)(1+\kappa^2)n }{ 8\gamma(1+\kappa^{-2}) } \cdot \| \mathcal X \|_{\Fop} \right) \leq n^{-\Omega(1)} \,.   \label{eq-behavior-Pb-part-3-bound}
\end{align}
Finally, note that conditioned on $\mathcal X$, we have $\langle \mathcal X, \bm x \bm u^{\top} \bm Z_2^{\top} \rangle \sim \mathcal N(0, \| \bm u \bm x^{\top} \mathcal X \|_{\Fop}^2 )$. Again, since under $\widetilde{\Pb}_{\bullet}$ we always have $\| \bm u \|^2 \leq (1+\iota)p=(1+\iota)n/\gamma$, we have
\begin{align*}
    \big\| \bm u \bm x^{\top} \mathcal X \big\|_{\Fop} \leq \|\bm u \| \| \bm x \| \| \mathcal X \|_{\Fop} \leq \sqrt{(1+\iota)/\gamma}\cdot n \| \mathcal X \|_{\Fop} \,.
\end{align*}
Thus, we have
\begin{align}
  \widetilde{\Pb}_{\bullet}\left( |\eqref{eq-behavior-Pb-part-2}| \geq \frac{c \mu(1-\iota)(1+\kappa^2)n }{ 8\gamma(1+\kappa^{-2}) } \cdot \| \mathcal X \|_{\Fop} \right) \leq n^{-\Omega(1)} \,.   \label{eq-behavior-Pb-part-2-bound}
\end{align}
Combining \eqref{eq-behavior-Pb-part-1-bound}, \eqref{eq-behavior-Pb-part-3-bound}, and \eqref{eq-behavior-Pb-part-2-bound}, we have that 
\begin{align*}
    &\widetilde{\Pb}_{\bullet}\left( \big\langle \mathcal X, \bm Y_2 \bm Y_2^{\top} - p \mathbb I_n \big\rangle \geq \frac{ c\mu(1-\iota)(1+\kappa^2)n }{ 4(1+\iota)(1+\kappa^{-2}) } \| \mathcal X \|_{\Fop} \right) \\
    \ge\ &  \widetilde{\Pb}_{\bullet}\left(\eqref{eq-behavior-Pb-part-1} \geq \frac{ c\mu(1-\iota)(1+\kappa^2)n }{ 2\gamma(1+\kappa^{-2}) } \cdot \| \mathcal X \|_{\Fop} \right) \\
    +\ & \widetilde{\Pb}_{\bullet}\left( |\eqref{eq-behavior-Pb-part-3}| < \frac{c \mu(1-\iota)(1+\kappa^2)n }{ 8\gamma(1+\kappa^{-2}) } \cdot \| \mathcal X \|_{\Fop} \right) \\ 
    +\ &\widetilde{\Pb}_{\bullet}\left( |\eqref{eq-behavior-Pb-part-2}| < \frac{c \mu(1-\iota)(1+\kappa^2)n }{ 8\gamma(1+\kappa^{-2}) } \cdot \| \mathcal X \|_{\Fop} \right)\ge \frac{c}{2}-o(1)\,. 
\end{align*}
We see that \eqref{eq-behavior-Pb} holds. This completes the proof of Lemma~\ref{lem-behavior-Pb-Qb}.

\section{Supplementary proofs in Section~\ref{sec:efficient-algs}}{\label{sub:supp-proofs-sec-3}}

\subsection{Proof of Lemma~\ref{lem-bound-beta-mathcal-H}}{\label{subsec:proof-lem-3.2}}

For all $[H] \in \mathcal H$, let $H \subset\mathsf K_{n,p}$ be a representative of $[H]$ and list $V^{\mathsf a}(H)$ in the counterclockwise order $V^{\mathsf a}(H)=\{ a_{v_1},\ldots,a_{v_{\aleph}} \}$. We construct a graph $\widetilde{H}$ as follows. Define $V(\widetilde{H})=V^{\mathsf a}(H)$. In addition, for each $1 \leq i \leq \aleph$ (below we use the convention that $a_{v_{\aleph+1}}=a_{v_1}$)
\begin{itemize}
    \item If $(a_{v_i},a_{v_{i+1}}) \in E(H)$, then $\chi_H((a_{v_i},a_{v_{i+1}}))\in \{ 1,\ldots,L \}$. We let $(a_{v_i},a_{v_{i+1}}) \in E(\widetilde H)$ and let $\gamma_{\widetilde H}((a_{v_i},a_{v_{i+1}}))=\chi_H((a_{v_i},a_{v_{i+1}}))$;
    \item If $(a_{v_i},a_{v_{i+1}}) \not\in E(H)$, then there exists $b_{u_i} \in V^{\mathsf b}$ such that $(a_{v_i},b_{u_i}),(a_{v_{i+1}},b_{u_i}) \in E_{0}(H)$. We let $(a_{v_i},a_{v_{i+1}}) \in E(\widetilde H)$ and let $\chi_{\widetilde H}((a_{v_i},a_{v_{i+1}}))=0$.
\end{itemize}
Let $\widetilde{\mathcal H}(\aleph)$ be the collection of unlabeled $[\widetilde H]=(V(\widetilde H),E(\widetilde H),\gamma_{\widetilde H})$ such that $(V(\widetilde H),E(\widetilde H))$ is an unlabeled cycle with length $\aleph$ and $\chi_{\widetilde H}$ maps $E(\widetilde H)$ to $\{ 0,1,\ldots,L \}$. It is clear that
\begin{equation*}
    \varphi: \mathcal H(\aleph) \to \widetilde{\mathcal H}(\aleph) \quad \varphi(H)=\widetilde{H}
\end{equation*}
is a bijection between $\mathcal H$ and $\widetilde{\mathcal H}$. In addition, it is clear that
\begin{align*}
    & |\mathsf{Aut}(H)|=|\mathsf{Aut}(\varphi(H))|, \quad |\mathsf{dif}_0(H)|=|\mathsf{dif}_0(\varphi(H))|, \quad |\mathsf{dif}(H)|=|\mathsf{dif}(\varphi(H))| \,, \\ 
    & |E_0(H)|=2|E_0(\varphi(H))|, \quad |E_{\ell}(H)|=|E_{\ell}(\varphi(H))| \mbox{ for } 1 \leq \ell \leq L \,.
\end{align*}
Thus, we have
\begin{align}
    \beta_{\mathcal H} &\overset{\eqref{eq-def-Xi(H)},\eqref{eq-def-beta-mathcal-H}}{=} \sum_{ [H] \in \mathcal H } \frac{ \rho^{2|\mathsf{dif}_0(H)|+4|\mathsf{dif}(H)|} (\mu^2/\gamma)^{\frac{1}{2}|E_0(H)|} (\epsilon^2_1\lambda_1)^{|E_1(H)|} \ldots (\epsilon^2_L \lambda_L)^{|E_L(H)|} }{ |\mathsf{Aut}(H)| } \nonumber \\
    &= \sum_{ [H] \in \widetilde{\mathcal H} } \frac{ \rho^{2|\mathsf{dif}_0(H)|+4|\mathsf{dif}(H)|} (\mu^2/\gamma)^{|E_0(H)|} (\epsilon^2_1\lambda_1)^{|E_1(H)|} \ldots (\epsilon^2_L \lambda_L)^{|E_L(H)|} }{ |\mathsf{Aut}(H)| } \,.  \label{eq-beta-mathcal-H-reduce-1}
\end{align}
We now bound the right-hand side of \eqref{eq-beta-mathcal-H-reduce-1}. Define $\mathcal W=\mathcal W(\aleph)$ to be the set of $\omega=(\omega_1,\ldots,\omega_{\aleph}) \in \{ 0,1,\ldots,L \}^{\aleph}$. In addition, for $\omega=(\omega_1,\ldots,\omega_{\aleph})\in\mathcal W$, we similarly write
\begin{align*}
    & E_{\ell}(\omega)=\big\{ 1\leq i\leq \aleph: \omega_i=\ell \big\} \mbox{ for } 1 \leq \ell \leq L \,; \\
    & \mathsf{dif}_0(\omega)=\big\{ 1 \leq i \leq \aleph-1: \{ \omega_i,\omega_{i+1} \}=\{ 0,\ell \} \mbox{ for some } \ell\neq 0 \big\} \,; \\
    & \mathsf{dif}(\omega)=\big\{ 1 \leq i \leq \aleph-1: \{ \omega_i,\omega_{i+1} \}=\{ \ell,\ell' \} \mbox{ for some } 1 \leq \ell<\ell' \leq L \big\} \,.
\end{align*}
Define 
\begin{equation}{\label{eq-def-widetilde-mathcal-H-bullet}}
    \widetilde{\mathcal H}_{\bullet}=\big\{ (v,[H]): [H] \in \mathcal H, v \in V(H) \big\} \,.
\end{equation}
Consider the mapping $\varphi:\widetilde{\mathcal H}_{\bullet} \to \mathcal W$ defined as follows: for all $(v,H) \in \widetilde{\mathcal H}_{\bullet}$, write $V(H)$ in the counterclockwise order $V(H)=\{ v_1,\ldots,v_{\aleph} \}$ with $v_1=v$ and let $\varphi(v,H)=\omega=(\omega_1,\ldots,\omega_\aleph)$ with $\omega_i=\chi_H((v_i,v_{i+1}))$ (we write $v_{\ell+1}=v_1$). It is clear that $\varphi$ is a bijection and
\begin{align*}
    &|E_{\ell}(H)|=|E_{\ell}(\varphi(v,H))| \mbox{ for } 0 \leq \ell \leq L \,, \\ 
    &|\mathsf{dif}(H)|-1 \leq |\mathsf{dif}(\varphi(v,H))| \leq |\mathsf{dif}(H)| \,, \\
    &|\mathsf{dif}_0(H)|-1 \leq |\mathsf{dif}_0(\varphi(v,H))| \leq |\mathsf{dif}_0(H)| \,.
\end{align*}
In addition, since $1 \leq |\mathsf{Aut}(H)| \leq \aleph$ for all $H \in \widetilde{\mathcal H}$, we have
\begin{align}
    \eqref{eq-beta-mathcal-H-reduce-1} &\leq \sum_{ [H] \in \widetilde{\mathcal H} } \rho^{2|\mathsf{dif}_0(H)|+4|\mathsf{dif}(H)|} (\mu^2/\gamma)^{|E_0(H)|} (\epsilon^2_1\lambda_1)^{|E_1(H)|} \ldots (\epsilon^2_L \lambda_L)^{|E_L(H)|}\nonumber \\
    &\leq \sum_{ (v,[H]) \in \widetilde{\mathcal H}_{\bullet} } \rho^{2|\mathsf{dif}_0(\varphi(v,H))|+4|\mathsf{dif}(\varphi(v,H))|} (\mu^2/\gamma)^{|E_0(\varphi(v,H))|} (\epsilon^2_1\lambda_1)^{|E_1(\varphi(v,H))|} \ldots (\epsilon^2_L \lambda_L)^{|E_L(\varphi(v,H))|}\nonumber \\\label{eq-beta-mathcalH-upbd}
    &= \sum_{ \omega\in\mathcal W } \rho^{2|\mathsf{dif}_0(\omega)|+4|\mathsf{dif}(\omega)|} (\mu^2/\gamma)^{|E_0(\omega)|} (\epsilon^2_1\lambda_1)^{|E_1(\omega)|} \ldots (\epsilon^2_L \lambda_L)^{|E_L(\omega)|} \,;
\end{align}
and also
\begin{align}
    \beta_{\mathcal H} &\geq \frac{1}{\aleph} \sum_{ [H] \in \widetilde{\mathcal H} } \rho^{2|\mathsf{dif}_0(H)|+4|\mathsf{dif}(H)|} (\mu^2/\gamma)^{|E_0(H)|} (\epsilon^2_1\lambda_1)^{|E_1(H)|} \ldots (\epsilon^2_L \lambda_L)^{|E_L(H)|} \nonumber \\
    &\overset{\mathrm{(a)}}{\ge} \frac{\rho^6}{\aleph^2} \sum_{ (v,[H]) \in \widetilde{\mathcal H}_{\bullet} } \rho^{2|\mathsf{dif}_0(\varphi(v,H))|+4|\mathsf{dif}(\varphi(v,H))|} (\mu^2/\gamma)^{|E_0(\varphi(v,H))|} (\epsilon^2_1\lambda_1)^{|E_1(\varphi(v,H))|} \ldots (\epsilon^2_L \lambda_L)^{|E_L(\varphi(v,H))|} \nonumber \\\label{eq-beta-mathcalH-lwbd}
    &\geq \frac{\rho^6}{\aleph^2} \sum_{ \omega\in\mathcal W } \rho^{2|\mathsf{dif}_0(\omega)|+4|\mathsf{dif}(\omega)|} (\mu^2/\gamma)^{|E_0(\omega)|} (\epsilon^2_1\lambda_1)^{|E_1(\omega)|} \ldots (\epsilon^2_L \lambda_L)^{|E_L(\omega)|} \,,
\end{align}
where $\mathrm{(a)}$ follows from $|\mathsf{dif}(H)|-1 \leq |\mathsf{dif}(\varphi(v,H))|$, $|\mathsf{dif}_0(H)|-1 \leq |\mathsf{dif}_0(\varphi(v,H))|$, and $\rho\le 1$. Thus, it suffices to show that there exists a constant $D=\Theta(1)$ such that
\begin{equation}{\label{eq-goal-lem-3.3}}
    D^{-1} \sigma_+(\mathbf P)^{\aleph} \leq \sum_{ \omega\in\mathcal W } \rho^{2|\mathsf{dif}_0(\omega)|+4|\mathsf{dif}(\omega)|} (\mu^2/\gamma)^{|E_0(\omega)|} (\epsilon^2_1\lambda_1)^{|E_1(\omega)|} \ldots (\epsilon^2_L \lambda_L)^{|E_L(\omega)|} \leq D \sigma_+(\mathbf P)^{\aleph} \,.
\end{equation}
We now give an inductive formula of 
\begin{align*}
    \sum_{ \omega\in\mathcal W(\aleph) } \rho^{2|\mathsf{dif}_0(\omega)|+4|\mathsf{dif}(\omega)|} (\mu^2/\gamma)^{|E_0(\omega)|} \prod_{1 \leq \ell \leq L} (\epsilon^2_\ell \lambda_\ell)^{|E_\ell(\omega)|} \,.
\end{align*}
Denote
\begin{equation}{\label{eq-def-mathsf-X-Y}}
\begin{aligned}
    &\mathsf{X}(\aleph) = \sum_{ \omega\in\mathcal W(\aleph), \omega_\aleph=0 } \rho^{2|\mathsf{dif}_0(\omega)|+4|\mathsf{dif}(\omega)|} (\mu^2/\gamma)^{|E_0(\omega)|} \prod_{1 \leq \ell \leq L} (\epsilon^2_\ell \lambda_\ell)^{|E_\ell(\omega)|}  \,; \\
    &\mathsf{Y}_\ell(\aleph) = \sum_{ \omega\in\mathcal W(\aleph), \omega_\aleph=\ell } \rho^{2|\mathsf{dif}_0(\omega)|+4|\mathsf{dif}(\omega)|} (\mu^2/\gamma)^{|E_0(\omega)|} \prod_{1 \leq \ell \leq L} (\epsilon^2_\ell \lambda_\ell)^{|E_\ell(\omega)|} \mbox{ for } 1 \leq \ell \leq L \,.
\end{aligned}
\end{equation}
For any $\omega\in\mathcal W(\aleph)$ with $\omega_{\aleph}=0$, $\omega = (\omega',0)$ with $\omega'\in \mathcal W(\aleph-1)$. There are two cases:
\begin{itemize}
    \item Case 1. $\omega_{\aleph-1} = 0$. We have $|\mathsf{dif}_0(\omega)| = |\mathsf{dif}_0(\omega')|$, $|\mathsf{dif}(\omega)| = |\mathsf{dif}(\omega')|$, $|E_0(\omega)| = |E_0(\omega')|+1$, and $|E_\ell(\omega)| = |E_\ell(\omega')|$ for all $1\le \ell \le L$. This contributes to a $\tfrac{\mu^2}{\gamma}$ factor.
    \item Case 2. $1\le \omega_{\aleph-1} = \ell\le L$. We have $|\mathsf{dif}_0(\omega)| = |\mathsf{dif}_0(\omega')|+1$, $|\mathsf{dif}(\omega)| = |\mathsf{dif}(\omega')|$, $|E_0(\omega)| = |E_0(\omega')|+1$, and $|E_\ell(\omega)| = |E_\ell(\omega')|$ for all $1\le \ell \le L$. This contributes to a $\tfrac{\rho^2\mu^2}{\gamma}$ factor.
\end{itemize}
Consequently, we have 
\begin{align*}
    \mathsf{X}(\aleph) &= \frac{\mu^2}{\gamma} \cdot \mathsf{X}(\aleph-1) + \sum_{1 \leq \ell \leq L} \frac{\rho^2 \mu^2}{\gamma} \cdot  \mathsf{Y}_{\ell}(\aleph-1) \,.
\end{align*}
Similarly, for any $1\le \ell \le L$,
\begin{align*}
    \mathsf{Y}_{\ell}(\aleph) = \epsilon^2_\ell \lambda_\ell \cdot \mathsf{Y}_{\ell}(\aleph-1) + \rho^2 \epsilon^2_\ell \lambda_\ell \cdot \mathsf{X}(\aleph-1) + \sum_{\ell'\neq\ell} \rho^4 \epsilon^2_\ell \lambda_\ell \cdot \mathsf{Y}_{\ell'}(\aleph-1) \,.
\end{align*}
Thus, we have
\begin{align*}
    \begin{pmatrix}
        \mathsf{X}(\aleph) \\ \mathsf Y_{1}(\aleph) \\ \mathsf Y_2(\aleph) \\ \vdots \\ \mathsf Y_{L}(\aleph)
    \end{pmatrix}
    = 
    \begin{pmatrix}
        \frac{\mu^2}{\gamma} & \frac{\rho^2\mu^2}{\gamma} & \frac{\rho^2\mu^2}{\gamma} & \ldots & \frac{\rho^2\mu^2}{\gamma} \\
        \rho^2 \epsilon^2_1 \lambda_1 & \epsilon^2_1 \lambda_1 & \rho^4 \epsilon^2_1 \lambda_1 & \ldots & \rho^4 \epsilon^2_1 \lambda_1 \\
        \rho^2 \epsilon^2_2 \lambda_2 & \rho^4 \epsilon^2_2 \lambda_2 & \epsilon^2_2 \lambda_2 & \ldots & \rho^4 \epsilon^2_2 \lambda_2 \\
        \vdots & \vdots & \vdots & \ddots & \vdots \\
        \rho^2 \epsilon^2_L \lambda_L & \rho^4 \epsilon^2_L \lambda_L & \rho^4 \epsilon^2_L \lambda_L & \ldots & \epsilon^2_L \lambda_L 
    \end{pmatrix}
    \begin{pmatrix}
        \mathsf{X}(\aleph-1) \\ \mathsf Y_{1}(\aleph-1) \\ \mathsf Y_2(\aleph-1) \\ \vdots \\ \mathsf Y_{L}(\aleph-1)
    \end{pmatrix} 
    \overset{\eqref{eq-def-mathbf-P}}{=} \mathbf P 
    \begin{pmatrix}
        \mathsf{X}(\aleph-1) \\ \mathsf Y_{1}(\aleph-1) \\ \mathsf Y_2(\aleph-1) \\ \vdots \\ \mathsf Y_{L}(\aleph-1)
    \end{pmatrix} \,.
\end{align*}
Note that $\mathbf P$ is similar to a positive definite matrix $\widetilde{\mathbf P}=\mathbf U^{\top} \mathbf V \mathbf U$, where
\begin{equation}{\label{eq-def-widetilde-mathbf-P}}
    \mathbf U=
    \begin{pmatrix}
        \sqrt{\frac{\mu^2}{\gamma}} & 0 & 0 & \ldots & 0 \\
        0 & \sqrt{\epsilon^2_1 \lambda_1} & 0 & \ldots & 0 \\
        0 & 0 & \sqrt{\epsilon^2_2 \lambda_2} & \ldots & 0 \\
        \vdots & \vdots & \vdots & \ddots & \vdots  \\
        0 & 0 & 0 & \ldots & \sqrt{\epsilon^2_L \lambda_L}
    \end{pmatrix} ,
    \ \mathbf V=
    \begin{pmatrix}
        1 & \rho^2 & \rho^2 & \ldots & \rho^2 \\
        \rho^2 & 1 & \rho^4 & \ldots & \rho^4 \\
        \rho^2 & \rho^4 & 1 & \ldots & \rho^4 \\
        \vdots & \vdots & \vdots & \ddots & \vdots \\
        \rho^2 & \rho^4 & \rho^4 & \ldots & 1 
    \end{pmatrix} \,.
\end{equation}
It is straightforward to check that $\widetilde{\mathbf P}$ is positive semidefinite. Thus, denote $\sigma_+(\mathbf P) =\sigma_1(\mathbf P)\geq \ldots \geq \sigma_{L+1}(\mathbf P)\geq 0$ be the eigenvalues of $\mathbf P$, standard results yield that there exists $\mathbf D \in \mathbb R^{(L+1)*(L+1)}$ with $\mathbf D_{l,1}>0$ such that
\begin{align}
    \begin{pmatrix}
        \mathsf{X}(\aleph) \\ \mathsf Y_{1}(\aleph) \\ \mathsf Y_2(\aleph) \\ \vdots \\ \mathsf Y_{L}(\aleph)
    \end{pmatrix}
    = \mathbf D 
    \begin{pmatrix}
        \sigma_1(\mathbf P)^{\aleph} \\ \sigma_2(\mathbf P)^{\aleph} \\ \sigma_3(\mathbf P)^{\aleph} \\ \vdots \\ \sigma_{L+1}(\mathbf P)^{\aleph}
    \end{pmatrix}
    \,. \label{eq-formula-X-ell-Y-ell}
\end{align}
Thus, it suffices to show that $\sigma_+(\mathbf P)>1$ if \eqref{eq-assum-upper-bound} holds. To this end, note that $\widetilde{\mathbf P}$ is precisely the covariance matrix of 
\begin{align*}
    \left( \sqrt{\tfrac{\mu^2}{\gamma}} U, \sqrt{\epsilon_1^2\lambda_1} V_1, \ldots, \sqrt{\epsilon^2_L \lambda_L} V_L \right), \mbox{ where } (U,V_1,\ldots,V_L) \mbox{ is defined in } \eqref{eq-Gaussian-covariance-structure} \,.
\end{align*}
For $X\sim \mathcal N(0,\Sigma_X)$ where $\Sigma_X=\widetilde{\mathbf P}$, it is well known that $\E[\exp(X^\top X/2)]<\infty$ if and only if $\lambda_{\max}(\Sigma_X)<1$, and the expectation diverges when $\lambda_{\max}(\Sigma_X)\ge 1$ (see, e.g., \cite[Section~1.4]{M09}).
Consequently,
\begin{align*}
    \sigma_+(\mathbf P)\geq 1 \Longleftrightarrow \sigma_+(\widetilde{\mathbf P})\geq 1 &\Longleftrightarrow \mathbb E\left[ \exp\left( \frac{1}{2}\left( \frac{\mu^2}{\gamma} U^2 + \epsilon_1^2 \lambda_1 V_1^2 + \ldots + \epsilon_L^2 \lambda_L V_L^2 \right) \right) \right] = \infty \\
    &\Longleftrightarrow F( \mu,\rho,\gamma,\{ \lambda_{\ell} \},\{ \epsilon_{\ell} \} )\geq 1 \,,
\end{align*}
where the last transition follows from the proof of Proposition~\ref{prop-bound-chi-2-divergence}. Finally, since both $\sigma_+(\mathbf P)$ and $F( \mu,\rho,\gamma,\{ \lambda_{\ell} \},\{ \epsilon_{\ell} \} )$ are continues and strictly monotone with respect to $( \mu,\rho,\gamma,\{ \lambda_{\ell} \},\{ \epsilon_{\ell} \} )$, we see that $\sigma_+(\mathbf P)>1$ whenever \eqref{eq-assum-upper-bound} holds.

\subsection{Proof of Lemma~\ref{lem-bound-beta-mathcal-J}}{\label{subsec:proof-lem-3.6}}

The proof of Lemma~\ref{lem-bound-beta-mathcal-J} is highly similar to the proof of Lemma~\ref{lem-bound-beta-mathcal-H}, so we will only provide an outline with the main differences while adapting arguments from Lemma~\ref{lem-bound-beta-mathcal-H} without presenting full details. We will only prove the bound regarding $\beta_{\mathcal J}$ and the bound regarding $\beta_{\mathcal J_{\star}}$ can be proved in the same manner. Denote $\widetilde{\mathcal J}$ as the collection of unlabeled $[\widetilde H]=(V(\widetilde H),E(\widetilde H),\gamma_{\widetilde H})$ such that $(V(\widetilde H),E(\widetilde H))$ is an unlabeled path with length $\aleph$ and $\gamma_{\widetilde H}$ maps $E(\widetilde H)$ to $\{ 0,1,\ldots,L \}$. Similarly as in Section~\ref{subsec:proof-lem-3.2}, it suffices to show that
\begin{equation}{\label{eq-lem-3.7-reduce-1}}
    D^{-1} \sigma_+(\mathbf P)^{\aleph} \leq \sum_{ [H] \in \widetilde{\mathcal J} } \frac{ \Xi(H)^2 }{ |\mathsf{Aut}(H)| } \leq D \sigma_+(\mathbf P)^{\aleph} \mbox{ for some } D=\Theta(1) \,.
\end{equation}
Recall that we defined $\mathcal W=\mathcal W(\aleph)$ to be the set of $\omega=(\omega_1,\ldots,\omega_{\aleph}) \in \{ 0,1,\ldots,L \}^{\aleph}$. Also denote 
\begin{equation*}
    \widetilde{\mathcal J}_{\bullet} = \big\{ (v,[H]): [H] \in \widetilde{\mathcal J}, v \in \mathsf L(H) \big\} \,.
\end{equation*}
It is clear that there exists a bijection $\psi:\widetilde{\mathcal J}_{\bullet} \to \mathcal W$ that preserves the quantity $|E_{\ell}(\cdot)|$, $|\mathsf{dif}_0(\cdot)|$ and $|\mathsf{dif}(\cdot)|$. 
Recall~\eqref{eq-beta-mathcalH-upbd} and~\eqref{eq-beta-mathcalH-lwbd}. Since $1\le |\mathsf{Aut}(H)|\le 2$ and $|\mathsf L(H)|\le 2$, we have \begin{align*}
    \frac{\rho^6}{4}\sum_{ \omega\in\mathcal W } \Xi(\omega)^2\le \beta_{\mathcal J}\le \sum_{ \omega\in\mathcal W } \Xi(\omega)^2.
\end{align*}
Recall that $\rho>0$ is a constant. Thus, similarly as \eqref{eq-goal-lem-3.3}, it suffices to show that there exists a constant $D=\Theta(1)$ such that (recall \eqref{eq-def-Xi(H)})
\begin{equation}{\label{eq-goal-lem-3.7}}
    D^{-1} \sigma_+(\mathbf P)^{\aleph} \leq \sum_{ \omega\in\mathcal W } \Xi(\omega)^2 = \mathsf X(\aleph) + \sum_{1 \leq \ell \leq L} \mathsf Y_{\ell}(\aleph) \leq D \sigma_+(\mathbf P)^{\aleph}\,.
\end{equation}
\eqref{eq-goal-lem-3.7} then directly follows from \eqref{eq-formula-X-ell-Y-ell}.

\section{Supplementary proofs in Section~\ref{sec:stat-analysis}}{\label{sec:supp-proofs-sec-4}}

\subsection{Proof of Lemma~\ref{lem-est-cov-f-S-f-K}}{\label{subsec:proof-lem-4.3}}

Recall \eqref{eq-def-overline-G-ell-i,j}. Conditioned on $\bm x,\bm u$ and $\bm x_1,\ldots,\bm x_L$, we have that
\begin{equation*}
    \big\{ \bm Y(i,k):i \in [n],k \in [p] \big\} \bigcup \Big( \cup_{1 \leq \ell \leq L} \big\{ \overline{\bm G}_{\ell}(i,j): (i,j) \in \operatorname{U}_n \big\} \Big)
\end{equation*}
are a collection of conditionally independent random variables, with
\begin{align*}
    &\mathbb E_{\Pb}\big[ \bm Y(i,k) \mid \bm x,\bm u,\bm x_{1},\ldots,\bm x_L \big]= \tfrac{ \sqrt{\mu} }{ \sqrt{n} } \bm x(i) \bm u(k) \,; \\
    &\mathbb E_{\Pb}\big[ \bm Y(i,k)^2 \mid \bm x,\bm u,\bm x_{1},\ldots,\bm x_L \big]= 1+ \tfrac{\mu}{n} \bm x(i)^2 \bm u(k)^2 \,; \\
    &\mathbb E_{\Pb}\big[ \overline{\bm G}_{\ell}(i,j) \mid \bm x,\bm u,\bm x_{1},\ldots,\bm x_L \big]= \tfrac{ \sqrt{\epsilon_{\ell}^2\lambda_{\ell}} }{ \sqrt{n} } \bm x_{\ell}(i) \bm x_{\ell}(j) \,; \\
    &\mathbb E_{\Pb}\big[ \overline{\bm G}_{\ell}(i,j)^2 \mid \bm x,\bm u,\bm x_{1},\ldots,\bm x_L \big]= (1+O(n^{-1}))(1+ \epsilon_{\ell} \bm x_{\ell}(i) \bm x_{\ell}(j)) \,.
\end{align*}
Thus, we have (recall \eqref{eq-def-f-H,A})
\begin{align}
    \mathbb E_{\Pb}\big[ f_S f_K \big] &= \mathbb E_{\bm x,\bm u,\bm x_{1},\ldots,\bm x_L}\mathbb E_{\Pb}\big[ f_S f_K \mid \bm x,\bm u,\bm x_1,\ldots,\bm x_L \big] \nonumber \\
    &= \mathbb E_{\bm x,\bm u,\bm x_{1},\ldots,\bm x_L}\Bigg\{ \prod_{(a_i,b_k)\in E_0(S) \triangle E_0(K)} \tfrac{ \sqrt{\mu} }{ \sqrt{n} } \bm x(i) \bm u(k) \prod_{(a_i,b_k)\in E_0(S) \cap E_0(K)} \big( 1+\tfrac{\mu}{n} \bm x(i)^2 \bm u(k)^2 \big) \nonumber \\
    &\quad \prod_{1 \leq \ell \leq L} \prod_{(a_i,a_j)\in E_{\ell}(S) \triangle E_{\ell}(K) } \tfrac{ \sqrt{\epsilon_{\ell}^2\lambda_{\ell}} }{ \sqrt{n} } \bm x_{\ell}(i) \bm x_{\ell}(j)\nonumber \\
    &\quad \prod_{1 \leq \ell \leq L} \prod_{(a_i,a_j)\in E_{\ell}(S) \cap E_{\ell}(K) } (1+O(n^{-1})\big( 1+ \epsilon_{\ell} \bm x_{\ell}(i) \bm x_{\ell}(j) \big) \bigg\} \nonumber  \,.
\end{align}
We first show \eqref{eq-cov-equal-0-non-intersect}. Note that if $V(S) \cap V(K)=\emptyset$ then $E_{\ell}(S) \cap E_{\ell}(K)=\emptyset$ for all $0 \leq \ell \leq L$. Thus, in this case we have
\begin{align*}
    \mathbb E_{\Pb}\big[ f_S f_K \big] = \mathbb E\Bigg\{ \prod_{(a_i,b_k)\in E_0(S) \cup E_0(K)} \tfrac{ \sqrt{\mu} }{ \sqrt{n} } \bm x(i) \bm u(k) \prod_{1 \leq \ell \leq L} \prod_{(a_i,a_j)\in E_{\ell}(S) \cup E_{\ell}(K) } \tfrac{ \sqrt{\epsilon_{\ell}^2\lambda_{\ell}} }{ \sqrt{n} } \bm x_{\ell}(i) \bm x_{\ell}(j) \bigg\} \,.
\end{align*}
Similar to \eqref{eq-cal-expectation}, we see that
\begin{align*}
    &\mathbb E\Bigg\{ \prod_{(a_i,b_k)\in E_0(S) \cup E_0(K)} \bm x(i) \bm u(k) \prod_{1 \leq \ell \leq L} \prod_{(a_i,a_j)\in E_{\ell}(S) \cup E_{\ell}(K) } \bm x_{\ell}(i) \bm x_{\ell}(j) \bigg\} \\
    =\ & \rho^{|\mathsf{dif}_0(S)|+ |\mathsf{dif}_0(K)| +2|\mathsf{dif}(S) |+ 2|\mathsf{dif}(K)| } \,.
\end{align*}
Thus, when $V(S) \cap V(K)=\emptyset$ we have
\begin{align*}
    \mathbb E_{\Pb}\big[ f_S f_K \big] &= \frac{ \gamma^{ \frac{1}{4}(|E_0(S)|+|E_0(K)|) } }{ n^{ \frac{1}{2}(|E(S)|+|E(K)|) } } \big( \tfrac{\mu^2}{\gamma} \big)^{ \frac{1}{4}(|E_0(S)|+|E_0(K)|) }  \prod_{1 \leq \ell \leq L} \big( \epsilon_{\ell}^2\lambda_{\ell} \big)^{ \frac{1}{2}(|E_\ell(S)|+|E_\ell(K)|) }  \\
    &\quad \cdot \rho^{|\mathsf{dif}_0(S)|+ |\mathsf{dif}_0(K)| +2|\mathsf{dif}(S) |+ 2|\mathsf{dif}(K)| } \\
    &\overset{\eqref{eq-def-Xi(H)}}{=} \frac{ \Xi(S)\Xi(K) }{ n^{ \frac{1}{2}(|E(S)|-\frac{1}{2}|E_0(S)|) } p^{ \frac{1}{4}|E_0(S)| } n^{ \frac{1}{2}(|E(K)|-\frac{1}{2}|E_0(K)|) } p^{ \frac{1}{4}|E_0(K)| } } \\
    &\overset{\eqref{eq-basic-property-mathcal-H}}{=} \frac{ \Xi(S)\Xi(K) }{ n^{ \frac{1}{2}\aleph } p^{ \frac{1}{4}|E_0(S)| } n^{ \frac{1}{2}\aleph } p^{ \frac{1}{4}|E_0(K)| } } \overset{\text{Lemma~}\ref{lem-mean-var-f-H-part-1}}{=} \mathbb E_{\Pb}[f_S] \mathbb E_{\Pb}[f_K] \,.
\end{align*}
Now we turn to \eqref{eq-est-general-cov-f-S-f-K}. 
Recall that $S\Cap K$ denotes the graph induced by the edge set $\cup_{0\le \ell\le L}(E_\ell(S)\cap E_\ell(K))$.
Note that
\begin{align*}
    & \left( \tfrac{\sqrt{\mu}}{\sqrt{n}} \right)^{|E_0(S) \triangle E_0(K)|} \prod_{1 \leq \ell \leq L} \left( \tfrac{ \sqrt{\epsilon_{\ell}^2\lambda_{\ell}} }{\sqrt{n}} \right)^{|E_{\ell}(S)\triangle E_\ell(K)|} \\
    =\ & \frac{ \gamma^{\frac{1}{4}|E_0(S) \triangle E_0(K)|} }{ n^{ \frac{1}{2}(|E(S)|+|E(K)|)-|E(S \Cap K)| } } \left( \frac{\mu^2}{\gamma} \right)^{ \frac{1}{4}|E_0(S) \triangle E_0(K)| } \prod_{1 \leq \ell \leq L} \big( \epsilon_{\ell}^2\lambda_{\ell} \big)^{\frac{1}{2}|E_{\ell}(S)\triangle E_\ell(K)|} \\
    =\ & \frac{ \Omega(S \Cap K) }{ \Omega(S)^{\frac{1}{2}} \Omega(K)^{\frac{1}{2}} } \left( \frac{\mu^2}{\gamma} \right)^{ \frac{1}{4}|E_0(S) \triangle E_0(K)| } \prod_{1 \leq \ell \leq L} \big( \epsilon_{\ell}^2\lambda_{\ell} \big)^{\frac{1}{2}|E_{\ell}(S)\triangle E_\ell(K)|} \,,
\end{align*}
where the second equality follows from $n=\gamma p$ and \eqref{eq-def-Omega(S)}:\begin{align*}
    \frac{ \Omega(S \Cap K) }{ \Omega(S)^{\frac{1}{2}} \Omega(K)^{\frac{1}{2}} }  &= \frac{n^{|E(S\Cap K)|-\tfrac{1}{2}|E_0(S\Cap K)|} (n/\gamma)^{\tfrac{1}{2}|E_0(S\Cap K)|}}{\big(n^{|E(S)|-\tfrac{1}{2}|E_0(S)|} (n/\gamma)^{\tfrac{1}{2}|E_0(S)|}\big)^{1/2}\big(n^{|E(K)|-\tfrac{1}{2}|E_0(K)|} (n/\gamma)^{\tfrac{1}{2}|E_0(K)|}\big)^{1/2}}\\&=\frac{\gamma^{\tfrac{1}{4}(|E_0(S)|+|E_0(K)|-2|E_0(S\Cap K)|)}}{n^{\tfrac{1}{2}(|E(S)|+|E(K)|-2|E(S\Cap K)|)}} =  \frac{ \gamma^{\frac{1}{4}|E_0(S) \triangle E_0(K)|} }{ n^{ \frac{1}{2}(|E(S)|+|E(K)|)-|E(S \Cap K)| } } \,.
\end{align*} 
Since $\aleph = o(\frac{\log n}{\log \log n})$, we have 
\begin{align*}
    \prod_{1 \leq \ell \leq L} \prod_{(a_i,a_j)\in E_{\ell}(S) \cap E_{\ell}(K) } (1+O(n^{-1}))\le (1+O(n^{-1}))^{L\aleph} = 1+o(1) \,.
\end{align*}
Thus, to prove Lemma~\ref{lem-est-cov-f-S-f-K}, it suffices to show that
\begin{align}
    &\mathbb E\Bigg\{ \prod_{(a_i,b_k)\in E_0(S) \triangle E_0(K)} \bm x(i) \bm u(k) \prod_{1 \leq \ell \leq L} \prod_{(a_i,a_j)\in E_{\ell}(S) \triangle E_{\ell}(K) } \bm x_{\ell}(i) \bm x_{\ell}(j) \nonumber \\
    &\quad\quad \prod_{(a_i,b_k)\in E_0(S) \cap E_0(K)} \big( 1+\tfrac{\mu}{n} \bm x(i)^2 \bm u(k)^2 \big) \prod_{(a_i,a_j)\in E_{\ell}(S) \cap E_{\ell}(K) } \big( 1+ \epsilon_{\ell} \bm x_{\ell}(i) \bm x_{\ell}(j) \big) \Bigg\} \label{eq-lem-4.3-goal-LHS} \\
    \leq\ & \frac{1}{2}C^{|\mathsf{L}(S \Cap K)|+|\mathsf{L}(S \cap K)|+1} \rho^{|\mathsf{dif}_0(S) \setminus V(K)|+ |\mathsf{dif}_0(K) \setminus V(S)| +2|\mathsf{dif}(S) \setminus V(K)|+ 2|\mathsf{dif}(K) \setminus V(S)| } \,. \label{eq-lem-4.3-goal-RHS}
\end{align}
Similarly as in the proof of Lemma~\ref{lem-mean-var-f-H-part-1}, we will count the contribution of each vertex but the proof here will be substantially more difficult. Expanding the terms $\big( 1+\tfrac{\mu}{n} \bm x(i)^2 \bm u(k)^2 \big) $ for all $(a_i,b_k)\in E_0(S) \cap E_0(K)$ and $\big( 1+ \epsilon_{\ell} \bm x_{\ell}(i) \bm x_{\ell}(j) \big) $ for all $(a_i,a_j)\in E_{\ell}(S) \cap E_{\ell}(K)$, we can write \eqref{eq-lem-4.3-goal-LHS} as (note that $\epsilon_\ell\in(0,1)$)
\begin{align}
    &\sum_{ \substack{ \mathtt E \subset E_0(S) \cap E_0(K) \\ \mathtt F_{\ell} \subset E_{\ell}(S \Cap K) } } \big(\tfrac{\mu}{n}\big)^{|\mathtt E|} \prod_{1 \leq \ell \leq L} \epsilon_\ell^{|\mathtt F_\ell|} \mathbb E\Bigg\{ \prod_{ (a_i,b_k) \in E_0(S) \triangle E_0(K) } \bm x(i) \bm u(k) \prod_{ (a_i,b_k) \in \mathtt E } \bm x(i)^2 \bm u(k)^2 \nonumber \\
    &\quad\quad\quad\quad\quad\quad\quad \quad\quad\quad\quad\quad\quad\quad \quad\quad\quad\ \prod_{1 \leq \ell \leq L} \prod_{ (a_i,a_j) \in \mathtt F_{\ell} \cup (E_{\ell}(S) \triangle E_{\ell}(K)) } \bm x_{\ell}(i) \bm x_{\ell}(j) \Bigg\} \nonumber \\
    \leq\ & \sum_{ \substack{ \mathtt E \subset E_0(S) \cap E_0(K) \\ \mathtt F_{\ell} \subset E_{\ell}(S \Cap K) } } \big(\tfrac{\mu}{n}\big)^{|\mathtt E|} \mathbb E\Bigg\{ \prod_{b_k \in V^{\mathsf b}(S) \cup V^{\mathsf b}(K) } \bm u(k)^{ \Delta(k) } \prod_{ a_i \in V^{\mathsf a}(S) \cup V^{\mathsf a}(K) } \bm x(i)^{ \Lambda_0(i) } \bm x_1(i)^{\Lambda_1(i)} \ldots \bm x_L(i)^{\Lambda_L(i)} \Bigg\} \,.  \label{eq-cov-f-S-f-K-relax-1}
\end{align}
Here (below we write $v \sim e$ if a vertex $v$ is incident to an edge $e$)
\begin{align}
    &\Delta(k)= \#\big\{ e \in E_0(S) \triangle E_0(K): b_k \sim e \big\} + 2 \#\big\{ e \in \mathtt E: b_k \sim e \big\} \,; \label{eq-def-Delta(k)} \\
    &\Lambda_0(i)= \#\big\{ e \in E_0(S) \triangle E_0(K): a_i \sim e \big\} + 2 \#\big\{ e \in \mathtt E: a_i \sim e \big\} \,; \label{eq-def-Lambda(i)} \\
    &\Lambda_\ell(i)= \#\big\{ e \in E_{\ell}(S) \triangle E_{\ell}(K): a_i \sim e \big\} + \#\big\{ e \in \mathtt F_{\ell}: a_i \sim e \big\} \,. \label{eq-def-Delta-ell(i)}
\end{align}
We now analyze \eqref{eq-cov-f-S-f-K-relax-1} as follows: it is clear that
\begin{itemize}
    \item If $a_i \in \mathsf{dif}_0(S) \setminus V(K)$, (since $S$ is a decorated cycle or decorated path) there exists $a_j \in \{ a_1,\ldots,a_n \}$ and $b_k \in \{ b_1,\ldots,b_p \}$ such that $(a_i,a_j) \in E_{\ell}(S) \setminus E(K)$ and $(a_i,b_j) \in E_0(S) \setminus E(K)$. In addition, it is clear that for all $e \in E(S \Cap K)$ we have $a_i \not\sim e$. Thus, we have 
    \begin{align*}
        \Lambda_0(i)=\Lambda_\ell(i)=1, \Lambda_{\ell'}(i)=0 \mbox{ for all } \ell' \neq \ell \,.
    \end{align*}
    This yields that (recall \eqref{eq-def-x-ell})
    \begin{align*}
        \mathbb E\Big[ \bm x(i)^{ \Lambda_0(i) } \bm x_1^{\Lambda_1(i)} \ldots \bm x_L(i)^{\Lambda_L(i)} \Big]= \rho \,.
    \end{align*}
    Similarly, if $a_i \in \mathsf{dif}_0(K) \setminus V(S)$ we have
    \begin{align*}
        \mathbb E\Big[ \bm x(i)^{ \Lambda_0(i) } \bm x_1^{\Lambda_1(i)} \ldots \bm x_L(i)^{\Lambda_L(i)} \Big]= \rho \,.
    \end{align*}
    \item If $a_i \in \mathsf{dif}(S) \setminus V(K)$, (since $S$ is a decorated cycle or decorated path) there exists $a_j, a_k \in \{ a_1,\ldots,a_n \}$ such that $(a_i,a_j) \in E_{\ell}(S) \setminus E(K)$ and $(a_i,a_k) \in E_{\ell'}(S) \setminus E(K)$. In addition, it is clear that for all $e \in E(S \Cap K)$ we have $a_i \not\sim e$. Thus, we have
    \begin{align*}
        \Lambda_{\ell}(i)=\Lambda_{\ell'}(i)=1, \Lambda_0(i)=\Lambda_{\ell''}(i)=0 \mbox{ for all } \ell'' \neq \ell,\ell' \,.
    \end{align*}
    This yields that (recall \eqref{eq-def-x-ell})
    \begin{align*}
        \mathbb E\Big[ \bm x(i)^{ \Lambda_0(i) } \bm x_1^{\Lambda_1(i)} \ldots \bm x_L(i)^{\Lambda_L(i)} \Big]= \rho^2 \,.
    \end{align*}
    Similarly, if $a_i \in \mathsf{dif}(K) \setminus V(S)$ we have
    \begin{align*}
        \mathbb E\Big[ \bm x(i)^{ \Lambda_0(i) } \bm x_1(i)^{\Lambda_1(i)} \ldots \bm x_L(i)^{\Lambda_L(i)} \Big]= \rho^2 \,.
    \end{align*}
    \item For all other $a_i$, since $\bm x(i),\bm x_\ell(i) \in \{-1,+1 \}$, we have (recall \eqref{eq-def-x-ell})
    \begin{align*}
        0 \leq \mathbb E\Big[ \bm x(i)^{ \Lambda_0(i) } \bm x_1^{\Lambda_1(i)} \ldots \bm x_L(i)^{\Lambda_L(i)} \Big] \leq 1 \,.
    \end{align*}
    \item If $b_k \in V(S) \setminus V(K)$, then (since $S$ is a decorated cycle or decorated path) there exists $a_j, a_k \in \{ a_1,\ldots,a_n \}$ such that $(a_i,b_k),(a_j,b_k) \in E_{0}(S) \setminus E(K)$. Thus, we have $\Delta(k)=2$. This yields that 
    \begin{align*}
        \mathbb E\Big[ \bm u(k)^{ \Delta(k) } \Big] = 1 \,.
    \end{align*}
    Similarly, if $b_k \in V(K) \setminus V(S)$ we have
    \begin{align*}
        \mathbb E\Big[ \bm u(k)^{ \Delta(k) } \Big] = 1 \,.
    \end{align*}
    \item If $b_k \in V(S) \cap V(K)$ and $b_k \not\in \mathsf{L}(S \cap K) \cup V(\mathtt E)$, then there exists $a_j, a_k \in \{ a_1,\ldots,a_n \}$ such that $(a_i,b_k),(a_j,b_k) \in E_{0}(S) \cap E_0(K)$. Then since $(a_i,b_k),(a_j,b_k) \not\in \mathtt E$ it is clear that $\Delta(k)=0$. This yields that
    \begin{align*}
        \mathbb E\Big[ \bm u(k)^{ \Delta(k) } \Big] = 1 \,.
    \end{align*}
    \item If $b_k \in V(S) \cap V(K)$ and $b_k \in \mathsf{L}(S \cap K) \cup V(\mathtt E)$, since the degree of $b_k$ in $S$ and $K$ is bounded by $2$, we have $\Delta(k) \leq 8$. 
    This yields that
    \begin{align*}
        0 \leq \mathbb E\Big[ \bm u(k)^{ \Delta(k) } \Big] \leq C' \,.
    \end{align*}
\end{itemize}
Combining the above arguments, we see that for all $\mathtt E \subset E_0(S) \cap E_0(K)$ and $\mathtt F_{\ell} \subset E_{\ell}(S \Cap K)$, we have (note that $|V(\mathtt E)| \leq 2|\mathtt E|$)
\begin{align}
    &\mathbb E\Bigg\{ \prod_{b_k \in V^{\mathsf b}(S) \cup V^{\mathsf b}(K) } \bm u(k)^{ \Delta(k) } \prod_{ a_i \in V^{\mathsf a}(S) \cup V^{\mathsf a}(K) } \bm x(i)^{ \Lambda_0(i) } \bm x_1^{\Lambda_1(i)} \ldots \bm x_L(i)^{\Lambda_L(i)} \Bigg\} \nonumber  \\
    \leq\ & \rho^{ |\mathsf{dif}_0(S) \setminus V(K)|+|\mathsf{dif}_0(K) \setminus V(S)|+2|\mathsf{dif}(S) \setminus V(K)|+2 |\mathsf{dif}(K) \setminus V(S)| } \cdot (C')^{ |\mathsf{L}(S \cap K)|+2|\mathtt E| } \,.  \label{eq-cov-f-S-f-K-relax-2-part-1}
\end{align}
In addition, denote $\mathtt F=\mathtt F_1 \cup \ldots \cup \mathtt F_{\ell}$. Note that $\mathtt F \subset E(S \Cap K)$ and $(\mathtt F_1,\ldots,\mathtt F_{\ell})$ is fixed when we fix $\mathtt F$. We now argue that if $\mathsf L(\mathtt F) \not\subset \mathsf L(S \Cap K) \cup V(\mathtt E)$, then
\begin{align}
    \mathbb E\Bigg\{ \prod_{b_k \in V^{\mathsf b}(S) \cup V^{\mathsf b}(K) } \bm u(k)^{ \Delta(k) } \prod_{ a_i \in V^{\mathsf a}(S) \cup V^{\mathsf a}(K) } \bm x(i)^{ \Lambda_0(i) } \bm x_1^{\Lambda_1(i)} \ldots \bm x_L(i)^{\Lambda_L(i)} \Bigg\} =0 \,. \label{eq-cov-f-S-f-K-relax-2-part-2}
\end{align}
To this end, note that if $\mathsf L(\mathtt F) \not\subset \mathsf L(S \Cap K) \cup V(\mathtt E)$, then there exists a vertex $a_i \in \mathsf L(\mathtt F) \setminus (\mathsf L(S \Cap K)\cup V(\mathtt E))$. Thus, one of the two cases must occur:

\noindent{\bf Case 1:} There exists $a_j,a_k \in \{ a_1,\ldots,a_n \}$ such that $(a_i,a_j) \in \mathtt F \subset E_{\ell}(S \Cap K)$ and $(a_i,a_k) \in E_{\ell'}(S \Cap K) \setminus \mathtt F$. In this case, we have 
\begin{align*}
    \Lambda_{\ell}(i)=1, \ \Lambda_0(i)=\Lambda_{\ell'}(i)=0 \mbox{ for all } \ell' \neq \ell \,.
\end{align*}
This yields that 
\begin{align*}
    \mathbb E\Big[ \bm x(i)^{ \Lambda_0(i) } \bm x_1(i)^{\Lambda_1(i)} \ldots \bm x_L(i)^{\Lambda_L(i)} \Big]= 0 \,.
\end{align*}

\noindent{\bf Case 2:} There exists $a_j \in \{ a_1,\ldots,a_n \}$ and $b_k\in \{ b_1,\ldots,b_p \}$ such that $(a_i,a_j) \in \mathtt F \subset E_{\ell}(S \Cap K)$ and $(a_i,b_k)\in E_0(S\Cap K)\setminus \mathtt F$. In this case, we have
\begin{align*}
    \Lambda_0(i) \in \{ 0,2 \}, \ \Lambda_\ell(i)=1, \Lambda_{\ell'}(i)=0 \mbox{ for all } \ell' \neq \ell \,.
\end{align*}
This yields that 
\begin{align*}
    \mathbb E\Big[ \bm x(i)^{ \Lambda_0(i) } \bm x_1(i)^{\Lambda_1(i)} \ldots \bm x_L(i)^{\Lambda_L(i)} \Big]= 0 \,.
\end{align*}
Combining the above two cases yields \eqref{eq-cov-f-S-f-K-relax-2-part-2}. Plugging \eqref{eq-cov-f-S-f-K-relax-2-part-1} and \eqref{eq-cov-f-S-f-K-relax-2-part-2} into \eqref{eq-cov-f-S-f-K-relax-1}, we get that
\begin{align}
    \eqref{eq-cov-f-S-f-K-relax-1} \leq\ & \sum_{ \mathtt E \subset E_0(S) \cap E_0(K) }  \#\big\{ \mathtt F \subset E^{\mathtt a}(S \Cap K): \mathsf L(\mathtt F)\subset \mathsf{L}(S \Cap K) \cup V(\mathtt E) \big\} \big(\tfrac{\mu}{n}\big)^{|\mathtt E|} \cdot (C')^{ |\mathsf{L}(S \cap K)|+2|\mathtt E| } \nonumber \\
    & \cdot \rho^{ |\mathsf{dif}_0(S) \setminus V(K)|+|\mathsf{dif}_0(K) \setminus V(S)|+2|\mathsf{dif}(S) \setminus V(K)|+2 |\mathsf{dif}(K) \setminus V(S)| } \,. \label{eq-cov-f-S-f-K-relax-2}   
\end{align}
Note that since $S,K$ are paths or cycles, we have $S \Cap K$ is either a cycle or a collection of paths (we allow a path be a single vertex). Thus, $\mathsf L(S \Cap K)$ and $V(\mathtt E)$ will cut $S \Cap K$ into at most $|\mathsf{L}(S \Cap K)|+|V(\mathtt E)|+1$ non-intersecting paths, and thus we have
\begin{align*}
    \#\big\{ \mathtt F \subset E^{\mathtt a}(S \Cap K): \mathsf L(\mathtt F)\subset \mathsf{L}(S \Cap K) \cup V(\mathtt E) \big\} \leq 2^{|\mathsf{L}(S \Cap K)|+|V(\mathtt E)|+1} \leq 2^{|\mathsf{L}(S \Cap K)|+2|\mathtt E|+1} \,.
\end{align*}
Thus, we have
\begin{align}
    \eqref{eq-cov-f-S-f-K-relax-2} \leq\ & \sum_{ \mathtt E \subset E_0(S) \cap E_0(K) } \big(\tfrac{\mu}{n}\big)^{|\mathtt E|} \cdot (C')^{ |\mathsf{L}(S \cap K)|+2|\mathtt E| } \cdot 2^{|\mathsf{L}(S \Cap K)|+2|\mathtt E|+1} \nonumber \\
    & \cdot \rho^{ |\mathsf{dif}_0(S) \setminus V(K)|+|\mathsf{dif}_0(K) \setminus V(S)|+2|\mathsf{dif}(S) \setminus V(K)|+2 |\mathsf{dif}(K) \setminus V(S)| } \nonumber \\
    \leq\ & (C')^{ |\mathsf{L}(S \cap K)| } 2^{ |\mathsf{L}(S \Cap K)|+1 } \rho^{ |\mathsf{dif}_0(S) \setminus V(K)|+|\mathsf{dif}_0(K) \setminus V(S)|+2|\mathsf{dif}(S) \setminus V(K)|+2 |\mathsf{dif}(K) \setminus V(S)| } \,.   \label{eq-cov-f-S-f-K-relax-3}
\end{align}
It is clear that \eqref{eq-cov-f-S-f-K-relax-3} immediately implies the bound in \eqref{eq-lem-4.3-goal-RHS} by picking $C = 2C'$.

\subsection{Proof of Lemma~\ref{lem-detection-most-technical}}{\label{subsec:proof-lem-4.4}}

We first prove \eqref{eq-bound-var-Pb-f-H-relax-3-Part-1}. Note by \eqref{eq-def-mathtt-P} that $\mathtt P(S,S)=1$. Thus, 
\begin{align*}
    \eqref{eq-var-Pb-f-H-relax-3-Part-1} &\overset{\eqref{eq-def-Omega(S)}}{=} \sum_{ S \in \mathsf K_{n,p}: [S] \in \mathcal H } \frac{C \Xi(S)^2 }{ n^{\aleph} p^{\frac{1}{2}|E_0(S)|} \beta_{\mathcal H}^2 } = \sum_{ [H] \in \mathcal H } \frac{C \Xi(H)^2 }{ n^{\aleph} p^{\frac{1}{2}|E_0(H)|} \beta_{\mathcal H}^2 } \cdot \#\{ S \in \mathsf{K}_{n,p}: S \cong H \} \\
    &= [1+o(1)] \cdot \sum_{ [H] \in \mathcal H } \frac{C \Xi(H)^2 }{ n^{\aleph} p^{\frac{1}{2}|E_0(H)|} \beta_{\mathcal H}^2 } \cdot \frac{ n^{\aleph} p^{\frac{1}{2}|E_0(H)|} }{ |\mathsf{Aut}(H)| } \overset{\eqref{eq-def-beta-mathcal-H}}{=} \frac{C(1+o(1))}{\beta_{\mathcal H}} \overset{\text{Lemma~\ref{lem-bound-beta-mathcal-H}}}{=} o(1) \,,
\end{align*}
leading to \eqref{eq-bound-var-Pb-f-H-relax-3-Part-1}. Now we prove \eqref{eq-bound-var-Pb-f-H-relax-3-Part-2}. Recall $\Xi(S)$, $\Omega(K)$ and $\mathtt P(S,K)$ defined in~\eqref{eq-def-Xi(H)}, ~\eqref{eq-def-Omega(S)} and ~\eqref{eq-def-mathtt-P}, respectively. Using \eqref{eq-condition-strong-detection} , we see that 
\begin{align*}
    C^{|\mathsf L(S \cap K)|+|\mathsf L(S \Cap K)|}=n^{o(1)}, \quad \mathtt P(S,K)=n^{o(1)} \cdot\Xi(S)\Xi(K), \quad \Omega(K) \overset{\eqref{eq-basic-property-mathcal-H}}{=} n^{|E(K)|+o(1)} \,.
\end{align*}
Thus, we have (note that $|E(S)|=|V(S)|$ and $|E(K)|=|V(K)|$)
\begin{align}
    \eqref{eq-var-Pb-f-H-relax-3-Part-2} = \sum_{ \substack{ S,K \subset \mathsf K_{n,p}: [S],[K] \in \mathcal H \\ V(S) \cap V(K) \neq \emptyset, S \neq K } } \frac{ \Xi(S)^2 \Xi(K)^2 }{ n^{|V(K)|-|E(S \Cap K)|-o(1)} \Omega(S) \beta_{\mathcal H}^2 } \,.  \label{eq-var-Pb-f-H-relax-3-Part-2-simplify-1}
\end{align}
Note that for $S \neq K$ and $V(S) \cap V(K) \neq \emptyset$, we must have
\begin{align*}
    |E(S \Cap K)| \leq |E(S) \cap E(K)| \leq |V(S) \cap V(K)|-1 \,.
\end{align*}
Thus, we have
\begin{align}
    \eqref{eq-var-Pb-f-H-relax-3-Part-2-simplify-1} = \sum_{ \substack{ S \in \mathsf{K}_{n,p}: [S] \in \mathcal H \\ 0 \leq r \leq \aleph \\ r+1 \leq t \leq \aleph } } \sum_{ [H] \in \mathcal H } \frac{ \Xi(S)^2 \Xi(H)^2 }{ \Omega(S) n^{|V(H)|-r-o(1)} \beta_{\mathcal H}^2 } \cdot \mathsf{ENUM}(S,[H];t) \,,  \label{eq-var-Pb-f-H-relax-3-Part-2-simplify-2}
\end{align}
where 
\begin{equation}{\label{eq-def-ENUM-detection}}
    \mathsf{ENUM}(S,[H];t) = \#\big\{ K \in \mathsf{K}_n: K \cong H, |V(K) \cap V(S)|=t \big\} \,.
\end{equation}
We now bound $\mathsf{ENUM}(S,[H];t)$ as follows. Note that we have at most $\binom{|V(S)|}{t}$ ways to choose $V(K) \cap V(S)$ and at most $\binom{n+p-|V(S)|}{|V(H)|-t}$ ways to choose $V(K) \setminus V(S)$. In addition, given $V(K)$ we have at most $\frac{|V(H)|!}{|\mathsf{Aut}(H)|}$ ways to choose $K$. Thus, (note that $|V(H)| \leq 2\aleph$)
\begin{align*}
    \mathsf{ENUM}(S,[H];t) \leq \binom{|V(S)|}{t} \binom{n+p-|V(S)|}{|V(H)|-t} \cdot \frac{|V(H)|!}{|\mathsf{Aut}(H)|} \leq \frac{ n^{|V(H)|-t+o(1)} }{ |\mathsf{Aut}(H)| } \,.
\end{align*}
Plugging this bound into \eqref{eq-var-Pb-f-H-relax-3-Part-2-simplify-2}, we get that
\begin{align}
    \eqref{eq-var-Pb-f-H-relax-3-Part-2-simplify-2} &\leq \sum_{ \substack{ S \in \mathsf{K}_{n,p}: [S] \in \mathcal H \\ 0 \leq r \leq \aleph \\ r+1 \leq t \leq \aleph } } \sum_{ [H] \in \mathcal H } \frac{ \Xi(S)^2 \Xi(H)^2 }{ \Omega(S) n^{|V(H)|-r-o(1)} \beta_{\mathcal H}^2 } \cdot \frac{ n^{|V(H)|-t+o(1)} }{ |\mathsf{Aut}(H)| } \nonumber \\
    &\overset{\eqref{eq-def-beta-mathcal-H}}{=} \sum_{ \substack{ S \in \mathsf{K}_{n,p}: [S] \in \mathcal H \\ 0 \leq r \leq \aleph \\ r+1 \leq t \leq \aleph } } \frac{ \Xi(S)^2 }{ \Omega(S) n^{t-r-o(1)} \beta_{\mathcal H} } = \sum_{ S \in \mathsf{K}_{n,p}: [S] \in \mathcal H } \frac{ \Xi(S)^2 }{ \Omega(S) n^{1-o(1)} \beta_{\mathcal H} } \nonumber \\
    &= \frac{ 1 }{ n^{1-o(1)} \beta_{\mathcal H} } \sum_{ [H] \in \mathcal H } \frac{ \Xi(H)^2 }{ \Omega(H) } \cdot \#\{ S \subset \mathsf K_{n,p}: S \cong H \} \nonumber \\
    &\overset{\eqref{eq-def-Omega(S)}}{=} \frac{ 1 }{ n^{1-o(1)} \beta_{\mathcal H} } \sum_{ [H] \in \mathcal H } \frac{ \Xi(H)^2 }{ \Omega(H) } \cdot \frac{ \Omega(H) }{ |\mathsf{Aut}(H)| } \overset{\eqref{eq-def-beta-mathcal-H}}{=} n^{-1+o(1)} \,,  \nonumber
\end{align}
leading to \eqref{eq-bound-var-Pb-f-H-relax-3-Part-2}.

\subsection{Proof of Lemma~\ref{lem-bound-Xi-mathtt-P}}{\label{subsec:proof-lem-4.8}}

Recall \eqref{eq-def-Xi(H)} that $$\Xi(H)= \rho^{ |\mathsf{dif}_0(H)|+2|\mathsf{dif}(H)| } \big( \tfrac{\mu^2}{\gamma} \big)^{ \frac{1}{4}|E_0(H)| } \prod_{1 \leq \ell \leq L} \big( \epsilon^2_{\ell} \lambda_{\ell} \big)^{ \frac{1}{2}|E_{\ell}(H)| }. $$ For all $0 \leq \ell \leq L$, it is straightforward to check that 
\begin{align}
    |E_{\ell}(S)| = \sum_{ 1 \leq \mathtt t \leq \mathtt T+1 } |E_{\ell}(S_{\mathtt t})| + \sum_{ 1 \leq \mathtt t \leq \mathtt T } |E_{\ell}(\widetilde{S}_{\mathtt t})| \,.  \label{eq-equality-E_ell(S)} 
\end{align}
In addition, for all $v \in V(S)$ we have
\begin{align*}
    \sum_{ 1 \leq \mathtt t \leq \mathtt T+1 } \mathbf 1(v \in \mathsf{dif}_0(S_{\mathtt t})) + \sum_{ 1 \leq \mathtt t \leq \mathtt T } \mathbf 1(v \in \mathsf{dif}_0(\widetilde S_{\mathtt t})) \leq \mathbf 1(v \in \mathsf{dif}_0(S)) + 1 \,.
\end{align*}
Thus, we have
\begin{align}
    |\mathsf{dif}_0(S)| &= \sum_{ v \in V(S) } \mathbf 1(v \in \mathsf{dif}_0(S)) \nonumber \\
    &\geq \sum_{ v \in V(S) } \Big( \sum_{ 1 \leq \mathtt t \leq \mathtt T+1 } \mathbf 1(v \in \mathsf{dif}_0(S_{\mathtt t})) + \sum_{ 1 \leq \mathtt t \leq \mathtt T } \mathbf 1(v \in \mathsf{dif}_0(\widetilde S_{\mathtt t}) ) \Big) - 2 \mathtt T \nonumber \\
    &= \sum_{ 1 \leq \mathtt t \leq \mathtt T+1 } |\mathsf{dif}_0(S_{\mathtt t}))| + \sum_{ 1 \leq \mathtt t \leq \mathtt T } |\mathsf{dif}_0(S_{\mathtt t})| - 2 \mathtt T \,. \label{eq-inequality-dif_0(S)}
\end{align}
Similarly we have
\begin{align}
    |\mathsf{dif}(S)| \geq \sum_{ 1 \leq \mathtt t \leq \mathtt T+1 } |\mathsf{dif}(S_{\mathtt t}))| + \sum_{ 1 \leq \mathtt t \leq \mathtt T } |\mathsf{dif}(S_{\mathtt t})| - 2 \mathtt T \,. \label{eq-inequality-dif(S)}
\end{align}
Plugging \eqref{eq-equality-E_ell(S)}--\eqref{eq-inequality-dif(S)} into \eqref{eq-def-Xi(H)} yields \eqref{eq-bound-Xi(S)}. Similarly we can show \eqref{eq-bound-Xi(K)}. Now we turn to \eqref{eq-bound-mathtt-P(S,K)}. Note that as $E(S) \cap E(K)=E(S \Cap K)$, for all $0 \leq \ell \leq L$ we have
\begin{align}
    |E_{\ell}(S) \triangle E_{\ell}(K)| \geq \sum_{1 \leq \mathtt t \leq \mathtt T} |E_{\ell}(\widetilde S_{\mathtt t} )| + \sum_{1 \leq \mathtt t \leq \mathtt T} |E_{\ell}(\widetilde K_{\mathtt t} )| \,.  \label{eq-equality-E_ell(S)-triangle-E_ell(K)} 
\end{align}
In addition, note that $\mathsf{dif}_0(\widetilde S_{\mathtt t}) \setminus \mathsf L(\widetilde S_{\mathtt t}) \subset \mathsf{dif}_0(S) \setminus V(K)$ are disjoint (and similarly for $\mathsf{dif}_0(K) \setminus V(S)$), we have
\begin{align}
    & |\mathsf{dif}_0(S) \setminus V(K)| \geq \sum_{1 \leq \mathtt t \leq \mathtt T} |\mathsf{dif}_0(\widetilde S_{\mathtt t})|-2\mathtt T \,; \label{eq-inequality-dif_0(S)-setminus-V(K)} \\
    & |\mathsf{dif}_0(K) \setminus V(S)| \geq \sum_{1 \leq \mathtt t \leq \mathtt T} |\mathsf{dif}_0(\widetilde K_{\mathtt t})|-2\mathtt T \,. \label{eq-inequality-dif_0(K)-setminus-V(S)}
\end{align}
Similarly we have
\begin{align}
    & |\mathsf{dif}(S) \setminus V(K)| \geq \sum_{1 \leq \mathtt t \leq \mathtt T} |\mathsf{dif}(\widetilde S_{\mathtt t})|-2\mathtt T \,; \label{eq-inequality-dif(S)-setminus-V(K)} \\
    & |\mathsf{dif}(K) \setminus V(S)| \geq \sum_{1 \leq \mathtt t \leq \mathtt T} |\mathsf{dif}(\widetilde K_{\mathtt t})|-2\mathtt T \,. \label{eq-inequality-dif(K)-setminus-V(S)}
\end{align}
Plugging \eqref{eq-equality-E_ell(S)-triangle-E_ell(K)}--\eqref{eq-inequality-dif(K)-setminus-V(S)} into \eqref{eq-def-mathtt-P} yields \eqref{eq-bound-mathtt-P(S,K)}.

\subsection{Proof of Lemma~\ref{lem-recovery-most-technical}}{\label{subsec:proof-lem-4.9}}

Before proving Lemma~\ref{lem-recovery-most-technical}, we first show the following lemma that will be used repeatedly in the later proof. Recall that $\mathcal J_\star(\aleph)$ denotes the collection of unlabeled decorated paths $[H]$ such that $|V^{\mathsf a}(H)| = \aleph+1$.
\begin{lemma}{\label{lem-contribution-sub-chain}}
    For all $\mathtt a \geq 0$ and given $u,v$, we have 
    \begin{align}{\label{eq-contribution-sub-chain}}
        \sum_{ S:[S] \in \mathcal J_\star(\mathtt a), \mathsf{L}(S)=\{ u,v \} } \frac{ n\Xi(S)^2 }{ \Omega(S) } \leq O(1) \cdot \sigma_+(\mathbf P)^{\mathtt a} \cdot n^{\mathbf 1(u=v)} \,.
    \end{align}
\end{lemma}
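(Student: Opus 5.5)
Fix the endpoints $a_u,a_v$. Group the sum in \eqref{eq-contribution-sub-chain} by isomorphism class $[H]\in\mathcal J_\star(\mathtt a)$ and then by labeled embedding. For a fixed class $[H]$, every labeled copy $S\cong H$ has the same weight $\Xi(S)=\Xi(H)$. By \eqref{eq-def-Omega(S)} and the identity $|E(H)|-\tfrac12|E_0(H)|=\mathtt a$ (the path analogue of \eqref{eq-basic-property-mathcal-J}), we also have $\Omega(S)=\Omega(H)=n^{\mathtt a}p^{\frac12|E_0(H)|}$. The sum therefore equals
\begin{align*}
    \sum_{[H]\in\mathcal J_\star(\mathtt a)} \frac{n\,\Xi(H)^2}{n^{\mathtt a}p^{\frac12|E_0(H)|}}\cdot \#\big\{S\cong H:\mathsf L(S)=\{a_u,a_v\}\big\}\,.
\end{align*}
Everything then reduces to counting the copies of $H$ with prescribed leaves.

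\textbf{Counting step.} First take $u\neq v$. The two leaves are pinned to $a_u,a_v$. Since the leaves lie in $V^{\mathsf a}$ (which is where the endpoint condition is meaningful), the free vertices are $\mathtt a-1$ vertices of $V^{\mathsf a}$ and $\tfrac12|E_0(H)|$ vertices of $V^{\mathsf b}$. Assigning the two ends to $\{a_u,a_v\}$ can be done in at most $2$ ways, and this factor is absorbed by $|\mathsf{Aut}(H)|\le 2$. The number of copies is therefore at most $2n^{\mathtt a-1}p^{\frac12|E_0(H)|}$, and the sum is bounded by $2\sum_{[H]}\Xi(H)^2=O(1)\cdot\beta_{\mathcal J_\star(\mathtt a)}$ up to the constant factor $|\mathsf{Aut}(H)|\le 2$.

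Next take $u=v$. The structure is then a closed decorated walk/cycle through $a_u$, with one fewer pinned vertex, so the count is at most $O(n^{\mathtt a}p^{\frac12|E_0(H)|})$. This produces exactly the extra factor $n^{\mathbf 1(u=v)}$. If paths with $u=v$ are degenerate, the bound holds trivially.

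\textbf{Spectral step.} Apply Lemma~\ref{lem-bound-beta-mathcal-J} with $\aleph$ replaced by $\mathtt a$, which gives $\beta_{\mathcal J_\star(\mathtt a)}\le D\sigma_+(\mathbf P)^{\mathtt a}$. Its proof uses only the transfer-matrix representation \eqref{eq-formula-X-ell-Y-ell} together with the spectral decomposition of $\mathbf P$, in which $\sigma_+(\mathbf P)$ is the top eigenvalue and all eigenvalues are nonnegative. Consequently the constant $D$ is uniform in $\mathtt a\ge 0$; in the boundary cases $\mathtt a=0,1$ one checks directly that the sum is $O(1)$.

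\textbf{Expected obstacle.} The main point to verify is precisely this uniformity of $D$ over all $\mathtt a$, since the lemma is applied with varying sub-chain lengths. I would handle it by noting that $\sum_{\omega}\Xi(\omega)^2=\mathbf 1^{\top}\mathbf P^{\mathtt a-1}\mathbf v_0$ for a fixed vector $\mathbf v_0$. Because $\mathbf P$ is similar to the PSD matrix $\widetilde{\mathbf P}$, this quantity is at most $\|\mathbf U^{-1}\|\|\mathbf U\|\,\|\mathbf 1\|\|\mathbf v_0\|\,\sigma_+(\mathbf P)^{\mathtt a-1}$, and the prefactor is independent of $\mathtt a$.
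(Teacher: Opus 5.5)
Your argument is the paper's proof. You group by isomorphism class, count the copies with pinned endpoints so that the prefactor $n$ cancels against $\Omega(H)$, and finish with $\beta_{\mathcal J_\star(\mathtt a)}\le D\sigma_+(\mathbf P)^{\mathtt a}$ from Lemma~\ref{lem-bound-beta-mathcal-J}. Your similarity argument $\mathbf P=\mathbf U\widetilde{\mathbf P}\mathbf U^{-1}$ correctly shows that $D$ does not depend on $\mathtt a$, a point the paper leaves implicit.

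Two small corrections. First, for $u=v$ there are no closed walks or cycles in the sum, since every $S$ is a decorated path. The only admissible $S$ is the one-vertex path $\{u\}$, which forces $\mathtt a=0$ and gives $\Xi(S)=\Omega(S)=1$, so the left side is exactly $n$; that, not a cycle count, is where $n^{\mathbf 1(u=v)}$ comes from.

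Second, you should not assume $\mathsf L(S)\subset V^{\mathsf a}$. In Parts~2--3 of the proof of Lemma~\ref{lem-recovery-most-technical}, the lemma is applied to sub-paths whose endpoints may lie in $V^{\mathsf b}$. There, as in the paper, the identities $|V^{\mathsf b}(H)\setminus\mathsf L(H)|=\tfrac12|E_0(H)|$ and $|E(H)|-\tfrac12|E_0(H)|=\mathtt a$ hold only up to $O(1)$. This changes your copy-count-to-$\Omega$ ratio only by a harmless factor $\gamma^{O(1)}$.
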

\begin{proof}
    When $u=v$, it is clear that left-hand side only sums over $S=\{ u \}$ being a single vertex and thus equals $n$, so \eqref{eq-contribution-sub-chain} holds in this case ($\mathtt a = 0$ and $n^{\mathbf 1(u=v)} = n$). For the case $u \neq v$, it is clear that the left-hand side of \eqref{eq-contribution-sub-chain} equals 
    \begin{align}
        &\sum_{[H] \in \mathcal J_{\star}(\mathtt a)} \frac{ n \Xi(H)^2 }{ \Omega(H) } \cdot \#\big\{ S \subset \mathsf K_{n,p}: S \cong H, \mathsf L(S)=\{ u,v \} \big\} \nonumber \\
        =\ &[1+o(1)]\sum_{[H] \in \mathcal J_{\star}(\mathtt a)} \frac{ n \Xi(H)^2 }{ \Omega(H) } \cdot \frac{ n^{|V^{\mathsf a}(H) \setminus \mathsf L(H)|} p^{|V^{\mathsf b}(H) \setminus \mathsf L(H)|} }{ |\mathsf{Aut}(H)| }  \,.  \label{eq-contribution-sub-chain-relax-1}
    \end{align}
    Note that we have
    \begin{align*}
        &|V^{\mathsf a}(H)\setminus \mathsf L(H)|-|E(H)|+\tfrac{1}{2}|E_0(H)| =O(1) \,, \\
        &|V^{\mathsf b}(H)\setminus \mathsf L(H)|-\tfrac{1}{2}|E_0(H)|=O(1) \,, \\
        &|V^{\mathsf a}(H)\setminus \mathsf L(H)|+|V^{\mathsf b}(H)\setminus \mathsf L(H)|=|E(H)|-1 \,.
    \end{align*}
    Consequently,
    \begin{align*}
        n\cdot n^{|V^{\mathsf a}(H) \setminus \mathsf L(H)|} p^{|V^{\mathsf b}(H) \setminus \mathsf L(H)|}  = n\cdot n^{|V^{\mathsf a}(H) \setminus \mathsf L(H)|} (n/\gamma)^{|V^{\mathsf b}(H) \setminus \mathsf L(H)|} = O(1)\cdot n^{|E(H)|} \,. 
    \end{align*}
    By~\eqref{eq-def-Omega(S)}, we have 
    \begin{align*}
        \Omega(H) &= n^{|E(H)| - \tfrac{1}{2}|E_0(H)|}p^{\tfrac{1}{2}|E_0(H)|} = n^{|E(H)| - \tfrac{1}{2}|E_0(H)|}(n/\gamma)^{\tfrac{1}{2}|E_0(H)|}=O(1)\cdot n^{|E(H)|} \,.
    \end{align*}
    Thus, we have 
    \begin{align*}
        \frac{ n \cdot n^{|V^{\mathsf a}(H) \setminus \mathsf L(H)|} p^{|V^{\mathsf b}(H) \setminus \mathsf L(H)|} }{ \Omega(H) } = O(1) 
    \end{align*}
    and thus
    \begin{align*}
        \eqref{eq-contribution-sub-chain-relax-1} = O(1) \cdot \sum_{[H] \in \mathcal J_{\star}(\mathtt a)} \frac{ \Xi(H)^2 }{|\mathsf{Aut}(H)|  } \overset{\eqref{eq-def-beta-mathcal-J}}{=} O(1) \cdot \beta_{\mathcal J_\star(\mathtt a)} \overset{\text{Lemma~\ref{lem-bound-beta-mathcal-J}}}{=} O(1) \cdot \sigma_+(\mathbf P)^{\mathtt a} \,,
    \end{align*}
    so \eqref{eq-contribution-sub-chain} also holds in this case.
\end{proof}

Now we return to the proof of Lemma~\ref{lem-recovery-most-technical}. We will split \eqref{eq-var-Phi-i,j-relax-4} into several parts and bound each part separately. Note that $C $ and $\rho$ are constants, we write $C^{4\mathtt T+5} \rho^{-40 \mathtt T}$ as $O(1)^{\mathtt T}$ in the following. It suffices to show 
\begin{align*}
    \sum_{\mathtt T \geq 0} \frac{O(1)^{\mathtt T} n^{-3\mathtt T+1}}{\beta_{\mathcal J}^2} \sum_{ S_{\cap}, S_{\setminus}, K_{\setminus} \in \mathcal A_{\mathtt T} } \prod_{1 \leq \mathtt t \leq \mathtt T+1} \frac{ n\Xi(S_{\mathtt t})^2 }{ \Omega(S_{\mathtt t}) } \prod_{1 \leq \mathtt t \leq \mathtt T} \frac{ n\Xi(\widetilde S_{\mathtt t})^2 }{ \Omega(\widetilde S_{\mathtt t}) }  \prod_{1 \leq \mathtt t \leq \mathtt T} \frac{ n\Xi(K_{\mathtt t})^2 }{ \Omega(\widetilde K_{\mathtt t}) }\le O(1) \,.
\end{align*}
We will consider the following three cases: $\mathtt T = 0$, $\mathtt T = 1$, and $\mathtt T\ge 2$, respectively. Note that for the case $\mathtt T = 0$ and $\mathtt T=1$, we can simply write $O(1)^{\mathtt T}$ as $O(1)$.

\noindent {\bf Part 1: the case $\mathtt T=0$.} In this case, we have $V(S)=V(K)$ and thus
\begin{align*}
    S_{\setminus}=K_{\setminus}=\emptyset, \quad S_{\cap}=S_1=S \,.
\end{align*}
Thus, the contribution of this part in \eqref{eq-var-Phi-i,j-relax-4} is bounded by
\begin{align}
    & O(1) \cdot \frac{n}{\beta_{\mathcal J}^2} \sum_{ [S] \in \mathcal J_{\star}(\aleph): \mathsf{L}(S)=\{ a_u,a_v \} } \frac{ \Xi(S)^2 }{ \Omega(S) } \nonumber \\
    \leq\ & O(1) \cdot \frac{ n\sigma_+(\mathbf P)^\aleph }{ \beta_{\mathcal J}^2 } \overset{\text{Lemma~\ref{lem-bound-beta-mathcal-J}}}{\leq} O(1) \cdot n \sigma_+(\mathbf P)^{-\aleph} \overset{\eqref{eq-condition-weak-recovery}}{=} o(1) \,,  \label{eq-bound-part-1-contribution-var-Phi-i,j}
\end{align}
where the first inequality follows from taking $\mathtt a=\aleph$ in Lemma~\ref{lem-contribution-sub-chain}.

\noindent {\bf Part 2: the case $\mathtt T=1$.} In this case, we have
\begin{align*}
    S_{\cap} = S_{1} \cup S_{2}, \quad \widetilde S_{\setminus}=\widetilde{S}_{1}, \quad K_{\setminus}=\widetilde K_{1} \,. 
\end{align*}
Denote $\mathfrak V(S)=\mathbf 1_{ \{ S \text{ is single vertex} \} }$. We have at most $4$ choices of $\mathfrak V(S_1),\mathfrak V(S_2)$. In addition, given $\mathfrak V(S_1),\mathfrak V(S_2)$, we have 
\begin{align*}
    (n+p)^{2-\mathfrak V(S_1)-\mathfrak V(S_2)} = O(1) \cdot n^{ 2-\mathfrak V(S_1)-\mathfrak V(S_2) }
\end{align*}
choices of $\mathsf L(S_1),\mathsf L(S_2)$ (note that $a_u \in \mathsf L(S_1)$, $a_v \in \mathsf L(S_2)$, and there are $n+p$ choices for the other leaves). In addition, assuming that $|V^{\mathsf a}(S_1)|=\mathtt a$ and $|V^{\mathsf a}(S_2)|=\mathtt b$, we have
\begin{align*}
    |V^{\mathsf a}(\widetilde S_1)|,|V^{\mathsf a}(\widetilde K_1)| \in \{ \aleph-\mathtt a-\mathtt b, \aleph-\mathtt a-\mathtt b+1, \aleph-\mathtt a-\mathtt b+2 \} \,.
\end{align*}
Here, the exact value of $|V^{\mathsf a}(\widetilde S_1)|,|V^{\mathsf a}(\widetilde K_1)|$ depending on whether $V(S_1)\cap V(\widetilde{S}_1), V(\widetilde{S}_1)\cap V(S_2)\in V^{\mathsf a}(S)$ and $V(S_1)\cap V(\widetilde{K}_1),V(\widetilde{K}_1)\cap V(S_2)\in V^{\mathsf a}(K)$. Thus, using Lemma~\ref{lem-contribution-sub-chain}, given $\mathsf L(S_1),\mathsf L(S_2)$ (and thus $\mathsf L(\widetilde S_1),\mathsf L(\widetilde K_1)$ is also fixed) and $|V^{\mathsf a}(\widetilde S_1)|,|V^{\mathsf a}(\widetilde K_1)|\le \aleph-\mathtt a-\mathtt b+2$ we have
\begin{align*}
    &\sum_{ S_{\cap},S_{\setminus},K_{\setminus} } \frac{n\Xi(S_1)^2}{\Omega(S_1)} \cdot \frac{n\Xi(S_2)^2}{\Omega(S_2)} \cdot \frac{n\Xi(\widetilde S_1)^2}{\Omega(\widetilde S_1)} \cdot \frac{n\Xi(\widetilde K_1)^2}{\Omega(\widetilde K_1)} \\
    \le \ & O(1) \cdot n^{\mathfrak V(S_1)+\mathfrak V(S_2)} \cdot \sigma_+(\mathbf P)^{\mathtt a} \cdot \sigma_+(\mathbf P)^{\mathtt b} \cdot \sigma_+(\mathbf P)^{\aleph-\mathtt a-\mathtt b+2} \cdot \sigma_+(\mathbf P)^{\aleph-\mathtt a-\mathtt b+2} \\
    =\ & O(1) \cdot n^{\mathfrak V(S_1)+\mathfrak V(S_2)} \cdot \sigma_+(\mathbf P)^{2\aleph-\mathtt a-\mathtt b+4} \,.
\end{align*}
Thus, the contribution of this part in \eqref{eq-var-Phi-i,j-relax-4} is bounded by
\begin{align}
    & O(1) \cdot \frac{ n^{-2} }{ \beta_{\mathcal J}^2 } \sum_{ \mathtt a+\mathtt b\leq \aleph } n^{ 2-\mathfrak V(S_1)-\mathfrak V(S_2) } \cdot n^{\mathfrak V(S_1)+\mathfrak V(S_2)} \cdot \sigma_+(\mathbf P)^{2\aleph-\mathtt a-\mathtt b+4} \nonumber \\
    \overset{\text{Lemma~\ref{lem-bound-beta-mathcal-J}}}{\leq}\ & O(1) \cdot \sum_{ \mathtt a+\mathtt b\leq \aleph } \sigma_+(\mathbf P)^{-\mathtt a-\mathtt b+4} = O(1) \,, \label{eq-bound-part-2-contribution-var-Phi-i,j}
\end{align}
where the last inequality follows from $\sigma_+(\mathbf P)>1$ given by Lemma~\ref{lem-bound-beta-mathcal-J} with \eqref{eq-assum-upper-bound}.

\noindent{\bf Part 3: the case $\mathtt T \geq 2$.} In this case we have
\begin{align*}
    S_{\cap} = \cup_{1 \leq \mathtt t \leq \mathtt T+1} S_{\mathtt t}, \quad S_{\setminus} = \cup_{1 \leq \mathtt t \leq \mathtt T} \widetilde S_{\mathtt t}, \quad K_{\setminus} = \cup_{1 \leq \mathtt t \leq \mathtt T} \widetilde K_{\mathtt t} \,.
\end{align*}
Again, define $\mathfrak V(S)=\mathbf 1_{ \{ S \text{ is single vertex} \} }$, we have at most $2^{\mathtt T+1}$ choices for $\{ \mathfrak V(S_{\mathtt t}): 1 \leq \mathtt t \leq \mathtt T+1 \}$. In addition, given $\{ \mathfrak V(S_{\mathtt t}): 1 \leq \mathtt t \leq \mathtt T+1 \}$ we have
\begin{align*}
    (n+p)^{ 2\mathtt T-\mathfrak V(S_1)-\ldots-\mathfrak V(S_{\mathtt T+1}) } \leq (1+\gamma^{-1})^{2\mathtt T} n^{ 2\mathtt T-\mathfrak V(S_1)-\ldots-\mathfrak V(S_{\mathtt T+1}) }
\end{align*}
choices of $\mathsf L(S_1),\ldots,\mathsf L(S_{\mathtt T+1})$. Also, there are at most $\aleph^{3\mathtt T+1}$ choices of 
\begin{align*}
    \left( p_{\mathtt t}, \widetilde{p}_{\mathtt t},\widetilde{q}_{\mathtt t} \right) =\left( |V^{\mathsf a}(S_{\mathtt t})|, |V^{\mathsf a}(\widetilde S_{\mathtt t})|, |V^{\mathsf a}(\widetilde K_{\mathtt t})| \right) \,.
\end{align*}
And we must have 
\begin{equation}{\label{eq-require-p-t-q-t}}
    \begin{aligned}
        &\sum_{1 \leq \mathtt t \leq \mathtt T+1} p_{\mathtt t} + \sum_{1 \leq \mathtt t \leq \mathtt T} \widetilde{p}_{\mathtt t} \leq \aleph+2\mathtt T \,; \\
        &\sum_{1 \leq \mathtt t \leq \mathtt T+1} p_{\mathtt t} + \sum_{1 \leq \mathtt t \leq \mathtt T} \widetilde{q}_{\mathtt t} \leq \aleph+2\mathtt T \,. 
    \end{aligned}
\end{equation}
Finally, using Lemma~\ref{lem-contribution-sub-chain}, given $\mathsf L(S_1),\ldots,\mathsf L(S_{\mathtt T+1})$ and $( p_{\mathtt t}, \widetilde{p}_{\mathtt t},\widetilde{q}_{\mathtt t} )$ above we have
\begin{align*}
    &\sum_{\substack{S_{\cap},S_{\setminus},K_\setminus\in \mathcal A_{\mathtt T}:\\\text{ given }\mathsf L(S_1),\ldots,\mathsf L(S_{\mathtt T+1})\text{ and }( p_{\mathtt t}, \widetilde{p}_{\mathtt t},\widetilde{q}_{\mathtt t} )}} \prod_{1 \leq \mathtt t \leq \mathtt T+1} \frac{ n\Xi(S_{\mathtt t})^2 }{ \Omega(S_{\mathtt t}) } \prod_{1 \leq \mathtt t \leq \mathtt T} \frac{ n\Xi(\widetilde S_{\mathtt t})^2 }{ \Omega(\widetilde S_{\mathtt t}) }  \prod_{1 \leq \mathtt t \leq \mathtt T} \frac{ n\Xi(K_{\mathtt t})^2 }{ \Omega(\widetilde K_{\mathtt t}) } \\
    \leq\ & O(1)^{\mathtt T} n^{ \mathfrak V(S_1)+\ldots+\mathfrak V(S_{\mathtt T+1}) } \prod_{1 \leq \mathtt t \leq \mathtt T+1} \sigma_+(\mathbf P)^{p_{\mathtt t}} \prod_{1 \leq \mathtt t \leq \mathtt T} \sigma_+(\mathbf P)^{\widetilde p_{\mathtt t}} \prod_{1 \leq \mathtt t \leq \mathtt T} \sigma_+(\mathbf P)^{\widetilde q_{\mathtt t}} \\
    \overset{\eqref{eq-require-p-t-q-t}}{\leq}\ & O(1)^{\mathtt T} \cdot n^{ \mathfrak V(S_1)+\ldots+\mathfrak V(S_{\mathtt T+1}) } \sigma_+(\mathbf P)^{2\aleph} \,.
\end{align*}
It remains to sum over $\mathsf L(S_1),\ldots,\mathsf L(S_{\mathtt T+1})\text{ and }( p_{\mathtt t}, \widetilde{p}_{\mathtt t},\widetilde{q}_{\mathtt t} )$. Recall that there are at most $(1+\gamma^{-1})^{2\mathtt T} n^{ 2\mathtt T-\mathfrak V(S_1)-\ldots-\mathfrak V(S_{\mathtt T+1})}$ possible choices for $\mathsf L(S_1),\ldots,\mathsf L(S_{\mathtt T+1})$, and at most $\aleph^{3\mathtt T+1}$ choices for $( p_{\mathtt t}, \widetilde{p}_{\mathtt t},\widetilde{q}_{\mathtt t} )$.
Thus, the contribution of this part in \eqref{eq-var-Phi-i,j-relax-4} is bounded by 
\begin{align}
    &\sum_{\mathtt T \geq 2} \frac{ n^{-3\mathtt T+1} }{ \beta_{\mathcal J}^2 } O(1)^{\mathtt T+1} \cdot n^{ 2\mathtt T-\mathfrak V(S_1)-\ldots-\mathfrak V(S_{\mathtt T+1}) } \cdot \aleph^{3\mathtt T+1} \cdot n^{ \mathfrak V(S_1)+\ldots+\mathfrak V(S_{\mathtt T+1}) } \sigma_+(\mathbf P)^{2\aleph} \nonumber \\
    \leq\ & \sum_{\mathtt T \geq 2} \frac{ n^{-\mathtt T+1} }{ \beta_{\mathcal J}^2 } \cdot O(1)^{\mathtt T+1} \cdot \aleph^{3\mathtt T+1} \cdot \sigma_+(\mathbf P)^{2\aleph} \nonumber \\
    \overset{\text{Lemma~\ref{lem-bound-beta-mathcal-J}}}{\leq}\ & O(1) \cdot \sum_{\mathtt T \geq 2}  O(1)^{\mathtt T+1} \cdot \aleph^{3\mathtt T+1} \cdot n^{-\mathtt T+1} = n^{-1+o(1)} \,.  \label{eq-bound-part-3-contribution-var-Phi-i,j}
\end{align}
Combining \eqref{eq-bound-part-1-contribution-var-Phi-i,j}, \eqref{eq-bound-part-2-contribution-var-Phi-i,j} and \eqref{eq-bound-part-3-contribution-var-Phi-i,j} leads to Lemma~\ref{lem-recovery-most-technical}.

\bibliographystyle{alpha}

\end{document}